\newtheorem{theorem}{Theorem}[section]
\newtheorem{lemma}[theorem]{Lemma}
\newtheorem{proposition}[theorem]{Proposition}
\newtheorem{corollary}[theorem]{Corollary}
\theoremstyle{definition}
\newtheorem{definition}[theorem]{Definition}
\newtheorem{remark}[theorem]{Remark}
\newtheorem{lemdef}[theorem]{Lemma and Definition}
\newcommand{\IR}{\mathbb{R}}
\newcommand{\IC}{\mathbb{C}}
\newcommand{\IN}{\mathbb{N}}
\newcommand{\IZ}{\mathbb{Z}}
\newcommand{\IP}{\mathbb{P}}
\newcommand{\cK}{\mathcal{K}}
\newcommand{\cM}{\mathcal{M}}
\newcommand{\cF}{\mathcal{F}}
\newcommand{\cS}{\mathcal{S}}
\newcommand{\cR}{\mathcal{R}}
\newcommand{\cA}{\mathcal{A}}
\newcommand{\cC}{\mathcal{C}}
\newcommand{\cL}{\mathcal{L}}
\newcommand{\cD}{\mathcal{D}}
\newcommand{\cT}{\mathcal{T}}
\renewcommand{\L}{\mathrm{L}}
\newcommand{\C}{\mathrm{C}}
\newcommand{\B}{\mathrm{B}}
\renewcommand{\H}{\mathrm{H}}
\newcommand{\F}{\mathrm{F}}
\renewcommand{\S}{\mathrm{S}}
\newcommand{\W}{\mathrm{W}}
\newcommand{\fa}{\mathfrak{a}}
\newcommand{\e}{\mathrm{e}}
\newcommand{\ii}{\mathrm{i}}
\renewcommand{\d}{\mathrm{d}}
\newcommand{\eps}{\varepsilon}
\newcommand{\loc}{\mathrm{loc}}
\renewcommand\Im{\operatorname{Im}}
\newcommand\interior{\mathrm{in}}
\newcommand\exterior{\mathrm{ex}}
\newcommand{\Lop}{\mathcal{L}}
\newcommand{\pv}{\mathrm{p.v.}}
\newcommand{\divergence}{\operatorname{div}}
\DeclareMathOperator{\supp}{supp}
\DeclareMathOperator{\dist}{dist}
\DeclareMathOperator{\diam}{diam}
\DeclareMathOperator{\Id}{Id}
\DeclareMathOperator{\Rg}{\mathrm{ran}}
\DeclareMathOperator{\dom}{\mathcal{D}}
\numberwithin{equation}{section}
\title[The Stokes operator in two-dimensional bounded Lipschitz domains]{The Stokes operator\\ in two-dimensional bounded Lipschitz domains}
\author{Fabian Gabel}
\author{Patrick Tolksdorf}
\address{Institut f\"ur Mathematik, Technische Universit\"at Hamburg, Am Schwarzenberg-Campus 3, 21073 Hamburg, Germany}
\email{\href{mailto:fabian.gabel@tuhh.de}{fabian.gabel@tuhh.de}}
\address{Institut f\"ur Mathematik, Johannes Gutenberg-Universit\"at Mainz, Staudingerweg 9, 55099 Mainz, Germany}
\email{\href{mailto:tolksdorf@uni-mainz.de}{tolksdorf@uni-mainz.de}}
\keywords{Dirichlet Problem, Lipschitz Domain, Stokes System, Layer Potential, Maximal Regularity, Navier--Stokes Equations}
\subjclass[2020]{Primary 47D06, 35Q30; Secondary 76D03, 76D05, 76D07}
\thanks{}
\dedicatory{}
\begin{document}
\begin{abstract}
We consider the Stokes resolvent problem in a two-dimensional bounded Lipschitz domain $\Omega$ subject to homogeneous Dirichlet boundary conditions. We prove $\mathrm{L}^p$-resolvent estimates for $p$ satisfying the condition $\lvert 1 / p - 1 / 2 \rvert < 1 / 4 + \varepsilon$ for some $\varepsilon > 0$. We further show that the Stokes operator admits the property of maximal regularity and that its $\mathrm{H}^{\infty}$-calculus is bounded. This is then used to characterize domains of fractional powers of the Stokes operator. Finally, we give an application to the regularity theory of weak solutions to the Navier--Stokes equations in bounded planar Lipschitz domains.
\end{abstract}
\maketitle

\section{Introduction}

\noindent Consider the Stokes resolvent problem
\begin{align}
\label{Eq: Stokes resolvent}
 \left\{ \begin{aligned}
  \lambda u - \Delta u + \nabla \phi &= f && \text{in } \Omega, \\
  \divergence(u) &= 0 && \text{in } \Omega, \\
  u &= 0 && \text{on } \partial \Omega
 \end{aligned} \right.
\end{align}
in a bounded Lipschitz domain $\Omega \subset \IR^d$, $d \geq 2$. 
Here, the resolvent parameter $\lambda$ is supposed to lie in a sector $\S_{\theta} \coloneqq \{ z \in \IC \setminus \{ 0 \} : \lvert \arg(z) \rvert < \pi - \theta \}$, $\theta \in (0 , \pi / 2)$, in the complex plane. The Stokes resolvent problem was investigated in a plethora of articles dealing with all kinds of geometries of $\Omega$ and function spaces. See~\cites{Abels, Amann, Borchers_Sohr, Bolkart_Giga_Miura_Suzuki_Tsutsui, Farwig_Sohr, Geissert_Hess_Hieber_Schwarz_Stavrakidis, Giga, Kunstmann_Weis, Mitrea_Monniaux, Mitrea_Monniaux_Wright, Shen, Tolksdorf, Tolksdorf_Watanabe} to only mention a few.

A natural tool to consider the Stokes problem is the Helmholtz decomposition. Here, we say that the Helmholtz decomposition of $\L^p (\Omega ; \IC^d)$ exists if there is an algebraic and topological decomposition
\begin{align*}
 \L^p (\Omega ; \IC^d) = \L^p_{\sigma} (\Omega) \oplus \mathrm{G}_p (\Omega),
\end{align*}
where $\L^p_{\sigma} (\Omega)$ denotes the closure in $\L^p (\Omega ; \IC^d)$ of
\begin{align*}
 \C_{c , \sigma}^{\infty} (\Omega) \coloneqq \big\{ \varphi \in \C_c^{\infty} (\Omega ; \IC^d) : \divergence(\varphi) = 0 \big\}
\end{align*}
and where
\begin{align*}
 \mathrm{G}_p (\Omega) \coloneqq \big\{ g \in \L^p (\Omega ; \IC^d) : g = \nabla \Phi \text{ for some } \Phi \in \L^p_{\loc} (\Omega) \big\}.
\end{align*}
The projection onto the space $\L^p_{\sigma} (\Omega)$ is called the Helmholtz projection and is denoted by $\IP_p$. 
It belongs to the mathematical folklore that, in the case $p = 2$, this projection is an orthogonal projection, and $\mathrm{G}_2 (\Omega)$ is the orthogonal complement to $\L^2_{\sigma} (\Omega)$ with respect to the $\L^2$-inner product. 
In particular, the Helmholtz decomposition of $\L^2(\Omega; \IC^d)$ exists for each open set $\Omega$. 
However, the situation for $p \neq 2$ is different and much more sensitive to the underlying geometry. Even smoothness of $\partial \Omega$ does not imply for certain unbounded domains the existence of the Helmholtz decomposition as the classic example of Maslennikova and Bogovski\u{\i}~\cite{Maslennikova_Bogovskii} shows. Also, irregularity of the boundary destroys the existence of the Helmholtz decomposition in certain $\L^p$-spaces. This shows in the works of Fabes, Mendez, and Mitrea~\cite{Fabes_Mendez_Mitrea} and of Mitrea~\cite{MitreaD}. 
Indeed, in~\cite{Fabes_Mendez_Mitrea} it is proved that, for each bounded Lipschitz domain $\Omega \subset \IR^d$, $d \geq 3$, there exists a constant $\eps > 0$ such that the Helmholtz decomposition exists on $\L^p (\Omega ; \IC^d)$ whenever
\begin{align}
\label{Eq: Helmholtz large dimension}
\Big\lvert \frac{1}{p} - \frac{1}{2} \Big\rvert < \frac{1}{6} + \eps.
\end{align}
In particular, it is shown that this range of numbers $p$ is sharp, i.e., for each $1 < p < \infty$ that does not lie in the interval $[3 / 2 , 3]$, a bounded Lipschitz domain is constructed such that the Helmholtz decomposition of $\L^p (\Omega ; \IC^d)$ fails. An analogous result was proved in~\cite[Thm.~1.1]{MitreaD} for bounded planar Lipschitz domains. Here, the Helmholtz decomposition exists on $\L^p (\Omega ; \IC^2)$ whenever
\begin{align}
\label{Eq: Helmholtz two dimensions}
 \Big\lvert \frac{1}{p} - \frac{1}{2} \Big\rvert < \frac{1}{4} + \eps.
\end{align}

\indent There are strong indications that the existence of the Helmholtz decomposition leads to a rich functional analytic theory of the Stokes operator. Indeed, for uniform $\C^3$-domains (bounded and unbounded), this is the message of the article of Geissert, Heck, Hieber, and Sawada~\cite{Geissert_Heck_Hieber_Sawada}. However, let us mention that the existence of the Helmholtz decomposition is not necessary for a rich functional analytic theory of the Stokes operator as the article~\cite{Bolkart_Giga_Miura_Suzuki_Tsutsui} shows. 

The results in~\cite{Fabes_Mendez_Mitrea} led Taylor to conjecture in~\cite{Taylor} that, for each three-dimensional bounded Lipschitz domain, there exists $\eps > 0$ such that, for each $p$ in the range~\eqref{Eq: Helmholtz large dimension}, the Stokes operator gives rise to a bounded analytic semigroup. In 2001, one year after Taylor formulated this conjecture, Deuring~\cite{Deuring} constructed bounded Lipschitz domains such that, for $p$ large enough, the Stokes operator indeed fails to generate even a strongly continuous semigroup on $\L^p_{\sigma} (\Omega)$. Compare also the discussion of Deuring's result in the article of Monniaux and Shen~\cite[Sect.~6]{Monniaux_Shen}.

Mitrea and Monniaux were able to prove Taylor's conjecture for the Stokes operator but with \emph{Neumann type boundary conditions} in 2009, see \cite{Mitrea_Monniaux_Neumann}.
In 2012, Taylor's original conjecture was settled in the affirmative by Shen~\cite{Shen} and later extended by Dikland to the case of weighted $\L^p$-spaces~\cite{Dikland}. 
Analogously to Taylor, Mitrea~\cite{MitreaD} conjectured that the Stokes operator gives rise to a bounded analytic semigroup on $\L^p_{\sigma} (\Omega)$ in \emph{two-dimensional} bounded Lipschitz domains $\Omega$ if $p$ satisfies~\eqref{Eq: Helmholtz two dimensions}. 
In the first part of this work, we will briefly review Shen's proof and give the necessary adaptations to the two-dimensional case. 
Thereby, we will give an affirmative solution of the conjecture stated in~\cite[Conj.~1.2]{MitreaD} and prove the following theorem.

\begin{theorem}
\label{Thm: Resolvent}
Let $\Omega \subset \IR^2$ be a bounded Lipschitz domain and $\theta \in (0 , \pi / 2)$. Then there exists $\eps > 0$ such that, for all numbers $p$ that satisfy~\eqref{Eq: Helmholtz two dimensions}, there exists a constant $C > 0$ such that, for all $f \in \L^p_{\sigma} (\Omega) \cap \L^2_{\sigma} (\Omega)$ and $\lambda \in \S_{\theta}$, the unique weak solutions $u \in \W^{1,2}_0 (\Omega ; \IC^2)$ and $\phi \in \L^2 (\Omega)$ with $\int_{\Omega} \phi \; \d x = 0$ to~\eqref{Eq: Stokes resolvent} satisfy
\begin{align*}
 (1 + \lvert \lambda \rvert) \| u \|_{\L^p_{\sigma} (\Omega)} \leq C \| f \|_{\L^p_{\sigma} (\Omega)}.
\end{align*}
Here, $\eps$ depends on $\theta$, the Lipschitz character of $\Omega$, and $\diam(\Omega)$ and $C$ depends on $\theta$, $p$, the Lipschitz character of $\Omega$, and $\diam(\Omega)$.
\end{theorem}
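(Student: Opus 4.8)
The plan is to adapt Shen's real-variable proof of the three-dimensional Stokes resolvent estimate~\cite{Shen} to the planar setting. The endpoint $p = 2$ is classical and insensitive to the boundary regularity: the weak solution obeys $\lambda \scal{u}{v} + \scal{\nabla u}{\nabla v} = \scal{f}{v}$ for all $v \in \C_{c , \sigma}^{\infty}(\Omega)$, so testing with $v = u$, using $\lambda \in \S_{\theta}$, and invoking the Poincar\'e inequality on the bounded domain $\Omega$ yields $\| \nabla u \|_{\L^2(\Omega)} + (1 + \lvert \lambda \rvert) \| u \|_{\L^2(\Omega)} \leq C_{\theta} \| f \|_{\L^2(\Omega)}$ together with uniqueness of $(u , \phi)$. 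Since the range $\lvert 1/p - 1/2 \rvert < 1/4 + \eps$ is symmetric about $1/2$ and the formal adjoint of~\eqref{Eq: Stokes resolvent} is again a Stokes resolvent problem (with $\lambda$ replaced by $\overline{\lambda}$ and $p$ by $p'$), it suffices to prove the a priori bound $(1 + \lvert \lambda \rvert) \| u \|_{\L^p(\Omega)} \leq C \| f \|_{\L^p(\Omega)}$ for $f \in \L^p_{\sigma}(\Omega) \cap \L^2_{\sigma}(\Omega)$ and $2 < p < q_0$ with a suitable $q_0 > 4$; the case $p < 2$ then follows by duality and interpolation with $p = 2$. After a dilation we may assume $\diam(\Omega) = 1$.

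The crux is a reverse H\"older inequality, uniform in $\lambda \in \S_{\theta}$, for the first-order quantity $w \coloneqq \bigl( \lvert \nabla v \rvert^2 + \lvert \lambda \rvert \lvert v \rvert^2 \bigr)^{1/2}$ attached to solutions of the homogeneous resolvent system. Concretely, one aims to show that there are $q_0 > 4$ and $C > 0$, depending only on $\theta$ and the Lipschitz character, such that whenever $(v , \pi)$ solves $\lambda v - \Delta v + \nabla \pi = 0$, $\divergence(v) = 0$ in $B(x_0 , 2 r) \cap \Omega$ with $v = 0$ on $B(x_0 , 2 r) \cap \partial \Omega$, then
\begin{align*}
 \biggl( \fint_{B(x_0 , r) \cap \Omega} w^{q_0} \; \d x \biggr)^{1/q_0} \leq C \biggl( \fint_{B(x_0 , 2 r) \cap \Omega} w^{2} \; \d x \biggr)^{1/2} .
\end{align*}
As in~\cite{Shen}, this rests on a Caccioppoli inequality for the resolvent system --- obtained by testing with $\eta^2 v$ and eliminating the pressure by subtracting its mean and applying a Bogovski\u{\i} operator on the Lipschitz annulus $\supp(\nabla \eta) \cap \Omega$ --- combined, at boundary balls, with the $\L^p$-regularity theory for the stationary Stokes Dirichlet problem in \emph{planar} Lipschitz domains, which supplies the improved integrability exponent $q_0 > 4$ (the elementary Meyers-type self-improvement only giving an exponent slightly above $2$). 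It is precisely here that the planar geometry enters --- the corresponding exponent in three dimensions being $3$ --- and it is this improvement that eventually produces the range~\eqref{Eq: Helmholtz two dimensions}. The resolvent parameter is carried through by treating the regimes $r \lesssim \lvert \lambda \rvert^{-1/2}$, where the zeroth-order term is benign and one is essentially in the stationary situation, and $r \gtrsim \lvert \lambda \rvert^{-1/2}$, where the term $\lvert \lambda \rvert \lvert v \rvert^2$ is absorbed, both with constants independent of $\lambda$.

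Equipped with this, the plan is to apply Shen's real-variable extrapolation lemma to the sublinear operator $T f \coloneqq \lvert \lambda \rvert^{1/2} \bigl( \lvert \nabla u \rvert^2 + \lvert \lambda \rvert \lvert u \rvert^2 \bigr)^{1/2}$, where $u$ solves~\eqref{Eq: Stokes resolvent}; by the first step $T$ is bounded on $\L^2_{\sigma}(\Omega)$ uniformly in $\lambda \in \S_{\theta}$. For a ball $B = B(x_0 , r)$ with $\lvert B \rvert \leq c_0 \lvert \Omega \rvert$ one splits $f = f \ind_{4B} + f \ind_{\Omega \setminus 4B}$; solving~\eqref{Eq: Stokes resolvent} with the near datum $f \ind_{4B}$ --- made solenoidal by a Bogovski\u{\i} correction, which costs only $\L^p$-norms --- gives the ``bad'' contribution, controlled in $\L^2(B)$ by $\| f \|_{\L^2(4B)}$ via the first step, while the ``good'' contribution solves the homogeneous resolvent system on $4B \cap \Omega$ with vanishing boundary values and hence obeys the reverse H\"older inequality above, with right-hand side $\lesssim (\fint_{4B} (\lvert Tf \rvert^2 + \lvert f \rvert^2) \, \d x)^{1/2}$. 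These are exactly the hypotheses of the extrapolation lemma, which returns $\| T f \|_{\L^p(\Omega)} \leq C \| f \|_{\L^p(\Omega)}$ for $2 < p < q_0$, i.e.\ $\lvert \lambda \rvert \| u \|_{\L^p} + \lvert \lambda \rvert^{1/2} \| \nabla u \|_{\L^p} \lesssim \| f \|_{\L^p}$; combining this for $\lvert \lambda \rvert \geq 1$ with the stationary $\L^p$-estimate for $\lvert \lambda \rvert \leq 1$ (obtained by the same machinery, or by splitting off the stationary Stokes solution and treating the remainder by the $\lvert \lambda \rvert$-estimate just proved) gives $(1 + \lvert \lambda \rvert) \| u \|_{\L^p} \lesssim \| f \|_{\L^p}$, and undoing the dilation while tracking constants yields the asserted dependence on $\theta$, $p$, the Lipschitz character, and $\diam(\Omega)$.

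I expect the main obstacle to be the boundary reverse H\"older inequality with an exponent $q_0$ genuinely above $4$ and with constants uniform in $\lambda$: the Caccioppoli and higher-integrability estimates for the \emph{resolvent} system near a Lipschitz boundary must be run without deterioration as $\lvert \lambda \rvert \to \infty$, which forces the splitting at the natural length scale $\lvert \lambda \rvert^{-1/2}$ and a uniform handling of the pressure through a Bogovski\u{\i}-type operator on truncated Lipschitz domains, and the optimal two-dimensional exponent must be extracted from the stationary Stokes Dirichlet theory so that the resulting range of $p$ coincides with~\eqref{Eq: Helmholtz two dimensions} and thereby confirms the conjecture of~\cite[Conj.~1.2]{MitreaD}; this is the step where the planar theory genuinely improves on the higher-dimensional one.
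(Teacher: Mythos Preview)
Your overall architecture---$p=2$ by testing, a weak reverse H\"older inequality fed into Shen's extrapolation lemma, then duality for $p<2$---matches the paper's. The substantive difference is in how the reverse H\"older inequality is obtained.

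The paper does \emph{not} split scales at $\lvert\lambda\rvert^{-1/2}$ and perturb off the stationary theory. Instead it proves the $\L^2$-Dirichlet problem for the Stokes \emph{resolvent} system directly and uniformly in $\lambda\in\S_\theta$ (Theorem~\ref{Thm: Dirichlet}) via layer potentials: this is the content of Section~2, where the two-dimensional fundamental solution $\Gamma(x;\lambda)$ is estimated and compared with $\Gamma(x;0)$ (Theorems~\ref{Thm: Derivative of Gamma} and~\ref{Thm: Comparison with stationary case}, with Hankel-function computations pushed to the appendix), after which the boundary operator $-\tfrac12\Id+\cK_{\overline\lambda}^*$ is shown invertible on $\L^2_n(\partial\Xi)$ with $\lambda$-uniform bounds. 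The reverse H\"older then follows from the nontangential estimate $\|(u)^*_\interior\|_{\L^2(\partial\Xi)}\le C\|g\|_{\L^2(\partial\Xi)}$ combined with the embedding $\|h\|_{\L^4(D)}\le C\|(h)^*_\interior\|_{\L^2(\partial D)}$. Note two further differences: the paper's reverse H\"older is for $u$ itself, not for your gradient quantity $w$, and the extrapolation is applied to the \emph{linear} resolvent operator (in fact in an $\ell^2$-valued form, so that $\cR$-sectoriality and hence maximal regularity come out simultaneously).

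Your route---Caccioppoli plus stationary Stokes theory plus scale splitting---is a recognizable alternative philosophy and has the appeal of not rebuilding layer potentials for the resolvent. But the step you yourself flag as the obstacle is genuinely delicate and not resolved by your sketch: to reach $q_0>4$ for $w$ at boundary balls you need the stationary Stokes \emph{regularity} problem (control of $(\nabla v)^*$ in $\L^2$), not merely the Dirichlet problem, and when you move $\lambda v$ to the right-hand side for $r\lesssim\lvert\lambda\rvert^{-1/2}$ the inhomogeneous contribution must be absorbed without degrading that sharp exponent; your proposal asserts this works but does not indicate the mechanism. The paper's choice to work with the resolvent fundamental solution from the outset avoids this absorption issue entirely, at the cost of the special-function analysis in Section~\ref{Sec: Properties of the matrix of fundamental solutions to the Stokes resolvent problem}.
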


To put this into a functional analytic context, let us introduce the Stokes operator as the realization of the sesquilinear form
\begin{align*}
 \fa \colon \W^{1 , 2}_{0 , \sigma} (\Omega) \times \W^{1 , 2}_{0 , \sigma} (\Omega) \to \IC, \quad (u , v) \mapsto \int_{\Omega} \nabla u \cdot \overline{\nabla v} \; \d x \coloneqq \sum_{\alpha , \beta = 1}^2 \int_{\Omega} \partial_{\alpha} u_{\beta} \overline{\partial_{\alpha} v_{\beta}} \; \d x.
\end{align*}
Here, we used the notation
\begin{align*}
 \W^{1 , p}_{0 , \sigma} (\Omega) \coloneqq \overline{\C_{c , \sigma}^{\infty} (\Omega)}^{\|\cdot\|_{\W^{1,p}(\Omega; \IC^2)}} , \qquad 1 < p < \infty.
\end{align*}
Now, the domain of the Stokes operator $A_2$ on $\L^2_{\sigma} (\Omega)$ is defined as
\begin{align*}
 \dom(A_2) \coloneqq \Big\{ u \in \W^{1 , 2}_{0 , \sigma} (\Omega) : \, \exists f \in \L^2_{\sigma} (\Omega) \text{ s.t.\@ for all } v \in \W^{1 , 2}_{0 , \sigma} (\Omega) \text{ it holds } \fa(u , v) = \int_{\Omega} f \cdot \overline{v} \; \d x \Big\}
\end{align*}
and, for $u \in \dom(A_2)$ with corresponding function $f \in \L^2_{\sigma} (\Omega)$, we define
\begin{align*}
 A_2 u \coloneqq f.
\end{align*}
For $p > 2$ satisfying~\eqref{Eq: Helmholtz two dimensions}, the realization of $A_2$ on $\L^p_{\sigma} (\Omega)$ is denoted by $A_p$ and given as the part of $A_2$ in $\L^p_{\sigma} (\Omega)$ which is defined by
\begin{align*}
 \dom(A_p) \coloneqq \big\{ u \in \dom(A_2) \cap \L^p_{\sigma} (\Omega) : A_2 u \in \L^p_{\sigma} (\Omega) \big\}, \quad A_p u \coloneqq A_2 u \quad \text{for} \quad u \in \dom(A_p).
\end{align*}
For $p < 2$ satisfying~\eqref{Eq: Helmholtz two dimensions} and $1 / p + 1 / p^{\prime} = 1$, define $A_p$ to be the $\L^p_{\sigma}$-adjoint of the operator $A_{p^{\prime}}$. 
With these definitions of the Stokes operator on $\L^p_{\sigma} (\Omega)$, Theorem~\ref{Thm: Resolvent} ensures that $A_p$ is an invertible and sectorial operator. In particular, we have the following corollary to Theorem~\ref{Thm: Resolvent}.

\begin{corollary}
\label{Cor: Analyticity}
Let $\Omega \subset \IR^2$ be a bounded Lipschitz domain. Then there exists $\eps > 0$ such that, for all numbers $p$ that satisfy~\eqref{Eq: Helmholtz two dimensions}, the operator $- A_p$ generates an exponentially stable analytic semigroup on $\L^p_\sigma(\Omega)$. The constant $\eps$ depends only on the Lipschitz character of $\Omega$ and $\diam(\Omega)$.
\end{corollary}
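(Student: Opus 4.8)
The plan is to derive Corollary~\ref{Cor: Analyticity} as a direct consequence of Theorem~\ref{Thm: Resolvent} together with standard semigroup theory. The essential point is that Theorem~\ref{Thm: Resolvent} furnishes, for every $p$ in the range~\eqref{Eq: Helmholtz two dimensions}, a uniform resolvent estimate on the sector $\S_\theta$ for data lying in the dense subspace $\L^p_\sigma(\Omega) \cap \L^2_\sigma(\Omega)$. First I would record that, since $\Omega$ is bounded, $\L^2_\sigma(\Omega) \cap \L^p_\sigma(\Omega)$ is dense in $\L^p_\sigma(\Omega)$ for $p \le 2$ (it contains $\C_{c,\sigma}^\infty(\Omega)$) and, by duality/reflexivity, the corresponding density also holds for $p > 2$; hence the a priori bound $(1 + \lvert\lambda\rvert)\,\lVert u\rVert_{\L^p_\sigma} \le C \lVert f\rVert_{\L^p_\sigma}$ extends by density to all of $\L^p_\sigma(\Omega)$ and shows that $\lambda + A_p$ (with $A_p$ as defined in the excerpt, via the part of $A_2$ for $p>2$ and via adjoints for $p<2$) is invertible on $\L^p_\sigma(\Omega)$ for every $\lambda \in \S_\theta$, with $\lVert (\lambda + A_p)^{-1} \rVert_{\Lop(\L^p_\sigma)} \le C / (1 + \lvert\lambda\rvert)$.

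Next I would translate this into the generation statement. Writing $\S_\theta = \{ z \neq 0 : \lvert\arg z\rvert < \pi - \theta\}$, the estimate $\lVert(\lambda + A_p)^{-1}\rVert \le C/(1+\lvert\lambda\rvert) \le C/\lvert\lambda\rvert$ valid for all $\lambda \in \S_\theta$ with $\theta \in (0,\pi/2)$ arbitrary means precisely that $A_p$ is sectorial of angle less than $\pi/2$; equivalently, $-A_p$ satisfies the Hille–Yosida type resolvent bound on a sector strictly larger than a half-plane, which is the classical characterization (see e.g.\@ the standard references on analytic semigroups) of the generator of a bounded analytic semigroup $(\e^{-t A_p})_{t \ge 0}$ on $\L^p_\sigma(\Omega)$. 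The improvement from $1/\lvert\lambda\rvert$ to $1/(1+\lvert\lambda\rvert)$ in the resolvent bound additionally gives $0 \in \rho(A_p)$ and a uniform bound for $(\lambda + A_p)^{-1}$ near $\lambda = 0$, so in fact there is $\delta > 0$ with the resolvent set containing a sector around the shifted point $-\delta$; this yields the exponential decay $\lVert \e^{-t A_p} \rVert_{\Lop(\L^p_\sigma)} \le M \e^{-\delta t}$, i.e.\@ the semigroup is exponentially stable.

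Finally, for the statement about $\eps$, I would simply invoke the dependence already made explicit in Theorem~\ref{Thm: Resolvent}: the $\eps$ there depends on $\theta$, the Lipschitz character, and $\diam(\Omega)$, but since the generation property only requires the resolvent bound for \emph{one} fixed $\theta \in (0,\pi/2)$ — say $\theta = \pi/4$ — we may fix such a $\theta$ and obtain an $\eps$ depending only on the Lipschitz character of $\Omega$ and $\diam(\Omega)$, as claimed.

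I do not anticipate a genuine obstacle here; the only point requiring a little care is the consistency of the various definitions of $A_p$ (part of $A_2$ for $p > 2$, adjoint of $A_{p'}$ for $p < 2$) with the resolvent identity, so that the weak solutions produced by Theorem~\ref{Thm: Resolvent} are exactly $u = (\lambda + A_p)^{-1} f$ — this is a routine verification using the definition of $\dom(A_p)$ and the density argument above, plus the fact that the $\L^p$–$\L^{p'}$ duality pairs the Stokes resolvent with its adjoint consistently on the overlap with $\L^2_\sigma(\Omega)$.
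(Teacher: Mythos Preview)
Your proposal is correct and follows essentially the same route as the paper: the paper records (Remark~\ref{Rem: R-boundedness}~\ref{Item: Sectoriality and semigroups}) that sectoriality of angle $<\pi/2$ together with dense domain yields a bounded analytic semigroup, and that the improved bound $\|(\lambda+A_p)^{-1}\|\le C/(1+|\lambda|)$ forces $0\in\rho(A_p)$ and hence exponential stability; this is exactly your argument. One minor simplification: since $\Omega$ is bounded, for $p>2$ one already has $\L^p_\sigma(\Omega)\subset\L^2_\sigma(\Omega)$, so Theorem~\ref{Thm: Resolvent} applies directly without a density step, and the case $p<2$ follows by duality from the definition $A_p = (A_{p'})'$.
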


Following Haase~\cite[Chap.~3]{Haase}, we define fractional powers of the Stokes operators $A_p$. In the case $p = 2$, the domains of the fractional powers $\dom(A_2^{\alpha})$  for $0 \leq \alpha < 3 / 4$ were characterized in terms of suitable Bessel potential spaces $\H^{\alpha / 2 , 2}_{0 , \sigma} (\Omega)$ by Mitrea and Monniaux~\cite[Thm.~5.1]{Mitrea_Monniaux}. The following theorem gives an $\L^p_{\sigma}$-version of this result. This generalizes the results of Giga~\cite[Thm.~3]{Giga_fractional} from the smooth situation to the situation of planar and bounded Lipschitz domains.

\begin{theorem}\label{Thm: Fractional powers}
Let $\Omega \subset \IR^2$ be a bounded Lipschitz domain. 
Then there exist $\eps  > 0$ and $\delta \in (0,1]$ such that, for all $1 < p < \infty$ and $0< \theta < 1$ satisfying~\eqref{Eq: Helmholtz two dimensions} and either
\begin{align*}
 \theta < \frac{1}{2} + \frac{1}{2 p} \quad \text{if} \quad \frac{1}{2} - \frac{1}{p} \leq \frac{\delta}{2}
 \qquad
 \text{or}
 \qquad
 \theta < \frac{1}{p} + \frac{1 + \delta}{4} \quad \text{if} \quad \frac{1}{2} - \frac{1}{p} > \frac{\delta}{2}\;,
\end{align*}
we have with equivalent norms that
\begin{align*}
 \dom(A_p^{\theta}) = \H^{2 \theta , p}_{0 , \sigma} (\Omega).
\end{align*}
\end{theorem}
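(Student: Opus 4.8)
The plan is to derive the description of $\dom(A_p^{\theta})$ from the bounded $\mathrm{H}^{\infty}$-calculus of the Stokes operator established above, combined with complex interpolation, a Kato square root identity, and a higher-order regularity statement for the Stokes resolvent problem~\eqref{Eq: Stokes resolvent}. The two regimes in the statement reflect two competing obstructions. The first is the \emph{trace obstruction}: the Dirichlet condition $u = 0$ is encoded in a solenoidal Bessel potential function only for smoothness strictly below $1 + 1/p$, so $\H^{2\theta , p}_{0 , \sigma}(\Omega) = \dom(A_p^{\theta})$ can hold only when $2\theta < 1 + 1/p$. The second is \emph{metric}: in a Lipschitz domain the solution operator of the Stokes system gains only a bounded amount of smoothness, measured by an exponent $\delta = \delta(\Omega) \in (0,1]$, and for $p$ far from $2$ the $\L^p$-range of the Helmholtz projection shrinks as well; both effects limit how high up the Bessel scale the identification can be pushed. (In the smooth case of Giga~\cite{Giga_fractional} this gain is unlimited, so only the first obstruction survives.)

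\emph{Reduction to interpolation spaces and the sub-threshold range.} By Theorem~\ref{Thm: Resolvent} and Corollary~\ref{Cor: Analyticity} the operator $A_p$ is invertible and sectorial on $\L^p_{\sigma}(\Omega)$, and it has a bounded $\mathrm{H}^{\infty}$-calculus, hence bounded imaginary powers; the standard functional-calculus theory (see~\cite{Haase}) then yields, with equivalent norms,
\[
 \dom(A_p^{\theta}) = \big[ \L^p_{\sigma}(\Omega) , \dom(A_p) \big]_{\theta} , \qquad 0 < \theta < 1 ,
\]
together with the reiteration identity $[\dom(A_p^{\theta_0}) , \dom(A_p^{\theta_1})]_{\eta} = \dom(A_p^{(1-\eta)\theta_0 + \eta \theta_1})$, reducing the theorem to an identification of complex interpolation spaces. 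I would first establish the square root identity $\dom(A_p^{1/2}) = \W^{1 , p}_{0 , \sigma}(\Omega) = \H^{1 , p}_{0 , \sigma}(\Omega)$, with $\|A_p^{1/2} u\|_{\L^p_{\sigma}(\Omega)}$ equivalent to $\|\nabla u\|_{\L^p(\Omega ; \IC^2)}$, by combining the resolvent estimate of Theorem~\ref{Thm: Resolvent}, the companion gradient bound $(1 + \lvert \lambda \rvert^{1/2}) \|\nabla u\|_{\L^p(\Omega;\IC^2)} \leq C \|f\|_{\L^p_{\sigma}(\Omega)}$ for~\eqref{Eq: Stokes resolvent}, and the bounded $\mathrm{H}^{\infty}$-calculus via the usual quadratic estimates. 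Interpolating $\dom(A_p^0) = \L^p_{\sigma}(\Omega)$ with $\dom(A_p^{1/2}) = \H^{1 , p}_{0 , \sigma}(\Omega)$ and using the interpolation identity $[\L^p_{\sigma}(\Omega) , \H^{1 , p}_{0 , \sigma}(\Omega)]_{s} = \H^{s , p}_{0 , \sigma}(\Omega)$ for $s \neq 1/p$ — which reduces, via the Helmholtz projection viewed as a retraction, to the classical identification of interpolation spaces between $\L^p$ and $\W^{1,p}_0$, and is therefore valid as far as the Helmholtz projection acts boundedly on the pertinent scale of vector-valued Bessel potential spaces — then gives $\dom(A_p^{\theta}) = \H^{2\theta , p}_{0 , \sigma}(\Omega)$ for $\theta$ in a first, sub-threshold range. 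For $p = 2$ this reproduces, and for $p$ near $2$ it is anchored against, the result of Mitrea and Monniaux~\cite{Mitrea_Monniaux}.

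\emph{Reaching the sharp ranges.} To go past $\theta = 1/2$, the key is regularity of~\eqref{Eq: Stokes resolvent} beyond $\W^{1,p}$, which I expect to formulate as a \emph{shifted resolvent estimate}: for $\lambda$ in the sector of Theorem~\ref{Thm: Resolvent} the Stokes solution operator is bounded from a solenoidal Bessel potential space of (negative) smoothness $\delta - 1$ into one of smoothness $1 + \delta$, with the same decay in $\lambda$; here $\delta$ comes from the Lipschitz regularity theory for the Stokes system that underlies Theorem~\ref{Thm: Resolvent}, and the admissible $p$ are restricted by the $\L^p$-mapping properties of the Helmholtz projection. Granting this, the realization $\widetilde{A}_p$ of the Stokes operator on the lower Bessel space inherits a bounded $\mathrm{H}^{\infty}$-calculus, obtained by interpolating its resolvent bounds with those of $A_p$; hence $\dom(\widetilde{A}_p^{\theta})$ is again a complex interpolation space, now of two Bessel potential spaces, and so equals $\H^{\delta - 1 + 2\theta , p}_{0 , \sigma}(\Omega)$. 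By consistency of the two functional calculi on the overlap of the scales, this extends the identification $\dom(A_p^{\theta}) = \H^{2\theta , p}_{0 , \sigma}(\Omega)$ up to the smoothness ceiling dictated by $\delta$; intersecting that ceiling with the trace threshold $2\theta < 1 + 1/p$ and with the range of $p$ on which the auxiliary operators are defined is what produces the two cases $\theta < 1/2 + 1/(2p)$ and $\theta < 1/p + (1+\delta)/4$ of the statement, while the reiteration identity recorded above lets one interpolate freely within the resulting region.

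\emph{Main obstacle.} The passage from resolvent estimates and $\mathrm{H}^{\infty}$-calculus to fractional powers and interpolation spaces is essentially formal. The real work is in the two analytic inputs: the shifted resolvent estimate for the Stokes system in a bounded Lipschitz domain — equivalently, the sharp smoothness gain $\delta$ of the Stokes data-to-solution map, together with its compatibility with the Helmholtz projection on the Bessel scale — and the interpolation theory of the divergence-free Bessel potential spaces $\H^{s , p}_{0 , \sigma}(\Omega)$, in particular the precise value of $s$ at which the condition $u = 0$ stops being encoded. Pinning these two points down is exactly what fixes $\delta$ and both thresholds in the theorem.
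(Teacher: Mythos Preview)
Your high-level architecture is right---the $\H^\infty$-calculus gives $\dom(A_p^\theta)=[\L^p_\sigma(\Omega),\dom(A_p)]_\theta$, and the task is to match this with the solenoidal Bessel scale---but your route past $\theta=1/2$ has a real gap. You stake the argument on a \emph{shifted resolvent estimate} (a $\lambda$-uniform bound for $(\lambda+A_p)^{-1}$ from $\H^{\delta-1,p}$-type data into $\H^{1+\delta,p}$-type solutions) and on building an auxiliary realization $\widetilde A_p$ on the lower space with its own $\H^\infty$-calculus. Neither ingredient is available: the only regularity input of this kind in the literature is the \emph{stationary} result of Mitrea--Wright (Theorem~\ref{Thm: Regularity Stokes}, i.e.\ $\lambda=0$), and upgrading it to a $\lambda$-uniform resolvent estimate on the Bessel scale would mean redoing the layer-potential analysis there---substantially more than the theorem requires.

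The paper sidesteps this entirely. For the forward inclusion $\H^{2\theta,p}_{0,\sigma}(\Omega)\subset\dom(A_p^\theta)$ it does not pass through the square root but simply interpolates the elementary embedding $\W^{2,p}_{0,\sigma}(\Omega)\subset\dom(A_p)$ (Lemma~\ref{Lem: Embedding}) against $\L^p_\sigma(\Omega)$, using $[\L^p_\sigma,\W^{2,p}_{0,\sigma}]_\theta=\H^{2\theta,p}_{0,\sigma}$ from~\cite[Thm.~2.12]{Mitrea_Monniaux}; this already gives the forward inclusion for \emph{every} $\theta\in(0,1)$ and absorbs your second ``analytic input''. The reverse inclusion is obtained by \emph{duality}: boundedness of $A_p^{-\theta}\colon\L^p_\sigma\to\H^{2\theta,p}_0$ reduces to the estimate $\|A_{p^{\prime}}^{-\theta}\IP_{p^{\prime}}g\|_{\L^{p^{\prime}}}\lesssim\|g\|_{\H^{-2\theta,p^{\prime}}}$. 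Writing $A_{p^{\prime}}^{-\theta}=A_{p^{\prime}}^{1-\theta}A_{p^{\prime}}^{-1}$, the forward inclusion (now with exponent $1-\theta$ and integrability $p^{\prime}$) controls the first factor by a Bessel norm, and the only external input is Theorem~\ref{Thm: Regularity Stokes} applied to $u=A_{p^{\prime}}^{-1}\IP_{p^{\prime}}g$, which solves the stationary Stokes system with right-hand side $g$ once the gradient part of $\IP_{p^{\prime}}g$ is absorbed into the pressure. The admissible $(s,p^{\prime})$-region in that theorem is precisely what produces the two thresholds on $\theta$; a final interpolation removes an auxiliary lower bound on $\theta$. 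The idea you are missing is that the forward inclusion can be fed back into the reverse one via duality, so that only the stationary regularity is needed---no resolvent estimate on a shifted scale, and no operator $\widetilde A_p$.
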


As a corollary, we obtain the following $\L^p$--$\L^q$-smoothing estimates for the Stokes semigroup.

\begin{corollary}
Let $\Omega \subset \IR^2$ be a bounded Lipschitz domain. 
Then there exists $\eps > 0$ such that, for all $1 < p \leq q < \infty$ satisfying~\eqref{Eq: Helmholtz two dimensions}, there exists $C > 0$ such that
\begin{align*}
 \| \e^{- t A_p} f \|_{\L^q_{\sigma} (\Omega)} &\leq C t^{- (\frac{1}{p} - \frac{1}{q})} \| f \|_{\L^p_{\sigma} (\Omega)}, \qquad t > 0 ,\, f \in \L^p_{\sigma} (\Omega),
\intertext{and}
\| \nabla \e^{- t A_p} f \|_{\L^q (\Omega; \IC^{2\times 2})} &\leq C t^{- \frac{1}{2} - (\frac{1}{p} - \frac{1}{q})} \| f \|_{\L^p_{\sigma} (\Omega)}, \qquad t > 0 ,\, f \in \L^p_{\sigma} (\Omega).
\end{align*}
\end{corollary}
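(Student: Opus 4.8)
The plan is to derive both estimates from the description of the fractional power domains in Theorem~\ref{Thm: Fractional powers}, the analyticity and exponential stability of the Stokes semigroup from Corollary~\ref{Cor: Analyticity}, and the Sobolev embedding theorem on bounded Lipschitz domains. Fix $1 < p \le q < \infty$ satisfying~\eqref{Eq: Helmholtz two dimensions} and set $\theta_0 \coloneqq 1/p - 1/q$. Since $-A_p$ generates an exponentially stable analytic semigroup, $A_p$ is invertible and sectorial, so its fractional powers are well defined following Haase~\cite[Chap.~3]{Haase}, each $A_p^{\beta}$ ($\beta \ge 0$) is invertible, and one has the familiar bound $\|A_p^{\beta} \e^{-tA_p}\|_{\L^p_\sigma(\Omega) \to \L^p_\sigma(\Omega)} \le C t^{-\beta}$ for all $t > 0$ — for $t \le 1$ by analyticity, for $t \ge 1$ by exponential stability — and likewise on $\L^q_\sigma(\Omega)$.

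For the first estimate I would combine Theorem~\ref{Thm: Fractional powers} with Sobolev embedding. By that theorem and the invertibility of $A_p^{\theta_0}$, the map $g \mapsto \|A_p^{\theta_0} g\|_{\L^p_\sigma(\Omega)}$ is an equivalent norm on $\dom(A_p^{\theta_0}) = \H^{2\theta_0,p}_{0,\sigma}(\Omega)$; since $2\theta_0 - 2/p = -2/q$ and $1 < p \le q < \infty$, the embedding $\H^{2\theta_0,p}(\Omega;\IC^2) \hookrightarrow \L^q(\Omega;\IC^2)$ holds on the bounded Lipschitz domain $\Omega$. Applying this to $g = \e^{-tA_p}f$, which is divergence free and hence lies in $\L^q_\sigma(\Omega)$, yields
\begin{align*}
 \|\e^{-tA_p}f\|_{\L^q_\sigma(\Omega)} \le C \|\e^{-tA_p}f\|_{\H^{2\theta_0,p}(\Omega;\IC^2)} \le C \|A_p^{\theta_0}\e^{-tA_p}f\|_{\L^p_\sigma(\Omega)} \le C t^{-\theta_0}\|f\|_{\L^p_\sigma(\Omega)},
\end{align*}
which is the asserted $\L^p$--$\L^q$ bound.

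For the gradient estimate I would split $\e^{-tA_p} = \e^{-(t/2)A_q} \e^{-(t/2)A_p}$, using that the semigroups on the spaces $\L^r_\sigma(\Omega)$ are mutually consistent. The first estimate gives $\|\e^{-(t/2)A_p}f\|_{\L^q_\sigma(\Omega)} \le C t^{-\theta_0}\|f\|_{\L^p_\sigma(\Omega)}$. For the remaining factor, Theorem~\ref{Thm: Fractional powers} identifies $\dom(A_q^{1/2}) = \H^{1,q}_{0,\sigma}(\Omega)$ with equivalent norms, so that $\|\nabla g\|_{\L^q(\Omega;\IC^{2\times 2})} \le C\|A_q^{1/2}g\|_{\L^q_\sigma(\Omega)}$ for $g \in \dom(A_q^{1/2})$; applied to $g = \e^{-(t/2)A_q}h$ together with the analytic-semigroup bound on $\L^q_\sigma(\Omega)$ this gives $\|\nabla \e^{-(t/2)A_q}h\|_{\L^q(\Omega;\IC^{2\times 2})} \le C t^{-1/2}\|h\|_{\L^q_\sigma(\Omega)}$. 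Chaining the two bounds produces $\|\nabla \e^{-tA_p}f\|_{\L^q(\Omega;\IC^{2\times 2})} \le C t^{-1/2-(1/p-1/q)}\|f\|_{\L^p_\sigma(\Omega)}$.

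The single point that needs verification — and the main (mild) obstacle — is that the exponents used are admissible in Theorem~\ref{Thm: Fractional powers}. The exponent $\theta_0 = 1/p - 1/q$ is always admissible for $A_p$: in the first alternative $\theta_0 < 1/p < 1/2 + 1/(2p)$ since $p > 1$, and in the second $\theta_0 < 1/p < 1/p + (1+\delta)/4$. The exponent $1/2$ is admissible for $A_q$ precisely when $q$ is not too large; the relevant alternative imposes $q < 4/(1-\delta)$, which is guaranteed for all $q$ in the Helmholtz range once $\eps$ is shrunk so that $\eps \le \delta/4$. With this choice, $\eps$ depends only on $\delta$ — hence, through Theorem~\ref{Thm: Fractional powers}, only on the Lipschitz character of $\Omega$ and $\diam(\Omega)$. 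An alternative to the splitting step would be to invoke Theorem~\ref{Thm: Fractional powers} with the single exponent $\theta = 1/2 + (1/p-1/q)$ and the embedding $\H^{2\theta,p}(\Omega;\IC^2) \hookrightarrow \W^{1,q}(\Omega;\IC^2)$, but this exponent approaches $1$ at the corners of the Helmholtz range and so is slightly more delicate to keep admissible; I would therefore prefer the argument above.
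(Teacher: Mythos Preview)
Your proof is correct and follows exactly the approach the paper intends: the corollary is stated immediately after Theorem~\ref{Thm: Fractional powers} without proof, and your derivation via the identification $\dom(A_p^\theta)=\H^{2\theta,p}_{0,\sigma}(\Omega)$, Sobolev embedding, and the analytic-semigroup bound $\|A_p^\beta \e^{-tA_p}\|\le Ct^{-\beta}$ is the standard way to extract such smoothing estimates. Your careful verification that the exponents $\theta_0=1/p-1/q$ and $1/2$ are admissible in Theorem~\ref{Thm: Fractional powers} (after shrinking $\eps\le\delta/4$) is precisely the bookkeeping the paper leaves implicit; the only trivial omission is the case $p=q$ in the first estimate, which follows directly from the uniform boundedness of the semigroup.
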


In order to prove Theorem~\ref{Thm: Fractional powers}, we show that the $\H^{\infty}$-calculus of $A_p$ is bounded. 
Here, we say that an injective operator $A$ on a Banach space $X$ that is sectorial of some angle $\omega \in [0 , \pi)$ has a bounded $\H^{\infty}$-calculus if, for some  $\theta \in (0 , \pi - \omega)$, there exists $C > 0$ such that, for all bounded analytic functions $f$ on $\S_\theta$, the estimate 
\begin{align*}
    \|f(A) \|_{\cL(X)} \leq C \sup_{z \in \S_\theta} |f(z)| 
\end{align*}
holds. 
Here, the expression $f(A)$ is to be understood in the sense of a \emph{regularization} of the \emph{natural functional calculus} for sectorial operators, cf.~\cite[Sect.~2.3]{Haase}.

\begin{theorem}
\label{Thm: Hinfty}
Let $\Omega \subset \IR^2$ be a bounded Lipschitz domain. Then there exists $\eps > 0$ such that, for all numbers $p$ that satisfy~\eqref{Eq: Helmholtz two dimensions}, the $\H^{\infty}$-calculus of $A_p$ is bounded.
\end{theorem}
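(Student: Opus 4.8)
The plan is to deduce boundedness of the $\H^\infty$-calculus of $A_p$ from the resolvent estimates of Theorem~\ref{Thm: Resolvent} together with the $\L^2$-theory by an interpolation/extrapolation argument, following the by-now standard strategy (cf.\ Shen~\cite{Shen} in the three-dimensional case, and the work of Kunstmann--Weis~\cite{Kunstmann_Weis}). The starting point is the case $p=2$: since $A_2$ is a nonnegative self-adjoint operator on the Hilbert space $\L^2_\sigma(\Omega)$, it automatically admits a bounded $\H^\infty(\S_\theta)$-calculus of \emph{any} angle $\theta>0$. The task is then to transfer this to all $p$ satisfying~\eqref{Eq: Helmholtz two dimensions}.

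First I would record that, by Theorem~\ref{Thm: Resolvent} and the definition of $A_p$ (as the part of $A_2$ in $\L^p_\sigma(\Omega)$ for $p>2$, and as the adjoint for $p<2$), the operator $A_p$ is injective, sectorial of angle $0$, and satisfies, for each $\theta\in(0,\pi/2)$, a uniform bound $\|\lambda(\lambda+A_p)^{-1}\|_{\cL(\L^p_\sigma)}\le C$ on the sector $\S_\theta$; moreover $0\in\rho(A_p)$ so that $A_p$ is actually invertible and we need not worry about the behaviour at the origin. Next, the key is to upgrade these single-operator resolvent bounds to $R$-boundedness of the family $\{\lambda(\lambda+A_p)^{-1}:\lambda\in\S_\theta\}$. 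This is where interpolation enters: the resolvents $(\lambda+A_p)^{-1}$ interpolate consistently between $p=2$ (where the family is even $R$-bounded, the underlying space being a Hilbert space) and an exponent $p_0$ at the edge of the admissible range; since $\L^p_\sigma(\Omega)$ is a closed subspace of $\L^p(\Omega;\IC^2)$, which has property $(\alpha)$ and in particular is a UMD Banach lattice with finite cotype, one can invoke the extrapolation theorem for $R$-boundedness (e.g.\ Kunstmann--Weis~\cite[Thm.~5.3]{Kunstmann_Weis}): a consistent family of operators that is $R$-bounded on $\L^{p_1}$ and uniformly bounded on an interval of $\L^p$-spaces around $p_1$ is $R$-bounded on that whole interval. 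Applying this with $p_1=2$ gives $R$-boundedness of $\{\lambda(\lambda+A_p)^{-1}\}$ on $\S_\theta$ for every $p$ in the open range~\eqref{Eq: Helmholtz two dimensions}.

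Once $R$-sectoriality is established, I would conclude via one of the standard transference principles. The cleanest route is to apply the Kalton--Weis theorem together with the observation that $A_2$ has a bounded $\H^\infty$-calculus: more precisely, the result of Kalton--Weis (see~\cite[Thm.~12.7]{Kunstmann_Weis}) states that an operator which is $R$-sectoral and which admits a bounded $\H^\infty$-calculus after restriction to a real-interpolation scale — or, in the consistent-family setting, an operator that interpolates between a Hilbert-space operator with bounded $\H^\infty$-calculus and an $R$-sectorial operator — has a bounded $\H^\infty$-calculus itself. Alternatively, and perhaps more transparently, one uses that $\L^p_\sigma(\Omega)$ together with the complex interpolation couple $(\L^2_\sigma,\L^{p_0}_\sigma)$ and the abstract result that bounded $\H^\infty$-calculus is stable under this kind of interpolation when combined with $R$-sectoriality on the endpoint. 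Either way the angle of the $\H^\infty$-calculus can be taken arbitrarily close to $\pi/2$ (indeed any angle exceeding the sectoriality angle $0$ works, up to the constraints of the interpolation), which is more than we need.

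The main obstacle I anticipate is \emph{consistency and density}: the operators $A_p$ for different $p$ are defined by restriction/adjunction from $A_2$ and a priori only on a dense subspace, so one must check carefully that the resolvents $(\lambda+A_p)^{-1}$ genuinely form a consistent family in the sense required by the extrapolation theorem — i.e.\ they agree on $\L^p_\sigma\cap\L^q_\sigma$ — and that $\L^p_\sigma(\Omega)\cap\L^2_\sigma(\Omega)$ is dense in $\L^p_\sigma(\Omega)$, which here follows since $\Omega$ is bounded so $\L^2_\sigma\hookrightarrow\L^p_\sigma$ for $p\le 2$ and one uses duality for $p>2$. A secondary technical point is that the relevant interpolation must respect the solenoidal constraint; this is handled by the Helmholtz projection $\IP_p$, whose boundedness in the range~\eqref{Eq: Helmholtz two dimensions} is exactly Mitrea's result~\cite{MitreaD} quoted in the introduction, so that $\L^p_\sigma(\Omega)$ is a complemented subspace of $\L^p(\Omega;\IC^2)$ and complex interpolation of the solenoidal spaces behaves as expected. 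With these two points in place, the argument is routine, and the $\eps$ produced is the same (or a smaller) $\eps$ as in Theorem~\ref{Thm: Resolvent}, depending only on the Lipschitz character of $\Omega$ and $\diam(\Omega)$.
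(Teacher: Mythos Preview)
Your route is genuinely different from the paper's, and while it can very likely be made to work, two of your key citations do not say what you need them to say.

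The paper does \emph{not} interpolate the $\H^\infty$-calculus of $A_p$ from that of $A_2$. Instead it invokes the Kunstmann--Weis comparison principle~\cite[Thm.~9]{Kunstmann_Weis} (stated here as Theorem~\ref{Thm: Kunstmann-Weis Hinfty}) with the \emph{Dirichlet-Laplacian} $-\Delta_q$ on $\L^q(\Omega;\IC^2)$ as the reference operator: one takes $X=\L^q_\sigma(\Omega)$, $Y=\L^q(\Omega;\IC^2)$, $R=\IP_q$, $S$ the inclusion. The Laplacian has a bounded $\H^\infty$-calculus on every $\L^q$ via Gaussian heat-kernel bounds and Duong--Robinson~\cite{Duong_Robinson}, so the comparison can be carried out space-by-space. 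The technical work is then to verify the fractional-power conditions of Proposition~\ref{Prop: Fractional comparison} for $\alpha=\pm\alpha_0$; this is done on $\L^2$ using the identification $\cD(A_2^{s/2})=\H^{s,2}_{0,\sigma}(\Omega)$ and $\cD((-\Delta_2)^{s/2})=\H^{s,2}_0(\Omega;\IC^2)$ from Mitrea--Monniaux (Proposition~\ref{Prop: Fractional Power Domains Stokes and Laplace}) together with the boundedness of $\IP$ on $\H^{s,2}$ for $|s|<1/2$. The resulting $\L^2$-level decay estimates~\eqref{Eq: Condition applied}--\eqref{Eq: Dual condition applied} are then interpolated against the mere $\cR$-boundedness on $\L^p$ via~\cite[Prop.~3.7]{Kalton_Kunstmann_Weis}, yielding the required decay on the intermediate $\L^q$. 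So interpolation does appear, but at the level of the specific $\cR$-bounds inside the comparison principle, not of the $\H^\infty$-calculus itself.

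Regarding your proposal: first, the extrapolation statement ``$\cR$-bounded on $\L^{p_1}$ plus uniformly bounded on an interval implies $\cR$-bounded on the interval'' is not a theorem---$\cR$-boundedness on $\L^p$ is $\ell^2$-valued boundedness, and scalar boundedness on the endpoint $\L^{p_0}$ gives no control on the $\ell^2$-extension there, so complex interpolation does not close. (This is moot, however, since $\cR$-sectoriality is already Theorem~\ref{Thm: Maximal regularity}, proved via Shen's weak reverse H\"older extrapolation.) Second, and more substantively, your ``transference principle'' is not in~\cite{Kunstmann_Weis}; there is no Theorem~12.7 in that paper. What you actually want is the interpolation theorem for the $\H^\infty$-calculus from Kalton--Kunstmann--Weis~\cite{Kalton_Kunstmann_Weis}: bounded $\H^\infty$-calculus on one endpoint plus $\cR$-sectoriality on the other, in an interpolation couple of Banach spaces with property $(\alpha)$, yields bounded $\H^\infty$-calculus on the complex interpolation spaces. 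With that citation in place (and after checking that $[\L^2_\sigma,\L^{p_0}_\sigma]_\theta=\L^q_\sigma$, which follows from the boundedness of the Helmholtz projection), your argument goes through and is arguably more streamlined than the paper's, since it avoids the Bessel-potential-space bookkeeping; the paper's route, on the other hand, trades that abstraction for concrete domain identifications that are reused later in the characterization of $\dom(A_p^\theta)$.
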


The boundedness of the $\H^{\infty}$-calculus will be deduced by using a comparison principle due to Kunstmann and Weis~\cite{Kunstmann_Weis} in which the Stokes operator is compared to the Dirichlet-Laplacian. A crucial ingredient for this comparison principle is the $\cR$-sectoriality of the Stokes operator that is well-known to be equivalent to the property of maximal $\L^q$-regularity, see Section~\ref{Sec: Resolvent estimates and maximal regularity} for further information, and is established in the following theorem.

\begin{theorem}
\label{Thm: Maximal regularity}
Let $\Omega \subset \IR^2$ be a bounded Lipschitz domain. Then there exists $\eps > 0$ such that, for all numbers $p$ that satisfy~\eqref{Eq: Helmholtz two dimensions} and for all $1 < q < \infty$, the Stokes operator $A_p$ has maximal $\L^q$-regularity.
\end{theorem}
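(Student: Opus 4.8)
The plan is to establish maximal $\L^q$-regularity for $A_p$ by means of the Kunstmann--Weis comparison principle, exactly as announced in the excerpt. Recall that, by the Dore--Venni type characterization, maximal $\L^q$-regularity of $A_p$ on the UMD space $\L^p_\sigma(\Omega)$ is equivalent to the $\cR$-sectoriality of $A_p$ of some angle strictly less than $\pi/2$. Thus it suffices to prove $\cR$-sectoriality, and this is where the comparison principle enters: the Dirichlet-Laplacian $-\Delta_D$ on $\L^p(\Omega;\IC^2)$ is known to have a bounded $\H^\infty$-calculus (and in particular maximal $\L^q$-regularity and $\cR$-sectoriality) on every bounded Lipschitz domain and for \emph{all} $1<p<\infty$; compare it with the Stokes operator and transfer the $\cR$-bound.

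First I would fix $p$ in the range~\eqref{Eq: Helmholtz two dimensions} and invoke Theorem~\ref{Thm: Resolvent} together with Corollary~\ref{Cor: Analyticity}: the resolvents $\lambda(\lambda + A_p)^{-1}$ are uniformly bounded on a sector $\S_\theta$ for some $\theta < \pi/2$, so $A_p$ is invertible and sectorial of angle $<\pi/2$. The Kunstmann--Weis principle (see~\cite{Kunstmann_Weis}) states roughly that if $B$ is $\cR$-sectorial on $\L^p$ and $A$ is sectorial with $(\lambda + A)^{-1} - (\lambda + B)^{-1}$ satisfying a suitable smallness/compactness-type bound in the $\cR$-sense relative to $(\lambda + B)^{-1}$, then $A$ inherits $\cR$-sectoriality. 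Here one takes $B$ to be (the part in $\L^p_\sigma(\Omega)$ of) $-\Delta_D$, or more precisely one works on the full space $\L^p(\Omega;\IC^2)$ with the Helmholtz projection $\IP_p$ — which exists by~\cite{MitreaD} in exactly the range~\eqref{Eq: Helmholtz two dimensions} — and writes $A_p = \IP_p(-\Delta_D)$ in an appropriate sense. The resolvent difference is then governed by the pressure term $\nabla\phi$, and the key is to control it: one represents $(\lambda + A_p)^{-1}f - \IP_p(\lambda - \Delta_D)^{-1}f$ through the pressure, which solves an elliptic problem with right-hand side a divergence, and to estimate the associated operator family in the $\cR$-sense. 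Since $\cR$-boundedness of a family dominated pointwise by a fixed bounded operator composed with uniformly bounded scalars reduces (via Kahane's contraction principle) to uniform boundedness plus the $\cR$-boundedness of the comparison family, and the latter holds because $-\Delta_D$ is $\cR$-sectorial on $\L^p$ for all $p$, the argument closes.

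The main obstacle I expect is verifying precisely the hypothesis of the comparison principle for the pressure-induced resolvent difference, i.e.\@ showing that the perturbation is $\cR$-small (or lower-order) relative to the Laplacian resolvent in the correct uniform-in-$\lambda$ sense — this requires the $\L^p$-resolvent estimates of Theorem~\ref{Thm: Resolvent} for the velocity together with a matching gradient (or pressure) estimate, so that the difference gains the decay needed to apply~\cite{Kunstmann_Weis}. One convenient route is: since by Theorem~\ref{Thm: Resolvent} we already have $(1+|\lambda|)\|u\|_{\L^p} \le C\|f\|_{\L^p}$, caloric/elliptic regularity plus the equation $\lambda u - \Delta u + \nabla\phi = f$ yields a comparable bound on $\nabla\phi$ in a suitable negative-order or interpolation sense, and this is enough to identify the resolvent difference with a composition of the scalar resolvent of $-\Delta_D$ and bounded operators times $(1+|\lambda|)^{-1}$-type factors.

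Finally I would package this as follows: (i) state that $-\Delta_D$ has maximal $\L^q$-regularity on $\L^p(\Omega;\IC^2)$ for all $1<p,q<\infty$, hence is $\cR$-sectorial of angle $0$; (ii) check the comparison hypothesis using Theorem~\ref{Thm: Resolvent} and the Helmholtz decomposition of~\cite{MitreaD}; (iii) conclude $\cR$-sectoriality of $A_p$ of some angle $<\pi/2$; (iv) by the Weis characterization of maximal regularity on UMD spaces (and $\L^p_\sigma(\Omega)$ is a closed subspace of the UMD space $\L^p(\Omega;\IC^2)$, hence UMD), deduce that $A_p$ has maximal $\L^q$-regularity for every $1<q<\infty$. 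The dependence of $\eps$ on only the Lipschitz character and $\diam(\Omega)$ is inherited from Theorem~\ref{Thm: Resolvent} and from~\cite{MitreaD}.
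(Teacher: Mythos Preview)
Your proposal contains a genuine gap. You invert the logical role of the Kunstmann--Weis comparison principle: in~\cite{Kunstmann_Weis} (see Theorem~\ref{Thm: Kunstmann-Weis Hinfty} in the paper), the principle \emph{assumes} $\cR$-sectoriality of $A$ and concludes boundedness of the $\H^{\infty}$-calculus; it does not transfer $\cR$-sectoriality from $B$ to $A$ via a resolvent-difference bound. What you describe---``if $B$ is $\cR$-sectorial and the resolvent difference is $\cR$-small, then $A$ is $\cR$-sectorial''---is a different perturbation statement, and making it work here would require exactly the pressure control you flag as ``the main obstacle'' but never establish. Writing $(\lambda+A_p)^{-1}\IP_p f - \IP_p(\lambda-\Delta_D)^{-1}f$ in a form that is $\cR$-bounded uniformly in $\lambda$ amounts to proving uniform $\L^p$-bounds on the map $f\mapsto\nabla\phi$ with the right $\lambda$-decay, which is not a consequence of Theorem~\ref{Thm: Resolvent} alone and is essentially as hard as the result you are trying to prove.

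The paper's route is entirely different and bypasses the comparison principle for this step. It proves $\cR$-sectoriality of $A_p$ \emph{directly}: one rewrites $\cR$-boundedness as an $\ell^2(\IC^2)$-valued $\L^p$-boundedness statement for the family $\{(1+|\lambda_k|)(\lambda_k+A_2)^{-1}\IP_2\}_k$ and then applies Shen's $\L^p$-extrapolation theorem (Theorem~\ref{Thm: Extrapolation of square function estimates}). The required weak reverse H\"older inequality comes from the resolution of the $\L^2$-Dirichlet problem for the Stokes resolvent system (Theorem~\ref{Thm: Dirichlet}) together with the non-tangential maximal function estimate~\eqref{eq: Estimate by non-tangential maximal function}, applied componentwise in $k$ and summed in $\ell^2$. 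This yields~\eqref{Eq: Weak reverse Hoelder inequality} with $p=4$, self-improves to $p=4+\eps$, and extrapolates to all $2<q<4+\eps$; the case $p<2$ follows by duality. Weis's theorem then gives maximal $\L^q$-regularity. The Kunstmann--Weis principle is used only \emph{afterward}, with the already-established $\cR$-sectoriality as input, to obtain the bounded $\H^{\infty}$-calculus.
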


Additionally to this theorem, we employ the \emph{square root property} $\cD(A_p^{1 / 2}) = \W^{1 , p}_{0 , \sigma} (\Omega)$ that is established in Theorem~\ref{Thm: Fractional powers} to transfer the maximal regularity property from the ground space $X = \L^p_{\sigma} (\Omega)$ to the ground space $X = \W^{-1 , p}_{\sigma} (\Omega) := [\W^{1 , p^{\prime}}_{0 , \sigma} (\Omega)]^*$ and establish the maximal $\L^q$-regularity property for the \emph{weak} Stokes operator defined on $\W^{-1 , p}_{\sigma} (\Omega)$, see Section~\ref{sec: weak Stokes}. 

Finally, these properties will be used to investigate regularity properties of Leray--Hopf weak solutions to the Navier--Stokes equations in a bounded Lipschitz domain $\Omega$
\begin{align}\label{Eq: NSE}
 \left\{ \begin{aligned}
 u^{\prime} - \Delta u + (u \cdot \nabla) u + \nabla \pi &= f = f_0 + \IP_2 \divergence (F) && \text{in } (0 , \infty) \times \Omega, \\
 \divergence (u) &= 0 && \text{in } (0 , \infty) \times \Omega, \\
 u &= 0 && \text{on } (0 , \infty) \times \partial \Omega, \\
 u (0) &= u_0 && \text{in } \Omega.
 \end{aligned} \right.
\end{align}
Weak solutions in the \emph{Leray--Hopf class}
\begin{align*}
 \L\H_{\infty} := \L^{\infty} (0 , \infty ; \L^2_{\sigma} (\Omega)) \cap \L^2 (0 , \infty ; \W^{1 , 2}_{0 , \sigma} (\Omega))
\end{align*}
are known to exist since the seminal works of Leray~\cite{Leray} and Hopf~\cite{Hopf}. In particular, in the two-dimensional case and if $\Omega$ is smooth enough, e.g., if the boundary is $\C^{1 , 1}$-regular, then Leray--Hopf weak solutions $u$ to~\eqref{Eq: NSE}, with $F = 0$ and $u_0$ and $f_0$ regular enough, are known to be unique and regular, i.e., that 
\begin{align*}
 u \in \L^{\infty} (0 , \infty ; \W^{1 , 2}_{0 , \sigma} (\Omega)) \cap \L^2 (0 , \infty ; \W^{2 , 2} (\Omega ; \IC^2)) \cap \W^{1 , 2} (0 , \infty ; \L^2_{\sigma} (\Omega)).
\end{align*}
If $\Omega$ is merely Lipschitz regular, then such regularity properties break down in general. Our final theorem establishes regularity properties of Leray--Hopf weak solutions in this geometric setting.

\begin{theorem}\label{Thm: Navier-Stokes regularity}
Let $\Omega \subset \IR^2$ be a bounded Lipschitz domain. 
Then there exists $\eps > 0$, depending only on the Lipschitz geometry of $\Omega$, such that the following statements are valid.
\begin{enumerate}[label=\normalfont{(\roman*)}]
 \item \label{Item: Lp theory} For all numbers $1 < s < 2$ and $1 < p < 2$ that satisfy
 \begin{align*}
  1 - \frac{1}{s} = \frac{1}{p} - \frac{1}{2} < \frac{1}{4} + \eps
 \end{align*}
 and all Leray--Hopf weak solutions
 \begin{align*}
  u \in \L^{\infty} (0 , \infty ; \L^2_{\sigma} (\Omega)) \cap \L^2 (0 , \infty ; \W^{1 , 2}_{0 , \sigma} (\Omega))
 \end{align*}
 to~\eqref{Eq: NSE} with initial data $u_0$ and force $f = f_0$ satisfying
 \begin{align*}
  u_0 \in \big( \L^p_{\sigma} (\Omega) , \dom(A_p) \big)_{1 - \frac{1}{s} , s} \quad \text{and} \quad f_0 \in \L^s (0 , \infty ; \L^p_{\sigma} (\Omega)),
 \end{align*}
 one has that
 \begin{align*}
 u \in \W^{1 , s} (0 , \infty ; \L^p_{\sigma} (\Omega)) \cap \L^s (0 , \infty ; \dom(A_p)).
\end{align*}
 \item\label{Item: Distribution theory} For all numbers $1 < p < \infty$ that satisfy~\eqref{Eq: Helmholtz two dimensions}, all $1 < s < \infty$ that satisfy
 \begin{align*}
 \frac{1}{p} + \frac{1}{s} = 1 ,
 \end{align*}
 and all Leray--Hopf solutions
 \begin{align*}
  u \in \L^{\infty} (0 , \infty ; \L^2_{\sigma} (\Omega)) \cap \L^2 (0 , \infty ; \W^{1 , 2}_{0 , \sigma} (\Omega))
 \end{align*}
 to~\eqref{Eq: NSE} with initial data $u_0$ and force $f = \IP_2 \divergence(F)$ satisfying
 \begin{align*}
  u_0 \in \big( \W^{-1 , p}_{\sigma} (\Omega) , \W^{1 , p}_{0 , \sigma} (\Omega) \big)_{1 - \frac{1}{s} , s} \quad \text{and} \quad F \in \L^s (0 , \infty ; \L^p (\Omega ; \IC^{2 \times 2})),
 \end{align*}
 one has that
 \begin{align*}
 u \in \W^{1 , s} (0 , \infty ; \W^{-1 , p}_{\sigma} (\Omega)) \cap \L^s (0 , \infty ; \W^{1 , p}_{0 , \sigma} (\Omega)).
\end{align*}
 \end{enumerate}
\end{theorem}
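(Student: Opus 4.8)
The plan is to treat~\eqref{Eq: NSE} as the abstract inhomogeneous Cauchy problem
\[
 u'(t) + \SysA_p \, u(t) = g(t) \quad (t > 0), \qquad u(0) = u_0 ,
\]
where $\SysA_p$ is the Stokes operator on the appropriate ground space and the nonlinearity is moved to the right-hand side: since $\divergence u = 0$, one has $(u \cdot \nabla) u = \divergence(u \otimes u)$, so in case~\ref{Item: Lp theory} I take $\SysA_p = A_p$ on $\L^p_\sigma(\Omega)$ and $g = \IP_p f_0 - \IP_p \divergence(u \otimes u)$, while in case~\ref{Item: Distribution theory} I take $\SysA_p$ to be the weak Stokes operator on $\W^{-1,p}_\sigma(\Omega)$ and $g = \divergence(F - u \otimes u)$, understood as an element of $\W^{-1,p}_\sigma(\Omega)$. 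The only genuinely nonlinear input is the classical two-dimensional picture: the Leray--Hopf solution is unique, and the energy estimate (in which the nonlinear term drops out) combined with the Ladyzhenskaya--Gagliardo--Nirenberg inequality $\|w\|_{\L^q(\Omega)} \lesssim \|w\|_{\L^2(\Omega)}^{2/q} \|\nabla w\|_{\L^2(\Omega)}^{1 - 2/q}$ (for $w \in \W^{1,2}_0(\Omega;\IC^2)$, $2 \le q < \infty$) yields for every $u \in \L\H_\infty$ uniform bounds over the whole half-line in each space $\L^r(0,\infty ; \L^q_\sigma(\Omega))$ on the two-dimensional Serrin line $2/r + 2/q = 1$; the global-in-time integrability comes from $\nabla u \in \L^2(0,\infty ; \L^2(\Omega ; \IC^{2 \times 2}))$.

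First I would check that $g \in \L^s(0,\infty ; X)$, the initial value already lying in the required real-interpolation trace space by hypothesis. In case~\ref{Item: Distribution theory}, $\|u \otimes u\|_{\L^p(\Omega)} = \|u\|_{\L^{2p}(\Omega)}^2$ together with the bound for $q = 2p$ gives $u \otimes u \in \L^s(0,\infty ; \L^p(\Omega))$ \emph{exactly} under the standing hypothesis $1/p + 1/s = 1$, which is precisely the Serrin relation $2/(2s) + 2/(2p) = 1$; combined with $F \in \L^s(0,\infty ; \L^p(\Omega))$ this puts $g$ in $\L^s(0,\infty ; \W^{-1,p}_\sigma(\Omega))$. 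In case~\ref{Item: Lp theory}, estimating $\|(u \cdot \nabla) u\|_{\L^p(\Omega)} \le \|u\|_{\L^a(\Omega)} \|\nabla u\|_{\L^2(\Omega)}$ with $1/a = 1/p - 1/2 > 0$ and running the same computation shows $(u \cdot \nabla) u \in \L^s(0,\infty ; \L^p(\Omega))$ exactly when $1 - 1/s = 1/p - 1/2$, so with $f_0 \in \L^s(0,\infty ; \L^p_\sigma(\Omega))$ one gets $g \in \L^s(0,\infty ; \L^p_\sigma(\Omega))$. A structural remark: \emph{no bootstrap is needed}, because in two dimensions the nonlinearity already lives on the Serrin line met by the Leray--Hopf class; the only restriction on $p$ is that the Stokes operator be maximally regular, which is exactly where the planar Lipschitz geometry enters via~\eqref{Eq: Helmholtz two dimensions}.

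Next I would feed this into the operator theory of the earlier sections. In case~\ref{Item: Lp theory}, Theorem~\ref{Thm: Maximal regularity} gives maximal $\L^s$-regularity of $A_p$, and the trace theorem for maximal-regularity spaces produces the unique $v \in \W^{1,s}(0,\infty ; \L^p_\sigma(\Omega)) \cap \L^s(0,\infty ; \dom(A_p))$ with $v' + A_p v = g$ and $v(0) = u_0$. In case~\ref{Item: Distribution theory}, one instead uses maximal $\L^s$-regularity of the weak Stokes operator on $\W^{-1,p}_\sigma(\Omega)$, obtained (Section~\ref{sec: weak Stokes}) by transporting Theorem~\ref{Thm: Maximal regularity} along the square-root identity $\dom(A_p^{1/2}) = \W^{1,p}_{0,\sigma}(\Omega)$ of Theorem~\ref{Thm: Fractional powers}, so that the weak Stokes operator has domain $\W^{1,p}_{0,\sigma}(\Omega)$; this yields $v \in \W^{1,s}(0,\infty ; \W^{-1,p}_\sigma(\Omega)) \cap \L^s(0,\infty ; \W^{1,p}_{0,\sigma}(\Omega))$. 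In both cases $v$ has exactly the regularity claimed for $u$.

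The last step — and the one I expect to be the main obstacle — is to identify $v$ with the given Leray--Hopf solution $u$. Both solve the \emph{linear} Stokes Cauchy problem with the same, now frozen, data $g$ and $u_0 \in \L^2_\sigma(\Omega)$. Since Ladyzhenskaya's inequality gives $u \otimes u \in \L^2_{\loc}(0,\infty ; \L^2(\Omega))$, the forcing is locally integrable in time as a $\W^{-1,2}_\sigma(\Omega)$-valued function, and $u$ is represented by the Duhamel formula for the analytic semigroup generated by the $\W^{-1,2}_\sigma$-Stokes operator, whereas $v$ satisfies the Duhamel formula on its own ground space. Because these semigroups and resolvents are consistent along the scale $\{\W^{-1,r}_\sigma(\Omega), \L^r_\sigma(\Omega)\}$, and because $\L^2(\Omega) \hookrightarrow \L^p(\Omega)$ on the bounded domain $\Omega$ for $p \le 2$, the values $u(t)$ and $v(t)$ lie in a common space where the two representations can be compared, forcing $v = u$; alternatively one may simply quote uniqueness of two-dimensional Leray--Hopf solutions once $v$ is placed in a uniqueness class. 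Consequently $u$ inherits the regularity of $v$, which is the assertion. The delicate work is entirely here: matching all the interpolation and trace spaces, confirming that forcings and solutions lie in compatible spaces across the $\L^p$-scale, and verifying the consistency of the Stokes semigroups; for $p < 2$ in case~\ref{Item: Distribution theory} an additional approximation step may be required for the linear uniqueness argument to apply.
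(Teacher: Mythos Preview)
Your proposal is correct and follows the same overall strategy as the paper: freeze the nonlinearity, verify it lies in the right space via the two-dimensional Ladyzhenskaya/interpolation estimates (the paper packages these as Lemma~\ref{Lem: Nonlinearity}, quoting Sohr), and invoke maximal $\L^s$-regularity of $A_p$ (resp.\ of the weak Stokes operator $\cA_p$). The one place where your route diverges is the identification step you flag as ``the main obstacle''. The paper sidesteps it entirely: rather than constructing a separate maximal-regularity solution $v$ and then arguing $v=u$, it starts from the known representation of the Leray--Hopf solution $u$ itself (Sohr, Thm.~V.1.3.1 and Thm.~IV.2.4.1), namely the Duhamel formula on $\L^2_\sigma$ involving $A_2^{1/2}$ and $A_2^{-1/2}\IP_2\divergence$, and then uses consistency of the semigroups and the identity $\Phi^{-1}A_2^{1/2}\e^{-tA_2}A_2^{-1/2}\IP_2\divergence(F)=\e^{-t\cA_2}\IP_2\divergence(F)$ to rewrite this as the mild-solution formula on $\L^p_\sigma$ (resp.\ on $\W^{-1,p}_\sigma$). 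Thus $u$ is \emph{itself} the mild solution to the linear problem in the target space, and maximal regularity applies directly to $u$; no separate $v$, no uniqueness argument, and no approximation step are needed. Your approach works too, but the paper's is cleaner precisely where you anticipated difficulty.
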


\section{An overview of Shen's proof with adaptations to two dimensions}

\noindent Shen's proof of the resolvent estimates of the Stokes operator in $d \geq 3$ dimensions fundamentally bases on the resolution of the $\L^2$-Dirichlet problem for the Stokes resolvent system \cite[Thm.~1.1]{Shen}. 
To formulate this problem, we state the following lemma and definition, cf.~\cite[p.~208]{Shen_lectures}.

\begin{lemdef}
\label{Lem: Cones}
Let $\Xi \subset \IR^d$, $d \geq 2$, be a bounded Lipschitz domain. There exists $\alpha > 1$ depending only on $d$ and the Lipschitz character of $\Xi$ such that each of the sets
\begin{align*}
 \gamma_{\alpha}^\interior (q) &\coloneqq \big\{ x \in \Xi : \lvert x - q \rvert < \alpha \dist(x , \partial \Xi) \big\}, \qquad q \in \partial \Xi, \\
 \gamma_{\alpha}^\exterior (q) &\coloneqq \big\{ x \in \overline{\Xi}^c: \lvert x - q \rvert < \alpha \dist(x , \partial \Xi) \big\}, \qquad q \in \partial \Xi,
\end{align*}
contains a cone of fixed height and aperture with vertex at $q$. In this case, we call the family $\{ \gamma_{\alpha}^\interior (q) : q \in \partial \Xi \}$ an \textit{interior} and $\{ \gamma_{\alpha}^\exterior (q) : q \in \partial \Xi \}$ an \textit{exterior} \textit{family of non-tangential approach regions}.
\end{lemdef}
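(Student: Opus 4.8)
The plan is to reduce everything to the local graph representation of $\Xi$ and then to the elementary fact that a point lying in a sufficiently narrow cone above (respectively below) the graph of an $L$-Lipschitz function has distance to that graph comparable to its distance to the vertex. Fix the Lipschitz character of $\Xi$: there are $r_0 > 0$ and $L \ge 0$ such that, for every $q \in \partial \Xi$, after a rigid motion sending $q$ to the origin one has, with an $L$-Lipschitz function $\varphi\colon\IR^{d-1}\to\IR$ satisfying $\varphi(0)=0$,
\[
 \Xi \cap B(0,r_0) = \{(x',x_d) : x_d > \varphi(x')\}\cap B(0,r_0), \qquad \partial \Xi\cap B(0,r_0) = \mathrm{graph}(\varphi)\cap B(0,r_0).
\]
Set $\kappa := L+1$ and $h := r_0/4$, and define (in these coordinates) the truncated cones
\[
 \Gamma^{\interior} := \{(x',x_d) : 0<|x|<h,\ x_d > \kappa|x'|\}, \qquad \Gamma^{\exterior} := \{(x',x_d) : 0<|x|<h,\ -x_d > \kappa|x'|\}.
\]
Since $\varphi(x') \le L|x'| < \kappa|x'|$ for $x'\ne 0$ and $\varphi(0)=0$, every point of $\Gamma^{\interior}$ lies strictly above $\mathrm{graph}(\varphi)$ and, as $|x|<r_0$, inside $B(0,r_0)$; hence $\Gamma^{\interior}\subset\Xi$, and symmetrically $\Gamma^{\exterior}\subset\overline{\Xi}^c$.

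Next I would use the standard estimate that, for an $L$-Lipschitz $f$ and a point $x=(x',x_d)$ with $x_d>f(x')$, one has $\dist(x,\mathrm{graph}(f)) \ge (x_d - f(x'))/\sqrt{1+L^2}$; this follows by minimizing $|x'-w'|^2 + (x_d-f(w'))^2$ over $w'$ and invoking $f(w') \ge f(x') - L|x'-w'|$. Let $x\in\Gamma^{\interior}$. Since $0\in\partial \Xi$ we have $\dist(x,\partial \Xi)\le|x|<r_0/4$, so every boundary point within distance $|x|$ of $x$ lies in $B(0,r_0/2)$, where $\partial \Xi$ coincides with $\mathrm{graph}(\varphi)$; consequently $\dist(x,\partial \Xi)\ge\dist(x,\mathrm{graph}(\varphi))\ge(x_d-\varphi(x'))/\sqrt{1+L^2}$. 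Combining $x_d-\varphi(x')\ge(1-L/\kappa)x_d$ with $x_d\ge\kappa|x|/\sqrt{\kappa^2+1}$, both consequences of $x_d>\kappa|x'|$, I get
\[
 \dist(x,\partial \Xi)\ge c_1|x|, \qquad c_1 := \frac{(1-L/\kappa)\kappa}{\sqrt{(1+L^2)(\kappa^2+1)}}\in(0,1),
\]
and $c_1$ depends only on $L$. Replacing $x_d$ by $-x_d$ throughout gives the same inequality for $x\in\Gamma^{\exterior}$.

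Finally, set $\alpha := 2/c_1 > 1$. Then $\Gamma^{\interior}\subset\gamma_{\alpha}^{\interior}(q)$ and $\Gamma^{\exterior}\subset\gamma_{\alpha}^{\exterior}(q)$ once transported back to the original coordinates with vertex $q$, and each of $\Gamma^{\interior}$, $\Gamma^{\exterior}$ is a cone of height $h=r_0/4$ and half-aperture $\arctan(1/\kappa)$, with $h$ and $\kappa$ depending only on the Lipschitz character of $\Xi$; this is exactly the assertion. I do not expect a genuine obstacle. The one delicate point is that $\dist(x,\partial \Xi)$ must be governed by the local graph rather than by far-away pieces of $\partial \Xi$, which is precisely why the cones are truncated at height $r_0/4$ --- small enough that the boundary point nearest to $x$ is trapped inside the coordinate chart.
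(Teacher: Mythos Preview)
Your argument is correct and complete. The paper does not actually prove this statement; it merely records it as a Lemma and Definition with a pointer to Shen's monograph \cite[p.~208]{Shen_lectures}. Your proof via the local graph representation---pick a cone steeper than the Lipschitz constant, use the elementary lower bound $\dist(x,\mathrm{graph}(\varphi))\ge(x_d-\varphi(x'))/\sqrt{1+L^2}$, truncate so the nearest boundary point stays in the chart---is exactly the standard verification of this folklore fact, and the care you take with the truncation height $r_0/4$ addresses the only subtle point.
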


In the following, we fix a value of $\alpha > 1$ subject to Lemma and Definition~\ref{Lem: Cones}. The notions of the non-tangential maximal function and non-tangential convergence are introduced as follows.

\begin{definition}
\label{Def: Nontangential maximal functions}
Let $\Xi \subset \IR^d$, $d \geq 2$, be a bounded Lipschitz domain and $\{ \gamma_{\alpha}^\interior (q) : q \in \partial \Xi \}$ a corresponding family of non-tangential approach regions. 
For a function $u \colon \Xi \to \IC^m$, $m \in \IN$, the \textit{interior non-tangential maximal function of $u$} is defined as
\begin{align*}
  (u)^*_\interior (q) \coloneqq \sup \big\{ \lvert u (x) \rvert : x \in \gamma_{\alpha}^\interior(q) \big\},
  \qquad q \in \partial \Xi.
\end{align*}
Similarly, for a function $v \colon \overline{\Xi}^c \to \IC^m$ and an exterior family of non-tangential approach regions $\{ \gamma_{\alpha}^\exterior (q) : q \in \partial \Xi \}$, the \textit{exterior non-tangential maximal function of $v$} is defined analogously and denoted by~$(v)^*_\exterior$.

For a function $f \colon \partial \Xi \to \IC^m$, we say that $u = f$ \textit{in the sense of non-tangential convergence from the inside} if
\begin{align*}
 \lim_{\substack{x \in \gamma_{\alpha}^\interior (q) \\ x \to q}} u(x) = f (q), \qquad \text{a.e.\@ } q \in \partial \Xi,
\end{align*}
and we call $f$ the \emph{non-tangential limit of $u$ inside of $\Xi$}.
Analogously, we say that $v = f$ \textit{in the sense of non-tangential convergence from the outside} if
\begin{align*}
 \lim_{\substack{x \in \gamma_{\alpha}^\exterior (q) \\ x \to q}} v(x) = f (q), \qquad \text{a.e.\@ } q \in \partial \Xi,
\end{align*}
and we call $f$ the \emph{non-tangential limit of $v$ outside of $\Xi$}.
\end{definition}

Let $\theta \in (0 , \pi / 2)$, and let
\begin{align*}
 \S_{\theta} \coloneqq \big\{ z \in \IC \setminus \{ 0 \} : \lvert \arg(z) \rvert < \pi - \theta \big\}.
\end{align*}
For a bounded Lipschitz domain $\Xi \subset \IR^d$, $d \geq 2$, with connected boundary and $\lambda \in \S_{\theta}$, consider the Dirichlet problem
\begin{align} \tag{Dir} \label{Dir}
 \left\{ \begin{aligned}
  \lambda u - \Delta u + \nabla \phi &= 0 && \text{in } \Xi ,\\
  \divergence(u) &= 0 && \text{in } \Xi, \\
  u &= g && \text{on } \partial \Xi, \\
  (u)_\interior^* &\in \L^2 (\partial \Xi),
 \end{aligned} \right.
\end{align}
where the equality $u = g$ on $\partial \Xi$ is to be understood in the sense of non-tangential convergence from the inside. Due to the condition $\divergence(u) = 0$, the divergence theorem implies that the normal component of $g$, i.e., $g \cdot n$, has average zero on $\partial \Xi$. Thus, the definition of the boundary space
\begin{align*}
 \L^2_n (\partial \Xi) \coloneqq \Big\{ g \in \L^2 (\partial \Xi ; \IC^d) : \int_{\partial \Xi} g \cdot n \, \d \sigma = 0 \Big\}
\end{align*}
seems natural.
The key ingredient that allowed Shen to prove the counterpart to Theorem~\ref{Thm: Resolvent} in three and more dimensions is the following resolution of the $\L^2$-Dirichlet problem:

\begin{theorem}[$\L^2$-Dirichlet problem]
\label{Thm: Dirichlet}
Let $\Xi \subset \IR^d$, $d \geq 2$, be a bounded Lipschitz domain with connected boundary and let $\theta \in (0 , \pi / 2)$. To every resolvent parameter $\lambda \in \S_{\theta}$ and every function $g \in \L^2_n (\partial \Xi)$, there exists a unique smooth function $u \colon \Xi \to \IC^d$ that satisfies $(u)_\interior^* \in \L^2 (\partial \Xi)$ and a smooth function $\phi \colon \Xi \to \IC$ that is unique up to the addition of constants such that~\eqref{Dir} is satisfied. Moreover, there exists a constant $C > 0$ such that
\begin{align}
\label{Eq: Stability estimate}
 \| (u)_\interior^* \|_{\L^2 (\partial \Xi)} \leq C \| g \|_{\L^2 (\partial \Xi)}.
\end{align}
The constant $C$ depends only on $d$, $\theta$, the Lipschitz character of $\Xi$, and on constants $\alpha , \beta > 0$ that satisfy $\alpha \leq \diam(\Xi) \leq \beta$.
\end{theorem}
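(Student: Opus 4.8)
The plan is to solve~\eqref{Dir} by the method of boundary layer potentials, reducing it to the invertibility of a singular integral operator on $\L^2_n(\partial \Xi)$. This follows Shen's strategy in dimensions $d \geq 3$ (see~\cite{Shen} and~\cite{Shen_lectures}) and, for the stationary case $\lambda = 0$, the work of Fabes, Kenig, and Verchota on the Stokes system; the genuinely two-dimensional input is the construction and the decay of the resolvent fundamental solution, which involves modified Bessel functions and a logarithmic singularity, and the care needed to keep all constants uniform in $\lambda \in \S_\theta$ after normalizing so that $\diam(\Xi) \approx 1$.

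\emph{Fundamental solution and layer potentials.} First I would construct, for $\lambda \in \S_\theta$, a matrix-valued $\Gamma_\lambda$ and a vector-valued pressure kernel $q_\lambda$ solving, columnwise, $\lambda \Gamma_\lambda^{k} - \Delta \Gamma_\lambda^{k} + \nabla q_\lambda^{k} = \delta_0 \, e_k$ and $\divergence \Gamma_\lambda^{k} = 0$ in $\IR^2$; concretely one takes $q_\lambda = \nabla G$ with $-\Delta G = \delta_0$ the Newtonian potential and builds $\Gamma_\lambda$ from $G$ and the fundamental solution $E_\lambda$ of $\lambda - \Delta$ (a multiple of $K_0(\lambda^{1/2}\lvert \cdot \rvert)$) by the usual formula. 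The properties I need are: near the origin $\Gamma_\lambda$ has the same singularity as the stationary Stokes kernel $\Gamma_0$, so $\Gamma_\lambda - \Gamma_0$ is bounded; the exponential decay $\lvert \Gamma_\lambda(x)\rvert + \lvert x\rvert\lvert \nabla \Gamma_\lambda(x)\rvert \lesssim \e^{-c\lvert\lambda\rvert^{1/2}\lvert x\rvert}$ once $\lvert\lambda\rvert^{1/2}\lvert x\rvert \geq 1$, with $c = c(\theta) > 0$; and the analogous bounds for $q_\lambda$ and one further derivative, which are classical for $\lambda - \Delta$. With these one defines the single layer potential $\cS_\lambda g(x) = \int_{\partial \Xi} \Gamma_\lambda(x - y)\, g(y)\, \d\sigma(y)$ with pressure $\cQ_\lambda g(x) = \int_{\partial\Xi} q_\lambda(x - y)\cdot g(y)\, \d\sigma(y)$, and the double layer potential $\cD_\lambda g$ obtained by applying the conormal (traction) derivative of the pair $(\Gamma_\lambda, q_\lambda)$ in $y$; both pairs solve the homogeneous resolvent Stokes system in $\Xi$ and in $\IR^2 \setminus \overline{\Xi}$. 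Splitting $\Gamma_\lambda = \Gamma_0 + (\Gamma_\lambda - \Gamma_0)$, the $\Gamma_0$-part is handled by the Coifman--McIntosh--Meyer theorem and the non-tangential maximal estimates for the stationary Stokes layer potentials on Lipschitz graphs, while the remainder has a kernel with one more order of smoothness and exponential decay, hence contributes to $(\cS_\lambda g)^*_\interior$ and $(\cD_\lambda g)^*_\interior$ at most the Hardy--Littlewood maximal function of $g$. This yields $\| (\cS_\lambda g)^*_\interior \|_{\L^2(\partial\Xi)} + \| (\cD_\lambda g)^*_\interior \|_{\L^2(\partial\Xi)} \lesssim \| g \|_{\L^2(\partial\Xi)}$, uniformly in $\lambda \in \S_\theta$ (with $\diam(\Xi)\approx 1$), together with the trace and conormal jump relations: non-tangentially from the inside and outside, $\cD_\lambda g \to (\mp\tfrac12\Id + \cK_\lambda)g$ and $\partial_\nu^{\pm}\cS_\lambda g \to (\pm\tfrac12\Id + \cK_\lambda^*)g$, where $\cK_\lambda$ is bounded on $\L^2(\partial\Xi)$ and $\cK_\lambda - \cK_0$ is compact.

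\emph{Solvability, estimate, and uniqueness.} I would then make the single layer ansatz $u = \cS_\lambda h$, $\phi = \cQ_\lambda h$, so that~\eqref{Dir} reduces to solving $\cS_\lambda h = g$ on $\partial\Xi$ for $h \in \L^2_n(\partial\Xi)$ (equivalently a double layer ansatz reduces it to inverting $\tfrac12\Id + \cK_\lambda$). The invertibility of the boundary single layer operator on $\L^2_n(\partial\Xi)$ comes from a Rellich identity for the resolvent system: testing the equation against $(\xi\cdot\nabla)\overline{u}$ for a vector field $\xi$ transversal to $\partial\Xi$ and taking real and imaginary parts --- which is where $\lambda\in\S_\theta$ enters --- gives $\| \partial_\nu u \|_{\L^2(\partial\Xi)} \approx \| \nabla_{\mathrm{tan}} u \|_{\L^2(\partial\Xi)}$ up to lower-order terms, uniformly in $\lambda$ after the normalization. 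Combined with the jump relations this makes $\pm\tfrac12\Id + \cK_\lambda$ semi-Fredholm with trivial kernel on $\L^2_n(\partial\Xi)$; since the stationary operator is invertible there (Fabes--Kenig--Verchota, where the connectedness of $\partial\Xi$ is used to control the one-dimensional obstruction coming from constant pressures) and $\cK_\lambda - \cK_0$ is compact, it is invertible, and the norm of the inverse is bounded uniformly on $\S_\theta$: for $\lvert\lambda\rvert$ in a compact subset of $\S_\theta$ by continuity and the compact-perturbation argument, and for $\lvert\lambda\rvert$ large by the exponential decay of $\Gamma_\lambda$, which --- after rescaling to length $\lvert\lambda\rvert^{-1/2}$ and localizing by a boundary partition of unity at that scale --- makes $\cS_\lambda$ a small perturbation of an invertible model operator. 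This produces $u$ and $\phi$ (the latter unique up to an additive constant) and the estimate~\eqref{Eq: Stability estimate} with $C$ depending only on $d$, $\theta$, the Lipschitz character, and $\diam(\Xi)$; interior smoothness is automatic since $u$ and $\phi$ solve a constant-coefficient elliptic system away from $\partial\Xi$. For uniqueness, if $g = 0$, the hypothesis $(u)^*_\interior \in \L^2(\partial\Xi)$ allows integration by parts over an exhausting family of Lipschitz subdomains (Verchota's approximation) and passage to the limit, giving
\begin{align*}
 \int_{\Xi}\big(\lambda\lvert u\rvert^2 + \lvert\nabla u\rvert^2\big)\,\d x = \int_{\partial\Xi}\big(\partial_\nu u - \phi\,n\big)\cdot\overline{u}\,\d\sigma = 0 ,
\end{align*}
so $\lambda\in\S_\theta$ forces $\nabla u\equiv 0$, hence $u\equiv 0$ and then $\nabla\phi\equiv 0$.

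\emph{Main obstacle.} The delicate point is the uniform-in-$\lambda$ invertibility of $\pm\tfrac12\Id + \cK_\lambda$ on $\L^2_n(\partial\Xi)$. For $\lvert\lambda\rvert$ bounded above and below this is merely a compact perturbation of the stationary theory, but the two endpoint regimes are genuinely separate: as $\lambda\to 0$ one must recover precisely the Fabes--Kenig--Verchota bound, together with the role of the connected boundary in pinning down the pressure constant and the zero-mean condition defining $\L^2_n(\partial\Xi)$; and as $\lvert\lambda\rvert\to\infty$ the correct length scale is $\lvert\lambda\rvert^{-1/2}$, the exponential decay of the resolvent kernel is indispensable, and the estimate has to be patched together from that small scale up to the fixed diameter of $\Xi$.
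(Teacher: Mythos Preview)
Your overall strategy---layer potentials built from the resolvent fundamental solution, splitting $\Gamma_\lambda = \Gamma_0 + (\Gamma_\lambda - \Gamma_0)$ to reduce to the stationary Fabes--Kenig--Verchota theory plus a compact remainder, Rellich identities, and Fredholm theory via compact perturbation of the $\lambda = 0$ operator---is exactly the paper's (and Shen's) approach. The ingredients you list, including the kernel estimates, the jump relations, the compactness of $\cK_\lambda - \cK_0$, and the uniqueness argument by integration by parts on approximating subdomains, all match.

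Two points of comparison. First, the paper makes the \emph{double} layer ansatz $u = \cD_\lambda f$, $\phi = \cD_\Phi f$, so that the boundary equation is $(-\tfrac{1}{2}\Id + \cK_{\overline\lambda}^*)f = g$; the space $U$ on which this is inverted is identified as $\Rg(-\tfrac{1}{2}\Id + \cK_{\overline\lambda})$, and the range is shown to be $\L^2_n(\partial\Xi)$ by computing $\ker(-\tfrac{1}{2}\Id + \cK_{\overline\lambda}) = \mathrm{span}\{n\}$. Your single layer ansatz is a legitimate alternative, but be aware that in two dimensions the boundary single layer operator for the Laplacian can degenerate at exceptional diameters because of the logarithm; the double layer route sidesteps this, which is one reason the paper takes it.

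Second, and this is where your argument has a genuine gap: the uniform-in-$\lambda$ bound on the inverse. You correctly state that the Rellich identity gives $\|\partial_\nu u\|_{\L^2(\partial\Xi)} \approx \|\nabla_{\mathrm{tan}} u\|_{\L^2(\partial\Xi)}$ uniformly in $\lambda \in \S_\theta$, but you then use this only for injectivity and try to obtain the quantitative bound by continuity on compact subsets of $\S_\theta$ plus, for large $|\lambda|$, a rescaling/localization to scale $|\lambda|^{-1/2}$. The paper does not argue this way, and your large-$\lambda$ step does not work as written: rescaling by $|\lambda|^{1/2}$ sends a domain of diameter $\approx 1$ to one of diameter $\approx |\lambda|^{1/2}$, so you leave the range $[\alpha,\beta]$ on which the constant is supposed to depend; and localizing at scale $|\lambda|^{-1/2}$ followed by patching introduces overlap constants that must themselves be controlled uniformly, which you have not done. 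The paper instead pushes the Rellich estimates all the way through to the quantitative lower bound $\|f\|_{\L^2} \leq C\|(-\tfrac{1}{2}\Id + \cK_{\overline\lambda})f\|_{\L^2}$ on $\L^2_n(\partial\Xi)$ with $C$ depending only on $d$, $\theta$, the Lipschitz character, and $\alpha,\beta$; this is the content of \cite[Sect.~4--5]{Shen}, and the paper remarks that the Rellich step is dimension-independent integration by parts. So the fix is to use the Rellich identity not merely qualitatively but quantitatively, which already carries the uniformity in $\lambda$ that you need.
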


\begin{remark}
As we will see in the proof, the functions $u$ and $\phi$ have representations as double layer potentials.
\end{remark}

The case $d \geq 3$ of Theorem~\ref{Thm: Dirichlet} was already proved by Shen~\cite[Thm.~5.5]{Shen}. In order to make the main idea of the proof accessible for the case $d = 2$, we derive estimates on fundamental solutions of the Stokes resolvent problem in Subsection~\ref{Sec: Properties of the matrix of fundamental solutions to the Stokes resolvent problem}.
Afterward, in Subsection~\ref{Sec: Single and double layer potentials}, we sketch the proof of Theorem~\ref{Thm: Dirichlet} and add remarks on the two-dimensional case.

\subsection{Properties of the matrix of fundamental solutions to the Stokes resolvent problem}
\label{Sec: Properties of the matrix of fundamental solutions to the Stokes resolvent problem}

This section aims to study fundamental solutions to the Stokes resolvent problem. Before working on the Stokes resolvent problem, we will look at the atoms of the fundamental solution of this problem: the Hankel functions. 

First of all, let us fix some recurring quantities. Let $\theta \in (0 , \pi / 2)$ and let $\lambda \in \S_{\theta}$. The polar form of $\lambda$ is given by $\lambda = r \e^{\ii \tau}$ with $0 < r < \infty$ and $- \pi + \theta < \tau < \pi - \theta$. 
Now, set
\begin{align*}
 k \coloneqq \sqrt{r}\, \e^{\ii (\pi + \tau) / 2}.
\end{align*}
This definition ensures that $k^2 = - \lambda$ and that $\theta / 2 < \arg(k) < \pi - \theta / 2$. The definition of $k$ further ensures the estimate
\begin{align}
\label{Eq: Imaginary part of square root}
 \Im(k) > \sqrt{\lvert \lambda \rvert} \sin(\theta / 2) > 0.
\end{align}
Before diving into fundamental solutions of the Stokes resolvent problem, we will first consider a fundamental solution for the \emph{scalar Helmholtz equation} in $\IR^2$
\begin{equation}\label{Eq: Scalar Helmholtz}
 \lambda u - \Delta u = 0.
\end{equation}
One fundamental solution to~\eqref{Eq: Scalar Helmholtz} with pole at the origin is given by
\begin{align*}
 G(x ; \lambda) \coloneqq \frac{\ii}{4} H_0^{(1)} (k \lvert x \rvert),
\end{align*}
where $H^{(1)}_{\nu} (z)$ is the \emph{Hankel function of the first kind}.
For $\nu > - 1 / 2$ and $0 < \arg(z) < \pi$, Hankel functions of the first kind have an integral representation
\begin{align}
\label{Eq: Hankel}
 H^{(1)}_{\nu} (z) 
 = \frac{2^{\nu + 1} \e^{\ii (z - \nu \pi)} z^{\nu}}{\ii \sqrt{\pi}\, \Gamma(\nu + \frac{1}{2})} \int_0^{\infty} \e^{2 z \ii s} s^{\nu - \frac{1}{2}} (1 + s)^{\nu - \frac{1}{2}} \, \d s,
\end{align}
which can be found in Lebedev's book~\cite[Sect.~5.11]{Lebedev}.
In the following, we will take $z = k \lvert x \rvert$. Note that the condition on the argument of $z$ is fulfilled due to~\eqref{Eq: Imaginary part of square root}.

\begin{lemma}
\label{Lem: Estimates on Helmholtz fundamental solution}
Let $\lambda \in \S_{\theta}$ and $\ell \geq 1$. 
There exist constants $c > 0$ depending only on $\theta$ and $C_{\ell} > 0$ depending only on $\ell$ and $\theta$ such that
\begin{align*}
 \lvert \nabla_x^{\ell} G(x ; \lambda) \rvert \leq \frac{C_{\ell} \e^{- c \sqrt{\lvert \lambda \rvert} \lvert x \rvert}}{\lvert x \rvert^{\ell}}, \qquad x \in \IR^2 \setminus \{ 0 \}.
\end{align*}
In the case $\ell = 0$, this inequality holds for all $x \in \IR^2$ with $\lvert \lambda \rvert \lvert x \rvert^2 \geq 1$.
\end{lemma}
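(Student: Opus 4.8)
The plan is to push everything through the scalar Hankel function $H^{(1)}_0$ and its $z$-derivatives evaluated at $z = k\lvert x\rvert$, exploiting that, by~\eqref{Eq: Imaginary part of square root} together with $\lvert k\rvert = \sqrt{\lvert\lambda\rvert}$, every such $z$ satisfies $\Im z \ge c_0 \lvert z\rvert$, where $c_0 := \sin(\theta/2)$, and also $\Im z \le \lvert z\rvert$. Two ingredients are needed: a pointwise bound on $H^{(1)}_\nu$ uniform on the sector $\theta/2 < \arg z < \pi - \theta/2$, and the precise structure of $\tfrac{\d^m}{\d z^m}H^{(1)}_0$.

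\emph{Step 1 (pointwise bound).} From~\eqref{Eq: Hankel} one reads off, for $\nu \ge 1$,
\[
 \lvert H^{(1)}_\nu(z)\rvert \le C_\nu\,\lvert z\rvert^\nu\,\e^{-\Im z}\int_0^\infty \e^{-2s\Im z}\,s^{\nu - \frac12}(1+s)^{\nu - \frac12}\,\d s .
\]
Splitting the integral at $s = 1$ (the part on $[0,1]$ is a constant, the tail is $\le C_\nu(\Im z)^{-2\nu}$ after bounding $(1+s)^{\nu-1/2} \lesssim_\nu s^{\nu-1/2}$) gives $\lvert H^{(1)}_\nu(z)\rvert \le C_\nu\lvert z\rvert^\nu\e^{-\Im z}\big(1 + (\Im z)^{-2\nu}\big)$. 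Using $c_0\lvert z\rvert \le \Im z$ for the second summand and $t^{2\nu}\e^{-c_0 t/2}\lesssim_\nu 1$ for the first leaves
\[
 \lvert H^{(1)}_\nu(z)\rvert \le C_\nu\,\lvert z\rvert^{-\nu}\,\e^{-c\lvert z\rvert},\qquad c := \tfrac12 c_0 ,\quad \nu \ge 1 .
\]
For $\nu = 0$ the computation is identical except that the tail of the integral is only controlled by $\min\{(\Im z)^{-1}, C + \lvert\log\Im z\rvert\}$, a quantity bounded in terms of $\theta$ exactly when $\Im z \ge c_0$, i.e.\ when $\lvert z\rvert \ge 1$. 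This already disposes of the case $\ell = 0$: whenever $\lvert k\rvert\lvert x\rvert = \sqrt{\lvert\lambda\rvert}\lvert x\rvert \ge 1$ one gets $\lvert G(x;\lambda)\rvert = \tfrac14\lvert H^{(1)}_0(k\lvert x\rvert)\rvert \le C\,\e^{-\Im(k\lvert x\rvert)} \le C\,\e^{-c_0\sqrt{\lvert\lambda\rvert}\lvert x\rvert}$, and it also explains why the hypothesis $\lvert\lambda\rvert\lvert x\rvert^2 \ge 1$ is imposed there.

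\emph{Step 2 (derivative structure).} A short induction gives the operator identity $\tfrac{\d^m}{\d z^m} = \sum_k a_{m,k}\,z^{2k - m}\big(\tfrac1z\tfrac{\d}{\d z}\big)^k$ with scalar $a_{m,k}$, the sum over $1 \le k \le m$ with $2k \ge m$ — the monomial exponents are forced by the homogeneity of both sides under $z \mapsto tz$. Combined with the classical identity $\big(\tfrac1z\tfrac{\d}{\d z}\big)^k H^{(1)}_0(z) = (-1)^k z^{-k}H^{(1)}_k(z)$, this yields $\tfrac{\d^m}{\d z^m}H^{(1)}_0(z) = \sum_k c_{m,k}\,z^{k - m}H^{(1)}_k(z)$, a finite sum in which only Hankel functions of order $k \ge 1$ occur, so the logarithmic singularity of $H^{(1)}_0$ at the origin never enters. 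Inserting the bound of Step~1, the factor $\lvert z\rvert^{k - m}\lvert H^{(1)}_k(z)\rvert \le C_k\lvert z\rvert^{-m}\e^{-c\lvert z\rvert}$ is independent of $k$, so $\big\lvert\tfrac{\d^m}{\d z^m}H^{(1)}_0(z)\big\rvert \le C_m\lvert z\rvert^{-m}\e^{-c\lvert z\rvert}$. Setting $g(r) := \tfrac{\ii}{4} H^{(1)}_0(kr)$ (so $G(x;\lambda) = g(\lvert x\rvert)$) and using $\lvert k\rvert = \sqrt{\lvert\lambda\rvert}$, this reads $\lvert g^{(m)}(r)\rvert \le C_m r^{-m}\e^{-c\sqrt{\lvert\lambda\rvert}r}$ for all $m \ge 1$, with $C_m$ depending only on $m$ and $\theta$.

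\emph{Step 3 (conclusion for $\ell \ge 1$).} Since $G(x;\lambda)$ is radial, $\nabla_x^\ell G(x;\lambda) = \sum_{m = 1}^\ell g^{(m)}(\lvert x\rvert)\,T_m(x)$, where each $T_m$ is tensor-valued, smooth away from the origin, and positively homogeneous of degree $m - \ell$, hence $\lVert T_m(x)\rVert \le C_\ell\lvert x\rvert^{m - \ell}$. Combining this with Step~2,
\[
 \lvert\nabla_x^\ell G(x;\lambda)\rvert \le \sum_{m = 1}^\ell C_m\,\lvert x\rvert^{-m}\e^{-c\sqrt{\lvert\lambda\rvert}\lvert x\rvert}\cdot C_\ell\lvert x\rvert^{m - \ell} = C_\ell\,\lvert x\rvert^{-\ell}\,\e^{-c\sqrt{\lvert\lambda\rvert}\lvert x\rvert},
\]
with $c$ depending only on $\theta$ and $C_\ell$ only on $\ell$ and $\theta$, as asserted. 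The only point that is not pure bookkeeping is the derivative structure in Step~2: it is essential that the coefficient of each $H^{(1)}_k$ in $\tfrac{\d^m}{\d z^m}H^{(1)}_0$ is \emph{exactly} the monomial $z^{k - m}$, not merely a Laurent polynomial of controlled degree, since this is what cancels the $k$-dependence against $\lvert H^{(1)}_k(z)\rvert \lesssim \lvert z\rvert^{-k}$ and rules out a spurious $\lvert z\rvert^{-2k}$-blow-up as $z \to 0$; everything else reduces to estimating the integral in~\eqref{Eq: Hankel}.
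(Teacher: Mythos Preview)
Your proof is correct and follows essentially the same route as the paper's: both arguments bound $\lvert H^{(1)}_\nu(z)\rvert$ via the integral representation~\eqref{Eq: Hankel}, use the lowering identity (the paper cites $\tfrac{\d}{\d z}\{z^{-\nu}H^{(1)}_\nu\} = -z^{-\nu}H^{(1)}_{\nu+1}$, you use the equivalent $(\tfrac{1}{z}\tfrac{\d}{\d z})^k H^{(1)}_0 = (-1)^k z^{-k}H^{(1)}_k$) to reduce $z$-derivatives of $H^{(1)}_0$ to higher-order Hankel functions, and then finish by the chain rule for radial functions. Your Step~2 spells out the operator identity $\tfrac{\d^m}{\d z^m} = \sum_k a_{m,k} z^{2k-m}(\tfrac{1}{z}\tfrac{\d}{\d z})^k$ explicitly (the paper just says ``via the chain rule''), which is a nice clarification but not a different idea; your remark that only orders $k\ge 1$ appear, avoiding the logarithmic singularity of $H^{(1)}_0$, is exactly the point.
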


\begin{proof}
Let $\ell \geq 1$. To estimate $\nabla^{\ell} G(x ; \lambda)$, notice the validity of the identity
\begin{align}
\label{Eq: Iterative derivative}
 \frac{\d}{\d z} \big\{ z^{- \nu} H_{\nu}^{(1)} (z) \big\} = - z^{- \nu} H_{\nu + 1}^{(1)} (z),
\end{align}
which holds for $z \in \IC \setminus (- \infty , 0]$ and $\nu \in \IR$, see~\cite[Eq.~(5.6.3)]{Lebedev}. The integral representation~\eqref{Eq: Hankel} further gives for $\nu \geq 1 / 2$ and $\Im(z) > 0$
\begin{align}
\label{Eq: Pointwise integral estimate}
 \lvert H^{(1)}_{\nu} (z) \rvert \leq C \e^{- \Im(z)} \lvert z \rvert^{\nu} \int_0^{\infty} \e^{- 2 s \Im(z)} s^{\nu - \frac{1}{2}} (1 + s)^{\nu - \frac{1}{2}} \, \d s \leq C \lvert z \rvert^{\nu} \lvert \Im(z) \rvert^{- 2 \nu} \e^{- \Im(z) / 2}.
\end{align}
Now, the desired estimate follows by combining~\eqref{Eq: Iterative derivative} with~\eqref{Eq: Pointwise integral estimate} via the chain rule.

In order to derive the estimate in the case $\ell = 0$ and $x$ subject to $\lvert \lambda \rvert \lvert x \rvert^2 \geq 1$, use that here we have $\Im (z) > \sqrt{\lvert \lambda \rvert} \lvert x \rvert \sin(\theta / 2) \geq \sin(\theta / 2)$ and conclude by virtue of~\eqref{Eq: Hankel} that
\begin{align*}
 \lvert H^{(1)}_0 (z) \rvert \leq C \e^{- \Im(z)} \int_0^{\infty} \e^{- 2 s \sin(\theta / 2)} s^{- \frac{1}{2}} (1 + s)^{- \frac{1}{2}} \, \d s. & \qedhere
\end{align*}
\end{proof}

We will use the following interior estimate for solutions to Poisson's equation on balls $B(x , r)$ centered in $x \in \IR^2$ with radius $r > 0$ during the derivation of the subsequent estimates in this section. 
In the case $\ell = 1$, the interior estimate is a special case of~\cite[Eq.~(3.16)]{Gilbarg_Trudinger}, and in the case $\ell > 1$, it follows by induction. We omit further details.

\begin{lemma}
\label{Lem: Interior estimates Poisson}
Let $r > 0$ and $x \in \IR^2$. If $w \in \C^k (B(x , r)) \cap \C^0 (\overline{B(x , r)})$ is a solution to $\Delta w = f$ in $B(x , r)$ for $f \in \C^{k - 1} (B(x , r))$, then
\begin{align*}
 \lvert \nabla^{\ell} w (x) \rvert \leq C r^{- \ell} \sup_{B(x , r)} \lvert w \rvert + C \max_{0 \leq j \leq \ell - 1} \sup_{B(x , r)} r^{j - \ell + 2} \lvert \nabla^j f \rvert, \quad \ell \leq k,
\end{align*}
where $C > 0$ only depends on $\ell$.
\end{lemma}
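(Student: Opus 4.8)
The plan is to argue by induction on $\ell$, the base case $\ell = 1$ being the classical interior derivative estimate for Poisson's equation: for $\Delta w = f$ in $B(x,r)$ one has $\lvert \nabla w(x) \rvert \leq C r^{-1} \sup_{B(x,r)} \lvert w \rvert + C r \sup_{B(x,r)} \lvert f \rvert$ with an absolute constant $C$, which is precisely \cite[Eq.~(3.16)]{Gilbarg_Trudinger} and which is exactly the asserted inequality for $\ell = 1$, the $f$-term being the $j = 0$ contribution since $r^{0 - 1 + 2} = r$. So the real work sits in the inductive step.

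For the step I would assume the estimate known for $\ell - 1$ at every admissible center and radius, fix a multi-index $\alpha$ with $\lvert \alpha \rvert = \ell$, and write $\partial^{\alpha} = \partial_i \partial^{\beta}$ with $\lvert \beta \rvert = \ell - 1$. Since $\ell \leq k$, differentiating $\Delta w = f$ shows that $v := \partial^{\beta} w$ solves $\Delta v = \partial^{\beta} f =: g$ in $B(x,r)$. Applying the base case to $v$ on the concentric half-radius ball $B(x, r/2)$ gives $\lvert \partial^{\alpha} w(x) \rvert \leq C r^{-1} \sup_{B(x,r/2)} \lvert v \rvert + C r \sup_{B(x,r/2)} \lvert g \rvert$. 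The second summand is at most $C r \sup_{B(x,r)} \lvert \nabla^{\ell - 1} f \rvert$, which is precisely the $j = \ell - 1$ term of the claimed bound. For the first summand, since $B(y, r/2) \subset B(x,r)$ for every $y \in B(x, r/2)$, the induction hypothesis on $B(y,r/2)$ bounds $\lvert \nabla^{\ell-1} w(y) \rvert$ by $C r^{-(\ell-1)} \sup_{B(x,r)} \lvert w \rvert + C \max_{0 \leq j \leq \ell - 2} \sup_{B(x,r)} r^{j - \ell + 3} \lvert \nabla^j f \rvert$; taking the supremum over such $y$, multiplying by $C r^{-1}$, and summing over the finitely many $\alpha$ with $\lvert \alpha \rvert = \ell$ reproduces the $r^{-\ell} \sup \lvert w \rvert$ term and the remaining contributions for $0 \leq j \leq \ell - 2$, at the cost of a constant depending only on $\ell$. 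Throughout, factors $(r/2)^{a}$ are traded for $r^{a}$ at the expense of a harmless $2^{\lvert a \rvert} \leq 2^{\ell + 2}$. The only thing to verify by hand is this bookkeeping of the scaling exponents $j - \ell + 2$, and the half-radius trick is what makes it close.

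The single point that needs a little care is a regularity mismatch: the base-case estimate is customarily stated for $\C^2$ solutions, whereas $v = \partial^{\beta} w$ is only guaranteed to lie in $\C^{k - \ell + 1}$, which is merely $\C^1$ when $\ell = k$. I would sidestep this by first proving the lemma for $w \in \C^{\infty}$, where differentiating the equation and applying the base case to $v$ are unproblematic, and then recovering the general case by mollification: apply the smooth case to $w \ast \rho_{\eta}$, which solves $\Delta (w \ast \rho_{\eta}) = f \ast \rho_{\eta}$ on $B(x, r - 2\eta)$, and let $\eta \downarrow 0$ using $w \ast \rho_{\eta} \to w$ in $\C^{k}_{\loc}$ and $f \ast \rho_{\eta} \to f$ in $\C^{k-1}_{\loc}$, then enlarge the radius back to $r$ (the supremum terms are monotone in the radius). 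I do not anticipate any genuine obstacle; the argument is entirely standard, which is why the statement is recorded here without the routine verification.
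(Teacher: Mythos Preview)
Your proposal is correct and follows exactly the route the paper indicates: the base case $\ell=1$ is taken from \cite[Eq.~(3.16)]{Gilbarg_Trudinger}, and the higher cases are obtained by induction, with the paper explicitly omitting the details you have supplied. Your half-radius trick, the bookkeeping of the exponents $j-\ell+2$, and the mollification workaround for the regularity mismatch at $\ell=k$ are all standard and sound; there is nothing to add.
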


In addition to the fundamental solution $G(x; \lambda)$ to the scalar Helmholtz equation~\eqref{Eq: Scalar Helmholtz}, let 
\begin{align*}
 G(x ; 0) \coloneqq - \frac{1}{2 \pi} \log (\lvert x \rvert), \qquad x \in \IR^2 \setminus \{ 0 \},
\end{align*} 
denote the fundamental solution to the Laplace equation in the plane. 
The following lemma gives estimates on derivatives of $G(x ; \lambda) - G(x ; 0)$.

\begin{lemma}\label{Lem: Difference Helmholtz}
Let $\lambda \in \S_{\theta}$. Then, for each $\ell \geq 3$, there exists $C > 0$ depending only on $\ell$ and $\theta$ such that
\begin{align}
\label{Eq: Difference Helmholtz}
 \big\lvert \nabla^{\ell}_x \big\{ G(x ; \lambda) - G(x ; 0) \big\} \big\rvert \leq C \lvert \lambda \rvert \lvert x \rvert^{2 - \ell}.
\end{align}
\end{lemma}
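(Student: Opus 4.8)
The plan is to write $G(x;\lambda) - G(x;0)$ as an integral in the resolvent parameter from $0$ to $\lambda$ and to differentiate under the integral sign. Concretely, fix $x \neq 0$ and consider the function $\mu \mapsto G(x;\mu)$ along the segment $[0,\lambda] \subset \cl{\S_{\theta}}$; since $\lambda G(x;\lambda) - \Delta G(x;\lambda) = 0$ in the sense of distributions away from the origin and $G(x;0)$ solves the homogeneous Laplace equation, the difference $v(x;\lambda) := G(x;\lambda) - G(x;0)$ satisfies $\Delta v(\cdot;\lambda) = \lambda G(\cdot;\lambda)$ pointwise on $\IR^2 \setminus \{0\}$. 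The idea is to exploit this Poisson equation together with the interior estimate of Lemma~\ref{Lem: Interior estimates Poisson} applied on the ball $B(x, |x|/2)$: for $\ell \geq 3$ one gets
\begin{align*}
 |\nabla^{\ell}_x v(x;\lambda)| \leq C |x|^{-\ell} \sup_{B(x,|x|/2)} |v| + C \max_{0 \leq j \leq \ell-1} |x|^{j - \ell + 2} \sup_{B(x,|x|/2)} |\lambda| |\nabla^j_x G(\cdot;\lambda)|.
\end{align*}

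The second term is already under control via Lemma~\ref{Lem: Estimates on Helmholtz fundamental solution}: for $1 \leq j \leq \ell-1$ we have $|\nabla^j_x G(y;\lambda)| \leq C_j |y|^{-j} \lesssim |x|^{-j}$ uniformly for $y \in B(x,|x|/2)$ (the exponential factor is bounded by $1$), so $|\lambda| |x|^{j-\ell+2} |\nabla^j_x G| \lesssim |\lambda| |x|^{2-\ell}$, which is exactly the claimed bound. The term $j = 0$ needs the pointwise bound on $G$ itself, and here one has to be slightly careful because Lemma~\ref{Lem: Estimates on Helmholtz fundamental solution} only controls $G$ when $|\lambda||x|^2 \geq 1$; for $|\lambda||x|^2 \leq 1$ one instead uses the small-argument logarithmic asymptotics of $H_0^{(1)}$, namely $H_0^{(1)}(z) = \tfrac{2\ii}{\pi}\log z + O(1)$ as $z \to 0$, giving $|\lambda||G(y;\lambda)| \lesssim |\lambda|(1 + |\log(|\lambda||x|^2)|) \lesssim |\lambda|^{1/2}|x|^{-1} \lesssim |x|^{-\ell}\cdot |\lambda||x|^{2}$ after using $|\lambda||x|^2 \leq 1$ — I would need to check the book-keeping, but morally this term is harmless since $\ell \geq 3$ means $|x|^{2-\ell}$ is the dominant negative power.

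The real work is the first term, $|x|^{-\ell}\sup_{B(x,|x|/2)}|v|$, i.e.\ a good pointwise bound on $v(y;\lambda) = G(y;\lambda) - G(y;0)$ itself with a clean factor $|\lambda||y|^2$. This I would get by the fundamental theorem of calculus in the spectral parameter: writing $\partial_{\mu} G(y;\mu)$ and integrating $G(y;\lambda) - G(y;0) = \int_0^{\lambda} \partial_{\mu} G(y;\mu)\,\d\mu$ along the segment. Using $G(y;\mu) = \tfrac{\ii}{4}H_0^{(1)}(k_{\mu}|y|)$ with $k_{\mu}^2 = -\mu$, the chain rule and the Hankel identity~\eqref{Eq: Iterative derivative} reduce $\partial_{\mu}G$ to $H_1^{(1)}(k_{\mu}|y|)$ times $|y|/k_{\mu}$, and then the small-/large-argument asymptotics of $H_1^{(1)}$ (which behaves like $z^{-1}$ near $0$) yield $|\partial_{\mu}G(y;\mu)| \lesssim |y|^2(1 + |\log(|\mu||y|^2)|)$, integrable in $\mu$ over $[0,\lambda]$ and producing $|v(y;\lambda)| \lesssim |\lambda||y|^2 \cdot (1 + |\log(|\lambda||y|^2)|)$. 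Since $|y| \sim |x|$ on $B(x,|x|/2)$, multiplying by $|x|^{-\ell}$ gives $|\lambda||x|^{2-\ell}(1 + |\log(|\lambda||x|^2)|)$, and one finally absorbs the logarithm: when $|\lambda||x|^2 \leq 1$ the bound $t|\log t| \leq C_{\delta} t^{1-\delta}$ lets it be reabsorbed (at the cost of the constant), and when $|\lambda||x|^2 \geq 1$ one simply appeals directly to Lemma~\ref{Lem: Estimates on Helmholtz fundamental solution} for both $\nabla^{\ell}G(\cdot;\lambda)$ (bounded by $C_{\ell}|x|^{-\ell} \leq C_{\ell}|\lambda||x|^{2-\ell}$) and $\nabla^{\ell}G(\cdot;0) = O(|x|^{-\ell})$ separately, so the difference obeys the claimed inequality there as well.

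The main obstacle, as indicated, is handling the logarithmic singularity that is intrinsic to two dimensions — in $d \geq 3$ the analogous difference has no log and the estimates are cleaner. The delicate point is to make sure the small-argument expansions of $H_0^{(1)}$ and $H_1^{(1)}$ are used with uniform constants depending only on $\theta$ (through the constraint $\theta/2 < \arg(k) < \pi - \theta/2$ keeping $k_{\mu}|y|$ away from the negative real axis), and to verify that the segment $[0,\lambda]$ stays inside the sector so that all the asymptotic expansions and the identity~\eqref{Eq: Iterative derivative} apply along the whole path of integration. Everything else is routine once these asymptotics and the interior estimate are in place.
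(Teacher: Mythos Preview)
Your approach has a genuine gap. The pointwise bound you claim for $v(y;\lambda) = G(y;\lambda) - G(y;0)$, namely $|v| \lesssim |\lambda||y|^{2}(1 + |\log(|\lambda||y|^{2})|)$, is false. Since $H_1^{(1)}(z) \sim -\tfrac{2\ii}{\pi z}$ as $z \to 0$, your chain-rule computation actually gives $\partial_{\mu} G(y;\mu) \sim -\tfrac{1}{4\pi\mu}$ for small $|\mu||y|^{2}$, and the integral $\int_0^{\lambda}\partial_{\mu} G\,\d\mu$ diverges at $\mu=0$. Equivalently, the small-argument expansion $H_0^{(1)}(z) = 1 + \tfrac{2\ii}{\pi}(\log(z/2) + \gamma) + O(z^{2}\log z)$ shows that $v(y;\lambda)$ contains the $y$-\emph{independent} term $-\tfrac{1}{2\pi}\log k + \mathrm{const}$; feeding this into the interior estimate, the contribution $|x|^{-\ell}\sup_{B(x,|x|/2)}|v|$ is of order $|x|^{-\ell}(1 + |\log\lambda|)$, which is not controlled by $|\lambda||x|^{2-\ell}$ (take $|\lambda|$ fixed and $|x| \to 0$). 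The $j=0$ source term $|\lambda||x|^{2-\ell}\sup|G(\cdot;\lambda)|$ has the same defect.

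Your proposed fix---absorbing the logarithm via $t|\log t| \leq C_{\delta}t^{1-\delta}$---does not recover the stated bound either: with $t = |\lambda||x|^{2}$ it yields $|\lambda|^{1-\delta}|x|^{2-2\delta-\ell}$, whose ratio to $|\lambda||x|^{2-\ell}$ is $(|\lambda||x|^{2})^{-\delta}$, unbounded as $|\lambda||x|^{2} \to 0$. The logarithm is in fact intrinsic for $\ell \in \{1,2\}$ (see the Remark following the lemma). It disappears at $\ell = 3$ only through an \emph{exact algebraic cancellation}: the terms in the series expansion of $\partial_{\beta}\partial_{\alpha}\partial_{\gamma}G(x;\lambda)$ that individually violate the bound sum precisely to $\partial_{\beta}\partial_{\alpha}\partial_{\gamma}G(x;0)$. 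The paper verifies this cancellation by hand for $\ell = 3$ (Appendix~\ref{sec:A1}) and then bootstraps to $\ell \geq 4$ by applying the interior estimate to $w = \nabla^{3}_{x}v$ rather than to $v$, so that the zeroth-order input $\sup|\nabla^{3}_{x}v|$ is the already log-free $\ell=3$ case and the source $\lambda\nabla^{3}_{x}G(\cdot;\lambda)$ needs only Lemma~\ref{Lem: Estimates on Helmholtz fundamental solution} with order at least three.
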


\begin{proof}
\textbf{Step 1: the case $\lvert \lambda \rvert \lvert x \rvert^2 > 1 / 2$.} 
In this case, Lemma~\ref{Lem: Estimates on Helmholtz fundamental solution} readily yields
\begin{align*}
 \big\lvert \nabla_x^{\ell} \big\{ G(x ; \lambda) - G(x ; 0) \big\} \big\rvert \leq C \bigg\{ \frac{\e^{- c \sqrt{\lvert \lambda \rvert} \lvert x \rvert}}{\lvert x \rvert^{\ell}} + \frac{1}{\lvert x \rvert^{\ell}} \bigg\} \leq C \frac{\lvert \lambda \rvert}{\lvert x \rvert^{\ell - 2}}\,.
\end{align*}
\medbreak
\noindent \textbf{Step 2: reduction to the case $\ell = 3$ and $\lvert \lambda \rvert \lvert x \rvert^2 \leq 1 / 2$.} Having the first part of the proof at our disposal, we concentrate on the case $\lvert \lambda \rvert \lvert x \rvert^2 \leq 1 / 2$. We show that~\eqref{Eq: Difference Helmholtz} can be established in the case $\ell \geq 4$ under the validity of~\eqref{Eq: Difference Helmholtz} in the case $\ell = 3$. Indeed, if we use the definition $w (x) \coloneqq \nabla_x^3 \{ G(x ; \lambda) - G(x ; 0)\}$, we get that $\Delta_x w (x) = \lambda \nabla_x^3 G(x ; \lambda)$ in $\IR^2 \setminus \{ 0 \}$. Employing Lemma~\ref{Lem: Interior estimates Poisson} with $f(x) \coloneqq \lambda \nabla_x^3 G(x ; \lambda)$ and $0 < r < \lvert x \rvert$ yields
\begin{alignat*}{2}
 \big\lvert \nabla^{\ell - 3} w(x) \big\rvert &\leq C r^{3 - \ell} \sup_{B(x , r)} \lvert w \rvert 
 &&+ C \max_{0 \leq j \leq \ell - 4} \sup_{B(x , r)} r^{j - \ell + 5} \lvert \nabla^j f \rvert \\
 &\leq C r^{3 - \ell} \sup_{y \in B(x , r)} \lvert \lambda \rvert \lvert y \rvert^{-1} 
 &&+ C \max_{0 \leq j \leq \ell - 4} \sup_{y \in B(x , r)} r^{j - \ell + 5} \lvert \lambda \rvert \lvert y \rvert^{- 3 - j} \\
 &= C r^{3 - \ell} \lvert \lambda \rvert \bigg\lvert x - r \frac{x}{\lvert x \rvert} \bigg\rvert^{- 1} 
 &&+ C \max_{0 \leq j \leq \ell - 4} r^{j - \ell + 5} \lvert \lambda \rvert \bigg\lvert x - r \frac{x}{\lvert x \rvert} \bigg\rvert^{- 3 - j}.
\end{alignat*}
For the second inequality, we used~\eqref{Eq: Difference Helmholtz} with $\ell = 3$ to estimate the first summand and Lemma~\ref{Lem: Estimates on Helmholtz fundamental solution} to estimate the second summand. Choosing $r = \lvert x \rvert / 2$ immediately gives the desired estimate.
\medbreak
\noindent \textbf{Step 3: the case $\ell = 3$ and $\lvert \lambda \rvert \lvert x \rvert^2 \leq 1 / 2$.} This case uses direct calculations relying on the asymptotic expansion of $H_0^{(1)} (z)$ with $z = k \lvert x \rvert$. We omit the technical calculations from this section and instead carry them out in Appendix~\ref{sec:A1}.
\end{proof}

\begin{remark}
In the situation of Lemma~\ref{Lem: Difference Helmholtz}, one can show by considering asymptotic expansions that, for $|\lambda||x| \leq 1/2$ and $\ell \in \{1,2\}$, the  following holds with $C > 0$ depending only on $\ell$ and $\theta$
\begin{align*}
 \big\lvert \nabla^{\ell}_x \big\{ G(x ; \lambda) - G(x ; 0) \big\} \big\rvert 
 \leq C \lvert \lambda \rvert \lvert x \rvert^{2 - \ell} \Big\{ |\log(|\lambda| |x|^2) | + 1 \Big\}.
\end{align*}
This, however, will not be relevant in this work.
\end{remark}

Now, we are in the position to study fundamental solutions to the \emph{Stokes resolvent problem}
\begin{align}\label{Eq: Stokes resolvent problem without domain}
\left\{ \begin{aligned}
 \lambda u - \Delta u + \nabla \phi &= 0, \\
 \divergence(u) &= 0
\end{aligned} \right.
\end{align}
in $\IR^2$ with $\lambda \in \S_{\theta}$. 
The fundamental solutions to the scalar Helmholtz equation~\eqref{Eq: Scalar Helmholtz} and the Laplace equation form the building blocks for solutions to the Stokes resolvent problem~\eqref{Eq: Stokes resolvent problem without domain}. 
The velocity field comes from the matrix of fundamental solutions with pole at the origin
\begin{align*}
 \Gamma_{\alpha \beta} (x ; \lambda) \coloneqq G(x ; \lambda) \delta_{\alpha \beta} - \frac{1}{\lambda} \partial_{x_{\alpha}} \partial_{x_{\beta}} \big\{ G(x ; \lambda) - G(x ; 0) \big\}, \quad \alpha , \beta \in \{1 , 2\}.
\end{align*}
For the pressure, we define the vector of fundamental solutions
\begin{align*}
 \Phi_{\beta} (x) \coloneqq - \partial_{x_{\beta}} G(x ; 0) = \frac{x_{\beta}}{2 \pi \lvert x \rvert^2}, \quad \beta \in \{ 1 , 2 \}.
\end{align*}

Using that $\Delta_x G(x ; \lambda) = \lambda G(x ; \lambda)$ in $\IR^2 \setminus \{ 0 \}$, one immediately verifies that, in $\IR^2 \setminus \{ 0 \}$ and for all $\beta \in \{ 1 , 2 \}$, the functions $\Gamma_{\alpha \beta}$ and $\Phi_{\beta}$ solve
\begin{align}
\label{Eq: Fundamental solutions solve equation}
\left\{ \begin{aligned}
 (\lambda - \Delta_x) \Gamma_{\alpha \beta} (x ; \lambda) + \partial_{x_{\alpha}} \Phi_{\beta} (x) &= 0, \quad \text{for } \alpha \in \{ 1 , 2 \}, \\
 \partial_{x_{\alpha}} \Gamma_{\alpha \beta} (x ; \lambda) &= 0.
 \end{aligned} \right.
\end{align}
Note that the summation convention was used in the last line. We now present a counterpart to Lemma~\ref{Lem: Estimates on Helmholtz fundamental solution} but for the matrix of fundamental solutions $\Gamma (x ; \lambda) \coloneqq (\Gamma_{\alpha \beta} (x ; \lambda))_{\alpha , \beta = 1}^2$.

\begin{theorem}\label{Thm: Derivative of Gamma}
Let $\lambda \in \S_{\theta}$. Then, for all $\ell \geq 1$, there exists $C > 0$ depending only on $\theta$ and $\ell$ such that
\begin{align*}
 \lvert \nabla_x^{\ell} \Gamma (x ; \lambda) \rvert \leq \frac{C}{(1 + \lvert \lambda \rvert \lvert x \rvert^2) \lvert x \rvert^{\ell}}\,.
\end{align*}
\end{theorem}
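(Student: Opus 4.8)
The plan is to estimate the two summands in the definition of $\Gamma_{\alpha\beta}(x;\lambda) = G(x;\lambda)\delta_{\alpha\beta} - \lambda^{-1}\partial_{x_\alpha}\partial_{x_\beta}\{G(x;\lambda)-G(x;0)\}$ separately, and, for each, to distinguish the two regimes $\lvert\lambda\rvert\lvert x\rvert^2\leq 1$ and $\lvert\lambda\rvert\lvert x\rvert^2>1$. The point of this split is that the target weight $(1+\lvert\lambda\rvert\lvert x\rvert^2)^{-1}$ is comparable to $1$ in the first regime and to $(\lvert\lambda\rvert\lvert x\rvert^2)^{-1}$ in the second.

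For the first summand $G(x;\lambda)\delta_{\alpha\beta}$, I would invoke Lemma~\ref{Lem: Estimates on Helmholtz fundamental solution} (valid for all $x\neq 0$ since $\ell\geq 1$), which gives $\lvert\nabla_x^\ell G(x;\lambda)\rvert\leq C_\ell\,\e^{-c\sqrt{\lvert\lambda\rvert}\lvert x\rvert}\lvert x\rvert^{-\ell}$. When $\lvert\lambda\rvert\lvert x\rvert^2\leq 1$ the exponential is $\leq 1$ and $1+\lvert\lambda\rvert\lvert x\rvert^2\leq 2$, so the bound is at most $2C_\ell(1+\lvert\lambda\rvert\lvert x\rvert^2)^{-1}\lvert x\rvert^{-\ell}$. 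When $\lvert\lambda\rvert\lvert x\rvert^2>1$, I use the elementary estimate $\e^{-ct}\leq C t^{-2}$ for $t\geq 1$ (with $C$ depending only on $c$, hence on $\theta$) at $t=\sqrt{\lvert\lambda\rvert}\lvert x\rvert$, giving $\e^{-c\sqrt{\lvert\lambda\rvert}\lvert x\rvert}\leq C(\lvert\lambda\rvert\lvert x\rvert^2)^{-1}\leq 2C(1+\lvert\lambda\rvert\lvert x\rvert^2)^{-1}$; multiplying by $\lvert x\rvert^{-\ell}$ closes this piece.

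For the second summand, taking $\ell$ derivatives produces $\lvert\lambda\rvert^{-1}\lvert\nabla_x^{\ell+2}\{G(x;\lambda)-G(x;0)\}\rvert$ with $\ell+2\geq 3$, so Lemma~\ref{Lem: Difference Helmholtz} is applicable with $\ell+2$ in place of $\ell$. In the regime $\lvert\lambda\rvert\lvert x\rvert^2\leq 1$ it yields $\lvert\lambda\rvert^{-1}\cdot C\lvert\lambda\rvert\lvert x\rvert^{2-(\ell+2)}=C\lvert x\rvert^{-\ell}\leq 2C(1+\lvert\lambda\rvert\lvert x\rvert^2)^{-1}\lvert x\rvert^{-\ell}$. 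In the regime $\lvert\lambda\rvert\lvert x\rvert^2>1$, Lemma~\ref{Lem: Difference Helmholtz} is too lossy (its right-hand side grows with $\lvert\lambda\rvert$), so I would instead bound the difference by the triangle inequality: $\lvert\nabla_x^{\ell+2}G(x;\lambda)\rvert\leq C\e^{-c\sqrt{\lvert\lambda\rvert}\lvert x\rvert}\lvert x\rvert^{-\ell-2}\leq C\lvert x\rvert^{-\ell-2}$ by Lemma~\ref{Lem: Estimates on Helmholtz fundamental solution}, while $\nabla_x^{\ell+2}G(x;0)$ is homogeneous of degree $-(\ell+2)$ and hence also $\leq C\lvert x\rvert^{-\ell-2}$; multiplying by $\lvert\lambda\rvert^{-1}$ and using $1+\lvert\lambda\rvert\lvert x\rvert^2\leq 2\lvert\lambda\rvert\lvert x\rvert^2$ gives the desired $2C(1+\lvert\lambda\rvert\lvert x\rvert^2)^{-1}\lvert x\rvert^{-\ell}$. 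Summing both contributions over $\alpha,\beta\in\{1,2\}$ finishes the proof.

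The argument is essentially bookkeeping once Lemmas~\ref{Lem: Estimates on Helmholtz fundamental solution} and~\ref{Lem: Difference Helmholtz} are available; the only genuine decision is not to route the large-$\lvert\lambda\rvert\lvert x\rvert^2$ regime through Lemma~\ref{Lem: Difference Helmholtz}, but to exploit instead the exponential decay of $G(\cdot;\lambda)$ to absorb the polynomial weight $(1+\lvert\lambda\rvert\lvert x\rvert^2)$. All constants produced depend only on $\ell$ and on $\theta$, as claimed.
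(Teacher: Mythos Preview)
Your proof is correct and follows essentially the same approach as the paper: split into the regimes $\lvert\lambda\rvert\lvert x\rvert^2$ small versus large (the paper uses threshold $1/2$ rather than $1$, which is immaterial), apply Lemma~\ref{Lem: Difference Helmholtz} to the difference term in the small regime, and in the large regime use the triangle inequality together with Lemma~\ref{Lem: Estimates on Helmholtz fundamental solution} and the homogeneity of $\nabla_x^{\ell+2} G(\cdot;0)$. Your explicit remark that Lemma~\ref{Lem: Difference Helmholtz} is too lossy in the large regime is exactly why the paper also bypasses it there.
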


\begin{proof}
In the case $\lvert \lambda \rvert \lvert x \rvert^2 > 1 / 2$, the estimate immediately follows by Lemma~\ref{Lem: Estimates on Helmholtz fundamental solution} since
\begin{align*}
 \lvert \nabla_x^{\ell} \Gamma(x ; \lambda) \rvert \leq \lvert \nabla_x^{\ell} G(x ; \lambda) \rvert + \frac{1}{\lvert \lambda \rvert} \, \lvert \nabla_x^{\ell + 2} G(x ; \lambda) \rvert + \frac{1}{\lvert \lambda \rvert} \, \lvert \nabla_x^{\ell + 2} G(x ; 0) \rvert \leq \frac{C}{(1 + \lvert \lambda \rvert \lvert x \rvert^2) \lvert x \rvert^{\ell}}\,.
\end{align*}
In the case $\lvert \lambda \rvert \lvert x \rvert^2 \leq 1 / 2$, Lemma~\ref{Lem: Estimates on Helmholtz fundamental solution} and Lemma~\ref{Lem: Difference Helmholtz} yield the estimate via
\begin{align*}
 \lvert \nabla_x^{\ell} \Gamma(x ; \lambda) \rvert \leq \lvert \nabla_x^{\ell} G(x ; \lambda) \rvert + \frac{1}{\lvert \lambda \rvert} \, \big\lvert \nabla_x^{\ell + 2} \big\{ G(x ; \lambda) - G(x ; 0) \big\} \big\rvert 
 \leq \frac{C}{(1 + \lvert \lambda \rvert \lvert x \rvert^2) \lvert x \rvert^{\ell}}\,. &\qedhere
\end{align*}
\end{proof}

If $\lambda = 0$, the Stokes resolvent problem~\eqref{Eq: Stokes resolvent problem without domain} becomes just the \emph{Stokes problem} in $\IR^2$
\begin{align*}
\left\{ \begin{aligned}
 - \Delta u + \nabla \phi &= 0, \\
 \divergence(u) &= 0.
 \end{aligned}\right. 
\end{align*}
Whereas the fundamental solution for the pressure is maintained, the matrix of fundamental solutions for the velocity field with pole at the origin is given by
\begin{align*}
 \Gamma_{\alpha \beta} (x ; 0) \coloneqq \frac{1}{4 \pi} \bigg\{ - \delta_{\alpha \beta} \log (\lvert x \rvert) + \frac{x_{\alpha} x_{\beta}}{\lvert x \rvert^2} \bigg\}, \quad \alpha, \beta \in \{1, 2\}.
\end{align*}
The following theorem presents the counterpart to Lemma~\ref{Lem: Difference Helmholtz} but for the fundamental solutions for the velocity field. Its technical proof, relying on direct computations of the fundamental solutions involved, is omitted from this section and presented in Appendix~\ref{sec:A2}.

\begin{theorem}
\label{Thm: Comparison with stationary case}
Let $\lambda \in \S_{\theta}$. Suppose that $\lvert \lambda \rvert \lvert x \rvert^2 \leq 1 / 2$. Then there exists $C > 0$ depending only on $\theta$ such that
\begin{align*}
 \big\lvert \nabla_x \big\{ \Gamma(x ; \lambda) - \Gamma(x ; 0) \big\} \big\rvert \leq C \lvert \lambda \rvert \lvert x \rvert \, \big\lvert \log(\lvert \lambda \rvert \lvert x \rvert^2) \big\rvert.
\end{align*}
\end{theorem}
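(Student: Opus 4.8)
The strategy is to remove the logarithmic singularities \emph{exactly} and reduce the estimate to one scalar radial potential. A direct computation from the formula for $\Gamma(x;0)$ gives the identity
\begin{equation*}
 \Gamma_{\alpha\beta}(x;0) = G(x;0)\,\delta_{\alpha\beta} + \partial_{x_\alpha}\partial_{x_\beta} N(x), \qquad N(x) := \frac{1}{8\pi}\,|x|^2\Bigl(\log|x| - \tfrac12\Bigr),
\end{equation*}
and moreover $\Delta N = -G(\cdot;0) + \tfrac{1}{4\pi}$ away from the origin. Since by definition $\Gamma_{\alpha\beta}(x;\lambda) = G(x;\lambda)\delta_{\alpha\beta} - \tfrac{1}{\lambda}\partial_{x_\alpha}\partial_{x_\beta}\{G(x;\lambda) - G(x;0)\}$, writing $H := G(\cdot;\lambda) - G(\cdot;0)$ and $W := \tfrac{1}{\lambda}H + N$ yields the exact decomposition
\begin{equation*}
 \Gamma_{\alpha\beta}(x;\lambda) - \Gamma_{\alpha\beta}(x;0) = H(x)\,\delta_{\alpha\beta} - \partial_{x_\alpha}\partial_{x_\beta}W(x),
\end{equation*}
so it suffices to prove, for $|\lambda||x|^2 \le 1/2$, the two bounds $|\nabla_x H(x)| \le C|\lambda||x|\,|\log(|\lambda||x|^2)|$ and $|\nabla_x^3 W(x)| \le C|\lambda||x|\,|\log(|\lambda||x|^2)|$, with $C$ depending only on $\theta$.

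For the first bound I would refine the small-argument analysis already underlying Lemma~\ref{Lem: Difference Helmholtz}. Using the expansion of $H_0^{(1)}$ near the origin, $H$ splits as $H(x) = c_0(\lambda) + \Psi(k|x|)$, where $k$ is as in Section~\ref{Sec: Properties of the matrix of fundamental solutions to the Stokes resolvent problem} (so $k^2=-\lambda$ and $\arg k\in(\theta/2,\pi-\theta/2)$), $c_0(\lambda) = \tfrac{\ii}{4} - \tfrac{1}{2\pi}(\log(k/2)+\gamma)$ carries the logarithmic blow-up as $\lambda\to0$, and
\begin{equation*}
 \Psi(z) = \frac{\ii}{4}\bigl(H_0^{(1)}(z) - 1\bigr) + \frac{1}{2\pi}\log z + \frac{\gamma - \log 2}{2\pi}
\end{equation*}
vanishes to second order at $z=0$. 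From the series (or from~\eqref{Eq: Hankel}) one reads off, for $z$ in the relevant sector with $|z|$ bounded, the bounds $|\Psi^{(\ell)}(z)| \le C_\ell|z|^{2-\ell}(|\log|z|| + 1)$ for $\ell = 0,1,2$ and — thanks to the cancellation of the $z^{-3}$ terms — $|\Psi'''(z)| \le C|z|^{-1}$. Since $|k|^2 = |\lambda|$ and $|\log(|k|\,|x|)| = \tfrac12|\log(|\lambda||x|^2)|$, differentiating $H(x) = c_0(\lambda) + \Psi(k|x|)$ gives $|\nabla_x H(x)| \le |k|\,|\Psi'(k|x|)| \le C|\lambda||x|(|\log(|\lambda||x|^2)| + 1)$, and the $+1$ is absorbed into $|\log(|\lambda||x|^2)|$ because $|\lambda||x|^2 \le 1/2$.

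For the second bound the point is that $W$ is \emph{radial} and solves $\Delta W = H + \tfrac{1}{4\pi} = \bigl(c_0(\lambda)+\tfrac{1}{4\pi}\bigr) + \Psi(k|\cdot|)$ in $\IR^2\setminus\{0\}$, and is bounded near the origin; hence $W(s) = a + \tfrac{c_0(\lambda)+1/(4\pi)}{4}\,s^2 + \rho(s)$, where $a$ is a constant and
\begin{equation*}
 \rho(s) := \int_0^s\frac{1}{t}\int_0^t\tau\,\Psi(k\tau)\,\d\tau\,\d t
\end{equation*}
is the particular radial solution of $\Delta[\rho(|\cdot|)] = \Psi(k|\cdot|)$ bounded at $0$. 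The first two terms are harmonic polynomials, so $\nabla_x^3 W = \nabla_x^3[\rho(|\cdot|)]$. Using $|\Psi(k\tau)| \le C|\lambda|\tau^2(|\log(|\lambda|\tau^2)| + 1)$ together with the elementary estimate $\int_0^s\tau^3(|\log(|\lambda|\tau^2)|+1)\,\d\tau \le Cs^4(|\log(|\lambda|s^2)|+1)$ — valid for $|\lambda|s^2\le 1/2$, where the logarithm keeps a fixed sign — one gets $|\rho'(s)| \le C|\lambda|s^3(|\log(|\lambda|s^2)|+1)$, and then from $(s\rho'(s))' = s\,\Psi(ks)$ successively $|\rho''(s)| \le C|\lambda|s^2(|\log(|\lambda|s^2)|+1)$ and $|\rho'''(s)| \le C|\lambda|s(|\log(|\lambda|s^2)|+1)$. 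Since a radial function $g(|\cdot|)$ satisfies $|\nabla^3[g(|\cdot|)](x)| \le C\bigl(|g'''(s)| + s^{-1}|g''(s)| + s^{-2}|g'(s)|\bigr)$ with $s=|x|$, all three contributions here are $O\bigl(|\lambda||x|(|\log(|\lambda||x|^2)|+1)\bigr)$, whence, absorbing the $+1$ once more, $|\nabla_x^3 W(x)| \le C|\lambda||x|\,|\log(|\lambda||x|^2)|$. Combining the two bounds with the decomposition above proves the theorem; note that no splitting into sub-ranges of $|\lambda||x|^2$ is needed, since the radial ODE argument is valid on the whole range $|\lambda||x|^2\le1/2$.

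The only genuinely technical step is the analysis of $\Psi$ in the second paragraph: establishing that $\Psi$ vanishes to second order at $0$ with the stated derivative bounds, and in particular that $\Psi'''$ carries no logarithmic factor. This is of exactly the same nature as the asymptotic computations deferred to Appendix~\ref{sec:A1} for Lemma~\ref{Lem: Difference Helmholtz}, and I would carry it out there; everything else — the algebraic identity for $\Gamma(x;0)$, the Poisson equation for $W$, and the one-dimensional integrations — is elementary, and it is clear from the construction that all constants depend only on $\theta$ (through the aperture of the sector containing $\arg k$ and through $|k|=\sqrt{|\lambda|}$).
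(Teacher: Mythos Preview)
Your proof is correct and takes a genuinely different route from the paper's.  The paper's argument in Appendix~\ref{sec:A2} proceeds by brute-force series expansion: write out $\partial_\gamma\Gamma_{\alpha\beta}(x;\lambda)$ via the expansions of $H_0^{(1)}$ computed at the start of the appendix, list all ``problematic'' terms (those that cannot individually be bounded by $|\lambda|\,|x|\,|\log(|\lambda||x|^2)|$), and then verify by hand that the sum of these problematic terms cancels exactly against $\partial_\gamma\Gamma_{\alpha\beta}(x;0)$.  This is completely explicit but tensorial and term-by-term.

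Your approach packages the same cancellation structurally.  The identity $\Gamma_{\alpha\beta}(x;0)=G(x;0)\delta_{\alpha\beta}+\partial_\alpha\partial_\beta N(x)$ is exactly what forces the problematic terms in the paper's computation to cancel; once it is in hand, the difference $\Gamma(x;\lambda)-\Gamma(x;0)$ collapses to $H\,\mathrm{Id}-\nabla^2 W$ with a single \emph{radial} scalar $W$ satisfying $\Delta W = H + \tfrac{1}{4\pi}$, and the third-derivative estimate reduces to the one-dimensional ODE for $\rho$.  The residual technical input --- the bounds on $\Psi^{(\ell)}$, including the absence of a logarithm in $\Psi'''$ --- is precisely of the type carried out in Appendix~\ref{sec:A1} (indeed, from the series $\Psi(z)=-\tfrac{1}{2\pi}\sum_{\ell\ge1}a_\ell z^{2\ell}(C_\ell+\log z)$ one sees that for $\ell=1$ the coefficient $2\ell(2\ell-1)(2\ell-2)$ of $z^{-1}\log z$ in $\Psi'''$ vanishes), so nothing is swept under the rug.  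What you gain is that the multi-index bookkeeping (the labels (Q1)--(Q4) in the paper) is replaced by a scalar computation; what the paper's approach gains is that every cancellation is displayed explicitly, with no auxiliary functions introduced.  Both rely on the same asymptotics of $H_0^{(1)}$ and yield constants depending only on~$\theta$.
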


\subsection{A sketch of the method of single and double layer potentials}
\label{Sec: Single and double layer potentials}

Theorem~\ref{Thm: Dirichlet} is proved by employing the method of single and double layer potentials. As the estimates on the fundamental solutions established in Subsection~\ref{Sec: Properties of the matrix of fundamental solutions to the Stokes resolvent problem} allow to literally follow the lines of the three and higher dimensional case, see~\cite[Sect.~3--5]{Shen}, we only sketch the philosophical pillars of the proof.

Let $\Xi \subset \IR^2$ denote a bounded Lipschitz domain with connected boundary. Fix $\theta \in (0 , \pi / 2)$ and let $\lambda \in \S_{\theta}$. For $f \in \L^p (\partial \Xi ; \IC^2)$, $1 < p < \infty$, define the \textit{single layer potential for the velocity field} $u \coloneqq \cS_{\lambda} (f)$ by
\begin{equation}\label{Eq: Single layer potential}
 (\cS_{\lambda} (f))_{j} (x) \coloneqq \int_{\partial \Xi} \Gamma_{j k} (x - y ; \lambda) f_k (y) \, \d \sigma (y), \quad j \in \{1,2\}.
\end{equation}
Notice that, throughout this section, we tacitly sum over repeated indices. The corresponding \textit{single layer potential for the pressure} $\phi \coloneqq \cS_{\Phi} (f)$ is given by
\begin{align*}
 \cS_{\Phi} (f) (x) \coloneqq \int_{\partial \Xi} \Phi_k (x - y) f_k (y) \, \d \sigma(y).
\end{align*}
By the smoothness of $\Gamma(\, \cdot \, ; \lambda)$ and the vector $\Phi = (\Phi_1 , \Phi_2)$ together with~\eqref{Eq: Fundamental solutions solve equation}, one directly verifies that $u$ and $\phi$ are smooth in $\IR^2 \setminus \partial \Xi$ and that $(u , \phi)$ solves the Stokes resolvent problem~\eqref{Eq: Stokes resolvent problem without domain} in $\IR^2 \setminus \partial \Xi$.

Another way of defining solutions to the Stokes resolvent problem~\eqref{Eq: Stokes resolvent problem without domain} in $\IR^2 \setminus \partial \Xi$ is by defining the so-called double layer potentials. 
To introduce these double layer potentials, let $n$ denote the outward unit normal to $\partial \Xi$ and let, for a vector field $v$ and a scalar function $\psi$, the \emph{conormal derivative} of $v$ and $\psi$ to $\partial \Xi$ be defined by
\begin{align*}
 \partial_{\nu} ( v , \psi ) \coloneqq \partial_n v - \psi n.
\end{align*}
The \textit{double layer potential for the velocity field} $u \coloneqq \cD_{\lambda} (f)$ is now given by the convolution of $f$ with a conormal derivative 
\begin{align}\label{Eq: Double layer potential}
 (\cD_{\lambda} (f))_j (x) &\coloneqq \int_{\partial \Xi} \big\{ \partial_{\nu} \big( \Gamma_{j \cdot} (\cdot - x ; \lambda) , \Phi_j (\cdot - x) \big) \big\}_k(y) f_k (y) \, \d \sigma (y) \\
 &\phantom{:}= \int_{\partial \Xi} \Big\{ \big\{ \partial_{y_i} \Gamma_{j k} (y - x ; \lambda) \big\} n_i (y) - \Phi_j (y - x) n_k (y) \Big\} f_k (y) \, \d \sigma (y).\nonumber
\end{align}
Here, we regarded $\Gamma_{j \cdot} (\, \cdot \, ; \lambda)$ as the vector $(\Gamma_{j k} (\, \cdot \, ; \lambda))_{k = 1}^2$. The corresponding \textit{double layer potential for the pressure} $\phi \coloneqq \cD_{\Phi} (f)$ is defined as
\begin{align*}
 \cD_{\Phi} (f) (x) \coloneqq \partial_{x_i} \partial_{x_k} \int_{\partial \Xi} G(y - x ; 0) n_i (y) f_k (y) \, \d \sigma (y) + \lambda \int_{\partial \Xi} G(y - x ; 0) n_k (y) f_k (y) \, \d \sigma (y).
\end{align*}
Again, one shows that $u$ and $\phi$ are smooth in $\IR^2 \setminus \partial \Xi$ and that $(u , \phi)$ solves~\eqref{Eq: Stokes resolvent problem without domain} in $\IR^2 \setminus \partial \Xi$.

Recall that, if one would replace in the definition of the single layer potential the fundamental solution by the Green's function, then the single layer potential operators would correspond to solving inhomogeneous Neumann boundary value problems with boundary value $f$. 
Moreover, in this situation, the double layer potential operators would correspond to solving inhomogeneous Dirichlet boundary value problems with boundary value $f$. 
As, unlike for the Green's functions, explicit estimates on the fundamental solutions are known from Subsection~\ref{Sec: Properties of the matrix of fundamental solutions to the Stokes resolvent problem}, one preferably works with the fundamental solutions $\Gamma(x; \lambda)$ and $\Phi(x)$. 
This, however, produces an error in the sense that, not even formally, neither the conormal derivative of $\cS_{\lambda} (f)$ and $\cS_{\Phi} (f)$ to $\partial \Xi$ coincides with $f$, nor the trace of $\cD_{\lambda} (f)$ to $\partial \Xi$ coincides with $f$. The main goal of the method of single and double layer potentials is then the following:
\medbreak
\begin{center}
\begin{tabular}{p{0.9\textwidth}}
\textit{Find a one-to-one correspondence between $f$ and $\partial_{\nu} (\cS_{\lambda} (f) , \cS_{\Phi} (f))|_{\partial \Xi}$ and also between~$f$ and $\cD_{\lambda} (f)|_{\partial \Xi}$ for $f$ in appropriate subspaces of $\L^2 (\partial \Xi ; \IC^2)$. Furthermore, derive ``good'' quantitative estimates between the involved objects.
}
\end{tabular}
\end{center}
\medbreak
The formulation of this goal is very imprecise, for it is not clear what the \emph{trace to $\partial \Xi$} should even mean. 
The apparent choice of non-tangential convergence, see Definition~\ref{Def: Nontangential maximal functions}, to define a trace to $\partial \Xi$ bears the problem of introducing a jump across the boundary $\partial \Xi$ due to a discrepancy between the non-tangential limit inside of $\Xi$ and outside of $\overline{\Xi}$, see~\cite[Sect.~3]{Shen}.
Using a non-tangential limit, see Definition~\ref{Def: Nontangential maximal functions}, as a notion for the trace to $\partial \Xi$ bears the problem of a discrepancy between the non-tangential limit inside of $\Xi$ and outside of $\overline{\Xi}$, see~\cite[Sect.~3]{Shen}.

If this main goal can be answered satisfactory, then, for a given boundary value $g$, the solutions $u$ and $\phi$ to the Dirichlet problem~\eqref{Dir} will be given by $u = \cD_{\lambda} (f)$ and $\phi = \cD_{\Phi} (f)$, where $f$ is chosen in such a way that the non-tangential limit of $u$ inside of $\Xi$ coincides with $g$.

To make this more precise, relying on the estimates established in Subsection~\ref{Sec: Properties of the matrix of fundamental solutions to the Stokes resolvent problem}, one can prove along the lines of~\cite[Sect.~3]{Shen} the following lemma. But, first, recall the definitions of the non-tangential maximal functions in Definition~\ref{Def: Nontangential maximal functions}.

\begin{lemma}
\label{Lem: Properties single layer potential}
Let $1 < p < \infty$ and $f \in \L^p (\partial \Xi ; \IC^2)$. There exists a constant $C > 0$ depending only on $\theta$, $p$, and the Lipschitz character of $\Xi$ such that $u$ and $\phi$ given by $u = \cS_{\lambda} (f)$ and $\phi = \cS_{\Phi} (f)$ satisfy
\begin{align*}
 \| (\nabla u)^*_\interior \|_{\L^p (\partial \Xi)} + \| (\nabla u)^*_\exterior \|_{\L^p (\partial \Xi)} + \| (\phi)^*_\interior \|_{\L^p (\partial \Xi)} + \| (\phi)^*_\exterior \|_{\L^p (\partial \Xi)} \leq C \| f \|_{\L^p (\partial \Xi ; \IC^2)}.
\end{align*}
Furthermore, the non-tangential limits of $\partial_j u_i$ for $i , j \in \{1 , 2\}$ and $\phi$ from inside of $\Xi$ and outside of $\overline{\Xi}$ exist and are given by
\begin{align*}
 (\partial_j u_i)_{\pm} (x) &= \pm \frac{1}{2} \big\{ n_j (x) f_i (x) - n_i (x) n_j (x) n_k (x) f_k (x) \big\} + \pv \int_{\partial \Xi} \big\{ \partial_{x_j} \Gamma_{i k} (x - y ; \lambda) \big\} f_k (y) \, \d \sigma (y) \\
 \phi_{\pm} (x) &= \mp \frac{1}{2} n_k (x) f_k (x) + \pv \int_{\partial \Xi} \Phi_k (x - y) f_k (y) \, \d \sigma (y)
\end{align*}
for almost every $x \in \partial \Xi$. The subscripts $+$ and $-$ indicate non-tangential limits taken inside $\Xi$ and outside $\overline{\Xi}$, respectively.
\end{lemma}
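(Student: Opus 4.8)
The plan is to follow Shen's treatment of the single layer potential in \cite[Sect.~3]{Shen} (see also \cite[p.~208]{Shen_lectures}), whose only dimension-specific input is a family of pointwise bounds on the fundamental solution --- now available in the plane from Subsection~\ref{Sec: Properties of the matrix of fundamental solutions to the Stokes resolvent problem}. First I would record that $u = \cS_{\lambda}(f)$ and $\phi = \cS_{\Phi}(f)$ are smooth off $\partial\Xi$ and solve~\eqref{Eq: Stokes resolvent problem without domain} there (already observed above), so that only the boundary behaviour is at stake. Then I would split the velocity kernel into its stationary part and a remainder,
\begin{align*}
 \Gamma(x ; \lambda) = \Gamma(x ; 0) + R(x ; \lambda), \qquad R(x ; \lambda) \coloneqq \Gamma(x ; \lambda) - \Gamma(x ; 0),
\end{align*}
noting that the pressure kernel $\Phi$ carries no $\lambda$. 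The stationary pieces are handled by the classical Calder\'on--Zygmund theory of layer potentials on Lipschitz domains; the $R$-piece perturbatively.

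For the stationary part: each kernel $\partial_{x_j}\Gamma_{ik}(\,\cdot\, ; 0)$ and each $\Phi_k(\,\cdot\,)$ is odd and positively homogeneous of degree $-1$, hence a Calder\'on--Zygmund kernel on the rectifiable curve $\partial\Xi$. By the Coifman--McIntosh--Meyer theorem the associated principal value operators are bounded on $\L^p(\partial\Xi)$ for $1 < p < \infty$, with constants depending only on $p$ and the Lipschitz character of $\Xi$, and the maximal truncated operators are bounded as well via Cotlar's inequality. I would then invoke the standard passage from these $\L^p$-bounds to the non-tangential maximal function estimates for $(\nabla\cS_0 f)^*_{\interior/\exterior}$ and $(\cS_{\Phi} f)^*_{\interior/\exterior}$, together with the classical computation of the boundary traces of $\partial_j \cS_0(f)_i$ and $\cS_{\Phi}(f)$ (Fabes--Kenig--Verchota; see also \cite[Sect.~3]{Shen}), which produces exactly the jump terms $\pm\tfrac12\{ n_j f_i - n_i n_j n_k f_k \}$ and $\mp\tfrac12 n_k f_k$ of the statement.

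For the remainder I would combine Theorem~\ref{Thm: Comparison with stationary case}, giving $\lvert \nabla_x R(x ; \lambda) \rvert \lesssim \lvert\lambda\rvert\lvert x\rvert\,\lvert\log(\lvert\lambda\rvert\lvert x\rvert^2)\rvert$ for $\lvert\lambda\rvert\lvert x\rvert^2 \leq 1/2$, with Theorem~\ref{Thm: Derivative of Gamma}, giving $\lvert \nabla_x \Gamma(x ; \lambda) \rvert \lesssim \lvert\lambda\rvert^{-1}\lvert x\rvert^{-3}$ for $\lvert\lambda\rvert\lvert x\rvert^2 > 1/2$. The point is to measure the remainder operator not against the full stationary kernel but against its truncation, i.e.\ to set $K_{\lambda}(x , y) \coloneqq \partial_{x_j}\Gamma_{ik}(x - y ; \lambda) - \ind_{\{\lvert x - y\rvert \leq (2\lvert\lambda\rvert)^{-1/2}\}}\,\partial_{x_j}\Gamma_{ik}(x - y ; 0)$, absorbing the truncated stationary part into the Calder\'on--Zygmund operator of the previous step. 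Parametrising $\partial\Xi$ by arclength and substituting $u = \lvert\lambda\rvert\rho^2$ with $\rho = \lvert x - y\rvert$, the near part of $\sup_x \int_{\partial\Xi}\lvert K_{\lambda}(x , y)\rvert\,\d\sigma(y)$ is bounded by a constant multiple of $\int_0^{1/2}\lvert\log u\rvert\,\d u$ and the far part by $\int_{1/2}^{\infty} u^{-2}\,\d u$, both finite \emph{independently of} $\lambda \in \S_{\theta}$; the symmetric bound with $x$ and $y$ exchanged is identical. Schur's test then gives $\L^p(\partial\Xi)$-boundedness uniform in $\lambda$, local integrability of $K_{\lambda}$ makes the corresponding potential continuous up to $\partial\Xi$ from both sides so that it contributes no jump, a routine maximal-function domination handles its non-tangential maximal function, and recombining the principal value against $\partial_{x_j}\Gamma_{ik}(\,\cdot\, ; 0)$ with the absolutely convergent integral against $\partial_{x_j}R_{ik}(\,\cdot\, ; \lambda)$ restores the principal value against $\partial_{x_j}\Gamma_{ik}(\,\cdot\, ; \lambda)$ appearing in the statement.

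The main obstacle is the uniformity in $\lambda \in \S_{\theta}$ of the remainder estimate: it forces the splitting at the scale $\lvert x - y\rvert \sim \lvert\lambda\rvert^{-1/2}$ and a careful marriage of the two pointwise regimes of Subsection~\ref{Sec: Properties of the matrix of fundamental solutions to the Stokes resolvent problem}, the crux being to keep the logarithmically divergent tail of the degree $-1$ homogeneous stationary kernel \emph{inside} the Calder\'on--Zygmund operator, where Cotlar's inequality controls it, rather than trying to bound it by absolute integration. The remaining work --- the passage from $\L^p$-boundedness of the singular integrals to the non-tangential maximal function bounds on a Lipschitz domain --- is classical but cone-geometry dependent; since the fundamental solution bounds of Subsection~\ref{Sec: Properties of the matrix of fundamental solutions to the Stokes resolvent problem} are structurally identical to their higher-dimensional analogues, I would transcribe the arguments of \cite[Sect.~3]{Shen} essentially verbatim rather than reproduce them.
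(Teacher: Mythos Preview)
Your proposal is correct and follows exactly the approach the paper indicates: decompose $\nabla\cS_\lambda(f) = \nabla\cS_0(f) + (\nabla\cS_\lambda(f) - \nabla\cS_0(f))$, invoke the classical $\lambda = 0$ theory for the first piece, and treat the remainder as a weakly singular perturbation via the kernel estimates of Subsection~\ref{Sec: Properties of the matrix of fundamental solutions to the Stokes resolvent problem}. Your Schur-test argument with the truncation at scale $\lvert\lambda\rvert^{-1/2}$ makes explicit the $\lambda$-uniformity that the paper leaves implicit in its reference to~\cite[Sect.~3]{Shen}; the only adjustment is that for the two-dimensional $\lambda = 0$ layer-potential theory the paper points to Mitrea--Wright~\cite{Mitrea_Wright} and Brown--Perry--Shen~\cite{Brown_Perry_Shen} rather than Fabes--Kenig--Verchota (which treats $d \geq 3$), though of course both rest on Coifman--McIntosh--Meyer as you note.
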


\begin{remark}
We remark at this point that Shen also derived bounds of the form
\begin{align*}
 \lvert \lambda \rvert^{1 / 2} \| (u)_\interior^* \|_{\L^p (\partial \Xi)} + \lvert \lambda \rvert^{1 / 2} \| (u)_\exterior^* \|_{\L^p (\partial \Xi)} \leq C \| f \|_{\L^p (\partial \Xi ; \IC^d)}
\end{align*} 
which are based on Shen's version of Lemma~\ref{Lem: Estimates on Helmholtz fundamental solution} which, in three and more dimensions, also holds in the case $\ell = 0$ with no restriction on $x$, cf.~\cite[Lem.~2.1]{Shen}. As this estimate is not needed in the derivation of the required estimates, we do not pursue this issue any further. Let us mention that $(u)_\interior^*$ and $(u)_\exterior^*$ lie in $\L^p (\partial \Xi)$ anyway whenever $(\nabla u)_\interior^*$ and $(\nabla u)_\exterior^*$ lie in $\L^2 (\partial \Xi)$ by~\cite[Prop.~8.1.3]{Shen_lectures}.
\end{remark}

Having Lemma~\ref{Lem: Properties single layer potential} at one's disposal, one can calculate non-tangential limits of the conormal derivative of $u = \cS_{\lambda} (f)$ and $\phi = \cS_{\Phi} (f)$ from inside $\Xi$ and outside $\overline{\Xi}$. 
The result reads
\begin{align}
\label{Eq: Conormal single layer}
 \big( \partial_{\nu} (u , \phi) \big)_{\pm} = \Big( \pm \frac{1}{2} \Id + \, \cK_{\lambda} \Big) f
\end{align}
for some singular integral operator $\cK_{\lambda}$ that is bounded on $\L^p (\partial \Xi ; \IC^2)$ for all $1 < p < \infty$. 
In particular, as a consequence of Theorem~\ref{Thm: Derivative of Gamma}, this operator satisfies the estimate
\begin{align*}
 \| \cK_{\lambda} f \|_{\L^p (\partial \Xi ; \IC^2)} \leq C \| f \|_{\L^p (\partial \Xi ; \IC^2)}
\end{align*}
for all $f \in \L^p (\partial \Xi ; \IC^2)$ and some constant $C > 0$ depending only on $\theta$, $p$, and the Lipschitz character of $\Xi$. \par
Since the double layer potential for the velocity field~\eqref{Eq: Double layer potential} is built up by derivatives of the single layer potentials for the velocity field and the pressure, one can use Lemma~\ref{Lem: Properties single layer potential} to prove the following result.

\begin{lemma}
\label{Lem: Properties double layer potential}
Let $1 < p < \infty$ and $f \in \L^p (\partial \Xi ; \IC^2)$. There exists a constant $C > 0$ depending only on $\theta$, $p$, and the Lipschitz character of $\Xi$ such that $u$ given by $u = \cD_{\lambda} (f)$ satisfies
\begin{align*}
 \| (u)_\interior^* \|_{\L^p (\partial \Xi)} + \| (u)_\exterior^* \|_{\L^p (\partial \Xi)} \leq C \| f \|_{\L^p (\partial \Xi ; \IC^2)}.
\end{align*}
\end{lemma}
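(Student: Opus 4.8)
The plan is to realize $\cD_{\lambda}(f)$ as a gradient of a single layer velocity potential plus a single layer pressure potential, and then to invoke Lemma~\ref{Lem: Properties single layer potential} directly; this transcribes the argument of~\cite[Sect.~3]{Shen} to the planar case.

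First I would record two parity facts about the fundamental solutions of Subsection~\ref{Sec: Properties of the matrix of fundamental solutions to the Stokes resolvent problem}: each entry $\Gamma_{jk}(\,\cdot\,;\lambda)$ is \emph{even}, being assembled from the radial functions $G(\,\cdot\,;\lambda)$, $G(\,\cdot\,;0)$ and an even number of derivatives, whereas $\Phi_j = -\partial_{x_j} G(\,\cdot\,;0)$ is \emph{odd}. Hence $\Gamma_{jk}(y-x;\lambda) = \Gamma_{jk}(x-y;\lambda)$ and $\partial_{y_i}\Gamma_{jk}(y-x;\lambda) = -\partial_{x_i}\Gamma_{jk}(y-x;\lambda)$ for $x \neq y$. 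For $x \in \IR^2 \setminus \partial\Xi$ the kernels in~\eqref{Eq: Double layer potential} are smooth in $x$ with all $x$-derivatives dominated by an $\L^1(\d\sigma)$ function of $y$ (by Theorem~\ref{Thm: Derivative of Gamma} and smoothness of $\Phi$ off the origin), so one may differentiate under the integral sign. Feeding these remarks into~\eqref{Eq: Double layer potential} gives, for every $j \in \{1,2\}$ and $x \in \IR^2 \setminus \partial\Xi$,
\begin{align*}
 (\cD_{\lambda}(f))_j(x) = -\sum_{i=1}^2 \partial_{x_i}\big(\cS_{\lambda}(n_i f)\big)_j(x) + \cS_{\Phi}\big((n \cdot f)\, e_j\big)(x),
\end{align*}
where $n_i f := (n_i f_1, n_i f_2) \in \L^p(\partial\Xi;\IC^2)$, $e_j$ is the $j$-th standard unit vector, and both boundary data $n_i f$ and $(n\cdot f)\, e_j$ have $\L^p(\partial\Xi;\IC^2)$-norm at most $\|f\|_{\L^p(\partial\Xi;\IC^2)}$.

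Since this identity is valid on $\Xi$ as well as on $\overline{\Xi}^c$, one has pointwise on every interior non-tangential approach region
\begin{align*}
 \big|(\cD_{\lambda}(f))_j(x)\big| \leq \sum_{i=1}^2 \big|\nabla\big(\cS_{\lambda}(n_i f)\big)(x)\big| + \big|\cS_{\Phi}\big((n\cdot f)\, e_j\big)(x)\big|,
\end{align*}
and likewise on exterior approach regions; taking suprema gives $((\cD_{\lambda}(f))_j)^*_{\interior} \leq \sum_i (\nabla\cS_{\lambda}(n_i f))^*_{\interior} + (\cS_{\Phi}((n\cdot f)\, e_j))^*_{\interior}$, and the same with $\exterior$. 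Now Lemma~\ref{Lem: Properties single layer potential}, applied to each of the finitely many boundary data $n_i f$ and $(n\cdot f)\, e_j$, bounds every summand on the right in $\L^p(\partial\Xi)$ by $C\|f\|_{\L^p(\partial\Xi;\IC^2)}$ with $C$ depending only on $\theta$, $p$ and the Lipschitz character of $\Xi$. Summing over $i$ and $j$ and using the triangle inequality (together with $|(\cD_{\lambda}(f))(x)| \leq \sum_j |(\cD_{\lambda}(f))_j(x)|$) yields the claim.

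The proof is essentially bookkeeping; the one spot requiring care is keeping the arguments $x-y$ versus $y-x$ and the parity-induced signs straight, so that one genuinely lands on $\nabla_x \cS_{\lambda}$ and $\cS_{\Phi}$ rather than on a more singular kernel. Once that is arranged, all the analysis is already contained in Lemma~\ref{Lem: Properties single layer potential}, and no further difficulty arises.
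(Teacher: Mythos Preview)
Your argument is correct and is exactly the approach the paper sketches: it says that the double layer potential for the velocity field ``is built up by derivatives of the single layer potentials for the velocity field and the pressure, one can use Lemma~\ref{Lem: Properties single layer potential} to prove the following result,'' and you have made this precise by the identity $(\cD_\lambda(f))_j = -\sum_i \partial_{x_i}(\cS_\lambda(n_i f))_j + \cS_\Phi((n\cdot f)e_j)$ and then invoking the $\L^p$-bounds for $(\nabla \cS_\lambda(\cdot))^*$ and $(\cS_\Phi(\cdot))^*$. The parity bookkeeping and the justification for differentiating under the integral are handled correctly.
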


Besides the estimates on the interior and exterior non-tangential maximal functions, one is also in the position to evaluate the non-tangential limits from inside of $\Xi$ and outside of $\overline{\Xi}$ of $u = \cD_{\lambda} (f)$. In this case, the non-tangential limits are given by
\begin{equation}\label{Eq: Dirichlet limit}
 (u)_{\pm} = \Big( \mp \frac{1}{2} \Id + \, \cK_{\overline{\lambda}}^* \Big) f,
\end{equation}
where $\cK_{\overline{\lambda}}^*$ is the adjoint of the operator $\cK_{\overline{\lambda}}$ from~\eqref{Eq: Conormal single layer} but with resolvent parameter being $\overline{\lambda}$.

We shortly remark that the main thought of the proofs of the presented results so far is the following: it is known that the results hold in the case $\lambda = 0$, see Fabes, Kenig, and Verchota~\cite{Fabes_Kenig_Verchota} in the case of three and more dimensions and the works of Brown, Mitrea, and Wright~\cite{Mitrea_Wright} and Brown, Perry, and Shen \cite{Brown_Perry_Shen} in the case of two and more dimensions. 
For example, to derive statements for $\nabla \cS_{\lambda} (f)$, one uses the decomposition
\begin{align*}
 \nabla \cS_{\lambda} (f) =  \nabla \cS_0 (f) + \big(\nabla \cS_{\lambda} (f) - \nabla \cS_0 (f) \big) .
\end{align*}
Then, one uses the desired property for $\nabla \cS_0 (f)$ which is already known by~\cite{Fabes_Kenig_Verchota, Mitrea_Wright} and derives the same property for $\nabla \cS_{\lambda} (f) - \nabla \cS_0 (f)$ as, for the difference, the involved integral kernel has a very weak singularity due to Theorem~\ref{Thm: Comparison with stationary case}.

A consequence of the discussion above and~\eqref{Eq: Dirichlet limit} is that Theorem~\ref{Thm: Dirichlet} follows, once we find a suitable subspace $U \subset \L^2 (\partial \Xi ; \IC^2)$ such that $(- \frac{1}{2} \Id + \, \cK_{\overline{\lambda}}^*)|_U$ is invertible and has range $\L^2_n (\partial \Xi)$. Moreover, to prove that the constant in the estimate~\eqref{Eq: Stability estimate} of the non-tangential maximal function $(u)^*_\interior$ only depends on $\theta$, the Lipschitz character of $\Xi$, and uniformly on the diameter of $\Xi$ one has to show---having Lemma~\ref{Lem: Properties double layer potential} in mind---the existence of $C > 0$ depending only on $\theta$, the Lipschitz character of $\Xi$, and $\alpha, \beta > 0$ (where $\alpha$ and $\beta$ satisfy $\alpha \leq \diam(\Xi) \leq \beta$) such that, for all $f \in U$, it holds that
\begin{align}
\label{Eq: Lower estimate on potential}
 \| f \|_{\L^2 (\partial \Xi ; \IC^2)} \leq C \| (- \tfrac{1}{2} \Id + \, \cK_{\overline{\lambda}}^*) f \|_{\L^2 (\partial \Xi ; \IC^2)}.
\end{align}

This analysis is performed in~\cite[Sect.~5]{Shen} and fundamentally relies on Fredholm theory. 
Indeed, it is known~\cite[Eq.~(5.165)]{Mitrea_Wright} that $- \frac{1}{2} \Id + \, \cK_0$ is a Fredholm operator with index zero. Now, $- \frac{1}{2} \Id + \, \cK_{\overline{\lambda}}$ is a compact perturbation of $- \frac{1}{2} \Id + \, \cK_0$ since
\begin{align*}
 - \frac{1}{2} \Id + \, \cK_{\overline{\lambda}} = - \frac{1}{2} \Id + \, \cK_0 + \big( \cK_{\overline{\lambda}} - \cK_0 \big)
\end{align*}
and since $\cK_{\overline{\lambda}} - \cK_0$ is a compact operator thanks to the good kernel estimate in Theorem~\ref{Thm: Comparison with stationary case}. It follows that $- \frac{1}{2} \Id + \, \cK_{\overline{\lambda}}$ is also a Fredholm operator with index zero. 
Due to
\begin{align*}
 \Rg(- \tfrac{1}{2} \Id + \, \cK_{\overline{\lambda}}^*) = \overline{\Rg(- \tfrac{1}{2} \Id + \, \cK_{\overline{\lambda}}^*)} = \ker(- \tfrac{1}{2} \Id + \, \cK_{\overline{\lambda}})^{\perp}
\end{align*}
and
\begin{align*}
 \ker (- \tfrac{1}{2} \Id + \, \cK_{\overline{\lambda}}^*)^{\perp} = \overline{\Rg(- \tfrac{1}{2} \Id + \, \cK_{\overline{\lambda}})} = \Rg(- \tfrac{1}{2} \Id + \, \cK_{\overline{\lambda}})\,,
\end{align*}
one defines the desired space $U$ to be $U \coloneqq \Rg(- \tfrac{1}{2} \Id + \, \cK_{\overline{\lambda}})$. Furthermore, it turns out that one can compute the kernel of $- \frac{1}{2} \Id + \, \cK_{\overline{\lambda}}$ to be the linear span in $\L^2 (\partial \Xi ; \IC^2)$ of the normal vector $n$. This results in
\begin{align*}
 \Rg(- \tfrac{1}{2} \Id + \, \cK_{\overline{\lambda}}^*) = \L^2_n (\partial \Xi).
\end{align*}
The key result that allows to deduce that the kernel of $- \frac{1}{2} \Id + \, \cK_{\overline{\lambda}}$ equals the linear span of the normal vector $n$ in $\L^2 (\partial \Xi ; \IC^2)$ are so-called \textit{Rellich estimates}, see~\cite[Sect.~4]{Shen}. These estimates further allow to prove the existence of a constant $C > 0$ depending only on $\theta$, the Lipschitz character of $\Xi$, $\alpha$, and $\beta$ such that, for all $f \in \L^2_n (\partial \Xi)$, it holds that
\begin{align*}
  \| f \|_{\L^2 (\partial \Xi ; \IC^2)} \leq C \| (- \tfrac{1}{2} \Id + \, \cK_{\overline{\lambda}}) f \|_{\L^2 (\partial \Xi ; \IC^2)}.
\end{align*}
By duality, this gives the bound in~\eqref{Eq: Lower estimate on potential} with the same constant $C$.

Finally, we remark that the Rellich estimates established in~\cite[Sect.~4]{Shen} do not depend on properties of the fundamental solutions but are established by very clever integrations by parts. This approach carries over from three and more dimensions to two dimensions word by word, so we will not go into details here. 
We only mention that the resolutions of the inhomogeneous Dirichlet, Neumann, and regularity problems for the Laplacian on bounded Lipschitz domains with connected boundary were used:
for the Dirichlet problem in two space dimensions, we refer to Dahlberg's works~\cites{Dahlberg.1977, Dahlberg.1979}, in particular to \cite[Thm.~2]{Dahlberg.1979} (noticing the remark  in~\cite[p.~14]{Dahlberg.1979}), the formulations of this result in the works of Verchota~\cite[Thm.~4.7D]{Verchota_Dissertation} and \cite[Thm.~0.9D]{Verchota_Layer_Potentials}, and the recent work by Shen~\cite[Thm.~8.5.14]{Shen_lectures}.
For the Neumann problem in two space dimensions, we refer to Verchota~\cite[Cor.~3.4 and Thm.~4.2]{Verchota_Layer_Potentials}, Jerison and Kenig~\cite[Rem. (d)]{Jerison_Kenig}, and Kenig~\cite{Kenig.1980}. See also the formulation and proof in Shen~\cite[Thm.~8.5.13]{Shen_lectures}.
For the regularity problem in two space dimensions, we refer to Kenig's work~\cite[Thm.~2.1.10]{Kenig.1994} noting that the necessary ingredients \cite[Lem.~2.1.13]{Kenig.1994} and \cite[Lem.~1.10.8]{Kenig.1994} also hold in the two-dimensional case. We also refer to Shen's monograph~\cite[Thm.~8.5.15]{Shen_lectures}.

With this being said, we conclude our review of Shen's proof on the resolution of the $\L^2$-Dirichlet problem for the Stokes resolvent problem.

\section{Functional analytic properties of the Stokes operator}

\noindent In this section, we will prove several functional analytic properties of the Stokes operator.

\subsection{Resolvent estimates and maximal regularity}
\label{Sec: Resolvent estimates and maximal regularity}

We start this section with a definition. For this purpose, recall the definition of the sector $\S_{\theta} \coloneqq \{ z \in \IC \setminus \{ 0 \} : \lvert \arg(z) \rvert < \pi - \theta \}$, $\theta \in (0 , \pi / 2)$.

\begin{definition}
\label{Def: Sectorial and R-sectorial}
Let $X$ and $Y$ denote Banach spaces over $\mathbb{C}$ and $B \colon \dom(B) \subset X \to X$ a closed linear operator. 
\begin{enumerate}[label=\normalfont{(\roman*)}]
 \item The operator $B$ is said to be \textit{sectorial} of angle $\omega \in [0 , \pi)$ if
 \begin{align*}
  \sigma (B) \subset \overline{\S_{\pi - \omega}}
 \end{align*}
 and if, for all $\omega < \theta < \pi$, the family $\{ \lambda (\lambda + B)^{-1} \}_{\lambda \in \S_{\theta}} \subset \cL(X)$ is bounded.
 \item\label{Item: R-boundedness} A family of operators $\cT \subset \cL (X, Y)$ is said to be \emph{$\cR$-bounded from $X$ to $Y$} if there exists a positive constant $C > 0$ such that, for any $k_0 \in \IN$, $(T_k)_{k = 1}^{k_0} \subset \cT$, and $(x_k)_{k = 1}^{k_0} \subset X$, the inequality
\begin{align*}
\Big\| \sum_{k = 1}^{k_0} r_k (\cdot) T_k x_k \Big\|_{\L^2 (0 , 1 ; Y)}
\leq C \Big\| \sum_{k = 1}^{k_0} r_k (\cdot) x_k \Big\|_{\L^2 (0 , 1 ; X)}
\end{align*}
holds. Here, $r_k (t) \coloneqq \mathrm{sgn} (\sin(2^k \pi t))$ are the \textit{Rademacher functions}. 
We denote with $\cR(\cT)$ the minimal constant $C > 0$ such that $\cT$ is $\cR$-bounded.

 \item The operator $B$ is said to be \textit{$\cR$-sectorial} of angle $\omega \in [0 , \pi)$ if
 \begin{align*}
  \sigma (B) \subset \overline{\S_{\pi - \omega}}
 \end{align*}
 and if, for all $\omega < \theta < \pi$, the family $\{ \lambda (\lambda + B)^{-1} \}_{\lambda \in \S_{\theta}} \subset \cL(X)$ is $\cR$-bounded.
\end{enumerate}
\end{definition}

\begin{remark}\label{Rem: R-boundedness}
\begin{enumerate}[label=\normalfont{(\roman*)}]
 \item By taking $k_0 = 1$, one sees that $\cR$-boundedness implies boundedness of a family of operators. If $X$ and $Y$ are isomorphic to a Hilbert space, then $\cR$-boundedness is equivalent to the boundedness of the family of operators, see~\cite[Rem.~3.2]{Denk_Hieber_Pruess}. \label{Item: R-boundedness implies boundedness}
 \item If $X$ is a subspace of $\L^p (\Omega ; \IC^m)$ for some $1 < p < \infty$, $m \in \IN$ and $\Omega \subset \IR^d$ Lebesgue-measurable, then there exists $C > 0$ such that, for all $k_0 \in \IN$ and $(f_k)_{k = 1}^{k_0} \subset X$, it holds that
 \begin{align*}
  \frac{1}{C} \Big\| \sum_{k = 1}^{k_0} r_k (\cdot) f_k \Big\|_{\L^2 (0 , 1 ; X)} \leq \Big\| \Big[ \sum_{k = 1}^{k_0} \lvert f_k \rvert^2 \Big]^{1 / 2} \Big\|_{\L^p (\Omega)} \leq C \Big\| \sum_{k = 1}^{k_0} r_k (\cdot) f_k \Big\|_{\L^2 (0 , 1 ; X)}.
 \end{align*}
 This means that $\cR$-boundedness in $\L^p$-spaces is equivalent to so-called \emph{square function estimates}, see~\cite[Rem.~2.9]{Kunstmann_Weis_2004}. \label{Item: Square function estimate}
 \item\label{it: Strongly Continuous SG} The operator $- B$ generates a strongly continuous bounded analytic semigroup on $X$ if and only if $B$ is densely defined and sectorial of angle $\omega \in [0 , \pi / 2)$, see~\cite[Thm.~II.4.6]{Engel_Nagel}. Moreover, this semigroup is exponentially stable if $0 \in \rho(B)$. A short calculation reveals that the condition $0 \in \rho(B)$ follows if one can show the boundedness of the family of operators $\{ (1 + \lvert \lambda \rvert) (\lambda + B)^{-1} \}_{\lambda \in \S_{\theta}}$ for $\omega < \theta < \pi$. \label{Item: Sectoriality and semigroups}
\end{enumerate}
\end{remark}

If $X$ is a $\mathrm{UMD}$-space, then the question of $\cR$-sectoriality is intimately related to the question of the maximal $\L^q$-regularity of generators $- B \colon \dom(B) \subset X \to X$ of a bounded analytic semigroup. However, we will not go into details with the definition of $\mathrm{UMD}$-spaces as the reflexive $\L^p$-spaces as well as all closed subspaces and quotient spaces of these $\L^p$-spaces have the $\mathrm{UMD}$-property, see, e.g.,~\cite{Burkholder}, and these will be the only spaces that appear in our investigation.

Recall that, for all $x \in X$ and $f \in \L^q (0 , \infty ; X)$, $1 < q < \infty$, the unique \emph{mild solution} to the abstract Cauchy problem
\begin{align}\label{Eq: abstract CP}
 \left\{ \begin{aligned}
  u^{\prime} (t) + B u (t) &= f (t), && t \geq 0, \\
  u(0) &= x
 \end{aligned} \right.\tag{ACP}
\end{align}
is given by \emph{Duhamel's formula} 
\begin{align*}
 u (t) = \e^{-t B} x + \int_0^t \e^{- (t - s) B} f(s) \, \d s \,,
\end{align*}
cf.~\cite[Prop.~3.1.16]{Arendt_Batty_Hieber_Neubrander}. 

\begin{definition}\label{Def: Maxreg}
For $1 < q < \infty$, we say that \emph{$B \colon \cD(B) \subset X \to X$ has maximal $\L^q$-regularity} if, for $x = 0$ and all $f \in \L^q (0 , \infty ; X)$, the unique mild solution to~\eqref{Eq: abstract CP} satisfies $u(t) \in \dom(B)$ for almost every $t > 0$ and $B u \in \L^q (0 , \infty ; X)$.
\end{definition}

\begin{remark}
In the case of Definition~\ref{Def: Maxreg}, $u$ is also weakly differentiable with respect to $t$ and satisfies $u^{\prime} \in \L^q (0 , \infty ; X)$. 
By employing the closed graph theorem, the mere fact that $u^{\prime}$ and $B u$ lie in $\L^q (0 , \infty ; X)$ implies the existence of $C > 0$ such that, for all $f \in \L^q (0 , \infty ; X)$, the stability estimate
\begin{align*}
 \| u^{\prime} \|_{\L^q (0 , \infty ; X)} + \| B u \|_{\L^q (0 , \infty ; X)} \leq C \| f \|_{\L^q (0 , \infty ; X)}
\end{align*}
holds. Furthermore, it is well-known, see, e.g.,~\cite[Thm.~2.1]{Dore}, that $u \in \L^q (0 , \infty ; X)$ if and only if $0 \in \rho(B)$. In this case, one has a stability estimate of the form
\begin{equation}\label{Eq: Homogeneous MaxReg}
 \| u \|_{\L^q (0 , \infty ; X)} + \| u^{\prime} \|_{\L^q (0 , \infty ; X)} + \| B u \|_{\L^q (0 , \infty ; X)} \leq C \| f \|_{\L^q (0 , \infty ; X)}.
\end{equation}
This estimate~\eqref{Eq: Homogeneous MaxReg} and with it the regularity of $u$ can be extended to mild solutions with inhomogeneous initial data $u(0) = x$ for all $x$ in the real interpolation space $(X, \cD(B))_{1 - 1/q, q}$, cf.~\cite[Prop.~2.2.3]{Tolksdorf_Dissertation}. In this case, the stability estimate generalizes to 
\begin{align*}
 \| u \|_{\L^q (0 , \infty ; X)} + \| u^{\prime} \|_{\L^q (0 , \infty ; X)} + \| B u \|_{\L^q (0 , \infty ; X)} \leq C \big( \| f \|_{\L^q (0 , \infty ; X)} + \| x \|_{(X , \cD(B))_{1 - 1/q , q}}  \big).
\end{align*}
Notice further that the property that $B$ has maximal $\L^q$-regularity is \textit{independent} of $q$, see~\cite[Thm.~4.2]{Dore} or~\cite[Thm.~7.1]{Dore_Max_Reg} and the references given there.
\end{remark}

A seminal result of Weis~\cite[Thm.~4.2]{Weis} now builds the bridge between the notions of maximal $\L^q$-regularity and $\cR$-sectoriality.
\begin{theorem}[Weis]
\label{Thm: Weis}
If $X$ is a $\mathrm{UMD}$-space and $- B \colon \dom(B) \subset X \to X$ is the generator of a bounded analytic semigroup, then $B$ has maximal $\L^q$-regularity for $1 < q < \infty$ if and only if $B$ is $\cR$-sectorial of angle $\omega$ for some $\omega \in [0 , \pi / 2)$.
\end{theorem}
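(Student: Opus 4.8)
The plan is to prove the two implications separately, organizing both around the equivalence of maximal $\L^q$-regularity for $B$ with the $\L^q(\IR ; X)$-boundedness of the single operator-valued Fourier multiplier whose symbol is $m(\xi) \coloneqq B(\ii \xi + B)^{-1}$. I would not look for a short self-contained argument, since the real substance is an operator-valued Mikhlin multiplier theorem.

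\emph{The ``only if'' direction (maximal regularity $\Rightarrow$ $\cR$-sectoriality).} Assume $B$ has maximal $\L^q$-regularity. Since $-B$ generates a bounded analytic semigroup, the imaginary axis (minus the origin) already lies in $\rho(-B)$, and Duhamel's formula for \eqref{Eq: abstract CP} with $x = 0$ realizes the map $f \mapsto Bu$ as the convolution operator $T_m$ on $\L^q(\IR ; X)$ with symbol $m(\xi) = B(\ii \xi + B)^{-1}$; the hypothesis says precisely that $T_m \in \cL(\L^q(\IR ; X))$. Testing $T_m$ against functions of the form $(s,t) \mapsto \sum_k r_k(s)\, \e^{\ii \xi_k t}\, \chi_{[0 , N]}(t)\, x_k$, letting $N \to \infty$ so that the multiplier acts asymptotically as multiplication by $m(\xi_k)$, and invoking a randomization argument (Kahane's inequalities and the contraction principle), one extracts $\cR$-boundedness of $\{ m(\xi) : \xi \in \IR \setminus \{0\} \}$, equivalently of $\{ \ii\xi(\ii\xi + B)^{-1} : \xi \in \IR\setminus\{0\} \}$. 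Combined with the sectoriality of angle $< \pi/2$ already available, a Neumann-series perturbation argument propagates this $\cR$-bound into a sector $\S_{\theta}$ for some $\theta < \pi/2$, i.e.\ $B$ is $\cR$-sectorial of some angle $\omega < \pi/2$.

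\emph{The ``if'' direction ($\cR$-sectoriality $\Rightarrow$ maximal regularity).} Assume $B$ is $\cR$-sectorial of angle $\omega < \pi/2$; then $\ii\IR \setminus \{0\} \subset \rho(-B)$ and $\{ \lambda(\lambda + B)^{-1} : \lambda \in \S_{\theta} \}$ is $\cR$-bounded for every $\theta \in (\omega , \pi)$. I would first prove $T_m \in \cL(\L^q(\IR ; X))$ for $m(\xi) = B(\ii\xi + B)^{-1}$ and then transfer this to the half-line: since the mild solution of \eqref{Eq: abstract CP} with $x = 0$ depends causally on $f$ via Duhamel's formula, extending $f \in \L^q(0 , \infty ; X)$ by zero keeps $u$, and hence $Bu = T_m f$, supported in $[0,\infty)$, so that $\| Bu \|_{\L^q(0,\infty;X)} \le \| T_m \| \, \| f \|_{\L^q(0,\infty;X)}$ — this is maximal $\L^q$-regularity in the sense of Definition~\ref{Def: Maxreg} (and $u, u' \in \L^q$ as well whenever $0 \in \rho(B)$, cf.\ the discussion after that definition). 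To prove $T_m$ bounded I would verify the operator-valued Mikhlin conditions for its symbol. Writing $m(\xi) = \Id - \ii\xi(\ii\xi + B)^{-1}$, the family $\{ m(\xi) \}$ is $\cR$-bounded because it is $\Id$ minus an $\cR$-bounded family, and a one-line computation gives
\begin{align*}
 \xi\, m'(\xi) = -\ii\xi(\ii\xi + B)^{-1} - \big( \xi(\ii\xi + B)^{-1} \big)^{2},
\end{align*}
which is $\cR$-bounded since sums and products of $\cR$-bounded families are $\cR$-bounded. As $X$ is a $\UMD$-space, the operator-valued Mikhlin multiplier theorem — in which $\cR$-boundedness of $\{ m(\xi) \}$ and $\{ \xi\, m'(\xi) \}$ plays the role of the scalar Mikhlin bound — then yields $T_m \in \cL(\L^q(\IR ; X))$.

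The main obstacle is the operator-valued Mikhlin multiplier theorem itself: it is the technical heart of Weis's work and the precise point at which the $\UMD$ hypothesis is indispensable. Its proof rests on the vector-valued Littlewood--Paley (dyadic) decomposition, available on $\UMD$ spaces through boundedness of the Hilbert transform, together with the $\cR$-boundedness assumption, which is exactly what is needed to reassemble the dyadic blocks — an operator-valued square-function estimate in disguise, in the spirit of Remark~\ref{Rem: R-boundedness}. Everything else — the Duhamel reduction, the resolvent identities, the permanence properties of $\cR$-boundedness, the randomization in the easy direction, and the causality bookkeeping for passing from $\IR$ to $(0,\infty)$ — is routine once that multiplier theorem is in hand.
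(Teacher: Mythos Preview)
The paper does not prove this theorem at all: it is quoted as ``a seminal result of Weis~\cite[Thm.~4.2]{Weis}'' and used as a black box, with no argument given. Your sketch is essentially Weis's original proof --- reduction of maximal regularity to the $\L^q(\IR;X)$-boundedness of the multiplier with symbol $m(\xi)=B(\ii\xi+B)^{-1}$, verification of the $\cR$-bounded Mikhlin condition for $m$ and $\xi m'$, and appeal to the operator-valued Mikhlin theorem on $\UMD$-spaces for the hard direction; a randomization/transference argument for the easy direction --- and is correct in outline. Since the paper offers nothing to compare against beyond the citation, there is no discrepancy to discuss.
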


We concentrate now on the Stokes operator and establish that it is $\cR$-sectorial of angle $\omega = 0$ on $\L^p_\sigma(\Omega)$ for suitable $p$. 
By virtue of Remark~\ref{Rem: R-boundedness}~\ref{Item: R-boundedness implies boundedness} and~\ref{Item: Sectoriality and semigroups} and Theorem~\ref{Thm: Weis}, this will readily prove Theorem~\ref{Thm: Resolvent}, Corollary~\ref{Cor: Analyticity}, and Theorem~\ref{Thm: Maximal regularity}. 
As the argument in three or more space dimensions is known and does not differ in the two-dimensional case, we only give a short sketch:

\subsection*{Proof of the \texorpdfstring{$\cR$}{R}-sectoriality of the Stokes operator}

The proof is decomposed into several steps.

\subsection*{Step 1: the case \texorpdfstring{$p = 2$}{p = 2}}

Let $\theta \in (0 , \pi)$. Notice that the Lax--Milgram lemma directly implies that $\S_{\theta} \cup \{ 0 \} \subset \rho(- A_2)$. Furthermore, for any $f \in \L^2 (\Omega ; \IC^2)$, define $u \coloneqq (\lambda + A_2)^{-1} \IP_2 f$. 
Then, testing the resolvent equation with $u$ followed by and applying Poincar\'e's inequality, one directly obtains the inequality
\begin{align}
\label{Eq: L2 resolvent}
 (1 + \lvert \lambda \rvert) \| u \|_{\L^2_{\sigma} (\Omega)} \leq C \| f \|_{\L^2 (\Omega ; \IC^d)}.
\end{align}
The constant $C > 0$ depends only on $d$, $\theta$, and $\diam(\Omega)$. 
By virtue of Remark~\ref{Rem: R-boundedness}~\ref{Item: R-boundedness implies boundedness}, this implies that the family $\{ (1 + \lvert \lambda \rvert) (\lambda + A_2)^{-1} \IP_2 \}_{\lambda \in \S_{\theta}} \subset \cL(\L^2 (\Omega ; \IC^2) , \L^2_{\sigma} (\Omega))$ is $\cR$-bounded. As $\IP_2$ acts as the identity on $\L^2_{\sigma} (\Omega)$, this readily settles the $\cR$-sectoriality in the case $p = 2$.

\subsection*{Step 2: reformulation into an \texorpdfstring{$\ell^2$}{l2}-valued boundedness estimate in the case \texorpdfstring{$p \geq 2$}{p >= 2}}

Let $p \geq 2$. A combination of Definition~\ref{Def: Sectorial and R-sectorial}~\ref{Item: R-boundedness} and Remark~\ref{Rem: R-boundedness}~\ref{Item: Square function estimate} reveals that the following statement is immediate, cf.~\cite[Prop.~2.3.4]{Tolksdorf_Dissertation}: \medbreak
\emph{
The family $\{ (1 + \lvert \lambda \rvert) (\lambda + A_2)^{-1} \IP_2|_{\L^p} \}_{\lambda \in \S_{\theta}}$ is $\cR$-bounded from $\L^p (\Omega ; \IC^2)$ into $\L^p_{\sigma} (\Omega)$ if and only if there exists $C > 0$ such that, for all $k_0 \in \IN$ and all $(\lambda_k)_{k = 1}^{k_0} \subset \S_{\theta}$, the operator 
\begin{align*}
 T_{(\lambda_k)_{k = 1}^{k_0}} \colon \L^p (\Omega ; \ell^2 (\IC^2)) \to \L^p (\Omega ; \ell^2 (\IC^2)), 
 \quad f = (f_k)_{k \in \IN} \mapsto 
 \begin{pmatrix} 
 (1 + \lvert \lambda_1 \rvert) (\lambda_1 + A_2)^{-1} \IP_2|_{\L^p} f_1 \\ 
 \vdots \\ (1 + \lvert \lambda_{k_0} \rvert) (\lambda_{k_0} + A_2)^{-1} \IP_2|_{\L^p} f_{k_0} \\
 0 \\ 
 \vdots 
 \end{pmatrix}
\end{align*}
is well-defined and satisfies
\begin{align}
\label{Eq: Vector-valued boundedness}
 \| T_{(\lambda_k)_{k = 1}^{k_0}} f \|_{\L^p (\Omega ; \ell^2 (\IC^2))} \leq C \| f \|_{\L^p (\Omega ; \ell^2 (\IC^2))}.
\end{align}
In other words, the family of all operators that can be formed by the procedure above is uniformly bounded.
}

\subsection*{Step 3: verification of~\texorpdfstring{\eqref{Eq: Vector-valued boundedness}}{(\ref{Eq: Vector-valued boundedness})} for certain values of \texorpdfstring{$p > 2$}{p > 2}} 
To verify~\eqref{Eq: Vector-valued boundedness}, one employs the following vector-valued version of the $\L^p$-extrapolation theorem of Shen~\cite[Thm.~4.1]{Tolksdorf_Systems}, see~\cite[Thm.~3.3]{Shen_Riesz} for the scalar-valued version.

\begin{theorem}
\label{Thm: Extrapolation of square function estimates}
Let $X$ be a Banach space, $\Upsilon \subset \IR^2$ be Lebesgue-measurable and bounded, $\cM > 0$, and let $T \in \Lop(\L^2(\Upsilon ; X))$ with $\| T \|_{\Lop(\L^2(\Upsilon ; X))} \leq \cM$.

Suppose that there exist constants $p > 2$, $R_0 > 0$, $\alpha_2 > \alpha_1 > 1$, and $\cC > 0$ such that the following statement is valid: for all balls $B(x_0 , r)$ with $0 < r < R_0$, which are either centered on $\partial \Upsilon$, i.e., $x_0 \in \partial \Upsilon$, or satisfy $B(x_0 , \alpha_2 r) \subset \Upsilon$, and all compactly supported $f \in \L^{\infty}(\Upsilon ; X)$ with $f = 0$ in $\Upsilon \cap B (x_0 , \alpha_2 r)$, the estimate
\begin{align}
\label{Eq: Weak reverse Hoelder inequality}
 \begin{aligned}
 \bigg( \frac{1}{r^2} \int_{\Upsilon \cap B(x_0 , r)} \| T f \|_X^p \, \d x \bigg)^{\frac{1}{p}} &\leq \cC \, \bigg( \frac{1}{r^2} \int_{\Upsilon \cap B(x_0 , \alpha_1 r)} \| T f \|_X^2 \, \d x \bigg)^{\frac{1}{2}}
 \end{aligned}
\end{align}
holds. 
Then, for each $2 < q < p$, the operator $T$ restricts to a bounded linear operator on $\L^q (\Upsilon ; X)$, with operator norm bounded by a constant depending on $p$, $q$, $\alpha_1$, $\alpha_2$, $\cC$, $\cM$, $R_0$, and $\diam(\Upsilon)$.
\end{theorem}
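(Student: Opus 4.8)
The plan is to reduce the vector-valued statement to the scalar-valued extrapolation theorem of Shen by applying it to the scalar function $g \coloneqq \|Tf\|_X$, while being careful that the hypothesis~\eqref{Eq: Weak reverse Hoelder inequality} is phrased in terms of a \emph{fixed} operator $T$ acting on $\L^2(\Upsilon;X)$ rather than in terms of an abstract family. More precisely, I would first recall the scalar Shen extrapolation theorem (\cite[Thm.~3.3]{Shen_Riesz}): if $S$ is a sublinear operator on $\L^2(\Upsilon)$, bounded with norm $\leq \cM$, and if a weak reverse Hölder inequality of the form~\eqref{Eq: Weak reverse Hoelder inequality} holds for $|Sf|$ in place of $\|Tf\|_X$ whenever $f$ vanishes on $\Upsilon \cap B(x_0,\alpha_2 r)$, then $S$ is bounded on $\L^q(\Upsilon)$ for every $2 < q < p$, with the stated dependence of the norm. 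The key observation is that the map $f \mapsto \|Tf\|_X$ is sublinear on $\L^2(\Upsilon;X)$: indeed $\|T(f_1+f_2)\|_X \leq \|Tf_1\|_X + \|Tf_2\|_X$ pointwise by linearity of $T$ and the triangle inequality in $X$, and it is $\L^2$-bounded with the same constant $\cM$ since $\bigl\|\,\|Tf\|_X\,\bigr\|_{\L^2(\Upsilon)} = \|Tf\|_{\L^2(\Upsilon;X)} \leq \cM\|f\|_{\L^2(\Upsilon;X)}$.

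The second step is a standard reduction making the scalar extrapolation theorem applicable to functions defined on the Banach space $\L^2(\Upsilon;X)$ rather than $\L^2(\Upsilon)$. One fixes a separable (or, more carefully, countably generated) closed subspace $X_0 \subseteq X$ containing the range relevant to a given $f$, but in fact no such fuss is needed: the scalar Shen theorem is proved by a Calderón--Zygmund / good-$\lambda$ decomposition argument that uses only the lattice structure of $\L^q$ and the reverse Hölder estimate; it is insensitive to whether the input function $f$ takes values in $\IC$ or in $X$. So I would state the scalar result in the form required (input in $\L^2(\Upsilon;Y)$ for an arbitrary Banach space $Y$, output a nonnegative scalar function) — this is exactly the form in which Shen's localization machinery in~\cite[Thm.~4.1]{Tolksdorf_Systems} already operates — and apply it with $Y = X$ and $S f \coloneqq \|Tf\|_X$. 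One then concludes that $f \mapsto \|Tf\|_X$ is bounded from $\L^q(\Upsilon;X)$ into $\L^q(\Upsilon)$, i.e.
\begin{align*}
 \|Tf\|_{\L^q(\Upsilon;X)} = \bigl\| \, \|Tf\|_X \, \bigr\|_{\L^q(\Upsilon)} \leq C \bigl\| \, \|f\|_X \, \bigr\|_{\L^q(\Upsilon)} = C \|f\|_{\L^q(\Upsilon;X)}
\end{align*}
for all $f \in \L^2(\Upsilon;X) \cap \L^q(\Upsilon;X)$ (say), with $C$ depending only on $p,q,\alpha_1,\alpha_2,\cC,\cM,R_0,\diam(\Upsilon)$; by density of this intersection in $\L^q(\Upsilon;X)$ and the $\L^q$-boundedness just obtained, $T$ extends uniquely to a bounded operator on $\L^q(\Upsilon;X)$.

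The main obstacle — really the only nontrivial point — is to verify carefully that Shen's scalar extrapolation theorem is genuinely valid with a Banach-space-valued input, since a naive citation is not quite legitimate: the statement in~\cite{Shen_Riesz} is scalar in both input and output. Here I would follow~\cite[Thm.~4.1]{Tolksdorf_Systems}, which records exactly the $X$-valued version needed (the proof there is a verbatim transcription of Shen's good-$\lambda$ argument, the only role of $X$ being that $\|f\|_X$ and $\|Tf\|_X$ appear in place of $|f|$ and $|Tf|$, and no geometric property of $X$ — no UMD, no reflexivity — is used), and cite it as the precise input. With that reference in hand the rest, as described above, is immediate. The dependence of the constants is inherited directly from the scalar statement since the reduction introduces no new parameters.
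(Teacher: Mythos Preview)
Your proposal is correct and aligns with the paper's treatment: the paper does not give an independent proof of this theorem but simply cites it as~\cite[Thm.~4.1]{Tolksdorf_Systems} (with~\cite[Thm.~3.3]{Shen_Riesz} for the scalar case), which is precisely the reference you invoke after your reduction sketch. Your additional explanation---that the good-$\lambda$ argument is insensitive to the target space because only the sublinear scalar quantities $\|f\|_X$ and $\|Tf\|_X$ enter---is accurate and is exactly the point of the vector-valued formulation in~\cite{Tolksdorf_Systems}.
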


\begin{remark}
We say that an operator satisfies a \emph{weak reverse Hölder estimate} if it satisfies~\eqref{Eq: Weak reverse Hoelder inequality}. The original version of Theorem~\ref{Thm: Extrapolation of square function estimates} comes with an additional term on the right-hand side of~\eqref{Eq: Weak reverse Hoelder inequality} which will not be needed in our case.
\end{remark}

We apply Theorem~\ref{Thm: Extrapolation of square function estimates} as follows: first of all, we fix some notation regarding the bounded Lipschitz domain $\Omega \subset \IR^2$. Let $M > 0$ denote the Lipschitz constant of $\Omega$. Define for $r > 0$
\begin{align*}
 D(r) \coloneqq \big\{ (x_1 , x_2) \in \IR^2 : \lvert x_1 \rvert < r \; \text{and} \; \lvert x_2 \rvert < (1 + M) r \big\},
\end{align*}
and let $R_0 > 0$ be such that, for each $x_0 \in \partial \Omega$, there exists a Lipschitz continuous function $\eta \colon \IR \to \IR$ with $\eta (0) = 0$ and $\| \eta^{\prime} \|_{\L^{\infty} (\IR)} \leq M$ such that, for all $0 < r < R_0$, one has after a possible rotation that
\begin{align*}
 D(r) \cap [\Omega - \{ x_0 \}] = \big\{ (y_1 , y_2) \in \IR^2 : \lvert y_1 \rvert < r \; \text{and} \; \eta(y_1) < y_2 < (1 + M) r \big\} =: D_{\eta} (r)
\end{align*}
and
\begin{align*}
 D(r) \cap [\partial \Omega - \{ x_0 \}] = \big\{ (y_1 , y_2) \in \IR^2 : \lvert y_1 \rvert < r \; \text{and} \; y_2 = \eta(y_1) \big\} =: I_{\eta} (r).
\end{align*}
Notice that
\begin{align}
\label{Eq: Balls and cylinders}
 B(0 , r) \subset D(r) \quad \text{and} \quad D (2 r) \subset B(0 , \alpha_1 r)
\end{align}
with
\begin{align*}
 \alpha_1 \coloneqq 2 \sqrt{1 + (1 + M)^2}.
\end{align*}
Furthermore, we remark that the \emph{Lipschitz cylinders} $D_{\eta} (r)$ are Lipschitz domains with Lipschitz constant comparable to $M$. Additionally, the Lipschitz character is uniform with respect to all parameters if the radius $r$ is bounded from below. This was discussed in detail, e.g., in~\cite[Lem.~1.3.25]{Tolksdorf_Dissertation}.

Now, choose $X \coloneqq \ell^2 (\IC^2)$, $\Upsilon \coloneqq \Omega$, and $p_0 \coloneqq 4$. Let further $\theta \in (0 , \pi / 2)$, $k_0 \in \IN$, and let $(\lambda_k)_{k = 1}^{k_0} \subset \S_{\theta}$. 
For $T$ being defined by $T \coloneqq T_{(\lambda_k)_{k = 1}^{k_0}}$, we know by Steps~1 and~2 that $T$ is bounded on $\L^2 (\Omega ; \ell^2 (\IC^2))$ and that the operator norm is bounded by the constant $C$ from~\eqref{Eq: L2 resolvent}. As this constant is uniform with respect to $k_0$ and all choices of $(\lambda_k)_{k = 1}^{k_0} \subset \S_{\theta}$, we choose $\cM \coloneqq C$.

Now, if we can establish the validity of~\eqref{Eq: Weak reverse Hoelder inequality} uniformly in those parameters as well, the family of all such operators $T$ restricts to a bounded family of operators in $\L^q (\Omega ; \ell^2 (\IC^2))$ for all $q$ subject to $2 \leq q < p$. Here, the number $p$ still has to be fixed.

We concentrate on verifying~\eqref{Eq: Weak reverse Hoelder inequality} only for points in $x_0 \in \partial \Omega$ as this is the more demanding task. Define $\alpha_2 \coloneqq 3 \sqrt{1 + (1 + M)^2}$ and let $0 < 2 r < R_0$. Finally, let $f \in \L^{\infty} (\Omega ; \ell^2 (\IC^2))$ have compact support and satisfy $f = 0$ in $\Omega \cap B(x_0 , \alpha_2 r)$.

For $1 \leq k \leq k_0$ define $u_k \coloneqq (\lambda_k + A_2)^{-1} \IP_2 f_k$ and notice that $u_k \in \W^{1,2}_{0 , \sigma} (\Omega)$. 
We remark that, by virtue of~\cite[Cor.~5.7]{Mitrea_Monniaux}, $u_k$ is even H\"older continuous in $\overline{\Omega}$. 
Thus, e.g., by~\cite[Thm.~III.2.1.1]{Sohr} there exists a unique pressure $\pi_k \in \L^2 (\Omega)$ with average zero such that
\begin{align*}
\left\{ \begin{aligned}
 \lambda_k u _k - \Delta u_k + \nabla \pi_k &= \IP_2 f_k && \text{in } \Omega,\\
 \divergence(u_k) &= 0 && \text{in } \Omega, \\
 u_k &= 0 && \text{on } \partial \Omega.
\end{aligned} \right.
\end{align*}
Using the Helmholtz decomposition of $\L^2(\Omega; \IC^2)$ in order to write $\IP_2 f_k = f_k - \nabla h_k$ with some $h_k \in \mathrm{G}_2(\Omega)$, we find, since $f_k$ vanishes in $\Omega \cap B(x_0 , \alpha_2 r)$, with $\phi_k \coloneqq \pi_k + h_k$ that
\begin{align}
\label{Eq: Localized Stokes resolvent}
\left\{ \begin{aligned}
 \lambda_k u _k - \Delta u_k + \nabla \phi_k &= 0 && \text{in } \Omega \cap B(x_0 , \alpha_2 r),\\
 \divergence(u_k) &= 0 && \text{in } \Omega \cap B(x_0 , \alpha_2 r), \\
 u_k &= 0 && \text{on } \partial \Omega \cap B(x_0 , \alpha_2 r).
\end{aligned} \right.
\end{align}
Inner regularity, see, e.g.,~\cite[Thm.~IV.4.3]{Galdi}, now implies that the solutions $u_k$ and $\phi_k$ are smooth in $\Omega \cap B(x_0 , \alpha_2 r)$.

By translation, rotation, and rescaling, we may assume that $x_0 = 0$, that $r = 1$, and that $u_k$ and $\phi_k$ solve~\eqref{Eq: Localized Stokes resolvent} in $D_{\eta} (2)$. Let $1 < s < 2$, and let $g_{k , s} \coloneqq u_k|_{\partial D_{\eta} (s)}$. 
By Theorem~\ref{Thm: Dirichlet}, there exist $v_{k , s}$ and $\vartheta_{k , s}$ solving the Stokes resolvent problem with resolvent parameter $\lambda_k$ in the Lipschitz domain $\Xi \coloneqq D_{\eta} (s)$ and boundary data $g_{k , s}$. Notice that the boundary datum in fact is the trace of $u_k$ itself. Moreover, since $u_k$ is H\"older continuous in $\overline{D_{\eta} (s)}$, its non-tangential maximal function from the inside lies in $\L^2 (\partial D_{\eta} (s))$ so that, by uniqueness of the solution, we must have that $u_k = v_{k , s}$. 
Finally, notice for every complex-valued measurable function $h$ the validity of the estimate
\begin{align}\label{eq: Estimate by non-tangential maximal function}
 \bigg( \int_{D_{\eta} (s)} \lvert h \rvert^4 \, \d x \bigg)^{\frac{1}{4}} \leq C \, \bigg( \int_{\partial D_{\eta} (s)} \lvert (h)_\interior^* \rvert^2 \, \d \sigma (x) \bigg)^{\frac{1}{2}},
\end{align}
where the constant $C > 0$ only depends on the Lipschitz constant $M$, see~\cite[Lem.~3.3]{Wei_Zhang}. 
We remark that the result in~\cite{Wei_Zhang} is only formulated for $d \geq 3$, but the proof carries over word for word to the case of $d = 2$.
Thus, by virtue of inequality~\eqref{eq: Estimate by non-tangential maximal function} and Theorem~\ref{Thm: Dirichlet}, we find that
\begin{align*}
 \bigg( \int_{D_{\eta} (1)} \Big[ \sum_{k = 1}^{k_0} \lvert (1 + \lvert \lambda_k \rvert ) u_k \rvert^2 \Big]^{\frac{4}{2}} \, \d x \bigg)^{\frac{2}{4}} &\leq C \int_{\partial D_{\eta} (s)} \Big\{ \Big( \Big[ \sum_{k = 1}^{k_0} \lvert (1 + \lvert \lambda_k \rvert) u_k \rvert^2 \Big]^{\frac{1}{2}} \Big)^*_\interior \Big\}^2 \, \d \sigma(x) \\
 &\leq C \int_{\partial D_{\eta} (s)} \sum_{k = 1}^{k_0} \lvert (1 + \lvert \lambda_k \rvert ) (u_k)_\interior^* \rvert^2 \, \d \sigma(x) \\
 &\leq C \int_{\partial D_{\eta} (s) \setminus I_{\eta} (s)} \sum_{k = 1}^{k_0}  \lvert (1 + \lvert \lambda_k \rvert ) u_k \rvert^2 \, \d \sigma(x).
\end{align*}
Notice that the constant $C$ is independent of $s$. Integrating this inequality over $s \in (1 , 2)$, the co-area formula yields
\begin{align*}
 \bigg( \int_{D_{\eta} (1)} \Big[ \sum_{k = 1}^{k_0} \lvert ( 1 + \lvert \lambda_k \rvert ) u_k \rvert^2 \Big]^{\frac{4}{2}} \, \d x \bigg)^{\frac{2}{4}} \leq C \int_{D_{\eta} (2)} \Big[ \sum_{k = 1}^{k_0} \lvert (1 + \lvert \lambda_k \rvert ) u_k \rvert^2 \Big]^{\frac{2}{2}} \, \d x,
\end{align*}
which is, after taking the square root and using the inclusion relations~\eqref{Eq: Balls and cylinders}, already the weak reverse H\"older estimate~\eqref{Eq: Weak reverse Hoelder inequality} with $p = p_0$.

The same strategy can be employed to establish this weak reverse H\"older estimate on balls $B (x_0 , r)$ such that $B(x_0 , \alpha_2 r) \subset \Omega$. Having these weak reverse H\"older estimates at hand, one uses the \emph{Vitali covering lemma} to deduce the validity of these weak reverse H\"older estimates on all balls $B (x_0 , r)$ such that $\Omega \cap B(x_0 , \alpha_2 r) \neq \emptyset$, cf.~the proof of~\cite[Lem.~4.2]{Tolksdorf_Systems}. Afterwards, one uses their self-improving property, see, e.g.,~\cite[Thm.~6.38]{Giaquinta_Martinazzi}, to deduce the validity of~\eqref{Eq: Weak reverse Hoelder inequality} with $p = p_0 + \eps$ for some $\eps > 0$ depending only on the Lipschitz character of $\Omega$ and $\theta$. As all parameters are uniform with respect to $(\lambda_k)_{k = 1}^{k_0}$, we conclude that~\eqref{Eq: Vector-valued boundedness} holds for all $2 < p < 4 + \eps$ with a uniform constant $C > 0$.

\subsection*{Step 4: the case \texorpdfstring{$p < 2$}{p < 2}}

Let $1 < p < 2$ be such that its H\"older conjugate exponent $p^{\prime}$ satisfies~\eqref{Eq: Helmholtz two dimensions}. Since $A_p$ is defined to be the adjoint to $A_{p^{\prime}}$, the $\cR$-sectoriality follows by duality, see~\cite[Lem.~3.1]{Kalton_Weis}.

\subsection*{Step 5: conclusion}

Having established the $\cR$-sectoriality of $A_p$ on $\L^p_{\sigma} (\Omega)$ for $2 \leq p < 4 + \eps$ via square function estimates in Steps~1,~2, and~3 as well for $(4 + \eps)^{\prime} < p \leq 2$ via duality in Step~4, we conclude the proof of Theorem~\ref{Thm: Resolvent}, Corollary~\ref{Cor: Analyticity}, and Theorem~\ref{Thm: Maximal regularity}. \qed

\subsection{Boundedness of the \texorpdfstring{$\H^{\infty}$}{H-infinity}-calculus}

To introduce the boundedness of the $\H^{\infty}$-calculus, define for $\theta \in (0 , \pi)$ the so-called \emph{Dunford--Riesz class}
\begin{align*}
 \H_0^{\infty} (\S_{\theta}) \coloneqq \bigg\{ f \colon \S_{\theta} \to \IC : f \text{ holomorphic and } \exists \eps , C > 0 \; \forall z \in \S_{\theta} \, : \, \lvert f (z) \rvert \leq \frac{C \lvert z \rvert^{\eps}}{(1 + \lvert z \rvert)^{2 \eps}} \bigg\}.
\end{align*}
Let $B \colon \dom(B) \subset X \to X$ be a sectorial operator of angle $\omega \in [0 , \pi)$ on a Banach space $X$ over the complex field. Then for any $\theta \in (0 , \pi - \omega)$ and $\vartheta \in (\theta , \pi - \omega)$ one defines for $f \in \H_0^{\infty} (\S_{\theta})$
\begin{align*}
 f(B) \coloneqq \frac{1}{2 \pi \ii} \int_{\partial \S_{\vartheta}} f(\lambda) (\lambda - B)^{-1} \, \d \lambda. 
\end{align*}
Here, the path $\partial \S_{\vartheta}$ is understood to be run through in the counterclockwise sense. Using the sectoriality of $B$ and the fact that $f \in \H_0^{\infty} (\S_{\theta})$ it is clear that $f(B) \in \Lop (X)$. If $B$ is densely defined and has a dense range, then the question of whether there exists $C > 0$ such that, for all $f \in \H_0^{\infty} (\S_{\theta})$, one has
\begin{align*}
 \| f(B) \|_{\Lop(X)} \leq C \sup_{z \in \S_{\theta}} \lvert f (z) \rvert
\end{align*}
is the question of the boundedness of the $\H^{\infty} (\S_{\theta})$-calculus of $B$ \cite[Prop.~5.3.4]{Haase}. While there is a vast and beautiful theory around this question, see, e.g., Haase~\cite{Haase} for further reading, we only want to note one consequence of the boundedness of the $\H^{\infty} (\S_{\theta})$-calculus of a sectorial operator~$B$. 
Indeed, there is a connection between the domains of fractional powers of $B$ and the complex interpolation spaces between $X$ and $\dom(B)$. 
More precisely, by~\cite[Thm.~6.6.9]{Haase} one finds with equivalent norms that
\begin{align}
\label{Eq: Complex interpolation}
 \dom(B^s) = \big[ X , \dom(B) \big]_s\,, \qquad s \in (0 , 1).
\end{align}

That, for $\theta \in (0 , \pi)$, the $\H^{\infty} (\S_{\theta})$-calculus of the Stokes operator on $\L^p_{\sigma} (\Omega)$ for 
\begin{align*}
\Big\lvert \frac{1}{p} - \frac{1}{2} \Big\rvert \leq \frac{1}{2d} + \varepsilon
\end{align*}
and $d \geq 3$ is indeed bounded, is a result of Kunstmann and Weis~\cite[Thm.~16]{Kunstmann_Weis}. In the following, we will shortly review their proof to confirm that their result stays valid in the two-dimensional case.

\subsection*{Step 1: the density of the domain and the range}

Concerning the density of the domain of the Stokes operator, we note the following lemma:

\begin{lemma}
\label{Lem: Embedding}
Let $\Omega \subset \IR^2$ be a bounded Lipschitz domain. Then there exists $\eps > 0$ such that, for all
\begin{align*}
 \Big\lvert \frac{1}{p} - \frac{1}{2} \Big\rvert < \frac{1}{4} + \eps ,
\end{align*}
the space $\W^{2 , p}_{0 , \sigma} (\Omega)$ is embedded continuously into $\dom(A_p)$. In particular, the representation formula
\begin{align}
\label{Eq: Stokes for smooth functions}
 A_p u = - \IP_p \Delta u, \qquad u \in \W^{2 , p}_{0 , \sigma} (\Omega),
\end{align}
is valid.
\end{lemma}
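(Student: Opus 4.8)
The plan is to prove the representation formula $A_p u = -\IP_p \Delta u$ for $u \in \W^{2,p}_{0,\sigma}(\Omega)$; the asserted continuous embedding then comes for free, because under~\eqref{Eq: Helmholtz two dimensions} the Helmholtz projection $\IP_p$ is bounded on $\L^p(\Omega;\IC^2)$ (this is exactly where Mitrea's two-dimensional Helmholtz theorem enters), so that $\|A_p u\|_{\L^p_{\sigma}(\Omega)} = \|\IP_p\Delta u\|_{\L^p(\Omega;\IC^2)} \leq C\|u\|_{\W^{2,p}(\Omega;\IC^2)}$ while trivially $\|u\|_{\L^p_{\sigma}(\Omega)}\leq\|u\|_{\W^{2,p}(\Omega;\IC^2)}$. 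Three facts are used repeatedly: density of $\C^{\infty}_{c,\sigma}(\Omega)$ in $\W^{2,p}_{0,\sigma}(\Omega)$, the trivial observation that $-\IP_2\Delta\varphi\in\L^2_{\sigma}(\Omega)$ for $\varphi\in\C^{\infty}_{c,\sigma}(\Omega)$, and the consistency of Helmholtz projections, i.e.\ $\IP_p = \IP_q$ on $\L^p(\Omega;\IC^2)\cap\L^q(\Omega;\IC^2)$ whenever both decompositions exist. I would split the argument into the cases $p=2$, $p>2$, and $p<2$.

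For $p = 2$, take $u\in\W^{2,2}_{0,\sigma}(\Omega)$ and $u_n\in\C^{\infty}_{c,\sigma}(\Omega)$ with $u_n\to u$ in $\W^{2,2}$. For every $v\in\W^{1,2}_{0,\sigma}(\Omega)$, integration by parts ($u_n$ being smooth with compact support) and the orthogonality $\L^2_{\sigma}(\Omega)\perp\mathrm{G}_2(\Omega)$ give $\fa(u_n,v) = \int_{\Omega}\nabla u_n\cdot\overline{\nabla v}\,\d x = -\int_{\Omega}\Delta u_n\cdot\overline{v}\,\d x = -\int_{\Omega}(\IP_2\Delta u_n)\cdot\overline{v}\,\d x$; letting $n\to\infty$, using $\nabla u_n\to\nabla u$ and $\IP_2\Delta u_n\to\IP_2\Delta u$ in $\L^2$, yields $\fa(u,v) = \int_{\Omega}(-\IP_2\Delta u)\cdot\overline{v}\,\d x$ for all such $v$, hence $u\in\dom(A_2)$ with $A_2 u = -\IP_2\Delta u$. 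For $p>2$ in the range~\eqref{Eq: Helmholtz two dimensions}, on the bounded domain $\Omega$ one has $\W^{2,p}_{0,\sigma}(\Omega)\hookrightarrow\W^{2,2}_{0,\sigma}(\Omega)$, so the previous case applies, and moreover $\Delta u\in\L^p(\Omega;\IC^2)$ so that, by consistency, $A_2 u = -\IP_2\Delta u = -\IP_p\Delta u\in\L^p_{\sigma}(\Omega)$, while also $u\in\L^p_{\sigma}(\Omega)$. By the definition of $A_p$ as the part of $A_2$ in $\L^p_{\sigma}(\Omega)$, this gives $u\in\dom(A_p)$ and $A_p u = -\IP_p\Delta u$.

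For $p<2$ with Hölder conjugate $p'$ in the range~\eqref{Eq: Helmholtz two dimensions}, recall that $A_p = (A_{p'})^*$, so I would verify the defining relation of the adjoint. Fix $u\in\W^{2,p}_{0,\sigma}(\Omega)$ and $u_n\in\C^{\infty}_{c,\sigma}(\Omega)$ with $u_n\to u$ in $\W^{2,p}$. For $v\in\dom(A_{p'})\subset\dom(A_2)$, using $A_{p'}v = A_2 v$, the definition of $A_2$ tested against $u_n\in\W^{1,2}_{0,\sigma}(\Omega)$, integration by parts, the orthogonality $v\in\L^2_{\sigma}(\Omega)\perp\mathrm{G}_2(\Omega)$, and consistency, one obtains $\int_{\Omega}(A_{p'}v)\cdot\overline{u_n}\,\d x = \fa(v,u_n) = \int_{\Omega}\nabla v\cdot\overline{\nabla u_n}\,\d x = -\int_{\Omega}v\cdot\overline{\Delta u_n}\,\d x = -\int_{\Omega}v\cdot\overline{\IP_p\Delta u_n}\,\d x$. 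Passing to the limit $n\to\infty$ — here $A_{p'}v\in\L^{p'}$, $u_n\to u$ in $\L^p$, $v\in\L^{p'}$, and $\IP_p\Delta u_n\to\IP_p\Delta u$ in $\L^p$ — yields $\int_{\Omega}(A_{p'}v)\cdot\overline{u}\,\d x = \int_{\Omega}v\cdot\overline{(-\IP_p\Delta u)}\,\d x$ for all $v\in\dom(A_{p'})$, which is precisely the statement that $u\in\dom(A_p)$ with $A_p u = -\IP_p\Delta u$.

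The step needing the most care is the case $p<2$: one must keep careful track of the conjugations in the $\L^p_{\sigma}$--$\L^{p'}_{\sigma}$ adjoint pairing and make sure that, although one has only $\W^{2,p}$-convergence of the approximants — which for $p<2$ on $\Omega$ does \emph{not} imply $\W^{2,2}$-convergence — every limit is taken in a topology in which the paired factor lies in the relevant dual space. Beyond that, the only genuinely non-trivial inputs are the boundedness of $\IP_p$ on $\L^p(\Omega;\IC^2)$ in the stated range and the consistency of the family $\{\IP_p\}_p$, both of which are available in two space dimensions; the remainder of the argument is routine.
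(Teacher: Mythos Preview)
Your argument is correct and is essentially the standard proof; the paper itself omits the details entirely, simply citing the three-dimensional version in~\cite[Lem.~2.5]{Tolksdorf} and noting that the argument is identical in two dimensions. Your case splitting according to the three different definitions of $A_p$ (via the form for $p=2$, as the part of $A_2$ for $p>2$, and by duality for $p<2$), together with approximation by $\C^{\infty}_{c,\sigma}(\Omega)$ and consistency of the Helmholtz projections, is exactly what one expects that cited proof to contain.
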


\begin{proof}
The proof of this in three or more dimensions is presented in~\cite[Lem.~2.5]{Tolksdorf} and is literally the same in two spatial dimensions. Thus, we may omit the proof.
\end{proof}

The density of the range of the Stokes operator follows by ``abstract nonsense''. Indeed, since $0 \in \rho(A_p)$, we know that $A_p$ is injective. Moreover, by the results from Subsection~\ref{Sec: Resolvent estimates and maximal regularity}, we know that $A_p$ is sectorial. The sectoriality combined with the reflexivity of $\L^p_{\sigma} (\Omega)$ implies the validity of the following algebraic and topological decomposition of $\L^p_{\sigma} (\Omega)$, see~\cite[Prop.~2.1.1.~h)]{Haase}:
\begin{align*}
 \L^p_{\sigma} (\Omega) = \ker(A_p) \oplus \overline{\Rg (A_p)}.
\end{align*}
Here, $\ker(A_p)$ denotes the kernel of $A_p$ and $\Rg(A_p)$ its range. Since $\ker(A_p) = \{ 0 \}$, we directly find that $\Rg(A_p)$ is dense in $\L^p_{\sigma} (\Omega)$.

\subsection*{Step 2: the comparison principle of Kunstmann and Weis}

The boundedness of the $\H^{\infty} (\S_{\theta})$-calculus of $A_p$ shall be deduced by that of the Dirichlet-Laplacian $- \Delta_p$ on $\L^p (\Omega ; \IC^2)$. That the $\H^{\infty} (\S_{\theta})$-calculus of $- \Delta_p$ is indeed bounded follows for example by combining the facts that the semigroup $(\e^{t \Delta_p})_{t \geq 0}$ satisfies heat kernel bounds, see, e.g., Davies~\cite[Cor.~3.2.8]{Davies}, and that $- \Delta_2$ has a bounded $\H^{\infty} (\S_{\theta})$-calculus on $\L^2 (\Omega ; \IC^2)$ (by the spectral theorem for self-adjoint operators) with a result of Duong and Robinson~\cite[Thm.~3.1]{Duong_Robinson}.

The \emph{comparison principle} of Kunstmann and Weis now reads as follows, see~\cite[Thm.~9]{Kunstmann_Weis}:
\begin{theorem}
\label{Thm: Kunstmann-Weis Hinfty}
Let $X$ and $Y$ be Banach spaces. Let $R \colon Y \to X$ and $S \colon X \to Y$ be bounded linear operators satisfying $R S = \Id_X$. 
Let $B$ have a bounded $\H^{\infty} (\S_{\sigma})$-calculus in $Y$ for some $\sigma \in (0,\pi)$, and let $A$ be $\cR$-sectorial in $X$. 
Assume that there are functions $\varphi , \psi \in \H_0^{\infty} (\S_{\nu}) \setminus \{ 0 \}$ where $\nu \in (0 , \sigma)$ and $C_1, C_2 > 0$ such that, for some $\beta > 0$ and all $\ell\in \IZ$,
\begin{align}
 \sup_{1 \leq s , t \leq 2} \cR \big\{ \varphi (s 2^{j + \ell} A) R \psi(t 2^j B) : j \in \IZ \big\} &\leq C_1 2^{- \beta \lvert \ell \rvert} \label{Eq: Condition} \quad\text{and}\\
 \sup_{1 \leq s , t \leq 2} \cR \big\{ \varphi (s 2^{j + \ell} A)^{\prime} S^{\prime} \psi (t 2^j B)^{\prime} : j \in \IZ \big\} &\leq C_2 2^{- \beta \lvert \ell \rvert}. \label{Eq: Dual condition}
\end{align}
Then $A$ has a bounded $\H^{\infty} (\S_{\nu})$-calculus on $X$.
\end{theorem}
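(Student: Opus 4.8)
The plan is to reduce, via the convergence lemma for the natural functional calculus (\cite[Prop.~5.3.4]{Haase}), to the a priori estimate $\|f(A)\|_{\cL(X)}\le C\sup_{z\in\S_{\nu}}|f(z)|$ for $f$ in the Dunford--Riesz class $\H_0^{\infty}(\S_{\nu})$, and to obtain this by transferring quadratic (square function) estimates from $B$ to $A$. After a standard renormalization of $\varphi$ and $\psi$ (passing to suitable fixed powers, harmless at the level of the hypotheses) one may assume the Calder\'on reproducing identities $\int_0^{\infty}\varphi(tA)^2\,\hmeas{t}=\Id$, $\int_0^{\infty}\psi(tB)^2\,\hmeas{t}=\Id$ on the natural dense subspaces, and their dual versions on $X'$, $Y'$. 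Two elementary inputs drive the argument: the bounded $\H^{\infty}$-calculus of $B$ gives, on an \emph{arbitrary} Banach space, the one-sided randomized square function estimate $\|(\psi(2^jB)y)_{j\in\IZ}\|_{\mathrm{Rad}(Y)}\lesssim\|y\|_Y$ and its $Y'$-analogue (because for $\|(a_j)\|_{\infty}\le1$ the symbol $\sum_j a_j\psi(2^jz)$ is bounded and holomorphic on the sector, uniformly in $(a_j)$); and, dually, $\|f(B)\|_{\cL(Y)}+\|f(B')\|_{\cL(Y')}\le C\sup|f|$.

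The first main step is the transference of the square function estimate to $A$. For $x$ in a dense subspace one writes, using $RS=\Id_X$ and a $B$-reproducing identity, $\varphi(2^jA)x=\sum_{k\in\IZ}\varphi(2^jA)\,R\,\psi(2^kB)\,\psi(2^kB)\,Sx$; setting $\ell:=j-k$ and splitting the $k$-sum, each $\ell$-slab is controlled by Kahane's contraction principle together with the $\cR$-boundedness hypothesis \eqref{Eq: Condition}, which supplies the decaying constant $C_1 2^{-\beta|\ell|}$, and then by the square function estimate for $B$ applied to the pieces $\psi(2^kB)Sx$. Summing the geometric series $\sum_{\ell}2^{-\beta|\ell|}<\infty$ yields $\|(\varphi(2^jA)x)_{j}\|_{\mathrm{Rad}(X)}\lesssim\|x\|_X$; running the identical scheme with $S'R'=\Id_{X'}$ and the dual hypothesis \eqref{Eq: Dual condition} gives $\|(\varphi(2^jA')x')_{j}\|_{\mathrm{Rad}(X')}\lesssim\|x'\|_{X'}$.

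The second step assembles $f(A)$. Expanding $\langle f(A)x,x'\rangle$ with the Calder\'on identities for $A$ and $A'$ produces a double integral over scales $s,t$ of terms $\langle m_{s,t}(A)\,\varphi(tA)x,\ \varphi(sA')x'\rangle$ with $m_{s,t}(z)=\varphi(sz)f(z)\varphi(tz)\in\H_0^{\infty}(\S_{\nu})$; the crucial non-circular point is that $\|m_{s,t}(A)\|\le C\sup|f|\cdot\eta(s/t)$ for an integrable off-diagonal profile $\eta$, by the \emph{elementary} Cauchy-integral estimate for $\H_0^{\infty}$-functions of a merely sectorial operator (no functional calculus boundedness is invoked here, which is exactly why the arbitrary $f$ may appear at $A$ at this point). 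A Schur-type summation in the scales, carried out in the randomized (Rademacher, or $\gamma$-radonifying) formulation and fed by the two square function estimates of the first step, then gives $|\langle f(A)x,x'\rangle|\le C\sup|f|\cdot\|x\|_X\|x'\|_{X'}$, completing the proof. The step I expect to be the main obstacle is precisely this randomized assembly on a general Banach space: the naive integral $\int_0^{\infty}\|\varphi(tA)x\|^2\,\hmeas{t}$ is the wrong object away from Hilbert space, so one must work throughout with Rademacher/$\gamma$-square functions and the associated square function calculus, and it is here — not only in the transference step — that $\cR$-sectoriality of $A$, which furnishes the $\cR$-boundedness of the dyadic Littlewood--Paley families $\{\varphi(2^jA):j\in\IZ\}$, is genuinely needed; the remaining difficulties (convergence of the improper Calder\'on integrals, interchange of sums and integrals with the duality pairing) are routine once $f\in\H_0^{\infty}$ has been fixed.
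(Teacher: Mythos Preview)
The paper does not prove this theorem; it merely quotes it as the ``comparison principle of Kunstmann and Weis'' and cites \cite[Thm.~9]{Kunstmann_Weis} for the proof. So there is no proof in the paper to compare against.

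That said, your sketch follows the broad architecture of the original Kunstmann--Weis argument: transfer randomized square function estimates from $B$ to $A$ (and from $B'$ to $A'$) using the off-diagonal decay hypotheses \eqref{Eq: Condition}--\eqref{Eq: Dual condition}, and then conclude bounded $\H^{\infty}$-calculus from two-sided square function estimates together with $\cR$-sectoriality. One point deserves more care. In your ``assembly'' step you write $\langle f(A)x,x'\rangle$ as a double sum over scales and claim that the elementary Cauchy-integral bound $\|m_{s,t}(A)\|\le C\sup|f|\cdot\eta(s/t)$ suffices for a Schur-type estimate in the randomized setting. But to pass from $\|(\varphi(2^{j+\ell}A)x)_j\|_{\mathrm{Rad}(X)}$ to $\|(m_{j,j+\ell}(A)\,\varphi(2^{j+\ell}A)x)_j\|_{\mathrm{Rad}(X)}$ with a constant $\eta(\ell)$ you need $\cR$-boundedness of the family $\{m_{j,j+\ell}(A):j\in\IZ\}$ with $\cR$-bound $\lesssim\sup|f|\cdot\eta(\ell)$, not just uniform operator-norm boundedness; mere sectoriality does not give this. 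The clean way around this---and the route actually taken in \cite{Kunstmann_Weis}---is to invoke the known equivalence (for $\cR$-sectorial, or even almost $\cR$-sectorial, operators) between two-sided randomized square function estimates and bounded $\H^{\infty}$-calculus, rather than reproving it by hand. Your transference step (Step~1) is the genuine content and is correctly outlined.
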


\begin{remark}
The original result of Kunstmann and Weis is even stronger as they actually do not assume $A$ to be $\cR$-sectorial but only \emph{almost $\cR$-sectorial}. 
\end{remark}

Kunstmann and Weis provided in~\cite[Prop.~10 and Prop.~11]{Kunstmann_Weis} also tools for establishing~\eqref{Eq: Condition} and~\eqref{Eq: Dual condition} which are summarized below. 

\begin{proposition}
\label{Prop: Fractional comparison}
In the setting of Theorem~\ref{Thm: Kunstmann-Weis Hinfty}, suppose that there exist $\alpha_0 > 0$ and $C > 0$ such that,
for $\alpha = \pm \alpha_0$, we have
\begin{align}\label{Eq: Retraction Condition}
 R \dom(B^{\alpha}) \subset \dom(A^{\alpha}), \qquad \| A^{\alpha} R y \|_X \leq C \| B^{\alpha} y \|_Y \quad \text{for all} \quad y \in \dom(B^{\alpha}),
\end{align}
and
\begin{align}\label{Eq: Injection Condition}
 S \dom(A^{\alpha}) \subset \dom(B^{\alpha}), \qquad \| B^{\alpha} S x \|_Y \leq C \| A^{\alpha} x \|_X \quad \text{for all} \quad x \in \dom(A^{\alpha}).
\end{align}
Then Condition~\eqref{Eq: Condition} and Condition~\eqref{Eq: Dual condition} hold for the choice $C_1 = C_2 = C$, $\beta = \alpha_0$, and $\varphi(\lambda) = \psi(\lambda) = \lambda^{2\alpha_0}(1 + \lambda)^{-4 \alpha_0}$.
\end{proposition}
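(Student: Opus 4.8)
The plan is to read hypotheses \eqref{Eq: Retraction Condition}--\eqref{Eq: Injection Condition} as saying that $R$ and $S$ become bounded after being ``twisted'' by fractional powers, and then to verify \eqref{Eq: Condition} and \eqref{Eq: Dual condition} by a scaling computation in the holomorphic functional calculus which reduces the required $\cR$-bounds to the $\cR$-sectoriality of $A$ and the $\H^{\infty}$-calculus of $B$.

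\emph{Twisted (co)retractions.} Assume, as holds in the application where $0$ lies in the resolvent sets of $A$ and of $B$, that $A$ and $B$ are injective, so that $A^{\pm\alpha_0}$ and $B^{\pm\alpha_0}$ are closed injective operators with dense domains. First I would check that the densely defined operators $A^{\alpha_0}RB^{-\alpha_0}$ and $A^{-\alpha_0}RB^{\alpha_0}$ are bounded with norm at most $C$: writing an element of $\dom(B^{\pm\alpha_0})=\Rg(B^{\mp\alpha_0})$ as $B^{\mp\alpha_0}y$, this is precisely the estimate in \eqref{Eq: Retraction Condition} for $\alpha=\pm\alpha_0$, and injectivity of the fractional powers identifies the composition. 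Denote by $R_{+},R_{-}\in\cL(Y,X)$ the resulting bounded extensions, so $\|R_{\pm}\|\leq C$ and $A^{\mp\alpha_0}R_{\pm}z=RB^{\mp\alpha_0}z$ on $\dom(B^{\mp\alpha_0})$; in the same way \eqref{Eq: Injection Condition} produces $S_{+},S_{-}\in\cL(X,Y)$ with $\|S_{\pm}\|\leq C$.

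\emph{A scaling identity and \eqref{Eq: Condition}.} For $\varphi(\lambda)=\psi(\lambda)=\lambda^{2\alpha_0}(1+\lambda)^{-4\alpha_0}$, using only the rescaling rule of the functional calculus, I would record the identities
\begin{align*}
 \varphi(cA)A^{\alpha_0}&=c^{-\alpha_0}\varphi_1(cA), & \varphi(cA)A^{-\alpha_0}&=c^{\alpha_0}\varphi_2(cA), \\
 B^{-\alpha_0}\psi(cB)&=c^{\alpha_0}\psi_2(cB), & B^{\alpha_0}\psi(cB)&=c^{-\alpha_0}\psi_1(cB),
\end{align*}
valid for all $c>0$, where $\varphi_1(\lambda)=\psi_1(\lambda)=\lambda^{3\alpha_0}(1+\lambda)^{-4\alpha_0}$ and $\varphi_2(\lambda)=\psi_2(\lambda)=\lambda^{\alpha_0}(1+\lambda)^{-4\alpha_0}$ all lie in $\H_0^{\infty}(\S_{\nu})$ (the left-hand sides are a priori only densely defined and extend to the bounded operators on the right). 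Fix $\ell\in\IZ$ and $s,t\in[1,2]$, put $a=s2^{j+\ell}$, $b=t2^j$, and note that $b/a=(t/s)2^{-\ell}$ does not depend on $j$. For $\ell\geq 0$, inserting $\Id=A^{\alpha_0}A^{-\alpha_0}$ and $\Id=B^{-\alpha_0}B^{\alpha_0}$ gives, on a dense subspace,
\begin{align*}
 \varphi(aA)\,R\,\psi(bB)
 &=\big[\varphi(aA)A^{\alpha_0}\big]\,R_{-}\,\big[B^{-\alpha_0}\psi(bB)\big] \\
 &=\big(\tfrac{b}{a}\big)^{\alpha_0}\,\varphi_1(aA)\,R_{-}\,\psi_2(bB),
\end{align*}
with $(b/a)^{\alpha_0}=(t/s)^{\alpha_0}2^{-\alpha_0\ell}\leq 2^{\alpha_0}2^{-\alpha_0|\ell|}$; for $\ell\leq 0$ I argue symmetrically with $R_{+}$, $\varphi_2$, $\psi_1$ in place of $R_{-}$, $\varphi_1$, $\psi_2$, picking up the $j$-independent factor $(a/b)^{\alpha_0}\leq 2^{\alpha_0}2^{-\alpha_0|\ell|}$. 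Since the scalar prefactor is independent of $j$, it then suffices to bound the $\cR$-bound of the $j$-indexed family $\varphi_1(aA)R_{-}\psi_2(bB)$ (and its analogue for $\ell\leq 0$), for which I use the submultiplicativity $\cR\{T_jUV_j\}\leq\cR\{T_j\}\,\|U\|\,\cR\{V_j\}$ together with: $\{\varphi_i(cA):c>0\}$ is $\cR$-bounded by $\cR$-sectoriality of $A$; $\{\psi_i(cB):c>0\}$ is $\cR$-bounded by the $\H^{\infty}$-calculus of $B$, where the passage from norm-boundedness to $\cR$-boundedness uses the Banach space geometry of $Y$ (property $(\alpha)$ of the $\L^p$-type spaces relevant in the application to $B=-\Delta_p$); and $\|R_{\pm}\|\leq C$. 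Taking the supremum over $s,t\in[1,2]$ yields \eqref{Eq: Condition} with $\beta=\alpha_0$ and $C_1$ a product of $C$ with the two $\cR$-bounds.

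\emph{The dual condition, and the main obstacle.} For \eqref{Eq: Dual condition} I would pass to the adjoints on the (reflexive) dual spaces: $A'$ is $\cR$-sectorial on $X'$, $B'$ has a bounded $\H^{\infty}$-calculus on $Y'$, one has $S'R'=\Id_{X'}$, $\varphi(cA)'=\varphi(cA')$, $\psi(cB)'=\psi(cB')$, and the inequalities \eqref{Eq: Injection Condition}--\eqref{Eq: Retraction Condition} dualize to the fractional intertwining hypotheses for the pair $(S',R')$ relative to $(A',B')$; hence \eqref{Eq: Dual condition} is exactly the statement of the previous step applied to this dual data, giving $C_2$ of the same form. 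The steps I expect to be the main obstacle are the rigorous construction of $R_{\pm}$, $S_{\pm}$ and the justification of the displayed operator identity on a dense subspace — this requires injectivity of $A$ and $B$ and careful tracking of the domains and ranges of the fractional powers as they are commuted through the functional calculus — together with the upgrade of the merely norm-bounded $\H^{\infty}$-calculus of $B$ to the $\cR$-bounded form needed for $\{\psi_i(cB):c>0\}$, which is the point where the geometry of the ground space genuinely enters.
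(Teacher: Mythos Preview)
The paper does not give its own proof of this proposition; it is quoted from \cite[Prop.~10 and Prop.~11]{Kunstmann_Weis} and used as a black box, so there is no argument in the paper to compare yours against. That said, your approach is the standard one and is essentially how the Kunstmann--Weis argument runs: the factorization $\varphi(aA)\,R\,\psi(bB)=(b/a)^{\pm\alpha_0}\varphi_i(aA)\,R_{\mp}\,\psi_{i'}(bB)$ via the bounded twisted operators $R_{\pm}=A^{\pm\alpha_0}RB^{\mp\alpha_0}$ is precisely what produces the decay $2^{-\alpha_0|\ell|}$, and your dualization of \eqref{Eq: Injection Condition} to obtain the analogue of \eqref{Eq: Retraction Condition} for $(S',A',B')$ is correct.

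Two remarks on the points you flag as obstacles. First, you are right that the $\cR$-boundedness of $\{\psi_i(cB):c>0\}$ does not follow from the bounded $\H^{\infty}$-calculus of $B$ on an arbitrary Banach space, but invoking property~$(\alpha)$ is heavier than needed: a bounded $\H^{\infty}$-calculus already implies $\cR$-sectoriality on any space of nontrivial type (or finite cotype), see~\cite{Kalton_Weis}, after which \cite[Lem.~3.3]{Kalton_Kunstmann_Weis} gives the required $\cR$-bound. All spaces appearing in the application are closed subspaces of $\L^p$, $1<p<\infty$, so either route is available. Second, your argument produces a constant $C_1$ equal to $C$ times two $\cR$-bounds and a factor $2^{\alpha_0}$, not the literal $C_1=C$ asserted in the proposition; this discrepancy is a harmless imprecision in the paper's paraphrase of the Kunstmann--Weis statement rather than a defect of your proof, since Theorem~\ref{Thm: Kunstmann-Weis Hinfty} only needs \emph{some} constants $C_1,C_2$.
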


Let us assume for a moment that the assumptions of Proposition~\ref{Prop: Fractional comparison} are verified for the choice $X = \L^2_{\sigma} (\Omega)$, $Y = \L^2 (\Omega ; \IC^2)$, $R = \IP_2$, $S$ being the inclusion of $\L^2_{\sigma} (\Omega)$ into $\L^2 (\Omega ; \IC^2)$, $A = A_2$, and $B = - \Delta_2$. 
Let $\varphi, \psi \in \H_0^\infty(\S_\nu) \setminus \{0 \}$ denote the functions provided by Proposition~\ref{Prop: Fractional comparison}. Then we would find constants $C_1 , C_2 > 0$ and some $\beta > 0$ such that
\begin{align}
 \sup_{1 \leq s , t \leq 2} \cR \big\{ \varphi (s 2^{j + \ell} A_2) R \psi(- t 2^j \Delta_2) : j \in \IZ \big\} &\leq C_1 2^{- \beta \lvert \ell \rvert} \label{Eq: Condition applied} \quad\text{and}\\
 \sup_{1 \leq s , t \leq 2} \cR \big\{ \varphi (s 2^{j + \ell} A_2)^{\prime} S^{\prime} \psi (- t 2^j \Delta_2)^{\prime} : j \in \IZ \big\} &\leq C_2 2^{- \beta \lvert \ell \rvert} \label{Eq: Dual condition applied}
\end{align}
for all $\ell \in \IZ$.
Let further $p$ satisfy
\begin{align*}
 0 < \frac{1}{2} - \frac{1}{p} < \frac{1}{4} + \eps,
\end{align*}
where $\eps > 0$ is small enough such that $A_p$ is $\cR$-sectorial on $\L^p_{\sigma} (\Omega)$. Notice that also $- \Delta_p$ is $\cR$-sectorial on $\L^p (\Omega ; \IC^2)$ due to the Gaussian upper bounds of the heat semigroup, see~\cite[Thm.~3.1]{Hieber_Pruess}. Now, the $\cR$-sectoriality of these two operators together with~\cite[Lem.~3.3]{Kalton_Kunstmann_Weis} implies that the two sets
\begin{align*}
 \big\{ \varphi(s 2^{j + \ell} A_p) : s > 0 , j , \ell \in \IZ \big\} \subset \Lop(\L^p_{\sigma} (\Omega)) \quad \text{and} \quad \big\{ \psi(- t 2^{j} \Delta_p) : t > 0 , j \in \IZ \big\} \subset \Lop(\L^p (\Omega ; \IC^2))
\end{align*}
are $\cR$-bounded. Next, since singletons of bounded operators are always $\cR$-bounded and since products of $\cR$-bounded sets of operators are $\cR$-bounded as well, cf.~\cite[Prop.~3.4]{Denk_Hieber_Pruess}, we find that
\begin{align*}
 \big\{ \varphi(s 2^{j + \ell} A_p) R \psi(- t 2^j \Delta_p) : s , t > 0 , j , \ell \in \IZ \big\} \subset \Lop(\L^p (\Omega ; \IC^2) , \L^p_{\sigma} (\Omega))
\end{align*}
is $\cR$-bounded. Finally, since $\cR$-boundedness implies uniform boundedness, there exists $C > 0$ such that
\begin{align*}
 \sup_{\ell \in \IZ} \sup_{1 \leq s , t \leq 2} \cR\big\{ \varphi(s 2^{j + \ell} A_p) R \psi(- t 2^j \Delta_p) : j \in \IZ \big\} \leq C.
\end{align*}
Using the interpolation result in~\cite[Prop.~3.7]{Kalton_Kunstmann_Weis} together with~\eqref{Eq: Condition applied}, one finds a constant $C > 0$ such that, for $2 < q < p$ with
\begin{align*}
 \frac{1}{q} = \frac{1 - \theta}{2} + \frac{\theta}{p} \quad \text{for some} \quad 0 < \theta < 1,
\end{align*}
the following estimate holds for all $\ell \in \IZ$
\begin{align*}
\sup_{1 \leq s , t \leq 2} \cR\big\{ \varphi(s 2^{j + \ell} A_q) R \psi(- t 2^j \Delta_q) : j \in \IZ \big\} \leq C 2^{- (1 - \theta) \beta \lvert \ell \rvert}.
\end{align*}
Consequently, with the definitions $X = \L^q_{\sigma} (\Omega)$, $Y = \L^q (\Omega ; \IC^2)$, $R = \IP_q$, $S$ being the inclusion of $\L^q_{\sigma} (\Omega)$ into $\L^q (\Omega ; \IC^2)$, $A = A_q$, and $B = - \Delta_q$, Condition~\eqref{Eq: Condition} of Theorem~\ref{Thm: Kunstmann-Weis Hinfty} is satisfied.

Condition~\eqref{Eq: Dual condition} follows in a similar fashion by noticing that, due to self-adjointness, we have that
\begin{align*}
 \psi(- t 2^j \Delta_p)^{\prime} = \psi(- t 2^j \Delta_p^{\prime}) \simeq \psi(- t 2^j \Delta_{p^{\prime}})
\end{align*}
and
\begin{align*}
 \psi(t 2^j A_p)^{\prime} = \psi(t 2^j A_p^{\prime}) \simeq \psi(t 2^j A_{p^{\prime}}).
\end{align*}
Moreover, the dual operator $S^{\prime}$ of $S$ is identified with an operator on $\L^{p^{\prime}} (\Omega ; \IC^2)$ as follows. 
Let ${\bf f} \in \L^p (\Omega ; \IC^2)^{\prime}$, and let $f \in \L^{p^{\prime}} (\Omega ; \IC^2)$  denote its canonical identification. Then, for $g \in \L^{p^\prime}_\sigma(\Omega)$, one calculates
\begin{align*}
 \langle S^{\prime} {\bf f} , g \rangle_{(\L^p_{\sigma})^{\prime} , \L^p_{\sigma}} = \langle {\bf f} , S g \rangle_{(\L^p_{\sigma})^{\prime} , \L^p_{\sigma}} = \langle f , g \rangle_{\L^{p^{\prime}} , \L^p} = \langle f , \IP_p g \rangle_{\L^{p^{\prime}} , \L^p} = \langle \IP_{p^{\prime}} f , g \rangle_{\L^{p^{\prime}}_{\sigma} , \L^p_{\sigma}}.
\end{align*}
Consequently, the operator 
\begin{align*}
 \varphi (s 2^{j + \ell} A_p)^{\prime} S^{\prime} \psi (- t 2^j \Delta_p)^{\prime}
\end{align*}
may be identified with the operator
\begin{align*}
 \varphi (s 2^{j + \ell} A_{p^{\prime}}) \IP_{p^{\prime}} \psi (- t 2^j \Delta_{p^{\prime}}).
\end{align*}
Now, the same argument leading to~\eqref{Eq: Condition} can be used to establish~\eqref{Eq: Dual condition}. 
The only difference is to use the $\cR$-sectoriality of $A_{p^{\prime}}$ on $\L^{p^{\prime}}_{\sigma} (\Omega)$ and of $- \Delta_{p^{\prime}}$ on $\L^{p^{\prime}} (\Omega ; \IC^2)$. 
Hence, besides the verification of the conditions of Proposition~\ref{Prop: Fractional comparison} on the $\L^2$-scale, this establishes the boundedness of the $\H^{\infty}$-calculus of the Stokes operator on the $\L^p_{\sigma}$-scale and thus proves Theorem~\ref{Thm: Hinfty}.

\subsection*{Step 3: verification of the conditions from Proposition~\ref{Prop: Fractional comparison}}

To start with, we briefly introduce suitable scales of Bessel potential spaces, see~\cites{Jerison_Kenig_Dirichlet, Mitrea_Monniaux,Triebel_Functions, Triebel} for proofs of the stated results and for further information. For $s \in \IR$ and $1 < p < \infty$, the well-known \emph{Bessel potential space} on $\IR^2$ is defined by
\begin{align*}
    \H^{s,p}(\IR^2; \IC^2) 
    \coloneqq 
    \big\{ f\in \cS(\IR^2; \IC^2)^\prime : \cF^{-1} ( 1 + |\cdot|^2 )^{s/2} \cF(f)  \in \L^p(\IR^2; \IC^2) \big\}\,, 
\end{align*}
with the norm
\begin{align*}
    \| f \|_{ \H^{s,p}(\IR^2; \IC^2) } \coloneqq \| \cF^{-1} ( 1 + |\cdot |^2 )^{s/2} \cF(f) \|_{\L^p(\IR^2; \IC^2)}.
\end{align*}
Its counterpart on the domain $\Omega$ is defined via restriction
\begin{align*}
 \H^{s , p} (\Omega ; \IC^2) \coloneqq \big\{ \mathfrak{R}_\Omega (g) : g \in \H^{s,p}(\IR^2; \IC^2) \big\}\,,
\end{align*}
where $\mathfrak{R}_\Omega$ restricts distributions to $\Omega$ and the corresponding norm is given by the natural quotient norm
\begin{align*}
 \| f \|_{ \H^{s,p}(\Omega; \IC^2) } \coloneqq \inf_{\substack{g \in \H^{s,p}(\IR^2; \IC^2) \\  \mathfrak{R}_\Omega(g) = f}} \| g \|_{\H^{s,p}(\IR^2; \IC^2)}.
\end{align*}
To incorporate traces that vanish at the boundary of $\Omega$, one defines
\begin{align*}
    \H^{s,p}_0(\Omega; \IC^2) 
    \coloneqq 
    \Big\{ \mathfrak{R}_\Omega (g) : g \in \H^{s,p}(\IR^2; \IC^2), \, \supp g \subset \overline{\Omega} \Big\}
\end{align*}
with the quotient norm
\begin{align}\label{Eq: Quotient Norm}
    \| f \|_{ \H^{s,p}_0(\Omega; \IC^2) } \coloneqq \inf_{\substack{g \in \H^{s,p}(\IR^2; \IC^2) \\ \supp g \subset \overline{\Omega}}} \| g \|_{\H^{s,p}(\IR^2; \IC^2)}.
\end{align}
The spaces $\H^{s , p}_0 (\Omega ; \IC^2)$ and $\H^{s , p} (\Omega ; \IC^2)$ coincide if $-1 + 1/p < s < 1/p$. Thus, if this condition applies, we may also write $\H^{s , p}_0 (\Omega ; \IC^2)$ for $\H^{s , p} (\Omega ; \IC^2)$ if this simplifies the notation. Moreover, $\C_c^{\infty} (\Omega ; \IC^2)$ is dense in $\H^{s , p}_0 (\Omega ; \IC^2)$ for all $s \in \IR$ and $1 < p < \infty$. Finally, for $s > 0$, the space $\H^{s , p}_0 (\Omega ; \IC^2)$ is reflexive. In particular, it holds for $1/p + 1/p^{\prime} = 1$ that
\begin{align*}
 \H^{s , p}_0 (\Omega ; \IC^2)^{\prime} = \H^{- s , p^{\prime}} (\Omega ; \IC^2) \quad \text{and} \quad \H^{- s , p^{\prime}} (\Omega ; \IC^2)^{\prime} = \H^{s , p}_0 (\Omega ; \IC^2).
\end{align*}
If we consider Bessel potential spaces as subspaces of  scalar-valued tempered distributions $\cS(\IR^2)^\prime$, we just write $\H^{s,p}(\IR^2)$, $\H^{s,p}(\Omega)$, and $\H_0^{s,p}(\Omega)$.

The solenoidal counterparts of these spaces are defined for $s > -1 + 1/p$ as the~\emph{Stokes scale} associated to the Lipschitz domain $\Omega$ and are given by
\begin{align*}
    \H^{s,p}_{0,\sigma}(\Omega; \IC^2) \coloneqq \overline{\C_{c,\sigma}^\infty(\Omega)}^{\|\cdot\|_{\H_0^{s,p}(\Omega; \IC^2)}}.
\end{align*}
We remark that this definition is in line with the scale used by Mitrea and Monniaux due to their result in~\cite[Prop.~2.10]{Mitrea_Monniaux}.

We will need the following result about the Helmholtz projection on Bessel potential spaces~\cite[Prop.~2.16~(II)]{Mitrea_Monniaux}.

\begin{proposition}[Mitrea, Monniaux]\label{prop: Helmholtz on Bessel potential space}
For $|s| < \frac{1}{2}$, the Helmholtz projection $\IP$ acts as a bounded linear projection on $\H^{s,2}(\Omega; \IC^2)$ and yields the following topological direct sum:
\begin{align*}
    \H^{s,2}(\Omega; \IC^2) = \H^{s,2}_{0,\sigma}(\Omega) \oplus \nabla \H^{s + 1,2}(\Omega)\;.
\end{align*}
Furthermore, $\H^{s , 2}_{0,\sigma} (\Omega)$ is the range of $\IP$ and is reflexive with $\H^{s , 2}_{0,\sigma} (\Omega)^\prime = \H^{- s , 2}_{0,\sigma} (\Omega)$.
\end{proposition}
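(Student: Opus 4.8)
The plan is to realise the Helmholtz projection as $\IP f = f - \nabla p$, where $p$ solves a weak Neumann problem, and then to transfer the sharp $\L^2$-based elliptic regularity for that problem on a bounded Lipschitz domain onto the Bessel potential scale; the exclusion of the endpoints $\lvert s \rvert = 1/2$ is exactly where that regularity fails for a merely Lipschitz boundary, so the restriction in the statement is intrinsic to the method. First I would settle $s = 0$. Given $f \in \L^2(\Omega;\IC^2)$, let $p \in \H^{1,2}(\Omega)$ with $\int_\Omega p \, \d x = 0$ be the unique solution — by Lax--Milgram on $\H^{1,2}(\Omega)/\IC$, coercive by the Poincar\'e--Wirtinger inequality — of
\begin{align*}
 \int_\Omega \nabla p \cdot \overline{\nabla \varphi} \, \d x = \int_\Omega f \cdot \overline{\nabla \varphi} \, \d x \qquad \text{for all } \varphi \in \H^{1,2}(\Omega),
\end{align*}
which is the weak form of $\Delta p = \divergence f$ in $\Omega$, $\partial_\nu p = f \cdot n$ on $\partial\Omega$. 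Then $\IP_2 f \coloneqq f - \nabla p$ is a bounded projection on $\L^2(\Omega;\IC^2)$ with $\ker\IP_2 = \nabla\H^{1,2}(\Omega) = \mathrm{G}_2(\Omega)$ (using that $\Omega$ is bounded and connected) and $\Rg\IP_2 = \L^2_\sigma(\Omega) = \H^{0,2}_{0,\sigma}(\Omega)$, which is the classical orthogonal Helmholtz decomposition.

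For $0 < s < 1/2$ the decisive point is the regularity estimate $\|\nabla p\|_{\H^{s,2}(\Omega;\IC^2)} \leq C \|f\|_{\H^{s,2}(\Omega;\IC^2)}$, i.e.\ well-posedness of the above Neumann problem with solutions in $\H^{s+1,2}(\Omega)$; I would quote this from the layer-potential and Rellich-identity theory for second-order elliptic problems on bounded Lipschitz domains (Jerison--Kenig, Fabes--Mendez--Mitrea~\cites{Jerison_Kenig_Dirichlet, Fabes_Mendez_Mitrea}), whose $\L^2$-Helmholtz result is essentially this very estimate. Granting it, $\IP \coloneqq \IP_2|_{\H^{s,2}}$ is bounded and idempotent on $\H^{s,2}(\Omega;\IC^2)$, so its range and kernel are closed complementary subspaces and the splitting is automatically topological. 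The kernel equals $\nabla\H^{s+1,2}(\Omega)$: if $\IP f = 0$ then $f = \nabla\psi$ with $\psi \in \H^{1,2}(\Omega)$, and $\nabla\psi \in \H^{s,2}$ together with $\psi \in \L^2$ forces $\psi \in \H^{s+1,2}(\Omega)$ via the characterisation $\H^{s+1,2}(\Omega) = \{ \psi \in \L^2(\Omega) : \nabla\psi \in \H^{s,2}(\Omega;\IC^2) \}$, valid on Lipschitz domains in this range. The range equals $\{ f \in \H^{s,2} : \IP_2 f = f \} = \L^2_\sigma(\Omega) \cap \H^{s,2}(\Omega;\IC^2)$, which I would identify with $\H^{s,2}_{0,\sigma}(\Omega)$ using the description of the solenoidal Bessel scale in \cite[Prop.~2.10]{Mitrea_Monniaux} (one inclusion is that $\C_{c,\sigma}^\infty(\Omega)$ sits inside the closed set $\L^2_\sigma \cap \H^{s,2}$, the other is the density contained in that proposition). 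This yields the decomposition for $0 \le s < 1/2$.

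For $-1/2 < s < 0$ I would dualise rather than redo regularity. With $t \coloneqq -s \in (0,1/2)$, $\IP$ is bounded on $\H^{t,2}(\Omega;\IC^2)$ by the previous step, and its Banach adjoint acts on $\H^{t,2}(\Omega;\IC^2)' = \H^{-t,2}(\Omega;\IC^2) = \H^{s,2}(\Omega;\IC^2)$ (using $\H^{t,2} = \H^{t,2}_0$ and the Bessel duality recalled in the text above); since $\IP_2$ is self-adjoint on $\L^2$ and $\L^2(\Omega;\IC^2)$ is dense in $\H^{s,2}(\Omega;\IC^2)$, this adjoint extends $\IP_2$, so $\IP$ is again a bounded projection on $\H^{s,2}(\Omega;\IC^2)$, with $\Rg(\IP) = \overline{\L^2_\sigma(\Omega)}^{\H^{s,2}} = \overline{\C_{c,\sigma}^\infty(\Omega)}^{\H^{s,2}} = \H^{s,2}_{0,\sigma}(\Omega)$ and $\ker(\IP)$ the annihilator of $\H^{t,2}_{0,\sigma}(\Omega)$, which one computes to be $\nabla\H^{s+1,2}(\Omega)$. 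Finally, $\H^{s,2}_{0,\sigma}(\Omega)$ is a complemented, hence closed, subspace of the reflexive space $\H^{s,2}(\Omega;\IC^2)$ and therefore reflexive; and since $\IP$ intertwines the pairing between $\H^{s,2}$ and $\H^{-s,2}$ with the Helmholtz projections on those two spaces, one reads off $\H^{s,2}_{0,\sigma}(\Omega)' = \H^{-s,2}_{0,\sigma}(\Omega)$.

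The hard part will be the Bessel-scale regularity estimate for the Neumann--Laplace problem on a general bounded Lipschitz domain in the range $0 < s < 1/2$: this is the only ingredient that is not soft functional analysis, it genuinely needs boundary harmonic analysis (non-tangential maximal functions, Rellich identities, square-function or atomic estimates), and it is precisely there that the endpoint $s = 1/2$ must be sacrificed for a Lipschitz — as opposed to $\C^1$ or smoother — boundary. Once that estimate and the identification $\H^{s,2}_{0,\sigma}(\Omega) = \L^2_\sigma(\Omega) \cap \H^{s,2}(\Omega;\IC^2)$ for $\lvert s \rvert < 1/2$ are available, idempotency, closedness of range and kernel, the duality, and the reflexivity are all routine.
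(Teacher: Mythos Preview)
The paper does not prove this proposition at all: it is stated with the attribution ``(Mitrea, Monniaux)'' and quoted directly from~\cite[Prop.~2.16~(II)]{Mitrea_Monniaux}, with no argument given in the present text. So there is no ``paper's own proof'' to compare against beyond the bare citation.

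Your sketch is a reasonable reconstruction of the underlying argument and is, in outline, how the result is obtained in the cited source: one reduces the boundedness of $\IP$ on $\H^{s,2}$ to regularity of the weak Neumann problem for the Laplacian on a bounded Lipschitz domain in the Bessel scale, which is available for $\lvert s \rvert < \tfrac{1}{2}$ (and fails at the endpoints) from the layer-potential/Rellich theory of Jerison--Kenig and Fabes--Mendez--Mitrea; the identification $\Rg(\IP) = \H^{s,2}_{0,\sigma}(\Omega)$ then comes from the description of the Stokes scale in~\cite[Prop.~2.10/Cor.~2.11]{Mitrea_Monniaux}, and the negative-$s$ case, reflexivity, and duality follow by the soft functional analysis you indicate. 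The two points that would need the most care in a full write-up are exactly the ones you flag: the Neumann regularity estimate $\|\nabla p\|_{\H^{s,2}} \lesssim \|f\|_{\H^{s,2}}$ on a general Lipschitz domain, and the identification $\H^{s,2}_{0,\sigma}(\Omega) = \L^2_\sigma(\Omega) \cap \H^{s,2}(\Omega;\IC^2)$ for $\lvert s \rvert < \tfrac{1}{2}$. Both are genuine theorems, not formalities, but both are available in the references you cite, so your plan is sound.
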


In the next step, we establish a relation between the scales of Bessel potential spaces from Proposition~\ref{prop: Helmholtz on Bessel potential space} and suitable interpolation and extrapolation scales of Banach spaces that are induced by fractional powers of $A_2$ and $-\Delta_2$, see~\cite[Def.~15.21]{Kunstmann_Weis_2004}.

For a given Banach space $X$, a sectorial operator $A \colon \cD(A) \to X$ with $0 \in \rho(A)$, and $\alpha \in \IR$, we define
\begin{align*}
    \dot{X}_{\alpha, A} \coloneqq ( \cD(A^\alpha), \|A^\alpha \cdot\|_X )^{\sim}
\end{align*}
to be the completion of the domain $\cD(A^\alpha)$ with respect to the \emph{homogeneous graph norm}.

\begin{remark}\label{Rem: interpolation scale}
\begin{enumerate}[label=\normalfont{(\roman*)}]
\item\label{Item: Dual interpolation scale} If $X$ is reflexive, there is a natural isomorphism $\bigl(\dot{X}_{\alpha, A}\bigr)^{\prime} = \bigl(X^{\prime}\bigr)^{\textbf{.}}_{-\alpha, A^{\prime}}$, see~\cite[Prop.~15.23]{Kunstmann_Weis_2004}. 
In particular, if $X$ is a Hilbert space and $A$ is self-adjoint, then $\bigl(\dot{X}_{\alpha, A}\bigr)^{\prime} = \dot{X}_{-\alpha, A}$ via the usual identification of $A$ with $A^{\prime}$ via the Riesz isomorphism.
\item\label{Item: Completeness for s > 0} For sectorial operators with $0 \in \rho(A)$, the scale $\bigl( \dot{X}_{\alpha, A}\bigr)$ coincides with the usual \emph{extrapolated fractional power scale of order 1} or the \emph{interpolation-extrapolation scale} according to \cite[Thm.~V.1.3.8 and Thm.~V.1.5.4]{Amann-monograph}.
In particular, for $\alpha > 0$, the domain $\cD(A^\alpha)$ is already complete with respect to the homogeneous graph norm as a consequence of the closed graph theorem.
\end{enumerate}
\end{remark}

The following proposition, whose first part is due to Mitrea and Monniaux~\cite[Thm.~5.1]{Mitrea_Monniaux}, characterizes the fractional power domains of the Stokes operator~$A_2$ and the Dirichlet-Laplacian~$-\Delta_2$ on $\L^2(\Omega; \IC^2)$ in terms of the Bessel potential spaces from Proposition~\ref{prop: Helmholtz on Bessel potential space}.

\begin{proposition}\label{Prop: Fractional Power Domains Stokes and Laplace}
    Let $|s| < \frac{1}{2}$.
    Then
    \begin{align}\label{eq: Bessel potential domains}
         \bigl(\L^2_\sigma(\Omega)\bigr)^{\textbf{.}}_{s/2, A_2} = \H^{s,2}_{0,\sigma}(\Omega)
         \quad\text{and}\quad
         \bigl(\L^2(\Omega; \IC^2)\bigr)^{\textbf{.}}_{s/2, -\Delta_2} = \H_0^{s,2}(\Omega; \IC^2).
    \end{align}
    In particular, for $s > 0$,
    \begin{align}\label{eq: Bessel potential domains positive s}
         \cD(A_2^{s/2}) = \H^{s,2}_{0,\sigma}(\Omega)
         \quad\text{and}\quad
         \cD((-\Delta_2)^{s/2}) = \H_0^{s,2}(\Omega; \IC^2).
    \end{align}
\end{proposition}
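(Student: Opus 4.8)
The plan is to prove the two identities first in the range $s>0$, where they reduce to known results, and then to extend them to $s\le 0$ (and thereby to the full homogeneous scale) by a self-duality argument based on the self-adjointness of $A_2$ and $-\Delta_2$ on $\L^2$. The case $s=0$ is immediate, since $\dot{X}_{0,A}=X$ by definition of the completion while $\H^{0,2}_{0,\sigma}(\Omega)=\L^2_\sigma(\Omega)$ and $\H^{0,2}_0(\Omega;\IC^2)=\L^2(\Omega;\IC^2)$.

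For $0<s<\tfrac12$ both sides of each identity are already complete: since $0\in\rho(A_2)$ and $0\in\rho(-\Delta_2)$, Remark~\ref{Rem: interpolation scale}~\ref{Item: Completeness for s > 0} identifies $\dot{X}_{s/2,A_2}=\cD(A_2^{s/2})$ and $\dot{X}_{s/2,-\Delta_2}=\cD\bigl((-\Delta_2)^{s/2}\bigr)$. The Stokes identity $\cD(A_2^{s/2})=\H^{s,2}_{0,\sigma}(\Omega)$ is the special case $s/2\in(0,\tfrac14)\subset[0,\tfrac34)$ of Mitrea and Monniaux~\cite[Thm.~5.1]{Mitrea_Monniaux}. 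For the Dirichlet--Laplacian I would note that $-\Delta_2$, being nonnegative and self-adjoint on $\L^2(\Omega;\IC^2)$, has form domain $\cD\bigl((-\Delta_2)^{1/2}\bigr)=\W^{1,2}_0(\Omega;\IC^2)$, and that $(-\Delta_2)^{1/2}$ is again nonnegative and self-adjoint, hence has a bounded $\H^\infty$-calculus; therefore~\eqref{Eq: Complex interpolation}, applied to $B=(-\Delta_2)^{1/2}$, gives $\cD\bigl((-\Delta_2)^{s/2}\bigr)=\bigl[\L^2(\Omega;\IC^2),\W^{1,2}_0(\Omega;\IC^2)\bigr]_s$, and this equals $\H^{s,2}_0(\Omega;\IC^2)$ for $0<s<\tfrac12$ by the classical complex-interpolation identity for Bessel potential spaces on Lipschitz domains (see, e.g., \cite{Triebel}; in this range the vanishing trace is invisible to the interpolation and $\H^{s,2}_0=\H^{s,2}$). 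This establishes~\eqref{eq: Bessel potential domains} for $0\le s<\tfrac12$, and in particular~\eqref{eq: Bessel potential domains positive s}.

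It remains to treat $-\tfrac12<s<0$, which I would handle by duality. Both $A_2$ and $-\Delta_2$ are self-adjoint with bounded inverse, so Remark~\ref{Rem: interpolation scale}~\ref{Item: Dual interpolation scale}, applied with the positive index $-s/2$, gives $\dot{X}_{s/2,A_2}=\bigl(\dot{X}_{-s/2,A_2}\bigr)'$ and $\dot{X}_{s/2,-\Delta_2}=\bigl(\dot{X}_{-s/2,-\Delta_2}\bigr)'$, the duality being realized through the continuous extension of the $\L^2_\sigma$- (respectively $\L^2$-) inner product. By the range already settled, $\dot{X}_{-s/2,A_2}=\cD(A_2^{-s/2})=\H^{-s,2}_{0,\sigma}(\Omega)$ and $\dot{X}_{-s/2,-\Delta_2}=\cD\bigl((-\Delta_2)^{-s/2}\bigr)=\H^{-s,2}_0(\Omega;\IC^2)$, both reflexive. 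Invoking $\bigl(\H^{-s,2}_{0,\sigma}(\Omega)\bigr)'=\H^{s,2}_{0,\sigma}(\Omega)$ from Proposition~\ref{prop: Helmholtz on Bessel potential space}, together with the recalled duality $\bigl(\H^{-s,2}_0(\Omega;\IC^2)\bigr)'=\H^{s,2}(\Omega;\IC^2)=\H^{s,2}_0(\Omega;\IC^2)$ (the last equality since $|s|<\tfrac12$), I conclude $\dot{X}_{s/2,A_2}=\H^{s,2}_{0,\sigma}(\Omega)$ and $\dot{X}_{s/2,-\Delta_2}=\H^{s,2}_0(\Omega;\IC^2)$, which completes~\eqref{eq: Bessel potential domains}.

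The point that I expect to require genuine care is the compatibility of the two duality pairings invoked in the last step: the abstract isomorphism of Remark~\ref{Rem: interpolation scale}~\ref{Item: Dual interpolation scale} and the Bessel-scale duality of Proposition~\ref{prop: Helmholtz on Bessel potential space} (respectively the recalled Bessel-space duality) must induce the \emph{same} identification. This holds because each of these pairings is the unique bounded bilinear extension of the $\L^2_\sigma$-inner product (respectively the $\L^2$-inner product) from $\L^2_\sigma(\Omega)\times\L^2_\sigma(\Omega)$, and because $\dot{X}_{-s/2,A_2}$ and $\H^{-s,2}_{0,\sigma}(\Omega)$ coincide with equivalent norms by the positive-index case; hence the two realizations of the dual space agree, giving equality of $\dot{X}_{s/2,A_2}$ and $\H^{s,2}_{0,\sigma}(\Omega)$ as topological vector spaces, and similarly for the Dirichlet--Laplacian. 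The remaining ingredients---the Bessel-space interpolation and duality facts, the equivalence of homogeneous graph norms, and the fact that $s/2$ lies in the Mitrea--Monniaux range---are routine and require no new ideas.
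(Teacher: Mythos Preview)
Your proof is correct and follows essentially the same approach as the paper: establish the positive case via the bounded $\H^\infty$-calculus of the self-adjoint operator and complex interpolation with the square-root domain, then obtain the negative case by self-duality via Remark~\ref{Rem: interpolation scale}~\ref{Item: Dual interpolation scale} and Proposition~\ref{prop: Helmholtz on Bessel potential space}. The only cosmetic differences are that the paper cites~\cite[Sect.~7]{Jerison_Kenig} for the Dirichlet--Laplacian case rather than giving your direct argument, and that your explicit discussion of the compatibility of the two duality pairings is left implicit in the paper.
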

\begin{proof}
    We only prove the facts for the Stokes operator.
    The corresponding facts for the Dirichlet-Laplacian follow for example from~\cite[Sect.~7]{Jerison_Kenig}.
    
    We start with the case $s > 0$. By virtue of the invertibility of $A_2$ and of Remark~\ref{Rem: interpolation scale}~\ref{Item: Completeness for s > 0}, we only have to calculate $\cD(A_2^{s/2})$. Since $A_2$ is self-adjoint, its $\H^{\infty}$-calculus on $\L^2_{\sigma} (\Omega)$ is bounded, see, e.g., \cite[Sect.~2.4]{Denk_Hieber_Pruess}. Thus, employing~\eqref{Eq: Complex interpolation} one finds that $\cD(A_2^{s/2}) = \big[ \L^2_\sigma(\Omega), \cD(A_2^{1/2})\big]_s$.
    Furthermore, it is known that $\cD(A_2^{1/2}) = \W^{1,2}_{0,\sigma}(\Omega)$, see, e.g., ~\cite[Lem.~III.2.2.1]{Sohr}
    and that the arising interpolation space is computed as 
    \begin{align*}
        \big[ \L^2_\sigma(\Omega), \W^{1,2}_{0 , \sigma}(\Omega)\big]_s = \H_{0,\sigma}^{s,2}(\Omega), 
    \end{align*}
    see~\cite[Thm.~2.12]{Mitrea_Monniaux}. 
    
    Now, let $s < 0$. 
    Using Proposition~\ref{prop: Helmholtz on Bessel potential space}, the fact that $A_2$ is self-adjoint, the isomorphism from Remark~\ref{Rem: interpolation scale}~\ref{Item: Dual interpolation scale}, and the result for the case $s > 0$ yield
    \begin{align*}
        \H_{0,\sigma}^{s,2}(\Omega) 
        =  \H_{0,\sigma}^{-s,2}(\Omega)^{\prime} 
        = \bigl(\big(\L^2_\sigma(\Omega)\big)^{\textbf{.}}_{-s/2, A_2} \bigr)^{\prime}
        = \big(\L^2_\sigma(\Omega)\big)^{\textbf{.}}_{s/2, A_2} 
    \end{align*}
    which completes the proof of the statement.
\end{proof}

For $B_2 \coloneqq -\Delta_2$ and $s > 0$, the following diagram summarizes the interplay of Proposition~\ref{prop: Helmholtz on Bessel potential space} (vertical arrows) and Proposition~\ref{Prop: Fractional Power Domains Stokes and Laplace} (horizontal arrows).
\begin{center}
\begin{tikzcd}
\H_0^{s,2} \arrow[d, "\IP"] \arrow[r, leftrightarrow, "\simeq"] & \cD(B_2^{s/2}) \arrow[rr, "B_2^{s/2}"]  && \L^2 \arrow[d, "\IP_2"]  \\
\H^{s,2}_{0,\sigma} \arrow[r, leftrightarrow, "\simeq"]           & \cD(A_2^{s/2}) \arrow[rr, "A_2^{s/2}"]  && \L^2_\sigma
\end{tikzcd}
\end{center}

We have now gathered all the prerequisites needed to verify the conditions from Proposition~\ref{Prop: Fractional comparison}.
As in Step~2, we let $X = \L^2_\sigma(\Omega)$, $Y = \L^2(\Omega; \IC^2)$, $R = \IP_2$, and use for $S \colon X \to Y$ the inclusion map. 
Furthermore, we fix some $0 < \alpha_0 < 1/4$, and we carry out the proof in two separate cases depending on the sign of the parameter $\alpha$.

\subsubsection*{The case $\alpha = \alpha_0 > 0$}
In this case, 
\begin{align*}
    R \cD(B_2^\alpha) = R \H^{2\alpha,2}_0(\Omega ; \IC^2) = \H^{2\alpha,2}_{0,\sigma}(\Omega) = \cD(A_2^\alpha)
\end{align*}
by~\eqref{eq: Bessel potential domains positive s}, Proposition~\ref{prop: Helmholtz on Bessel potential space}, and the characterization of the fractional power domains of the Dirichlet-Laplacian. Moreover, for all $y \in \H^{2\alpha,2}_0(\Omega ; \IC^2)$, one estimates
\begin{align}\label{Eq: Retraction alpha gt 0}
   \|A_2^\alpha R y \|_{\L^2_\sigma(\Omega)} 
   \lesssim \| R y \|_{\H^{2\alpha, 2}_{0,\sigma}(\Omega)}
   \lesssim \| y \|_{\H^{2\alpha, 2}(\Omega; \IC^2)}
   \lesssim \| B_2^{\alpha} y \|_{\L^2(\Omega; \IC^2)}\,.
\end{align}
This gives Condition~\eqref{Eq: Retraction Condition} for $\alpha > 0$.
Similarly, we verify that 
\begin{align*}
   S \cD(A_2^\alpha) 
   = \H^{2\alpha, 2}_{0,\sigma}(\Omega) 
   \subset \H^{2\alpha, 2}_0(\Omega; \IC^2) = \cD(B_2^\alpha)
\end{align*} and calculate for every $x \in \H^{2\alpha, 2}_{0,\sigma}(\Omega)$
\begin{align}\label{Eq: Injection alpha gt 0}
    \| B_2^\alpha S x \|_{\L^2(\Omega; \IC^2)}
    = \| B_2^\alpha x \|_{\L^2(\Omega; \IC^2)}
    \lesssim \| x \|_{\H^{2\alpha, 2}_{0,\sigma}(\Omega)}\,.
\end{align}
This gives Condition~\eqref{Eq: Injection Condition} for $\alpha > 0$.

\subsubsection*{The case $\alpha = - \alpha_0 < 0$}
For the case of negative exponents, the inclusions on the left-hand side of~\eqref{Eq: Retraction Condition} and \eqref{Eq: Injection Condition} are straightforward since $\cD(A_2^\alpha) = \L^2_\sigma(\Omega)$ and $\cD(B_2^\alpha) = \L^2(\Omega; \IC^2)$ as sets. 
The first inclusion follows from the classical mapping properties of the Helmholtz projection on $\L^2$, see~\cite[Lem.~2.5.2]{Sohr}, while the second one is trivial.

The boundedness estimate in Condition~\eqref{Eq: Retraction Condition} follows via duality from~\eqref{Eq: Injection alpha gt 0}. Indeed, let $y \in \L^2(\Omega; \IC^2)$ and $g \in \L^2_\sigma(\Omega)$. 
Then
\begin{align*}
    \langle A_2^\alpha R B_2^{-\alpha} y, g \rangle_{\L^2_\sigma, \L^2_\sigma}^{}
    = \langle y, B_2^{-\alpha} S A_2^\alpha g \rangle_{\L^2, \L^2}^{},
\end{align*}
and the claim follows by taking the supremum over all $g$.
For the remaining part of Condition~\eqref{Eq: Injection Condition}, note that, since $\alpha < 0$, we have $A_2^{-\alpha} x \in \cD(A_2^\alpha) = \L^{2}_\sigma(\Omega)$ implying the identity $S A_2^{-\alpha} = R A_2^{-\alpha}$.
Now, the desired estimate follows via duality from~\eqref{Eq: Retraction alpha gt 0} as, for $x \in \L^2_\sigma(\Omega)$ and $h \in \L^2(\Omega; \IC^2)$, we have
\begin{align*}
    \langle B_2^\alpha S A_2^{-\alpha} x, h \rangle_{\L^2, \L^2}^{}
    = \langle x, A_2^{-\alpha} R B_2^{\alpha} h \rangle_{\L^2_\sigma, \L^2_\sigma}^{}\,.
\end{align*}

\subsection{Domains of fractional powers}

This section deals with the calculation of domains of fractional powers for the Stokes operator on $\L^p_{\sigma} (\Omega)$. This extends the results from the case $p = 2$ established in~\cite[Thm.~5.1]{Mitrea_Monniaux}. The same approach could also be used to extend the results in three and higher dimensions in the $\L^p_{\sigma}$-situation, where currently only the domains of $A_p^{\theta}$ are characterized for $0 \leq \theta \leq 1 / 2$. We will, however, concentrate on the two-dimensional case. 
As a preparation, we state a regularity result for the Poisson problem for the Stokes system with Dirichlet boundary conditions, initially formulated by Dindo\v{s} and Mitrea \cite[Thm.~5.6]{Dindos_Mitrea} and later improved by Mitrea and Wright~\cite[Thm.~10.6.2]{Mitrea_Wright}.  
We remark that this theorem is formulated in terms of Besov spaces $\B^s_{p , q}$ and Triebel--Lizorkin spaces $\F^s_{p , q}$ and that we present the particular case of Bessel potential spaces satisfying the relation $\H^{s , p} = \F^s_{p , 2}$.

\begin{theorem}[Mitrea, Wright]
\label{Thm: Regularity Stokes}
Let $\Omega \subset \IR^2$ be a bounded Lipschitz domain. Then there exists $0 < \delta \leq 1$ depending only on $\Omega$ such that, for all $1 < p < \infty$ and $0 < s < 1$ satisfying either
\begin{align}
\label{Eq: First condition}
 0 < \frac{1}{p} < s + \frac{1 + \delta}{2} \quad \text{and} \quad 0 < s \leq \frac{1 + \delta}{2}
\end{align}
or
\begin{align}
\label{Eq: Second condition}
 - \frac{1 + \delta}{2} < \frac{1}{p} - s < \frac{1 + \delta}{2} \quad \text{and} \quad \frac{1 + \delta}{2} < s < 1\,,
\end{align}
the Stokes system
\begin{align*}
\left\{ \begin{aligned}
 - \Delta u + \nabla \phi &= f && \text{in } \Omega, \\
 \divergence(u) &= 0 && \text{in } \Omega, \\
 u &= 0 && \text{on } \partial \Omega
 \end{aligned} \right.
\end{align*}
has for all $f \in \H^{s + 1 / p - 2 , p} (\Omega ; \IC^2)$ unique solutions $u \in \H^{s + 1 / p , p} (\Omega ; \IC^2)$ and $\phi \in \H^{s + 1 / p - 1 , p} (\Omega)$ (the pressure is unique up to the addition of constants). Moreover, there exists a constant $C > 0$ depending only on $p$, $s$, and $\Omega$ such that the following estimate holds
\begin{align*}
 \| u \|_{\H^{s + \frac{1}{p} , p} (\Omega ; \IC^2)} + \| \phi \|_{\H^{s + \frac{1}{p} - 1 , p} (\Omega)} \leq C \| f \|_{\H^{s + \frac{1}{p} - 2 , p} (\Omega ; \IC^2)}.
\end{align*}
\end{theorem}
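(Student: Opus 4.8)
\emph{Approach.} Since this is the stationary ($\lambda = 0$) Stokes system, the plan is to prove it by the method of single and double layer potentials --- the $\lambda = 0$ specialization of the apparatus recalled in Section~\ref{Sec: Single and double layer potentials}, but now tracked on the full Besov/Triebel--Lizorkin scale rather than only on $\L^2(\partial\Omega)$. In outline: (i)~remove the inhomogeneity $f$ by a volume (Newtonian) potential built from $\Gamma(\,\cdot\,;0)$ and $\Phi$, reducing matters to a homogeneous Stokes system whose Dirichlet datum is the boundary trace of that potential; (ii)~represent the homogeneous solution by a double-layer ansatz so that the boundary condition becomes the integral equation $(-\tfrac{1}{2}\Id + \cK_0^{\ast})\varphi = \text{(trace)}$, in the notation of~\eqref{Eq: Dirichlet limit} at $\lambda = 0$, and invert $-\tfrac{1}{2}\Id + \cK_0^{\ast}$ on the appropriate boundary Besov space; (iii)~read off the claimed regularity and estimate from mapping properties of the potentials, and obtain uniqueness from the $\L^2$-Dirichlet problem, Theorem~\ref{Thm: Dirichlet} at $\lambda = 0$.

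\emph{Volume potential and the boundary equation.} First I would record that, after composition with a universal compactly supported extension operator $\mathfrak{E}$ from $\Omega$ to $\IR^2$, convolution with the Stokeslet $\Gamma(\,\cdot\,;0)$ (a matrix of second-order Riesz-type kernels) is bounded $\H^{s + 1/p - 2 , p}(\Omega ; \IC^2) \to \H^{s + 1/p , p}(\Omega ; \IC^2)$ and convolution with $\Phi$ is bounded $\H^{s + 1/p - 2 , p}(\Omega ; \IC^2) \to \H^{s + 1/p - 1 , p}(\Omega)$, uniformly for $(s,p)$ in the stated range; this rests on Mikhlin multiplier theory, its Triebel--Lizorkin version, and the identification $\H^{s,p} = \F^{s}_{p,2}$. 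This produces a pair $(v,q)$ with $-\Delta v + \nabla q = f$ and $\divergence v = 0$ in $\Omega$ (the Stokeslet field being automatically solenoidal) and with boundary trace $g \coloneqq v|_{\partial\Omega} \in \B^{s}_{p,p}(\partial\Omega)$ by the trace theorem $\H^{s + 1/p , p}(\Omega) \to \B^{s}_{p,p}(\partial\Omega)$ --- which is where the ubiquitous shift by $1/p$ enters. Making the double-layer ansatz $u = v - \cD_0(\varphi)$ and $\phi = q - \cD_\Phi(\varphi)$, modulo a standard finite-dimensional adjustment accounting for the compatibility condition $\int_{\partial\Omega} g \cdot n \, \d\sigma = 0$, the homogeneous boundary condition $u = 0$ on $\partial\Omega$ becomes $(-\tfrac{1}{2}\Id + \cK_0^{\ast})\varphi = g$.

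\emph{Invertibility of the boundary operator --- the main obstacle.} The crux is that $-\tfrac{1}{2}\Id + \cK_0^{\ast}$ is invertible on $\B^{s}_{p,p}(\partial\Omega)$ (on a suitable complemented subspace, away from the finite-dimensional obstructions) for every $(s,p)$ obeying~\eqref{Eq: First condition}--\eqref{Eq: Second condition}. I would argue this in three stages. First, invertibility at the anchor point $s = 0$, $p = 2$: this is exactly the Fredholm-plus-Rellich argument sketched in Section~\ref{Sec: Single and double layer potentials} specialized to $\lambda = 0$, drawing on the classical $\L^2$-solvability of the Dirichlet and regularity problems for the stationary Stokes system from~\cite{Brown_Perry_Shen,Mitrea_Wright}. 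Second, invertibility of the relevant operators on $\L^p(\partial\Omega)$ and on the Sobolev space $\W^{1,p}(\partial\Omega)$ for $p$ in an interval about $2$, again from~\cite{Brown_Perry_Shen} and duality. Third --- and this is the delicate part --- a \v{S}ne\u{\i}berg-type stability-of-invertibility theorem on the complex interpolation scale of boundary Besov and Triebel--Lizorkin spaces propagates invertibility from the anchor points $(0,2)$ and $(1,2)$ to an open two-parameter neighborhood, after which the interpolation/duality web fills out the full range~\eqref{Eq: First condition}--\eqref{Eq: Second condition}; the width $\delta$ is precisely the amount of fractional Sobolev smoothness the Lipschitz graph can carry under a bi-Lipschitz flattening. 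The bookkeeping here --- two independent parameters $(s,p)$, the $1/p$-shifts in the trace and normal-derivative maps, the pressure gaining only one derivative, and keeping all constants controlled by the Lipschitz character --- is where the real work lies.

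\emph{Assembly and uniqueness.} With $\varphi \coloneqq (-\tfrac{1}{2}\Id + \cK_0^{\ast})^{-1} g$, the mapping properties of $\cD_0$ and $\cD_\Phi$ on the Besov scale, combined with the bounds of the previous steps, give $u \in \H^{s + 1/p , p}(\Omega ; \IC^2)$, $\phi \in \H^{s + 1/p - 1 , p}(\Omega)$, and the stated inequality by chaining constants, all of which depend only on $p$, $s$, and the Lipschitz character of $\Omega$. For uniqueness, a solution pair with vanishing data has, in the range considered, interior non-tangential maximal function $(u)_\interior^{\ast} \in \L^2(\partial\Omega)$ after an embedding argument, so Theorem~\ref{Thm: Dirichlet} at $\lambda = 0$ forces $u \equiv 0$ and $\phi$ constant; the general statement follows by density of smooth data. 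I expect the third stage of the invertibility step to be the principal technical hurdle.
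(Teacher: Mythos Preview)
The paper does not prove this theorem. It is quoted verbatim as a black-box result from the literature: the text introduces it with ``As a preparation, we state a regularity result for the Poisson problem for the Stokes system with Dirichlet boundary conditions, initially formulated by Dindo\v{s} and Mitrea~\cite[Thm.~5.6]{Dindos_Mitrea} and later improved by Mitrea and Wright~\cite[Thm.~10.6.2]{Mitrea_Wright},'' and no argument follows. The theorem is then used in Remark~\ref{Rem: Domain embedding} and in the proof of Theorem~\ref{Thm: Continuous Embedding} purely as an input.

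Your outline is a reasonable high-level sketch of the strategy in the cited monograph~\cite{Mitrea_Wright}: Newtonian potential to absorb the right-hand side, double-layer ansatz for the resulting homogeneous Dirichlet problem, invertibility of $-\tfrac{1}{2}\Id + \cK_0^{\ast}$ on boundary Besov spaces propagated from the $\L^2$ anchor via Rellich estimates, atomic/molecular theory, interpolation and stability, and uniqueness from the $\L^2$-Dirichlet theory. But there is nothing in the present paper to compare it against, and the details you flag as ``the principal technical hurdle'' are exactly what fills a substantial portion of~\cite{Mitrea_Wright}; an outline at this level is not a proof, and the paper makes no pretense of supplying one.
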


\begin{remark}
\label{Rem: Domain embedding}
Let $1 < p < \infty$ satisfy~\eqref{Eq: Helmholtz two dimensions}. If $f \in \L^p_{\sigma} (\Omega)$, then $f \in \H^{s + 1 / p - 2 , p} (\Omega ; \IC^2)$ for all $s \in (0 , 1)$. Thus, 
\begin{align*}
 \dom(A_p) \subset \bigcap_s \, \H^{s + \frac{1}{p} , p} (\Omega ; \IC^2),
\end{align*}
where the intersection is taken over all $s \in (0 , 1)$ that either satisfy~\eqref{Eq: First condition} or~\eqref{Eq: Second condition}. If $\delta = 1$, then~\eqref{Eq: Second condition} is void what implies that
\begin{align*}
 \dom(A_p) \subset \bigcap_{t < 1 + \frac{1}{p}} \, \H^{t , p} (\Omega ; \IC^2).
\end{align*}
If $\delta \in (0 , 1)$, the first inequality in~\eqref{Eq: Second condition} implies that $s$ must satisfy $s < \min(1 , 1 / p + (1 + \delta) / 2)$. A calculation of the minimum reveals that
\begin{align*}
 \dom(A_p) \subset \bigcap_{t < 1 + \frac{1}{p}} \, \H^{t , p} (\Omega ; \IC^2) \quad \text{if} \quad \frac{1}{2} - \frac{1}{p} \leq \frac{\delta}{2}
\end{align*}
and that
\begin{align*}
 \dom(A_p) \subset \bigcap_{t < \frac{2}{p} + \frac{1 + \delta}{2}} \, \H^{t , p} (\Omega ; \IC^2) \quad \text{if} \quad \frac{1}{2} - \frac{1}{p} > \frac{\delta}{2}.
\end{align*}
\end{remark}

We are ready to prove the embedding of the Bessel potential spaces into domains of fractional powers $\cD(A_p^\theta)$ from Theorem~\ref{Thm: Fractional powers}.
\begin{theorem}\label{Thm: Continuous Embedding}
Let $\Omega \subset \IR^2$ be a bounded Lipschitz domain. Then there exists $\eps > 0$ such that, for all $1 < p < \infty$ satisfying~\eqref{Eq: Helmholtz two dimensions} and all $0 < \theta < 1$, the continuous embedding
\begin{align*}
 \H^{2 \theta , p}_{0 , \sigma} (\Omega) \subset \dom(A_p^{\theta})
\end{align*}
holds. Furthermore, there exists $\delta \in (0 , 1]$ such that, if $\theta$ and $p$ additionally satisfy either
\begin{align}
\label{Eq: Fractional small p}
 \theta < \frac{1}{2} + \frac{1}{2 p} \quad \text{if} \quad \frac{1}{2} - \frac{1}{p} \leq \frac{\delta}{2}
\end{align}
or
\begin{align}
\label{Eq: Fractional large p}
 \theta < \frac{1}{p} + \frac{1 + \delta}{4} \quad \text{if} \quad \frac{1}{2} - \frac{1}{p} > \frac{\delta}{2}\,,
\end{align}
we have with equivalent norms that
\begin{align*}
\H^{2 \theta , p}_{0 , \sigma} (\Omega)  =\dom(A_p^{\theta}) .
\end{align*}
\end{theorem}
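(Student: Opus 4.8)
The plan is to reduce everything to complex interpolation. By Theorem~\ref{Thm: Hinfty} the operator $A_p$ has a bounded $\H^{\infty}$-calculus, so~\eqref{Eq: Complex interpolation} yields $\dom(A_p^{\theta}) = [\L^p_{\sigma}(\Omega), \dom(A_p)]_{\theta}$ for $0 < \theta < 1$; the imaginary powers $A_p^{\ii t}$ are bounded, and the reiteration theorem for the complex method gives $[\dom(A_p^{\theta_0}), \dom(A_p^{\theta_1})]_{\eta} = \dom(A_p^{(1 - \eta)\theta_0 + \eta \theta_1})$. The second standing ingredient is the family of interpolation identities for the Stokes scale, $[\H^{j , p}_{0 , \sigma}(\Omega), \H^{k , p}_{0 , \sigma}(\Omega)]_{\eta} = \H^{(1 - \eta) j + \eta k , p}_{0 , \sigma}(\Omega)$ for $j , k \in \{0 , 1 , 2\}$, together with the duality $\H^{- s , p}_{\sigma}(\Omega) = \H^{s , p'}_{0 , \sigma}(\Omega)'$ for $0 < s < 1$; these are in the spirit of Mitrea--Monniaux and Jerison--Kenig and will be used freely (for the non-solenoidal scale they are classical, and the Helmholtz projection transfers them to the solenoidal scale).

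\textbf{The embedding $\H^{2 \theta , p}_{0 , \sigma}(\Omega) \subset \dom(A_p^{\theta})$ for all $0 < \theta < 1$.} Lemma~\ref{Lem: Embedding} gives $\W^{2 , p}_{0 , \sigma}(\Omega) \hookrightarrow \dom(A_p)$. For $\theta \leq \tfrac12$, functoriality of the complex method and the identities above give $\H^{2 \theta , p}_{0 , \sigma}(\Omega) = [\L^p_{\sigma}(\Omega), \W^{2 , p}_{0 , \sigma}(\Omega)]_{\theta} \hookrightarrow [\L^p_{\sigma}(\Omega), \dom(A_p)]_{\theta} = \dom(A_p^{\theta})$; in particular $\W^{1 , p}_{0 , \sigma}(\Omega) = \H^{1 , p}_{0 , \sigma}(\Omega) \hookrightarrow \dom(A_p^{1/2})$. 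For $\theta > \tfrac12$ one reiterates: $\H^{2 \theta , p}_{0 , \sigma}(\Omega) = [\W^{1 , p}_{0 , \sigma}(\Omega), \W^{2 , p}_{0 , \sigma}(\Omega)]_{2 \theta - 1} \hookrightarrow [\dom(A_p^{1/2}), \dom(A_p)]_{2 \theta - 1} = \dom(A_p^{\theta})$. Consequently $A_p^{\theta}$ extends to a bounded operator $\H^{2 \theta , p}_{0 , \sigma}(\Omega) \to \L^p_{\sigma}(\Omega)$.

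\textbf{The reverse inclusion under~\eqref{Eq: Fractional small p} or~\eqref{Eq: Fractional large p}.} I would treat $0 < \theta \leq \tfrac12$ first. One establishes the square-root identity $\dom(A_p^{1/2}) = \W^{1 , p}_{0 , \sigma}(\Omega)$: the inclusion ``$\supseteq$'' is the embedding above, and ``$\subseteq$'' follows by feeding the representation $A_p u = - \IP_p \Delta u$ (Lemma~\ref{Lem: Embedding}) into Theorem~\ref{Thm: Regularity Stokes} with $s + 1/p = 1$, after identifying, via duality from the exponent $p'$ and Remark~\ref{Rem: interpolation scale}\,\ref{Item: Dual interpolation scale}, the extrapolation space of order $-\tfrac12$ with $\W^{-1 , p}_{\sigma}(\Omega) = \H^{-1 , p}_{\sigma}(\Omega)$ (the $p = 2$ case being known, cf.\ Sohr's book as used in the proof of Proposition~\ref{Prop: Fractional Power Domains Stokes and Laplace}, and serving as the anchor of a short bootstrap across the Helmholtz range). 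Interpolating then gives $\dom(A_p^{\theta}) = [\L^p_{\sigma}(\Omega), \dom(A_p^{1/2})]_{2 \theta} = \H^{2 \theta , p}_{0 , \sigma}(\Omega)$ for $0 < \theta < \tfrac12$. For $\tfrac12 < \theta$ within~\eqref{Eq: Fractional small p}/\eqref{Eq: Fractional large p}, take $u \in \dom(A_p^{\theta})$; then $A_p u$ lies in the extrapolation space of order $\theta - 1$ (shift the fractional-power scale of $A_p$ down by one), which by the $\theta \leq \tfrac12$-result applied with exponent $1 - \theta \in (0 , \tfrac12)$ to $A_{p'}$ and by duality equals $\H^{2 \theta - 2 , p}_{\sigma}(\Omega)$. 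The hypotheses on $(p , \theta)$ are precisely arranged so that $\tau \coloneqq 2 \theta$ lies in the admissible range~\eqref{Eq: First condition}/\eqref{Eq: Second condition} of Theorem~\ref{Thm: Regularity Stokes} (with the case split governed by whether $\tfrac12 - \tfrac1p \leq \tfrac\delta2$), so the Stokes solution operator is an isomorphism $\H^{2 \theta - 2 , p}_{\sigma}(\Omega) \to \H^{2 \theta , p}_{0 , \sigma}(\Omega)$ inverting $- \IP_p \Delta$; applying it to $A_p u$ recovers $u$, whence $u \in \H^{2 \theta , p}_{0 , \sigma}(\Omega)$. Together with the embedding this gives equality with equivalent norms.

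\textbf{Main obstacle.} The delicate point is the reverse inclusion for $\theta > \tfrac12$: since $\dom(A_p)$ is strictly smaller than $\H^{2 , p}_{0 , \sigma}(\Omega)$ on a genuine Lipschitz domain, one cannot simply interpolate the couple $\bigl(\L^p_{\sigma}(\Omega), \dom(A_p)\bigr)$ against the Bessel scale — a direct Balakrishnan/semigroup integral for $A_p^{-\theta}$ does not converge in $\H^{2 \theta , p}_{0 , \sigma}(\Omega)$ — and one must invoke the sharp regularity theorem of Mitrea--Wright on a \emph{negative-order} solenoidal Bessel scale rather than mere elliptic regularity into $\L^p_{\sigma}(\Omega)$. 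The accompanying bookkeeping — translating~\eqref{Eq: Fractional small p}/\eqref{Eq: Fractional large p} into~\eqref{Eq: First condition}/\eqref{Eq: Second condition} for $\tau = 2 \theta$, and checking that the negative-order spaces $\H^{2 \theta - 2 , p}_{\sigma}(\Omega)$ together with the duality and extrapolation identifications behave as expected on a bounded Lipschitz domain — is the part requiring care.
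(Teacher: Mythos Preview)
Your embedding argument is the paper's, though the case split at $\theta = 1/2$ is unnecessary: the identity $\H^{2\theta,p}_{0,\sigma}(\Omega) = [\L^p_\sigma(\Omega), \W^{2,p}_{0,\sigma}(\Omega)]_\theta$ from~\cite[Thm.~2.12]{Mitrea_Monniaux} holds for all $0 < \theta < 1$, and Lemma~\ref{Lem: Embedding} feeds this directly into $[\L^p_\sigma(\Omega), \dom(A_p)]_\theta = \dom(A_p^\theta)$.

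For the reverse inclusion your route differs from the paper's and, as written, has a gap. Your square-root step is circular: identifying the order-$(-\tfrac12)$ extrapolation space of $A_p$ with $\W^{-1,p}_\sigma(\Omega)$ via Remark~\ref{Rem: interpolation scale}\,\ref{Item: Dual interpolation scale} presupposes $\dom(A_{p'}^{1/2}) = \W^{1,p'}_{0,\sigma}(\Omega)$, which is the square-root identity at the dual exponent; and the formula $A_p u = -\IP_p\Delta u$ from Lemma~\ref{Lem: Embedding} is only available for $u \in \W^{2,p}_{0,\sigma}(\Omega)$, not for general $u \in \dom(A_p^{1/2})$. The ``bootstrap from $p=2$'' you allude to is not a standard mechanism here---the operators $A_p$ are consistent across $p$, but their fractional-power domains do not interpolate in $p$ without first knowing what they are. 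Your $\theta > 1/2$ argument then leans on this step (via $\dom(A_{p'}^{1-\theta}) = \H^{2-2\theta,p'}_{0,\sigma}(\Omega)$), on duality identities for the solenoidal Bessel scale at general $p$ that the paper only states at $p = 2$ (Proposition~\ref{prop: Helmholtz on Bessel potential space}), and on the consistency of the extrapolated $A_p$ with the distributional Stokes operator---all plausible, but none of it free.

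The paper sidesteps all of this by never isolating the square root and never touching extrapolation spaces. To show that $A_p^{-\theta}\colon \L^p_\sigma(\Omega) \to \H^{2\theta,p}_0(\Omega;\IC^2)$ is bounded, it dualizes to the estimate $\|A_{p'}^{-\theta}\IP_{p'} g\|_{\L^{p'}} \leq C\|g\|_{\H^{-2\theta,p'}}$ for $g \in \C_c^\infty(\Omega;\IC^2)$, factors $A_{p'}^{-\theta} = A_{p'}^{1-\theta}\circ A_{p'}^{-1}$, and handles the two factors separately: $A_{p'}^{1-\theta}$ is bounded from $\H^{2-2\theta,p'}_{0,\sigma}(\Omega)$ to $\L^{p'}_\sigma(\Omega)$ by the \emph{forward} embedding already proved (no circularity), and $A_{p'}^{-1}\IP_{p'}$ maps $\H^{-2\theta,p'}(\Omega;\IC^2)$ into $\H^{2-2\theta,p'}(\Omega;\IC^2)$ by Theorem~\ref{Thm: Regularity Stokes} applied with parameters $s = 1 + 1/p - 2\theta$ and integrability $p'$ (after absorbing the Helmholtz correction $\nabla h$ into the pressure). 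That application of Theorem~\ref{Thm: Regularity Stokes} forces an auxiliary lower bound $\theta > (1-\delta)/4 + 1/(2p)$ in addition to~\eqref{Eq: Fractional small p}/\eqref{Eq: Fractional large p}; the lower bound is removed at the end by choosing some admissible $\alpha > \theta$ and interpolating $\dom(A_p^\alpha) = \H^{2\alpha,p}_{0,\sigma}(\Omega)$ against $\L^p_\sigma(\Omega)$. The square root and all smaller $\theta$ fall out as byproducts rather than as prerequisites.
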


\begin{proof}
The boundedness of the $\H^{\infty}$-calculus of $A_p$ by Theorem~\ref{Thm: Hinfty} and its consequence for complex interpolation \eqref{Eq: Complex interpolation} imply that
\begin{align}
\label{Eq: Fractional powers and interpolation}
 \big[ \L^p_{\sigma} (\Omega) , \dom(A_p^{\alpha}) \big]_{\theta} = \dom(A_p^{\alpha \theta})
\end{align}
for $\alpha > 0$.
Moreover, we find by Lemma~\ref{Lem: Embedding} the continuous embedding
\begin{align}\label{Eq: Embedding of interpolation}
 \big[ \L^p_{\sigma} (\Omega) , \W^{2 , p}_{0 , \sigma} (\Omega) \big]_{\theta} \subset \big[ \L^p_{\sigma} (\Omega) , \dom(A_p) \big]_{\theta}.
\end{align}
Due to the interpolation result in~\cite[Thm.~2.12]{Mitrea_Monniaux}, it is known that the interpolation space on the left-hand side of~\eqref{Eq: Embedding of interpolation} coincides with $\H^{2 \theta , p}_{0 , \sigma} (\Omega)$. 
Taking $\alpha = 1$ in~\eqref{Eq: Fractional powers and interpolation} results in the continuous embedding, which is formulated with $r$ instead of $p$ and $s$ instead of $\theta$ for further reference: for any $1 < r < \infty$ satisfying~\eqref{Eq: Helmholtz two dimensions} and any $s \in (0 , 1)$ it holds that
\begin{align}
\label{Eq: Continuous embedding}
 \H^{2 s , r}_{0 , \sigma} (\Omega) \subset \dom(A_r^s).
\end{align}

For the converse inclusion, let $\delta \in (0 , 1]$ be given by Theorem~\ref{Thm: Regularity Stokes}. We assume first that 
\begin{align}
\label{Eq: Auxiliary condition theta}
 \frac{1 - \delta}{4} + \frac{1}{2 p} < \theta < \min\bigg( \frac{1}{2} + \frac{1}{2 p} ,\; \frac{1}{p} + \frac{1 + \delta}{4} \bigg).
\end{align}
Notice that the minimum is given by $1 / 2 + 1 / (2 p)$ if $1 / 2 - 1 / p \leq \delta / 2$ and that the minimum is given by $1 / p + (1 + \delta) / 4$ if $1 / 2 - 1 / p > \delta / 2$. We remark that the desired embedding is established once the operator $A_p^{- \theta}$ is a bounded operator
\begin{align*}
 A_p^{- \theta} \colon \L^p_{\sigma} (\Omega) \to \H^{2 \theta , p}_{0 , \sigma} (\Omega).
\end{align*}
Since $\H^{2 \theta , p}_{0 , \sigma} (\Omega) = \H^{2 \theta , p}_0 (\Omega ; \IC^2) \cap \L^p_{\sigma} (\Omega)$ holds due to~\cite[Cor.~2.11]{Mitrea_Monniaux} and since $A_p^{- \theta}$ is bounded from $\L^p_{\sigma} (\Omega)$ to $\L^p_{\sigma} (\Omega)$, it suffices to prove the boundedness of
\begin{align}
\label{Eq: Boundedness property}
 A_p^{- \theta} \colon \L^p_{\sigma} (\Omega) \to \H^{2 \theta , p}_0 (\Omega ; \IC^2).
\end{align}
To this end, we find by the self-adjointness of $A_2$ and of $\IP_2$ for $f \in \C_{c , \sigma}^{\infty} (\Omega)$ and $g \in \C_c^{\infty} (\Omega ; \IC^2)$ that
\begin{align}
\label{Eq: Duality argument}
 \Big\lvert \int_{\Omega} A_p^{- \theta} f \cdot \overline{g} \; \d x \Big\rvert = \Big\lvert \int_{\Omega} f \cdot \overline{A_{p^{\prime}}^{- \theta} \IP_{p^{\prime}} g} \; \d x \Big\rvert \leq \| f \|_{\L^p_{\sigma}(\Omega)} \| A_{p^{\prime}}^{- \theta} \IP_{p^{\prime}} g \|_{\L^{p^{\prime}}(\Omega; \IC^2)}.
\end{align}
Since $\H^{- 2 \theta , p^{\prime}} (\Omega ; \IC^2)^{\prime} = \H^{2 \theta , p}_0 (\Omega ; \IC^2)$ and since $\C^{\infty}_c (\Omega ; \IC^2)$ is dense in $\H^{- 2 \theta , p^{\prime}} (\Omega ; \IC^2)$, cf.~\cite[Thm.~3.5~(i)]{Triebel}, the boundedness property~\eqref{Eq: Boundedness property} follows from~\eqref{Eq: Duality argument} once it is shown that there exists $C > 0$ such that, for all $g \in \C_c^{\infty} (\Omega ; \IC^2)$, it holds
\begin{align}\label{eq: Boundedness from Lp to H-2theta}
 \| A_{p^{\prime}}^{- \theta} \IP_{p^{\prime}} g \|_{\L^{p^{\prime}}(\Omega; \IC^2)} \leq C \| g \|_{\H^{- 2 \theta , p^{\prime}}(\Omega; \IC^2)}.
\end{align}

To this end, we find by virtue of the first inequality in~\eqref{Eq: Auxiliary condition theta} combined with Remark~\ref{Rem: Domain embedding} that $\Rg(A_{p^{\prime}}^{-1}) = \cD (A_{p^{\prime}}) \subset \H^{2 (1 - \theta) , p^{\prime}}_{0 , \sigma} (\Omega)$. This enables us to use~\eqref{Eq: Continuous embedding} with $r = p^{\prime}$ and $s = 1 - \theta$ to deduce that
\begin{align*}
 \| A_{p^{\prime}}^{- \theta} \IP_{p^{\prime}} g \|_{\L^{p^{\prime}}(\Omega; \IC^2)} 
 = \| A_{p^{\prime}}^{1 - \theta} A_{p^{\prime}}^{- 1} \IP_{p^{\prime}} g \|_{\L^{p^{\prime}}(\Omega; \IC^2)} 
 \leq C \| A_{p^{\prime}}^{- 1} \IP_{p^{\prime}} g \|_{\H^{2 - 2 \theta , p^{\prime}}_{0 , \sigma}(\Omega; \IC^2)}.
\end{align*}
Define $u \coloneqq A_{p^{\prime}}^{-1} \IP_{p^{\prime}} g$ and let $\phi$ denote the associated pressure. Then $u$ and $\phi$ solve the Stokes system given by
\begin{align*}
 \left\{ \begin{aligned}
 - \Delta u + \nabla \phi &= \IP_{p^{\prime}} g && \text{in } \Omega, \\
 \divergence(u) &= 0 && \text{in } \Omega, \\
 u &= 0 && \text{on } \partial \Omega.
 \end{aligned} \right.
\end{align*}
Now, by virtue of~\cite[Thm.~4.4]{MitreaD}, the function $\IP_{p^{\prime}} g$ is given by $g + \nabla h$ for some $h \in \W^{1 , p^{\prime}} (\Omega)$. It follows that $u$ and $\phi - h$ solve the same system but with the right-hand side $g$. Now, define $s \coloneqq 1 + 1 / p - 2 \theta$. Then the conditions imposed on $\theta$ in~\eqref{Eq: Auxiliary condition theta} imply that $s$ and $p^{\prime}$ satisfy~\eqref{Eq: First condition} so that Theorem~\ref{Thm: Regularity Stokes} implies that
\begin{align*}
 \| u \|_{\H^{2 - 2 \theta , p^{\prime}}_{0 , \sigma}(\Omega)} \leq C \| g \|_{\H^{- 2 \theta , p^{\prime}}(\Omega; \IC^2)}
\end{align*}
which in turn shows that~\eqref{eq: Boundedness from Lp to H-2theta} holds.
This concludes the proof in the case where $\theta$ satisfies the lower bound in~\eqref{Eq: Auxiliary condition theta}.

To get rid of the lower bound in~\eqref{Eq: Auxiliary condition theta}, choose, for general $\theta$ subject to~\eqref{Eq: Fractional small p} or~\eqref{Eq: Fractional large p}, some $\alpha > 0$ subject to the upper and lower bounds in~\eqref{Eq: Auxiliary condition theta} satisfying $0 < \theta < \alpha$. Then the result follows from the previous considerations combined with the interpolation result in~\cite[Thm.~2.12]{Mitrea_Monniaux} and~\eqref{Eq: Fractional powers and interpolation}.
\end{proof}

\subsection{The weak Stokes operator}\label{sec: weak Stokes}

In this section, we expand our functional analytic framework by a Stokes-like operator on spaces of regular distributions.
This operator will prove to be helpful in Section~\ref{Sec: Navier-Stokes} for establishing higher regularity of solutions to the Navier--Stokes equations with the right-hand side in divergence form, cf.~\cite[Sect.~7]{Choudhury_Hussein_Tolksdorf}.

For the course of this section, let $\Omega \subset \IR^2$ be a bounded Lipschitz domain and $\eps > 0$ such that, for all numbers $p$ that satisfy~\eqref{Eq: Helmholtz two dimensions}, the operator $- A_p$ generates a bounded analytic semigroup.
Moreover, for $1/p + 1/p^\prime = 1$, let 
$\Phi \colon \big[\L^{p^{\prime}}_{\sigma}(\Omega)\big]^{*} \rightarrow \L^p_{\sigma}(\Omega)$
denote the canonical isomorphism between the \emph{anti}dual $\big[\L^{p^{\prime}}_{\sigma}(\Omega)\big]^{*}$ and $\L^p_{\sigma}(\Omega)$ 
with the duality pairing 
\begin{align*}
\langle \Phi^{-1}u,v \rangle_{\big[\L^{p^{\prime}}_{\sigma}\big]^{*}, \L^{p^{\prime}}_{\sigma}} = \langle u,v \rangle_{\L^p_{\sigma}, \L^{p^{\prime}}_{\sigma}}= \int_ {\Omega} u \cdot \overline{v} \; \d x, \quad u \in  \L^p_{\sigma}(\Omega), \, v\in \L^{p^{\prime}}_{\sigma}(\Omega).
\end{align*} 
We regard $\Phi^{-1}$ also  as the canonical inclusion of $\L^p_{\sigma} (\Omega)$ into $\W^{-1 , p}_{\sigma} (\Omega)$ via 
\begin{align*}
\langle \Phi^{-1}u, v \rangle_{\W^{- 1 , p}_{\sigma},\W^{1,p^{\prime}}_{0,\sigma}}= \langle u,v \rangle_{\L^p_{\sigma}, \L^{p^{\prime}}_{\sigma}}, \quad u\in \L^p_{\sigma} (\Omega), \, v\in \W^{1,p^{\prime}}_{0,\sigma}(\Omega).
\end{align*} 
In this sense, we define the \emph{weak Stokes operator} $\mathcal{A}_p$ in $\W^{-1 , p}_{\sigma} (\Omega)$
by $\dom(\mathcal{A}_p) \coloneqq \Phi^{-1} \W^{1 , p}_{0 , \sigma} (\Omega)$ and 
\begin{equation*}
\mathcal{A}_p \colon \dom(\mathcal{A}_p) \subset \W^{-1 , p}_{\sigma} (\Omega) \rightarrow \W^{-1,p}_{\sigma}(\Omega), \quad w \mapsto \Big[v \mapsto \int_{\Omega} \nabla \Phi w \cdot \overline{ \nabla v } \; \d x \Big].
\end{equation*}
Recall that, by Theorem~\ref{Thm: Continuous Embedding} and the invertibility of $A_p^\prime$, cf.~Theorem~\ref{Thm: Resolvent} and Remark~\ref{Rem: R-boundedness}~\ref{it: Strongly Continuous SG}, the square root of the Stokes operator is an isomorphism
$A^{\frac{1}{2}}_{p^{\prime}} \colon \W^{1,p^{\prime}}_{0,\sigma}(\Omega) \to \L^{p^{\prime}}_{\sigma}(\Omega)$.
This fact allows to deduce the relation
\begin{equation*}
 \big[A^{\frac{1}{2}}_{p^{\prime}} \big]^{*}\Phi^{-1}  \in \text{Isom}(\L^{p}_{\sigma}(\Omega),\W^{-1,p}_{\sigma}(\Omega))
\end{equation*}
and use the operator~$\big[A^{\frac{1}{2}}_{p^{\prime}} \big]^{*}\Phi^{-1}$ as a similarity transform that connects $A_p$ and $\cA_p$, namely
\begin{equation}\label{Eq: representation weak Stokes}
    \cA_{p}= \big[A^{\frac{1}{2}}_{p'} \big]^* \Phi^{-1} \circ A_{p} \circ A^{-\frac{1}{2}}_{p} \Phi = \big[A^{\frac{1}{2}}_{p'} \big]^* \Phi^{-1} \circ A_{p} \circ \Phi \big[A^{-\frac{1}{2}}_{p'} \big]^*.
\end{equation}
The representation formulas~\eqref{Eq: representation weak Stokes} were derived in~\cite[Lem.~5.1]{Choudhury_Hussein_Tolksdorf} for the case $d = 3$. 
However, the presented proof carries over \emph{mutatis mutandis} to the case $d = 2$.

Permanence properties of the class of $\cR$-sectorial operators dictate that $\cA_p$ inherits the $\cR$-sectoriality of $A_p$, see, e.g., \cite[Sect.~4.1]{Denk_Hieber_Pruess}.
In particular, the representation~\eqref{Eq: representation weak Stokes} induces a representation of the resolvents
\begin{equation}\label{Eq: Resolvent representation}
    ( \lambda + \cA_p)^{-1} = \big[A^{\frac{1}{2}}_{p'} \big]^* \Phi^{-1} \circ (\lambda +  A_{p})^{-1} \circ \Phi \big[A^{-\frac{1}{2}}_{p'} \big]^* \,,
\end{equation}
see the proof of \cite[Prop.~1.3~(iv)]{Denk_Hieber_Pruess}.
The following result is a corollary of~\eqref{Eq: Resolvent representation} in terms of a similar semigroup and its implication on maximal regularity. 
\begin{proposition}
\label{Prop: Analyticity of weak Stokes}
Let $\Omega \subset \IR^2$ be a bounded Lipschitz domain. Then there exists $\eps > 0$ such that, for all numbers $p$ that satisfy~\eqref{Eq: Helmholtz two dimensions}, the following statements are valid.
\begin{enumerate}[label=\normalfont{(\roman*)}]
\item\label{it: Analytic weak stokes}  $\rho(\cA_p) = \rho(A_p)$ and 
$-\cA_p$ generates a bounded analytic semigroup $\e^{-t\cA_p}$ on $\W^{-1 , p}_{\sigma} (\Omega)$.
\item\label{it: consistency} For $u \in \W^{-1 , p}_{\sigma} (\Omega)$ and $f \in \L^p_{\sigma} (\Omega)$, the following two identities hold for all $t \geq 0$
\begin{equation*}
 \quad \e^{- t \cA_p} u = \big[A^{\frac{1}{2}}_{p'}\big]^*\Phi^{-1}  \e^{- t A_p} \Phi \big[ A_{p^{\prime}}^{- \frac{1}{2}} \big]^* u
 \quad\text{and}\quad 
 \Phi^{-1} \e^{- t A_p} f = \e^{- t \cA_p} \Phi^{-1} f.
\end{equation*}
 In particular, the weak Stokes semigroups are consistent on the $\W^{-1,p}_\sigma$-scale.
\item\label{it: MaxReg weak Stokes} $\mathcal{A}_p$ has maximal $\L^q$-regularity for $1 < q < \infty$.
\end{enumerate}
\end{proposition}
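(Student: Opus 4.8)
The plan is to read the similarity relation~\eqref{Eq: representation weak Stokes} as a genuine conjugation $\cA_p = J A_p J^{-1}$, where $J \coloneqq \big[ A_{p'}^{1/2} \big]^* \Phi^{-1}$ and hence $J^{-1} = \Phi \big[ A_{p'}^{-1/2} \big]^*$; here $A_{p'}^{1/2} \colon \W^{1,p'}_{0,\sigma}(\Omega) \to \L^{p'}_\sigma(\Omega)$ is the isomorphism furnished by Theorem~\ref{Thm: Continuous Embedding} (the square-root property, valid once $\eps$ is chosen small enough) together with the invertibility of $A_{p'}$, so that $J \in \text{Isom}(\L^p_\sigma(\Omega), \W^{-1,p}_\sigma(\Omega))$. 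Once this is fixed, part~\ref{it: Analytic weak stokes} is immediate: conjugation by a fixed isomorphism preserves the resolvent set, giving $\rho(\cA_p) = \rho(A_p)$ and $(\lambda + \cA_p)^{-1} = J (\lambda + A_p)^{-1} J^{-1}$ for $\lambda \in \rho(A_p)$, which is exactly~\eqref{Eq: Resolvent representation}; and since $\{ (1 + \lvert \lambda \rvert)(\lambda + A_p)^{-1} \}_{\lambda \in \S_\theta}$ is bounded for every $\theta \in (0,\pi/2)$ by Theorem~\ref{Thm: Resolvent}, the conjugated family is bounded up to the factor $\|J\| \, \|J^{-1}\|$, so by Remark~\ref{Rem: R-boundedness}~\ref{it: Strongly Continuous SG} the operator $-\cA_p$ generates a bounded (in fact exponentially stable) analytic semigroup on $\W^{-1,p}_\sigma(\Omega)$.

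For part~\ref{it: consistency}, I would first deduce the identity $\e^{-t\cA_p} = J \e^{-tA_p} J^{-1}$ directly from~\eqref{Eq: Resolvent representation}, inserting it into the Dunford (Cauchy) integral that represents the analytic semigroup and pulling the fixed operators $J$, $J^{-1}$ through the contour integral; this yields the first displayed formula. The second formula follows once one knows that the canonical inclusion $\Phi^{-1} \colon \L^p_\sigma(\Omega) \hookrightarrow \W^{-1,p}_\sigma(\Omega)$ intertwines $A_p$ and $\cA_p$. To verify this, note that $\dom(A_p) \subset \dom(A_p^{1/2}) = \W^{1,p}_{0,\sigma}(\Omega)$, so $\Phi^{-1} \dom(A_p) \subset \Phi^{-1} \W^{1,p}_{0,\sigma}(\Omega) = \dom(\cA_p)$, and for $u \in \dom(A_p)$ and $v \in \W^{1,p'}_{0,\sigma}(\Omega)$ a density argument (approximate $v$ by elements of $\C_{c,\sigma}^\infty(\Omega)$ and use $\nabla u \in \L^p$, $A_p u \in \L^p_\sigma$) extends the form identity to give $\langle \cA_p \Phi^{-1} u, v \rangle = \int_\Omega \nabla u \cdot \overline{\nabla v} \, \d x = \fa(u,v) = \int_\Omega A_p u \cdot \overline{v} \, \d x = \langle \Phi^{-1} A_p u, v \rangle$. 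Hence $\Phi^{-1} A_p = \cA_p \Phi^{-1}$, therefore $\Phi^{-1} \e^{-tA_p} = \e^{-t\cA_p} \Phi^{-1}$, and consistency of the semigroups $\e^{-t\cA_p}$ on the $\W^{-1,p}_\sigma$-scale follows from the consistency of the $\e^{-tA_p}$ on the $\L^p_\sigma$-scale through this relation.

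Part~\ref{it: MaxReg weak Stokes} is then obtained from the permanence of $\cR$-sectoriality under conjugation by a bounded isomorphism: since a singleton of bounded operators is $\cR$-bounded and $\cR$-boundedness is stable under composition, the family $\{ (1+\lvert\lambda\rvert)(\lambda+\cA_p)^{-1} \}_{\lambda \in \S_\theta} = \{ (1+\lvert\lambda\rvert) J (\lambda+A_p)^{-1} J^{-1} \}_{\lambda \in \S_\theta}$ is $\cR$-bounded because $A_p$ is $\cR$-sectorial of angle $0$ (the $\cR$-sectoriality underlying Theorem~\ref{Thm: Maximal regularity}). Since $\W^{-1,p}_\sigma(\Omega)$ is isomorphic, via $J$, to the closed subspace $\L^p_\sigma(\Omega)$ of $\L^p(\Omega;\IC^2)$ and is therefore a $\mathrm{UMD}$-space, Theorem~\ref{Thm: Weis} turns this $\cR$-sectoriality into maximal $\L^q$-regularity of $\cA_p$ for every $1 < q < \infty$; alternatively one verifies directly that Duhamel's formula for $\cA_p$ with datum $f \in \L^q(0,\infty;\W^{-1,p}_\sigma(\Omega))$ produces $J$ applied to the maximal-regularity solution for $A_p$ with datum $J^{-1}f$, using the semigroup identity from part~\ref{it: consistency}.

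The steps above are all soft; the only two points that require genuine care are the domain bookkeeping behind the intertwining identity $\Phi^{-1} A_p = \cA_p \Phi^{-1}$ — in particular the density argument extending $\fa(u,v) = \int_\Omega A_p u \cdot \overline{v}\, \d x$ from $v \in \W^{1,2}_{0,\sigma}(\Omega)$ to $v \in \W^{1,p'}_{0,\sigma}(\Omega)$, which relies on the square-root property $\dom(A_p^{1/2}) = \W^{1,p}_{0,\sigma}(\Omega)$ holding for the whole range of exponents under consideration — and the routine but slightly fiddly passage from the resolvent identity~\eqref{Eq: Resolvent representation} to the semigroup identity $\e^{-t\cA_p} = J \e^{-tA_p} J^{-1}$ via the functional calculus. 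I expect the first of these to be the main (and rather modest) obstacle.
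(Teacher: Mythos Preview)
Your proposal is correct and follows precisely the route the paper takes: the paper states the proposition as a direct corollary of the similarity relation~\eqref{Eq: representation weak Stokes} and the induced resolvent identity~\eqref{Eq: Resolvent representation}, invoking the permanence of $\cR$-sectoriality under conjugation by a bounded isomorphism (citing~\cite[Sect.~4.1]{Denk_Hieber_Pruess} and~\cite[Prop.~1.3~(iv)]{Denk_Hieber_Pruess}) without spelling out further details. Your write-up simply makes explicit the soft steps the paper leaves implicit; the only minor simplification available is that the second identity in~\ref{it: consistency} can be obtained from the first by observing $J^{-1}\Phi^{-1} = \Phi\big[A_{p'}^{-1/2}\big]^*\Phi^{-1} = A_p^{-1/2}$ (via the duality between $A_p$ and $A_{p'}$) and using that $A_p^{-1/2}$ commutes with $\e^{-tA_p}$, which bypasses the density argument you flag.
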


\section[Global strong solutions to the Navier--Stokes equations in planar Lipschitz domains]{Global strong solutions\texorpdfstring{\\}{} to the Navier--Stokes equations in planar Lipschitz domains}\label{Sec: Navier-Stokes}

\noindent This section is devoted to proving the regularity properties of Leray--Hopf weak solutions to the Navier--Stokes equations~\eqref{Eq: NSE} stated in Theorem~\ref{Thm: Navier-Stokes regularity}. Let
\begin{align*}
 u \in \L\H_{\infty} := \L^{\infty} (0 , \infty ; \L^2_{\sigma} (\Omega)) \cap \L^2 (0 , \infty ; \W^{1 , 2}_{0 , \sigma} (\Omega))
\end{align*}
be such a solution. Sufficient conditions for the existence of $u$ on $u_0$ and $f$ are that $u_0 \in \L^2_{\sigma} (\Omega)$ and that $f = f_0 + \IP_2 \divergence(F)$ with $f_0 \in \L^1 (0 , \infty ; \L^2_{\sigma} (\Omega))$ and $F \in \L^2(0 , \infty ; \L^2 (\Omega ; \IC^{2 \times 2}))$, see, e.g.,~\cite[Thm.~V.3.1.1]{Sohr}. These solutions further satisfy the \emph{energy inequality}
\begin{align*}
 E_\infty(u) \coloneqq &\frac{1}{2} \| u \|_{\L^{\infty} (0 , \infty ; \L^2 (\Omega; \IC^2))}^2 + \| \nabla u \|_{\L^2 (0 , \infty ; \L^2 (\Omega; \IC^{2\times 2}))}^2 \\
 &\quad\leq 2 \| u_0 \|_{\L^2_{\sigma} (\Omega)}^2 + 4 \| F \|_{\L^2 (0 , \infty ; \L^2 (\Omega; \IC^{2\times 2}))}^2 + 8 \| f_0 \|_{\L^1 (0 , \infty ; \L^2_{\sigma} (\Omega))}^2,
\end{align*}
see, e.g.,~\cite[Eq.~V.(3.1.13)]{Sohr}. The key to establishing the higher regularity result for Leray--Hopf weak solutions lies in the following two nonlinear estimates whose proofs can be found in~\cite[Lem.~V.1.2.1 and Rem.~V.1.2.2]{Sohr}.

\begin{lemma}
\label{Lem: Nonlinearity}
\begin{enumerate}[label=\normalfont{(\roman*)}]
 \item \label{Item: Lp spaces} For all $1 \leq s < 2$ there exists a constant $C > 0$ depending only on $s$ such that, for all $1 \leq p < 2$ satisfying
 \begin{align*}
  \frac{1}{p} + \frac{1}{s} = \frac{3}{2}
 \end{align*}
 and all
 \begin{align*}
  u \in \L^{\infty} (0 , \infty ; \L^2_{\sigma} (\Omega)) \cap \L^2 (0 , \infty ; \W^{1 , 2}_{0 , \sigma} (\Omega)),
 \end{align*}
 one has
 \begin{align*}
  \| (u \cdot \nabla) u \|_{\L^s (0 , \infty ; \L^p (\Omega; \IC^2))} \leq C E_\infty(u).
 \end{align*}
 \item For all $1 \leq s \leq \infty$, there exists a constant $C > 0$ depending only on $s$ such that, for all $1 \leq p < \infty$ satisfying
 \begin{align*}
  \frac{1}{p} + \frac{1}{s} = 1
 \end{align*}
 and all
 \begin{align*}
  u \in \L^{\infty} (0 , \infty ; \L^2_{\sigma} (\Omega)) \cap \L^2 (0 , \infty ; \W^{1 , 2}_{0 , \sigma} (\Omega)),
 \end{align*}
 one has
 \begin{align*}
  \| u \otimes u \|_{\L^s (0 , \infty ; \L^p (\Omega; \IC^{2\times 2}))} \leq C E_\infty(u).
 \end{align*}
\end{enumerate}
\end{lemma}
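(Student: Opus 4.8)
The plan is to reduce both inequalities to the two-dimensional Gagliardo--Nirenberg--Sobolev inequality, combined with Hölder's inequality in space and time and a concluding application of Young's inequality; the only substance is the exponent bookkeeping, arranged so that the scaling constraints $\tfrac1p+\tfrac1s=\tfrac32$ in part~(i) and $\tfrac1p+\tfrac1s=1$ in part~(ii) force the power of $\|\nabla u(t)\|_{\L^2}$ under the time integral to be exactly $2$. Throughout I write $s'$ for the Hölder conjugate of $s$ and use the classical planar interpolation inequality
\begin{align*}
 \|v\|_{\L^q(\Omega)} \leq C_q\,\|v\|_{\L^2(\Omega)}^{2/q}\,\|\nabla v\|_{\L^2(\Omega)}^{1-2/q}, \qquad 2\leq q<\infty,\ v\in\W^{1,2}_0(\Omega;\IC^2),
\end{align*}
(proved by extending $v$ by zero to $\IR^2$), which applies to $u(t)$ for almost every $t$ since a Leray--Hopf solution satisfies $u(t)\in\W^{1,2}_{0,\sigma}(\Omega)$ for a.e.\ $t$; I also use the pointwise bounds $\lvert(u\cdot\nabla)u\rvert\leq\lvert u\rvert\,\lvert\nabla u\rvert$ and $\lvert u\otimes u\rvert=\lvert u\rvert^{2}$.

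For part~(i), fix $1<s<2$ and the corresponding $p$, and choose $q$ by $\tfrac1q=\tfrac1p-\tfrac12$; the constraint then gives $q=s'\in(2,\infty)$. Hölder in space with exponents $(q,2)$ yields $\|(u\cdot\nabla)u(t)\|_{\L^p}\leq\|u(t)\|_{\L^{q}}\|\nabla u(t)\|_{\L^2}$, and the interpolation inequality upgrades this to $\|(u\cdot\nabla)u(t)\|_{\L^p}\leq C\|u(t)\|_{\L^2}^{2/s'}\|\nabla u(t)\|_{\L^2}^{\,2-2/s'}$. Taking the $\L^s$-norm in $t$, bounding $\|u(t)\|_{\L^2}$ by $\|u\|_{\L^\infty(0,\infty;\L^2)}$ and integrating, the exponent of $\|\nabla u(t)\|_{\L^2}$ equals $s(2-2/s')=2$ precisely because $1-\tfrac1{s'}=\tfrac1s$, so that
\begin{align*}
 \|(u\cdot\nabla)u\|_{\L^s(0,\infty;\L^p)} \leq C\,\|u\|_{\L^\infty(0,\infty;\L^2)}^{2/s'}\,\|\nabla u\|_{\L^2(0,\infty;\L^2)}^{2/s}.
\end{align*}
Since $\tfrac2{s'}+\tfrac2s=2$, the weighted arithmetic--geometric mean inequality bounds the right-hand side by $\tfrac1{s'}\|u\|_{\L^\infty(0,\infty;\L^2)}^2+\tfrac1s\|\nabla u\|_{\L^2(0,\infty;\L^2)}^2\leq C\,E_\infty(u)$, which is the claim.

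Part~(ii) is structurally identical: for $1<s<\infty$ and its conjugate $p$ one starts from $\|(u\otimes u)(t)\|_{\L^p}\leq\|u(t)\|_{\L^{2p}}^2$, applies the interpolation inequality with $q=2p\in(2,\infty)$ to get $\|(u\otimes u)(t)\|_{\L^p}\leq C\|u(t)\|_{\L^2}^{2/p}\|\nabla u(t)\|_{\L^2}^{\,2-2/p}$, and repeats the time-integration and Young-inequality steps; here $\tfrac1p+\tfrac1s=1$ makes $s(2-2/p)=2$ and $\tfrac2p+\tfrac2s=2$, so the bound is again $C\,E_\infty(u)$. The endpoint $s=\infty$, $p=1$ is immediate from $\|u\otimes u\|_{\L^\infty(0,\infty;\L^1)}\leq\|u\|_{\L^\infty(0,\infty;\L^2)}^2\leq C\,E_\infty(u)$, while the borderline $s=1$ in part~(i) is not needed in the application to Theorem~\ref{Thm: Navier-Stokes regularity}\,\ref{Item: Lp theory}, where $1<s<2$. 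I anticipate no genuine obstacle: the only delicate point is the exponent matching that lets the $\L^\infty_t\L^2_x$ mass be pulled out while leaving exactly $\|\nabla u(t)\|_{\L^2}^2$ under the integral, and the scaling identities above make this automatic; this reproduces the computation of \cite[Lem.~V.1.2.1 and Rem.~V.1.2.2]{Sohr}.
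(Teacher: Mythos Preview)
Your proof is correct and self-contained. The paper does not actually prove this lemma: it simply states that the proofs ``can be found in~\cite[Lem.~V.1.2.1 and Rem.~V.1.2.2]{Sohr}'' and moves on. What you have written is precisely the standard argument behind that citation---H\"older in space, the two-dimensional Gagliardo--Nirenberg inequality for $\W^{1,2}_0$-functions, and then the observation that the scaling relations $\tfrac{1}{p}+\tfrac{1}{s}=\tfrac{3}{2}$ (resp.\ $\tfrac{1}{p}+\tfrac{1}{s}=1$) force the exponent of $\|\nabla u(t)\|_{\L^2}$ under the time integral to be exactly $2$, followed by Young's inequality. Your remark that the endpoint $s=1$ in part~(i) need not be treated is also correct for a cleaner reason than you give: when $s=1$ the constraint forces $p=2$, which is excluded by $1\leq p<2$, so the statement is vacuous there (and likewise $s=1$ in part~(ii) would force $p=\infty$, again excluded).
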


Relying on the maximal regularity results proved in Section~\ref{Sec: Resolvent estimates and maximal regularity}, we are now in the position to prove Theorem~\ref{Thm: Navier-Stokes regularity}.

\begin{proof}[Proof of Theorem~\ref{Thm: Navier-Stokes regularity}]
 \emph{Ad \ref{Item: Lp theory}}: 
 First of all, notice that~\cite[Thm.~IV.2.4.1]{Sohr} implies that, for almost every $t > 0$, one has that
 \begin{align*}
  \int_0^t \e^{- (t - \tau) A_2} A_2^{- \frac{1}{2}} \IP_2 \divergence(u (\tau) \otimes u(\tau)) \, \d \tau \in \dom(A_2^{\frac{1}{2}})
 \end{align*}
 and the representation formula
 \begin{align*}
  u(t) &= \e^{- t A_2} u_0 + \int_0^t \e^{- (t - s) A_2} f_0 (\tau) \, \d \tau 
  - A_2^{\frac{1}{2}} \int_0^t \e^{- (t - \tau) A_2} A_2^{- \frac{1}{2}} \IP_2 \divergence(u (\tau) \otimes u(\tau)) \, \d \tau,
 \end{align*}
 see~\cite[Thm.~V.1.3.1]{Sohr}. 
 Since the Stokes semigroups are consistent on the $\L^p_{\sigma}$-scale and since by assumption $u_0 \in \L^p_{\sigma} (\Omega)$ and $f_0 \in \L^s (0 , \infty ; \L^p_{\sigma} (\Omega))$, one can replace the $\L^2_{\sigma}$-semigroups by the $\L^p_{\sigma}$-semigroups. 
 Notice further that $\divergence(u (\tau) \otimes u(\tau)) = (u(\tau) \cdot \nabla) u(\tau)$ since $u$ is divergence-free. Furthermore, Lemma~\ref{Lem: Nonlinearity}~\ref{Item: Lp spaces} implies that $(u \cdot \nabla) u \in \L^s (0 , \infty ; \L^p (\Omega ; \IC^2))$. In particular, for almost every $0 < \tau < t$, one has that $(u(\tau) \cdot \nabla) u(\tau) \in \L^p (\Omega ; \IC^2)$ which implies that
 \begin{align*}
  \e^{- (t - \tau) A_2} A_2^{- \frac{1}{2}} \IP_2 \divergence\big(u (\tau) \otimes u(\tau)\big) = A_2^{- \frac{1}{2}} \e^{- (t - \tau) A_p} \IP_p (u (\tau) \cdot \nabla) u(\tau).
 \end{align*}
 Since $A_2^{- \frac{1 }{2}}$ is a bounded operator on $\L^2_{\sigma} (\Omega)$, one can pull this operator in front of the integral, yielding
 \begin{align*}
  u(t) &= \e^{- t A_p} u_0 + \int_0^t \e^{- (t - s) A_p} f_0 (\tau) \, \d \tau - \int_0^t \e^{- (t - \tau) A_p} \IP_p (u (\tau) \cdot \nabla) u(\tau) \, \d \tau.
 \end{align*}
 Consequently, $u$ is a mild solution to the \textit{linear} equation
 \begin{align*}
  \left\{ \begin{aligned}
   u^{\prime} (t) + A_p u(t) &= f_0 (t) - \IP_p (u(t) \cdot \nabla) u(t), && t > 0, \\
   u(0) &= u_0.
  \end{aligned} \right.
 \end{align*}
 Theorem~\ref{Thm: Maximal regularity} on the maximal regularity of the Stokes operator now implies that
 \begin{align*}
 u \in \W^{1 , s} (0 , \infty ; \L^p_{\sigma} (\Omega)) \cap \L^s (0 , \infty ; \dom(A_p)).
\end{align*}

\emph{Ad~\ref{Item: Distribution theory}}: 
Let $u$ be the Leray--Hopf weak solution corresponding to the initial value $u(0) = u_0$ and the right-hand side $f = \IP_2 \divergence(F)$.
In particular, $u$ can be represented via the representation formula
 \begin{equation}\label{eq: variation-of-constants rhs divergence}
  u(t) = \e^{- t A_2} u_0 
  + A_2^{\frac{1}{2}} \int_0^t \e^{- (t - \tau) A_2} A_2^{- \frac{1}{2}} \, \IP_2 \divergence\big(F(\tau) - u (\tau) \otimes u(\tau)\big) \, \d \tau,
 \end{equation}
 for almost every $t > 0$.
 As in Subsection~\ref{sec: weak Stokes}, let again 
 $\Phi \colon [\L^{p^{\prime}}_{\sigma}(\Omega)]^{*} \rightarrow \L^p_{\sigma}(\Omega)$
 denote  the  canonical  isomorphism between $[\L^{p^{\prime}}_{\sigma}(\Omega)]^{*}$ and $\L^p_{\sigma}(\Omega)$.
We will need the identity
\begin{equation}\label{eq: Characterization Pdiv}
    \Phi^{-1} A_2^{-\frac{1}{2}} \,\IP_2 \divergence(F) = \big[ A_2^{-\frac{1}{2}} \big]^* \,\IP_2 \divergence(F)
\end{equation}
on $\W^{-1,2}_\sigma(\Omega)$.
Indeed, the characterization of $A_2^{-\frac{1}{2}} \, \IP_2 \divergence$ by means of functionals as described in~\cite[Lem.~III.2.6.2]{Sohr} gives  that, for $F \in \L^2(\Omega; \IC^{2 \times 2})$ and $v \in \C_{0,\sigma}^\infty(\Omega)$,
\begin{align*}
\big\langle\, \Phi^{-1} A_2^{-\frac{1}{2}} \,\IP_2 \divergence(F) ,v \,\big\rangle_{\W^{-1,2}_{\sigma}, \W^{1,2}_{0,\sigma}} 
    &= \big\langle\, A_2^{-\frac{1}{2}} \,\IP_2 \divergence(F), v \,\big\rangle_{\L^{ 2}_\sigma , \L^{2}_\sigma} 
    = - \big\langle\, \IP_2 \divergence(F) , A_2^{-\frac{1}{2}} v  \,\big\rangle_{\W^{-1,2}_{\sigma}, \W^{1,2}_{0,\sigma}} \\
    &=\big\langle\, \big[ A_2^{-\frac{1}{2}} \big]^* \,\IP_2 \divergence(F) ,v \,\big\rangle_{\W^{-1,2}_{\sigma}, \W^{1,2}_{0,\sigma}} \,.
\end{align*}
If we now bring the representation of the semigroup from Proposition~\ref{Prop: Analyticity of weak Stokes}~\ref{it: consistency} together with identity~\eqref{eq: Characterization Pdiv}, we see that
\begin{equation}\label{Eq: similarity transform}
    \Phi^{-1} A_2^{\frac{1}{2}} \e^{- t A_2} A_2^{-\frac{1}{2}} \, \IP_2 \divergence(F ) 
    = 
    \e^{- t \cA_2} \, \IP_2 \divergence(F) 
\end{equation}
for all $F \in \L^{2}(\Omega; \IC^{2 \times 2})$ and $t > 0$.
Following the idea of the proof of~\cite[Thm.~3.3]{Choudhury_Hussein_Tolksdorf}, 
applying the embedding~$\Phi^{-1}$ to equation~\eqref{eq: variation-of-constants rhs divergence} and using~\eqref{Eq: similarity transform} and the consistency of the semigroups $\e^{-t \cA_p}$, see Proposition~\ref{Prop: Analyticity of weak Stokes}~\ref{it: consistency}, yields
 \begin{align*}
  \Phi^{-1} u(t) 
  &= \e^{- t \cA_p} \, \Phi^{-1} u_0 
  + \int_0^t \e^{- (t - \tau) \cA_p} \, \IP_p \divergence\big(F(\tau) - u (\tau) \otimes u(\tau)\big) \, \d \tau,
 \end{align*}
for almost every $t >0$.
 Consequently, $\Phi^{-1} u$ is a mild solution to the \textit{linear} equation
 \begin{align*}
  \left\{ \begin{aligned}
   \mathcal{U}^{\prime} (t) + \cA_p \,\mathcal{U}(t) &= \IP_p \divergence \big(F(t) - u(t) \otimes u(t) \big), && t > 0, \\
   \mathcal{U}(0) &= \Phi^{-1} u_0.
  \end{aligned} \right.
 \end{align*}
Proposition~\ref{Prop: Analyticity of weak Stokes}~\ref{it: MaxReg weak Stokes} on the maximal regularity of the weak Stokes operator now implies 
 \begin{align*}
 \Phi^{-1} u \in \W^{1 , s} (0 , \infty ; \W^{-1,p}_{\sigma} (\Omega)) \cap \L^s (0 , \infty ; \Phi^{-1} \W^{1,p}_{0,\sigma}(\Omega)).
\end{align*}
We arrive at the desired regularity result by translating this result into terms of $u$. 
\end{proof}

Combining Theorem~\ref{Thm: Navier-Stokes regularity} with embeddings of $\dom(A_p)$ into Bessel potential spaces leads to the following corollary.

\begin{corollary}
In the situation of Theorem~\ref{Thm: Navier-Stokes regularity}~\ref{Item: Lp theory}, the solution $u$ satisfies 
\begin{align*}
 u \in \W^{1 , s} (0 , \infty ; \L^p_{\sigma} (\Omega)) \cap \L^s (0 , \infty ; \H^{\alpha , p}_{0 , \sigma} (\Omega))
\end{align*}
for any $0 < \alpha < 1 + 1 / p$.
\end{corollary}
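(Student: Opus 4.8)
The plan is to treat this as an essentially immediate consequence of Theorem~\ref{Thm: Navier-Stokes regularity}~\ref{Item: Lp theory} combined with the identification of fractional power domains in Theorem~\ref{Thm: Continuous Embedding}. Theorem~\ref{Thm: Navier-Stokes regularity}~\ref{Item: Lp theory} already supplies $u \in \W^{1,s}(0,\infty;\L^p_\sigma(\Omega)) \cap \L^s(0,\infty;\dom(A_p))$, so the $\W^{1,s}$-membership is free and the entire task reduces to upgrading the second factor, i.e.\ to proving the continuous embedding $\dom(A_p) \subset \H^{\alpha,p}_{0,\sigma}(\Omega)$ for every $0 < \alpha < 1 + 1/p$ and then applying it pointwise in time inside the $\L^s$-integral.

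First I would note that in the situation of Theorem~\ref{Thm: Navier-Stokes regularity}~\ref{Item: Lp theory} one has $1 < p < 2$, hence $1/2 - 1/p < 0 \le \delta/2$, so the relevant branch of Theorem~\ref{Thm: Continuous Embedding} is the ``small $p$'' case~\eqref{Eq: Fractional small p}. After possibly shrinking $\eps$ so that it is admissible for Theorem~\ref{Thm: Continuous Embedding} as well (harmless, since both constants depend only on the Lipschitz geometry of $\Omega$), this branch gives with equivalent norms that $\dom(A_p^\theta) = \H^{2\theta,p}_{0,\sigma}(\Omega)$ for every $\theta$ with $0 < \theta < \tfrac12 + \tfrac{1}{2p}$.

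Next I would fix $0 < \alpha < 1 + 1/p$ and set $\theta \coloneqq \alpha/2$. Then $\theta < \tfrac12 + \tfrac{1}{2p}$, so $\dom(A_p^\theta) = \H^{\alpha,p}_{0,\sigma}(\Omega)$, and also $\theta < 1$ because $\alpha < 1 + 1/p < 2$. By Corollary~\ref{Cor: Analyticity} the operator $A_p$ is invertible and sectorial, so the fractional power $A_p^{\theta - 1}$ is bounded on $\L^p_\sigma(\Omega)$; writing $A_p^\theta u = A_p^{\theta - 1}(A_p u)$ for $u \in \dom(A_p)$ then yields $\dom(A_p) \subset \dom(A_p^\theta)$ with continuous embedding. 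Chaining the two inclusions gives $\dom(A_p) \hookrightarrow \H^{\alpha,p}_{0,\sigma}(\Omega)$ continuously, whence $\L^s(0,\infty;\dom(A_p)) \subset \L^s(0,\infty;\H^{\alpha,p}_{0,\sigma}(\Omega))$, completing the argument.

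I do not expect a genuine obstacle here, since every ingredient is already established; the only points that require a moment's care are the bookkeeping of which case of Theorem~\ref{Thm: Continuous Embedding} is in force (it is~\eqref{Eq: Fractional small p}, precisely because $p < 2$) and the observation that the range $0 < \alpha < 1 + 1/p$ is exactly the one for which $\theta = \alpha/2$ simultaneously satisfies $\theta < \tfrac12 + \tfrac{1}{2p}$ and $\theta < 1$.
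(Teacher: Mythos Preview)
Your proposal is correct and follows essentially the same strategy as the paper: establish the continuous embedding $\dom(A_p)\hookrightarrow\H^{\alpha,p}_{0,\sigma}(\Omega)$ for $0<\alpha<1+1/p$ and apply it inside the $\L^s$-integral. The only difference is the reference used for that embedding: the paper invokes Remark~\ref{Rem: Domain embedding} directly (i.e.\ the Mitrea--Wright regularity estimate of Theorem~\ref{Thm: Regularity Stokes}), whereas you factor through Theorem~\ref{Thm: Continuous Embedding} via $\dom(A_p)\subset\dom(A_p^{\alpha/2})=\H^{\alpha,p}_{0,\sigma}(\Omega)$; since Theorem~\ref{Thm: Continuous Embedding} is itself built on Remark~\ref{Rem: Domain embedding}, the two routes are equivalent in substance.
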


\begin{proof}
Notice that $\dom(A_p)$ embeds for $1 < p < 2$ continuously into $\H^{\alpha , p}_{0 , \sigma} (\Omega)$ by Remark~\ref{Rem: Domain embedding}. The rest follows by Theorem~\ref{Thm: Navier-Stokes regularity}~\ref{Item: Lp theory}.
\end{proof}

\appendix
\begin{appendix}

\section{The proofs of Lemma~\ref{Lem: Difference Helmholtz} and Theorem~\ref{Thm: Comparison with stationary case}}

\noindent This appendix provides the missing calculations for the proofs of Lemma~\ref{Lem: Difference Helmholtz} and Theorem~\ref{Thm: Comparison with stationary case}.
We build on the notations already established at the beginning of Subsection~\ref{Sec: Properties of the matrix of fundamental solutions to the Stokes resolvent problem}, and we collect expressions for the partial derivatives of the fundamental solutions to the Laplace equation and the scalar Helmholtz equation in $d= 2$.

For the fundamental solution $G(x; 0) = -\frac{1}{2\pi} \log(|x|)$ to the Laplace equation in two dimensions, the partial derivatives read:
\begingroup
\allowdisplaybreaks
\begin{align*}
  \partial_\gamma G(x; 0) &= -\frac{1}{2\pi} \, \frac{x_\gamma}{|x|^2}, \\[1.0em]
  \partial_\alpha \partial_\gamma G(x; 0) &= -\frac{1}{2\pi}\, \frac{\delta_{\alpha\gamma}}{|x|^2} + \frac{1}{\pi} \, \frac{x_\alpha x_\gamma}{|x|^4}, \\[1.0em]
  \partial_\beta \partial_\alpha \partial_\gamma G(x; 0)
  &= \frac{1}{\pi} \, \frac{\delta_{\beta \gamma} x_\alpha + \delta_{\alpha \gamma} x_\beta + \delta_{\alpha\beta} x_\gamma}{|x|^4} - \frac{4}{\pi}\, \frac{x_\alpha x_\beta x_\gamma}{|x|^6},
  \qquad \alpha, \beta, \gamma \in \{1, 2\}.
\end{align*}
\endgroup

Recall from Subsection~\ref{Sec: Properties of the matrix of fundamental solutions to the Stokes resolvent problem} that the fundamental solution for the scalar Helmholtz equation~\eqref{Eq: Scalar Helmholtz} is given via 
$
  G(x; \lambda) = \frac{\ii}{4}\, H_0^{(1)}(k|x|),
$
where $H_0^{(1)}(z)$ is the Hankel function of the first kind.
Denoting complex derivatives by $\frac{\d{}}{\d z}$, we calculate using the chain rule and the product rule:
\begingroup
\allowdisplaybreaks
\begin{align*}
  \partial_\gamma G(x; \lambda) &= \frac{\ii}{4}\, k\;  \frac{x_\gamma}{|x|}\, \frac{\d{}}{\d z} H_0^{(1)}(k |x|)\,, \\[1.0em]
  \partial_\alpha \partial_\gamma G(x; \lambda) 
  &=  \frac{\ii}{4}\,  k \,\bigg(\frac{\delta_{\alpha\gamma}}{|x|} - \frac{x_\alpha x_\gamma}{|x|^3} \bigg)\, \frac{\d{}}{\d z} H_0^{(1)}(k |x|) + \frac{\ii}{4} \, k^2 \;  \frac{x_\alpha x_\gamma}{|x|^2} \, \frac{\d{}^2}{\d z^2} H_0^{(1)}(k |x|)\,, \\[1.0em]
  \partial_\beta \partial_\alpha \partial_\gamma G(x; \lambda)
  &= \frac{\ii}{4} \, k^3 \; \frac{x_\alpha x_\beta x_\gamma}{|x|^3} \, \frac{\d{}^3}{\d z^3} H_0^{(1)}(k |x|)\\*[0.5em]
  &\quad + \frac{\ii}{4} \, k^2 \,\bigg(\frac{\delta_{\beta\gamma} x_\alpha + \delta_{\alpha \gamma} x_\beta + \delta_{\alpha \beta} x_\gamma}{|x|^2} - 3\, \frac{x_\alpha x_\beta x_\gamma}{|x|^4} \bigg)\, \frac{\d{}^2}{\d z^2} H_0^{(1)}(k |x|) \\*[0.5em]
  &\quad+ \frac{\ii}{4} \, k \, \bigg( 3\, \frac{x_\alpha x_\beta x_\gamma}{|x|^5} - \frac{\delta_{\beta\gamma} x_\alpha + \delta_{\alpha\gamma} x_\beta + \delta_{\alpha\beta} x_\gamma}{|x|^3} \bigg)\, \frac{\d{}}{\d z} H_0^{(1)}(k |x|) \,,
  \quad \alpha, \beta, \gamma \in \{1, 2\}.
\end{align*}
\endgroup

We will now present the derivatives of the Hankel function $H_0^{(1)}(z)$ that appeared in the previous calculations by calculating the complex derivatives of the series expansion of $H_0^{(1)}(z)$ which may be found in Lebedev~\cite[Sect.\@~5.6]{Lebedev}. 
The expansions below are a consequence of the series expansions of the \emph{Bessel functions of the first kind} $J_0$ and the \emph{Bessel functions of the second kind} $Y_0$.

Let $\psi$ denote the \emph{digamma function}, i.e., the logarithmic derivative of the \emph{gamma} function. 
The previous expansions read:
\begingroup
\allowdisplaybreaks
\begin{align*}
  \frac{\pi}{2 \ii} \, H_0^{(1)}(z)
  &= \frac{\pi}{2\ii} \, J_0(z) + \frac{\pi}{2}\, Y_0(z) \\*
  &= \sum_{\ell = 0}^\infty \, \frac{(-1)^\ell}{(\ell!)^2 \, 4^\ell} \,  z^{2\ell} \Big( -\frac{\ii \pi}{2} - \log(2) - \psi(\ell + 1) \Big) 
  +  \sum_{\ell = 0}^\infty \, \frac{(-1)^\ell}{(\ell!)^2 \, 4^\ell} \,  z^{2\ell} \log(z) \\*
  &= \sum_{\ell = 0}^\infty \, a_\ell \,  z^{2\ell} \, C_\ell 
   + \sum_{\ell = 0}^\infty \, a_\ell  \,z^{2\ell} \log(z)\,, \\[1.0em]
  \frac{\pi}{2\ii} \, \frac{\d{}}{\d z} H_0^{(1)}(z)
  &= \sum_{\ell = 1}^\infty \, a_\ell \,  (2\ell) \, z^{2\ell - 1} \, C_\ell 
  + \sum_{\ell = 1}^\infty \, a_\ell \, (2\ell)  \, z^{2\ell - 1} \log(z) \, 
  + \sum_{\ell = 0}^\infty \, a_\ell \, z^{2\ell - 1} \\*
  &= \sum_{\ell = 1}^\infty \, b_\ell \, z^{2\ell - 1} \, C_\ell 
  + \sum_{\ell = 1}^\infty \, b_\ell \, z^{2\ell - 1} \log(z) \, 
  +  \sum_{\ell = 0}^\infty \, a_\ell \, z^{2\ell - 1}, \\[1.0em]
  \frac{\pi}{2\ii} \, \frac{\d{}^2}{\d z^2} H_0^{(1)}(z)
  &= \sum_{\ell = 1}^\infty \, b_\ell \, (2\ell - 1) \, z^{2\ell - 2} \, C_\ell 
  + \sum_{\ell = 1}^\infty \, b_\ell \, (2\ell - 1)\, z^{2\ell - 2} \log(z) 
  + \sum_{\ell = 1}^\infty \, b_\ell \, z^{2\ell - 2}\\*
  &\quad + \sum_{\ell = 0}^\infty \, a_\ell \, (2\ell - 1) \, z^{2\ell - 2} \\*
  &=   \sum_{\ell = 1}^\infty c_\ell \, z^{2\ell - 2} \, C_\ell 
  +  \!\sum_{\ell = 1}^\infty c_\ell \, z^{2\ell - 2} \log(z) 
  +  \!\sum_{\ell = 1}^\infty b_\ell \, z^{2\ell - 2} 
  +  \!\sum_{\ell = 0}^\infty a_\ell \, (2\ell - 1) \, z^{2\ell - 2}, \\[1.0em]
  \frac{\pi}{2\ii} \, \frac{\d{}^3}{\d z^3} H_0^{(1)}(z)
  &= \sum_{\ell = 2}^\infty c_\ell \, (2\ell - 2) \, z^{2\ell - 3} \, C_\ell 
  +  \sum_{\ell = 2}^\infty \, c_\ell \, (2\ell - 2) \, z^{2\ell - 3} \log(z) 
  +  \sum_{\ell = 1}^\infty \, c_\ell \, z^{2\ell - 3} \\*
 &\quad  
  + \sum_{\ell = 2}^\infty \, b_\ell \, (2\ell - 2) \, z^{2\ell - 3} 
  + \sum_{\ell = 0}^\infty \, a_\ell \, (2\ell - 1) \, (2\ell - 2) \, z^{2\ell - 3} \\*
  &= \sum_{\ell = 2}^\infty \, d_\ell \, z^{2\ell - 3} \, C_\ell 
  + \sum_{\ell = 2}^\infty \, d_\ell \, z^{2\ell - 3} \log(z) 
  + \sum_{\ell = 1}^\infty \, c_\ell \, z^{2\ell - 3} \\
 &\quad  + \sum_{\ell = 2}^\infty \, b_\ell \, (2\ell - 2) \, z^{2\ell - 3} 
  + \sum_{\ell = 0}^\infty \, a_\ell \, (2\ell - 1)\, (2\ell - 2) \, z^{2\ell - 3} ,
\end{align*}
where we introduced the following coefficients for better readability:
\begin{align}
  \begin{alignedat}{1}
  C_\ell &\coloneqq -\frac{\ii \pi}{2} - \log(2) - \psi(\ell + 1), \\
  a_\ell &\coloneqq \frac{(-1)^\ell}{(\ell!)^2 \, 4^\ell}, \quad
  b_\ell \coloneqq a_\ell \cdot  2\ell, \quad
  c_\ell \coloneqq b_\ell \cdot (2\ell - 1), \quad\text{and}\quad
  d_\ell \coloneqq c_\ell \cdot (2\ell - 2).
  \end{alignedat}\label{defnConst}\tag{D1}
\end{align}
\endgroup

\subsection{Proof of Lemma~\ref{Lem: Difference Helmholtz} for \texorpdfstring{$d = 2$}{d = 2}}\label{sec:A1}

For $\lambda \in \S_\theta$, $\theta \in (0, \pi/2)$, and $|\lambda| |x|^2 \leq (1/2)$, we show that the estimate
\begin{align*}
\sup_{\alpha, \beta, \gamma \in \{1,2\}}
  \Big|\, \partial_\beta \partial_\alpha \partial_\gamma G(x; \lambda) - \partial_\beta \partial_\alpha \partial_\gamma G(x; 0)  \, \Big|
  \leq C\, |\lambda| |x|^{-1}
\end{align*}
is valid with a constant $C > 0$ that only depends on $\theta$.
Fix $\alpha$, $\beta$, and $\gamma$.
We proceed in two steps. In the first step, we filter out all summands in the series expansion of  
$\partial_\beta \partial_\alpha \partial_\gamma G(x; \lambda)$ for which an individual estimate does not yield the desired bound in Lemma~\ref{Lem: Difference Helmholtz}.
We will call the terms whose absolute value cannot be bounded individually \emph{problematic}.
In the second step, we show that taking the sum over all problematic terms and subtracting $\partial_\beta \partial_\alpha \partial_\gamma G(x; 0)$ gives 0.

In order to make the next calculations better to digest, we decompose the third derivative of $G(\,\cdot\, ; \lambda)$ as follows:
\begin{align*}
  \partial_\beta \partial_\alpha \partial_\gamma G(x; \lambda)
  \eqqcolon A_3 + A_2 + A_1,
\end{align*}
where each $A_i$, $i = 1,\dots,3$, corresponds to the term involving the $i$th complex derivative of $H_0^{(1)}$.
Let us start with $A_3$. 
We have
\begin{align*}
  A_3 = - \frac{1}{2 \pi } k^3 \, \frac{x_\alpha x_\beta x_\gamma}{|x|^3}  \,
  &\cdot\,\bigg\{
     \; \sum_{\ell = 2}^\infty \; d_\ell \, (k|x|)^{2\ell - 3} \, C_\ell +  \sum_{\ell = 2}^\infty \; d_\ell \, (k|x|)^{2\ell - 3} \log(k|x|)  \\
  &\quad\,  +  \sum_{\ell = 1}^\infty \; c_\ell \, (k|x|)^{2\ell - 3} +  \sum_{\ell = 2}^\infty \; b_\ell\, (2\ell - 2) \, (k|x|)^{2\ell - 3} \\
  &\quad\, +  \sum_{\ell = 0}^\infty \; a_\ell \, (2\ell - 1)\, (2\ell - 2) \, (k|x|)^{2\ell - 3} \;
  \bigg\}.
\end{align*}
First, note that
\begin{align*}
  \Big|\, \frac{1}{2\pi} k^3\, \frac{x_\alpha x_\beta x_\gamma}{|x|^3} \,\cdot\; c_1\, (k|x|)^{2 \cdot 1 -3} \,\Big| 
  \leq C \, |\lambda| |x|^{-1},
\end{align*}
with a constant $C > 0$.
This shows that the first term of the third sum in $A_3$ is not problematic.
For the rest of $A_3$, we use the fact $|\lambda| |x|^2 \leq (1/2)$ to trade one $|k| = \sqrt{|\lambda|}$ in the prefactor for a constant times $|x|^{-1}$ and show that the prefactor behaves as
\begin{align*}
  \Big|\, k^3 \, \frac{x_\alpha x_\beta x_\gamma}{|x|^3}\, \Big| 
  \leq C\, |\lambda| |x|^{-1}, 
\end{align*}
with a constant $C > 0$.
Therefore, the only problematic term in $A_3$ is the first element of the last sum
\begin{align}
  \label{eq:P1}
  \tag{P1}- \frac{1}{2 \pi } k^3\, \frac{x_\alpha x_\beta x_\gamma}{|x|^3} \, \cdot\;  a_0 \, (2 \cdot 0 - 1)\, (2 \cdot 0 - 2) \, (k|x|)^{2 \cdot 0 - 3} .
\end{align}
For $A_2$, we calculate
\begin{align*}
  A_2 &= - \frac{1}{2\pi} k^2\, \bigg(\, \frac{\delta_{\beta\gamma} x_\alpha + \delta_{\alpha \gamma} x_\beta + \delta_{\alpha \beta} x_\gamma}{|x|^2} - 3\, \frac{x_\alpha x_\beta x_\gamma}{|x|^4} \, \bigg) \\ 
  &\qquad \cdot 
  \bigg\{
    \; \sum_{\ell = 1}^\infty \; c_\ell \, (k|x|)^{2\ell - 2} \, C_\ell 
  + \; \sum_{\ell = 1}^\infty \; c_\ell \, (k|x|)^{2\ell - 2} \log(k|x|) \\
  &\qquad\quad\,+ \; \sum_{\ell = 1}^\infty \; b_\ell \, (k|x|)^{2\ell - 2} 
  + \; \sum_{\ell = 0}^\infty \; a_\ell\, (2\ell - 1) \, (k|x|)^{2\ell - 2} 
  \bigg\}.
\end{align*}
As the prefactor already behaves like $|\lambda| |x|^{-1}$, we identify the following two terms as being problematic:
\begin{align}
  \label{eq:P2}
  \tag{P2}
  \begin{alignedat}{1}
  & - \frac{1}{2\pi} k^2\, \bigg(\, \frac{\delta_{\beta\gamma} x_\alpha + \delta_{\alpha \gamma} x_\beta + \delta_{\alpha \beta} x_\gamma}{|x|^2} - 3\, \frac{x_\alpha x_\beta x_\gamma}{|x|^4} \, \bigg) 
    \, \cdot\;  c_1 \, (k|x|)^{2 \cdot 1 - 2} \log(k|x|) \;\;\;\text{and}\\[0.5em]
   &- \frac{1}{2\pi} k^2 \,\bigg(\, \frac{\delta_{\beta\gamma} x_\alpha + \delta_{\alpha \gamma} x_\beta + \delta_{\alpha \beta} x_\gamma}{|x|^2} - 3\, \frac{x_\alpha x_\beta x_\gamma}{|x|^4} \, \bigg) 
  \, \cdot \; a_0\, (2\cdot 0 - 1) \, (k|x|)^{2 \cdot 0 - 2} .
  \end{alignedat}
\end{align}
For the last component, we have the following identity:
\begin{align*}
  A_1 = 
  & - \frac{1}{2\pi} k\, \bigg( 3\, \frac{x_\alpha x_\beta x_\gamma}{|x|^5} - \frac{\delta_{\beta\gamma} x_\alpha + \delta_{\alpha\gamma} x_\beta + \delta_{\alpha\beta} x_\gamma}{|x|^3} \bigg)  \\
  &\,\cdot\, \bigg\{ 
  \; \sum_{\ell = 1}^\infty \; b_\ell \, (k|x|)^{2\ell - 1} \, C_\ell 
  + \sum_{\ell = 1}^\infty \; b_\ell \, (k|x|)^{2\ell - 1} \log(k|x|) 
  + \sum_{\ell = 0}^\infty \; a_\ell \, (k|x|)^{2\ell - 1} 
  \bigg\}.
\end{align*}
In this case, the prefactor behaves like $\sqrt{|\lambda|} |x|^{-2}$.
Therefore, problematic terms only arise in the last two sums
\begin{align}
  \label{eq:P3}
  \tag{P3}
  \begin{alignedat}{1}
    &- \frac{1}{2\pi} k\, \bigg( \, 3\, \frac{x_\alpha x_\beta x_\gamma}{|x|^5} - \frac{\delta_{\beta\gamma} x_\alpha + \delta_{\alpha\gamma} x_\beta + \delta_{\alpha\beta} x_\gamma}{|x|^3} \, \bigg)
    \, \cdot \; b_1 \, (k|x|)^{2 \cdot 1 - 1} \log(k|x|)  \quad\text{and}\\[0.5em]
    &- \frac{1}{2\pi} k\, \bigg( \, 3\, \frac{x_\alpha x_\beta x_\gamma}{|x|^5} - \frac{\delta_{\beta\gamma} x_\alpha + \delta_{\alpha\gamma} x_\beta + \delta_{\alpha\beta} x_\gamma}{|x|^3} \, \bigg)
  \, \cdot \; a_0 \, (k|x|)^{2 \cdot 0 - 1} .
  \end{alignedat}
\end{align}
Note that Definition~\eqref{defnConst} gives $a_0 = 1$ and $c_1 = b_1 = - 1/2$.
Therefore, we can already see that the logarithmic terms in the sum $\eqref{eq:P2} + \eqref{eq:P3}$ cancel.

Finally, we observe that, if we take the sum over the remaining problematic terms in \eqref{eq:P1}, \eqref{eq:P2}, and \eqref{eq:P3} and subtract $\partial_\beta \partial_\alpha \partial_\gamma G(x; 0)$, the result is $0$. 
The former fact is easily seen by grouping the terms having the same power of $|x|$:
\begin{samepage}
\begin{align*}
  &\hspace{-2cm}\mathrm{(P1)} + \mathrm{(P2)} + \mathrm{(P3)} - \partial_\beta \partial_\alpha \partial_\gamma G(x; 0) \\[0.5em]
  = &- \frac{1}{\pi}\, \frac{x_\alpha x_\beta x_\gamma}{|x|^6} \\[0.5em]
    &+ \frac{1}{2\pi} \, \bigg(\,\frac{\delta_{\beta\gamma} x_\alpha + \delta_{\alpha \gamma} x_\beta + \delta_{\alpha \beta} x_\gamma}{|x|^4} - 3\, \frac{x_\alpha x_\beta x_\gamma}{|x|^6} \, \bigg) \\[0.5em]
    &- \frac{1}{2\pi} \, \bigg( \, 3\, \frac{x_\alpha x_\beta x_\gamma}{|x|^6} - \frac{\delta_{\beta\gamma} x_\alpha + \delta_{\alpha\gamma} x_\beta + \delta_{\alpha\beta} x_\gamma}{|x|^4} \, \bigg)\\[0.5em]
   &- \frac{1}{\pi}\, \frac{\delta_{\beta \gamma} x_\alpha + \delta_{\alpha \gamma} x_\beta + \delta_{\alpha\beta} x_\gamma}{|x|^4} 
     + \frac{4}{\pi}\, \frac{x_\alpha x_\beta x_\gamma}{|x|^6} 
     \quad=\quad 0 \,. 
\end{align*}
This completes the proof of Lemma~\ref{Lem: Difference Helmholtz}. \hfill$\qed$
\end{samepage}

\subsection{Proof of Theorem~\ref{Thm: Comparison with stationary case}}\label{sec:A2}

In this section, we show that, for all $\lambda \in \S_\theta$, $\theta \in (0,\pi/2)$, and $x \in \IR^2\setminus \{0\}$ subject to the condition $|\lambda| |x|^2 \leq (1/2)$, we have
\begin{align*}
  \Big|\, \nabla_x \big\{ \Gamma(x; \lambda) - \Gamma(x; 0) \big\} \, \Big| \leq C\, |\lambda| |x| \, \big|\log(|\lambda| |x|^2)\,\big|\,,
\end{align*}
where $C > 0$ depends only on $\theta$.
The means and the strategy to prove this estimate are similar to the procedure in Appendix~\ref{sec:A1}.
In addition to the derivatives that were calculated at the beginning of this appendix, we will furthermore need the first partial derivatives for the matrix of fundamental solutions of the Stokes problem:
\begin{align*}
    \Gamma_{\alpha\beta}(x; 0) 
    &= \frac{1}{4\pi} \bigg\{ \,- \delta_{\alpha\beta} \log(|x|) + \frac{x_\alpha x_\beta}{|x|^2} \,\bigg\} \,,
    \\[0.5em]
  \partial_\gamma \Gamma_{\alpha\beta} (x; 0) 
  &= \frac{1}{4 \pi}\, \bigg(\, \frac{\delta_{\alpha\gamma} x_\beta + \delta_{\beta\gamma} x_\alpha  - \delta_{\alpha\beta} x_\gamma}{|x|^2}  \, \bigg) 
  - \frac{1}{2\pi}\, \frac{x_\alpha x_\beta x_\gamma}{|x|^4},\quad \alpha, \beta, \gamma \in \{1,2\}.
\end{align*}
Now, consider the difference
\begin{align*}
  &\partial_\gamma \Gamma_{\alpha\beta}(x; \lambda) - \partial_\gamma \Gamma_{\alpha\beta}(x; 0) \\
  &\qquad=\partial_\gamma G(x; \lambda) \,\delta_{\alpha\beta}
  + \frac{1}{k^2} \, \partial_\beta \partial_\alpha \partial_\gamma \Big\{ G(x; \lambda) - G(x; 0) \Big\} 
  - \partial_\gamma \Gamma_{\alpha\beta}(x; 0) =: B_1 + B_2 + B_3 \,,
\end{align*}
where we introduced the variables $B_i$, $i = 1,\dots,3$, for the sake of readability.
As in the previous section, we will, in a first step, study the terms $B_1$ and $B_2$ independently and filter out the \emph{problematic} terms, i.e., those terms that do not individually meet the bound stated in Theorem~\ref{Thm: Comparison with stationary case}. 
In a second step, we will compare the found problematic terms with the term $B_3$ and show that, in the final sum, all terms add to zero.

Starting with $B_1$, we have
\begin{align*}
  B_1 = -\frac{1}{2\pi} k\;  \frac{\delta_{\alpha\beta} x_\gamma}{|x|}  \;
  \cdot \,\bigg\{ \;
  \sum_{\ell = 1}^\infty \; b_\ell \, (k|x|)^{2\ell - 1} \, C_\ell 
  &+ \sum_{\ell = 1}^\infty \; b_\ell \, (k|x|)^{2\ell - 1} \log(k|x|) \; \\
  &+  \sum_{\ell = 0}^\infty \; a_\ell \, (k|x|)^{2\ell - 1} \; \bigg\}.
\end{align*}
In this expression, we only detect one problematic term, namely the term corresponding to $\ell = 0$ in the last sum
\begin{align}
  \label{eq:Q1}
  \tag{Q1}
  -\frac{1}{2\pi} k\;  \frac{\delta_{\alpha\beta} x_\gamma}{|x|}\, \cdot \; a_0 \, (k|x|)^{2 \cdot 0 - 1} .
\end{align}

Recall that, in Section~\ref{sec:A1}, we showed that $\eqref{eq:P1} +\eqref{eq:P2} +  \eqref{eq:P3} - \partial_\beta\partial_\alpha\partial_\gamma G(x; 0) = 0$.  
For the expression $B_2$, this leads us to the decomposition
\begin{align*}
  B_2 
  &= k^{-2} \, \Big( \, A_3 + A_2 + A_1 - \partial_\beta\partial_\alpha\partial_\gamma G(x; 0)\, \Big)  \\
  &= k^{-2} \, \Big( \, \big\{ A_3 - \eqref{eq:P3} \big\} 
                       +\big\{ A_2 - \eqref{eq:P2} \big\}
                       +\big\{ A_1 - \eqref{eq:P1} \big\} \, \Big) \eqqcolon A_3' + A_2' + A_1'\,,
\end{align*}
with the variables $A_i$, $i = 1, \dots, 3$, which were also introduced in the previous section. 
In the following, we list the problematic terms surging from the terms in each $A_i'$. 

For $A_3'$, we see that the following term does not meet the desired behavior:
\begin{align}
  \label{eq:Q2}
  \tag{Q2}
  \begin{alignedat}{1}
  &- \frac{1}{2 \pi } k \, \frac{x_\alpha x_\beta x_\gamma}{|x|^3} \; \cdot \; c_1 \, (k|x|)^{2 \cdot 1 - 3}. 
  \end{alignedat}
\end{align}
For $A_2'$, we see that, compared to $A_2$, every summand is problematic which after cancellation leads to:
\begin{align}
  \label{eq:Q3}
  \tag{Q3}
  \begin{alignedat}{1}
  &- \frac{1}{2\pi} \, \bigg(\,\frac{\delta_{\beta\gamma} x_\alpha + \delta_{\alpha \gamma} x_\beta + \delta_{\alpha \beta} x_\gamma}{|x|^2} - 3\, \frac{x_\alpha x_\beta x_\gamma}{|x|^4} \, \bigg) 
    \,\cdot \; c_1 \, (k|x|)^{2\cdot 1 - 2} \, C_1 \\[1.0em]
  &- \frac{1}{2\pi} \, \bigg(\, \frac{\delta_{\beta\gamma} x_\alpha + \delta_{\alpha \gamma} x_\beta + \delta_{\alpha \beta} x_\gamma}{|x|^2} - 3\, \frac{x_\alpha x_\beta x_\gamma}{|x|^4} \, \bigg) 
    \,\cdot \; b_1 \, (k|x|)^{2 \cdot 1 - 2}  \\[1.0em]
  &- \frac{1}{2\pi} \, \bigg(\,\frac{\delta_{\beta\gamma} x_\alpha + \delta_{\alpha \gamma} x_\beta + \delta_{\alpha \beta} x_\gamma}{|x|^2} - 3\, \frac{x_\alpha x_\beta x_\gamma}{|x|^4} \, \bigg) 
  \,\cdot \; a_1 (2 \cdot 1 - 1) \, (k|x|)^{2 \cdot 1 - 2} .
  \end{alignedat}
\end{align}
The same holds for the expression $A_1'$:
\begin{align}
  \label{eq:Q4}
  \tag{Q4}
  \begin{alignedat}{1}
    &- \frac{1}{2\pi} \, \frac{1}{k} \, \bigg(\, 3\, \frac{x_\alpha x_\beta x_\gamma}{|x|^5} - \frac{\delta_{\beta\gamma} x_\alpha + \delta_{\alpha\gamma} x_\beta + \delta_{\alpha\beta} x_\gamma}{|x|^3} \, \bigg)  
    \,\cdot \; b_1 \, (k|x|)^{2 \cdot 1 - 1} \, C_1  \\[1.0em]
    &- \frac{1}{2\pi} \, \frac{1}{k} \, \bigg( \, 3\, \frac{x_\alpha x_\beta x_\gamma}{|x|^5} - \frac{\delta_{\beta\gamma} x_\alpha + \delta_{\alpha\gamma} x_\beta + \delta_{\alpha\beta} x_\gamma}{|x|^3} \, \bigg)  
  \,\cdot \; a_1 \, (k|x|)^{2 \cdot 1 - 1} .
  \end{alignedat}
\end{align}

Now it is time to sum all problematic terms, expand the variables and add the term $B_3$. 
With
$a_0 = 1$, $a_1 = -1/4$, and $c_1 = b_1 =  -1/2$,
we see that, within the sum \eqref{eq:Q3} + \eqref{eq:Q4}, a lot of terms cancel. 
This leaves us with:
\begin{align*}
  \label{eq:Q33}
  \tag{Q3${}^\prime$}
  &- \frac{1}{2\pi} \, \bigg(\, \frac{\delta_{\beta\gamma} x_\alpha + \delta_{\alpha \gamma} x_\beta + \delta_{\alpha \beta} x_\gamma}{|x|^2} - 3\, \frac{x_\alpha x_\beta x_\gamma}{|x|^4} \, \bigg) 
  \, \cdot b_1 \, (k|x|)^{2\cdot 1 - 2}.
\end{align*}
Finally, let us consider the remaining sum $\eqref{eq:Q1} + \eqref{eq:Q2} + \eqref{eq:Q33} + B_3$:
\begin{align*}
  &-\frac{1}{2\pi} \,  \frac{\delta_{\alpha\beta} x_\gamma}{|x|^2} 
  + \frac{1}{4 \pi }\, \frac{x_\alpha x_\beta x_\gamma}{|x|^4} 
  + \frac{1}{4\pi} \, \bigg(\,\frac{\delta_{\beta\gamma} x_\alpha + \delta_{\alpha \gamma} x_\beta + \delta_{\alpha \beta} x_\gamma}{|x|^2} - 3\, \frac{x_\alpha x_\beta x_\gamma}{|x|^4} \, \bigg) 
   \\[0.5em]
 &\quad - \frac{1}{4 \pi} \, \bigg(\, \frac{\delta_{\alpha\gamma} x_\beta + \delta_{\gamma\beta} x_\alpha  - \delta_{\alpha\beta} x_\gamma}{|x|^2} \, \bigg) 
  + \frac{1}{2\pi}\,  \frac{x_\alpha x_\beta x_\gamma}{|x|^4}
  \quad = \quad 0\, .
\end{align*}
As all problematic terms add up to zero, we have proved the initial claim.\hfill$\qed$
\end{appendix}

\begin{bibdiv}
\begin{biblist}

\bibitem{Abels}
H.~Abels.
\newblock {\em Boundedness of imaginary powers of the Stokes operator in an infinite layer\/}.
\newblock J.\@ Evol.\@ Equ.~\textbf{2} (2002), no.~4, 439--457.
\newblock \doi{10.1007/PL00012599}.

\bibitem{Amann-monograph}
H.~Amann.
\newblock Linear and Quasilinear Parabolic Problems. Volume I: Abstract Linear Theory. 
\newblock Monographs in Mathematics, vol.~89, Birkhäuser, Basel, 1995.
\newblock \doi{10.1007/978-3-0348-9221-6}.

\bibitem{Amann}
H.~Amann.
\newblock {\em On the strong solvability of the Navier--Stokes equations\/}.
\newblock J.\@ Math.\@ Fluid Mech.~\textbf{2} (2000), no.~1, 16--98.
\newblock \doi{10.1007/s000210050018}.

\bibitem{Arendt_Batty_Hieber_Neubrander}
W.~Arendt, C.~J.~K.~Batty, M.~Hieber, and F.~Neubrander. 
\newblock Vector-Valued Laplace Transforms and Cauchy Problems. 
\newblock Monographs in Mathematics, vol.~96, Birkhäuser, Basel, 2001.
\newblock \doi{10.1007/978-3-0348-5075-9}.

\bibitem{Bolkart_Giga_Miura_Suzuki_Tsutsui}
M.~Bolkart, Y.~Giga, T.-H.~Miura, T.~Suzuki, and Y.~Tsutsui.
\newblock {\em On analyticity of the $L^p$-Stokes semigroup for some non-Helmholtz domains\/}.
\newblock Math.\@ Nachr.~\textbf{290} (2017), no.~16, 2524--2546.
\newblock \doi{10.1002/mana.201600016}.

\bibitem{Borchers_Sohr}
W.~Borchers and H.~Sohr.
\newblock {\em On the semigroup of the Stokes operator for exterior domains in $L^q$-spaces\/}.
\newblock Math.\@ Z.~\textbf{196} (1987), no.~3, 415--425.
\newblock \doi{10.1007/BF01200362}.

\bibitem{Brown_Perry_Shen}
R.~M.~Brown, P.~A.~Perry, and Z.~Shen.
\newblock {\em On the Dimension of the attractor for the non-homogeneous Navier-Stokes equations in non-smooth domains\/}.
\newblock Indiana Univ.~Math.~J.~\textbf{49} (2000), no.~1, 81--112. 
\newblock \doi{10.1512/IUMJ.2000.49.1603}.

\bibitem{Choudhury_Hussein_Tolksdorf}
A.~P.~Choudhury, A.~Hussein, and P.~Tolksdorf.
\newblock {\em Nematic liquid crystals in Lipschitz domains\/}.
\newblock SIAM J.\@ Math.\@ Anal.~\textbf{50} (2018), 4282--4310.
\newblock \doi{10.1137/17M1160975}.

\bibitem{Dahlberg.1977}
B.~E.~J.~Dahlberg.
\newblock {\em Estimates of harmonic measure\/}.
\newblock Arch.\@ Ration.\@ Mech.\@ Anal.~\textbf{65} (1977), 275--288.
\newblock \doi{10.1007/BF00280445}.

\bibitem{Dahlberg.1979}
B.~E.~J.~Dahlberg.
\newblock {\em On the Poisson integral for Lipschitz and $C^1$-domains\/}.
\newblock Studia Math.~\textbf{66} (1979), no.~1, 13--24.
\newblock \doi{10.4064/sm-66-1-13-24}.

\bibitem{Davies}
E.~B.~Davies.
\newblock Heat Kernels and Spectral Theory.
\newblock Cambridge Tracts in Mathematics, vol.~92. Cambridge Univ.\@ Press, Cambridge, 1989.
\newblock \doi{10.1017/CBO9780511566158}.

\bibitem{Burkholder}
B.~Davis and R.~Song. 
\newblock {\em A geometric condition that implies the existence of certain singular integrals of Banach-space-valued functions\/}. 
\newblock In: Selected Works of Donald L.~Burkholder. Selected Works in Probability and Statistics. Springer, New York, NY, 2011. 
\newblock \doi{10.1007/978-1-4419-7245-3_26}.

\bibitem{Denk_Hieber_Pruess}
R.~Denk, M.~Hieber, and J.~Pr\"uss.
\newblock {\em $\cR$-boundedness, Fourier multipliers and problems of elliptic and parabolic type\/}.
\newblock Mem.\@ Amer.\@ Math.\@ Soc.~\textbf{166} (2003), no.~788.
\newblock \doi{10.1090/memo/0788}.

\bibitem{Deuring}
P.~Deuring.
\newblock {\em The Stokes resolvent in 3D domains with conical boundary points: nonregularity in $L^p$-spaces\/}.
\newblock Adv.\@ Differential Equ.\@ \textbf{6} (2001), no.~2, 175--228.
\newblock url:\url{http://projecteuclid.org/euclid.ade/1357141493} (visited on 06/30/2020).

\bibitem{Dikland}
T.~Dikland.
\newblock {\em Resolvent estimates in (weighted) $L^p$ spaces for the Stokes operator in Lipschitz domains\/}.
\newblock Master's thesis, TU Delft, 2019.
\newblock url:\url{http://resolver.tudelft.nl/uuid:6a725f7a-aa0d-47d8-ad05-3811f3050145} (visited on 06/30/2022).

\bibitem{Dindos_Mitrea}
M.~Dindo\v{s} and M.~Mitrea. 
\newblock {\em The stationary Navier-Stokes system in nonsmooth manifolds: the Poisson problem in Lipschitz and $C^1$ Domains\/}. 
\newblock Arch.\@ Ration.\@ Mech.\@ Anal.\@ \textbf{174} (2004), 1--47. 
\newblock \doi{10.1007/s00205-004-0320-y}.

\bibitem{Dore}
G.~Dore.
\newblock {\em $L^p$ regularity for abstract differential equations\/}.
\newblock In: Functional Analysis and Related Topics (Kyoto, 1991), 25--38, Lecture Notes in Math., vol.~1540, Springer, Berlin, 1993. 
\newblock \doi{10.1007/BFb0085472}.

\bibitem{Dore_Max_Reg}
G.~Dore.
\newblock {\em Maximal regularity in $L^p$ spaces for an abstract Cauchy problem\/}.
\newblock Adv.\@ Differential Equ.\@ \textbf{5} (2000), no.~1--3, 293--322.
\newblock url:\url{http://projecteuclid.org/euclid.ade/1356651386} (visited on 06/30/2022). 

\bibitem{Duong_Robinson}
X.~T.~Duong and D.~W.~Robinson.
\newblock {\em Semigroup kernels, Poisson bounds, and holomorphic functional calculus\/}.
\newblock J.\@ Funct.\@ Anal.~\textbf{142} (1996), no.~1, 89--128.
\newblock \doi{10.1006/jfan.1996.0145}.

\bibitem{Engel_Nagel}
K.-J.~Engel and R.~Nagel.
\newblock One-Parameter Semigroups for Linear Evolution Equations. Graduate Texts in Mathematics, vol.~194.
\newblock Springer, New York, 2000.
\newblock \doi{10.1007/b97696}.

\bibitem{Fabes_Kenig_Verchota}
E.~B.~Fabes, C.~E.~Kenig, and G.~C.~Verchota.
\newblock {\em The Dirichlet problem for the Stokes system on Lipschitz domains\/}.
\newblock Duke Math.\@ J.~\textbf{57} (1988), no.~3, 769--793.
\newblock \doi{10.1215/S0012-7094-88-05734-1}.

\bibitem{Fabes_Mendez_Mitrea}
E.~B.~Fabes, O.~Mendez, and M.~Mitrea.
\newblock {\em Boundary layers on Sobolev--Besov spaces and Poisson's equation for the Laplacian in Lipschitz domains\/}.
\newblock J.\@ Funct.\@ Anal.~\textbf{159} (1998), no.~2, 323--368.
\newblock \doi{10.1006/jfan.1998.3316}.

\bibitem{Farwig_Sohr}
R.~Farwig and H.~Sohr.
\newblock {\em Generalized resolvent estimates for the Stokes system in bounded and unbounded domains\/}.
\newblock J.\@ Math.\@ Soc.\@ Japan \textbf{46} (1994), no.~4, 607--643.
\newblock \doi{10.2969/jmsj/04640607}.

\bibitem{Galdi}
G.~P.~Galdi.
\newblock An Introduction to the Mathematical Theory of the Navier-Stokes Equations. Steady-state Problems. Springer Monographs in Mathematics,
\newblock Springer, New York, 2011.
\newblock \doi{10.1007/978-0-387-09620-9}.

\bibitem{Geissert_Heck_Hieber_Sawada}
M.~Geissert, H.~Heck, M.~Hieber, and O.~Sawada.
\newblock {\em Weak Neumann implies Stokes\/}.
\newblock J.\@ Reine Angew.\@ Math.~\textbf{669} (2012), 75--100.
\newblock \doi{10.1515/CRELLE.2011.150}.

\bibitem{Geissert_Hess_Hieber_Schwarz_Stavrakidis}
M.~Geissert, M.~Hess, M.~Hieber, C.~Schwarz, and K.~Stavrakidis.
\newblock {\em Maximal $L^p$--$L^q$-estimates for the Stokes equation: a short proof of Solonnikov's theorem\/}.
\newblock J.\@ Math.\@ Fluid Mech.~\textbf{12} (2010), no.~1, 47--60.
\newblock \doi{10.1007/s00021-008-0275-0}.

\bibitem{Giaquinta_Martinazzi}
M.~Giaquinta and L.~Martinazzi.
\newblock An Introduction to the Regularity Theory for Elliptic Systems, Harmonic Maps and Minimal Graphs. Publications of the Scuola Normale Superiore.
\newblock Edizioni della normale, Pisa, 2012.
\newblock \doi{10.1007/978-88-7642-443-4}.

\bibitem{Giga}
Y.~Giga.
\newblock {\em Analyticity of the semigroup generated by the Stokes operator in $L_r$ spaces\/}.
\newblock Math.\@ Z.~\textbf{178} (1981), no.~3, 297--329.
\newblock \doi{10.1007/BF01214869}.

\bibitem{Giga_fractional}
Y.~Giga.
\newblock {\em Domains of fractional powers of the Stokes operator in $L_r$ spaces\/}.
\newblock Arch.\@ Rational Mech.\@ Anal.\@ \textbf{89} (1985), no.~3, 251--265.
\newblock \doi{10.1007/BF00276874}.

\bibitem{Gilbarg_Trudinger}
D.~Gilbarg and N.~S.~Trudinger.
\newblock Elliptic Partial Differential Equations of Second Order. Classics in Mathematics.
\newblock Springer, Berlin, 2001.
\newblock \doi{10.1007/978-3-642-61798-0}.

\bibitem{Haase}
M.~Haase.
\newblock The Functional Calculus for Sectorial Operators. Operator Theory: Advances and Applications, vol.~169.
\newblock Birkh{\"a}user, Basel, 2006.
\newblock \doi{10.1007/3-7643-7698-8}.

\bibitem{Hieber_Pruess}
M.~Hieber and J.~Pr\"uss.
\newblock {\em Heat kernels and maximal $\L^p$--$\L^q$ estimates for parabolic evolution equations},
\newblock Commun.\@ Part.\@ Diff.\@ Eq.~\textbf{12} (1997), no.~9--10, 1647--1669.
\newblock \doi{10.1080/03605309708821314}.

\bibitem{Hopf}
E.~Hopf. 
\newblock {\em \"Uber die Anfangswertaufgabe f\"ur die hydrodynamischen Grundgleichungen\/}. 
\newblock Math.\@ Nachr.\@ \textbf{4} (1951), no.~1--6, 213--231. 
\newblock \doi{10.1002/mana.3210040121}.

\bibitem{Jerison_Kenig}
D.~Jerison and C.~E.~Kenig.
\newblock {\em The Neumann problem on Lipschitz domains\/}.
\newblock Bull.\@ Amer.\@ Math.\@ Soc.~\textbf{4} (1981), 203--207.
\newblock \doi{10.1090/S0273-0979-1981-14884-9}.

\bibitem{Jerison_Kenig_Dirichlet}
D.~Jerison and C.~E.~Kenig.
\newblock {\em The inhomogeneous Dirichlet problem in Lipschitz domains\/}.
\newblock J.\@ Funct.\@ Anal.\@ \textbf{130} (1995), no.~1, 161--219.
\newblock \doi{10.1006/jfan.1995.1067}.

\bibitem{Kenig.1980}
C.~E.~Kenig.
\newblock {\em Weighted $H^p$ spaces on Lipschitz domains\/}.
\newblock Amer.\@ J.\@ Math.\@~\textbf{102} (1980), no.~1, 129--163.
\newblock \doi{10.2307/2374173}.

\bibitem{Kenig.1994}
C.~E.~Kenig.
\newblock Harmonic Analysis Techniques for Second Order Elliptic Boundary Value Problems. 
CBMS Regional Conference Series in Mathematics, vol.~83, Amer.\@ Math.\@ Soc.\@, Providence, RI, 1994.
\newblock \doi{10.1090/cbms/083}.

\bibitem{Kalton_Weis}
N.~J.~Kalton and L.~Weis.
\newblock {\em The $H^\infty-$calculus and sums of closed operators\/}.
\newblock Math.\@ Ann.~\textbf{321} (2001), 319--345.
\newblock \doi{10.1007/s002080100231}.

\bibitem{Kalton_Kunstmann_Weis}
N.~J.~Kalton, P.~C.~Kunstmann, and L.~Weis.
\newblock {\em Perturbation and interpolation theorems for the $\H^{\infty}$-calculus with applications to differential operators\/}.
\newblock Math.\@ Ann.~\textbf{336} (2006), no.~4, 747--801.
\newblock \doi{10.1007/s00208-005-0742-3}.

\bibitem{Kunstmann_Weis_2004}
P.~C.~Kunstmann and L.~Weis.
\newblock Maximal $L_p$-regularity for parabolic equations, Fourier multiplier theorems and $H^\infty$-functional calculus. 
\newblock In: Functional Analytic Methods for Evolution Equations. Lecture Notes in Mathematics, vol.~1855, 65--311, Springer, Berlin, 2004. 
\newblock \doi{10.1007/978-3-540-44653-8_2}.

\bibitem{Kunstmann_Weis}
P.~C.~Kunstmann and L.~Weis.
\newblock {\em New criteria for the $\H^{\infty}$-calculus and the Stokes operator on bounded Lipschitz domains\/}.
\newblock J.\@ Evol.\@ Equ.~\textbf{17} (2017), no.~1, 387--409.
\newblock \doi{10.1007/s00028-016-0360-4}.

\bibitem{Lebedev}
N.~N.~Lebedev.
\newblock Special Functions and their Applications. Translated and edited by R.~A.~Silverman.
\newblock Dover Publications, Inc., New York, 1972.

\bibitem{Maslennikova_Bogovskii}
V.~N.~Maslennikova and M.~E.~Bogovski\u{\i}.
\newblock {\em Elliptic boundary value problems in unbounded domains with noncompact and nonsmooth boundaries\/}.
\newblock Rend.\@ Sem.\@ Mat.\@ Fis.\@ Milano \textbf{56} (1986), 125--138.
\newblock \doi{10.1007/BF02925141}.

\bibitem{MitreaD}
D.~Mitrea.
\newblock {\em Sharp $L^p$-Hodge decompositions for Lipschitz domains in $\IR^2$\/}.
\newblock Adv.\@ Differential\@ Equ.\@ \textbf{7} (2002), no.~3, 343--364.
\newblock url:\url{http://projecteuclid.org/euclid.ade/1356651829} (visited on 06/30/2022).

\bibitem{Mitrea_Monniaux}
M.~Mitrea and S.~Monniaux.
\newblock {\em The regularity of the Stokes operator and the Fujita--Kato approach to the Navier--Stokes initial value problem in Lipschitz domains\/}.
\newblock J.\@ Funct.\@ Anal.\@ \textbf{254} (2008), no.~6, 1522--1574.
\newblock \doi{10.1016/j.jfa.2007.11.021}.

\bibitem{Mitrea_Monniaux_Neumann}
M.~Mitrea and S.~Monniaux.
\newblock {\em On the analyticity of the semigroup generated by the Stokes operator with Neumann-type boundary conditions on Lipschitz subdomains of Riemannian manifolds\/}.
\newblock Trans.\@ Amer.\@ Math.\@ Soc.\@ \textbf{361} (2009), 3125--3157.
\newblock \doi{10.1090/S0002-9947-08-04827-7}.

\bibitem{Mitrea_Monniaux_Wright}
M.~Mitrea, S.~Monniaux, and M.~Wright.
\newblock {\em The Stokes operator with Neumann boundary conditions in Lipschitz domains\/}.
\newblock J.\@ Math.\@ Sci.~\textbf{176} (2011), no.~3, 409--457.
\newblock \doi{10.1007/s10958-011-0400-0}.

\bibitem{Mitrea_Wright}
M.~Mitrea and M.~Wright.
\newblock {\em Boundary value problems for the Stokes system in arbitrary Lipschitz domains}.
\newblock Ast\'erisque \textbf{344} (2012).
\newblock \doi{10.24033/ast.808}.

\bibitem{Monniaux_Shen}
S.~Monniaux and Z.~Shen.
\newblock Stokes problems in irregular domains with various boundary conditions.
\newblock In: Handbook of Mathematical Analysis in Mechanics of Viscous Fluids, 1--42, Springer, Cham, 2018.
\newblock \doi{10.1007/978-3-319-10151-4_4-1}.

\bibitem{Leray}
J.~Leray.
\newblock {\em Sur le mouvement d'un liquide visqueux emplissant l'espace\/}. \newblock Acta Math.\@ \textbf{63} (1934), 193--248. 
\newblock \doi{10.1007/BF02547354}.

\bibitem{Shen_Riesz}
Z.~Shen.
\newblock {\em Bounds on Riesz transforms on $L^p$-spaces for second order elliptic operators\/}.
\newblock Ann.\@ Inst.\@ Fourier (Grenoble)~\textbf{55} (2005), no.~1, 173--197.
\newblock \doi{10.5802/aif.2094}.

\bibitem{Shen}
Z.~Shen.
\newblock {\em Resolvent estimates in $L^p$ for the Stokes operator in Lipschitz domains\/}.
\newblock Arch.\@ Ration.\@ Mech.\@ Anal.~\textbf{205} (2012), no.~2, 395--424.
\newblock \doi{10.1007/s00205-012-0506-7}.

\bibitem{Shen_lectures}
Z.~Shen.
\newblock Periodic Homogenization of Elliptic Systems. Operator Theory: Advances and Applications, vol.~269.
\newblock Birkh\"auser/Springer, Cham, 2018.
\newblock \doi{10.1007/978-3-319-91214-1}.

\bibitem{Sohr}
H.~Sohr. 
\newblock The Navier-Stokes Equations. An Elementary Functional Analytic Approach. 
\newblock Birkh\"auser, Basel, 2001.
\newblock \doi{10.1007/978-3-0348-0551-3}.

\bibitem{Taylor}
M.~E.~Taylor.
\newblock {\em Incompressible fluid flows on rough domains\/}.
\newblock In: Semigroups of Operators: Theory and Applications (Newport Beach, CA, 1998), 320–334, Progr. Nonlinear Differential Equations Appl., vol.~42, Birkh\"auser, Basel, 2000. 
\newblock \doi{10.1007/978-3-0348-8417-4_32}.

\bibitem{Triebel_Functions}
H.~Triebel.
\newblock Theory of Function Spaces II. 
\newblock Springer, Basel, 1992.
\newblock \doi{10.1007/978-3-0346-0419-2}.

\bibitem{Triebel}
H.~Triebel.
\newblock {\em Function spaces in Lipschitz domains and on Lipschitz manifolds. Characteristic functions as point\-wise multipliers\/}.
\newblock Rev.\@ Mat.\@ Complut.~\textbf{15} (2002), no.~2, 475--524.
\newblock \doi{10.5209/rev_REMA.2002.v15.n2.16910}.

\bibitem{Tolksdorf_Dissertation}
P.~Tolksdorf.
\newblock {\em On the $\mathrm{L}^p$-theory of the Navier-Stokes equations on Lipschitz domains\/}.
\newblock PhD thesis, TU Darmstadt, 2017.
\newblock url:\url{http://tuprints.ulb.tu-darmstadt.de/5960/} (visited on 06/30/2022).

\bibitem{Tolksdorf}
P.~Tolksdorf.
\newblock {\em On the $\L^p$-theory of the Navier--Stokes equations on three-dimensional bounded Lipschitz domains\/}.
\newblock Math.\@ Ann.~\textbf{371} (2018), no.~1--2, 445--460.
\newblock \doi{10.1007/s00208-018-1653-4}.

\bibitem{Tolksdorf_Systems}
P.~Tolksdorf.
\newblock {\em $\cR$-sectoriality of higher-order elliptic systems on general bounded domains\/}.
\newblock J.\@ Evol.\@ Equ.~\textbf{18} (2018), no.~2, 323--349.
\newblock \doi{10.1007/s00028-017-0403-5}.

\bibitem{Tolksdorf_Watanabe}
P.~Tolksdorf and K.~Watanabe.
\newblock {\em The Navier--Stokes equations in exterior Lipschitz domains: $\L^p$-theory\/}.
\newblock J.\@ Differential Equations~\textbf{269} (2020), no.~7, 5765--5801.
\newblock \doi{10.1016/j.jde.2020.04.015}.

\bibitem{Verchota_Dissertation}
G.~Verchota.
\newblock {\em Layer potentials and boundary value problems for Laplace's equation on Lipschitz domains\/}.
\newblock PhD thesis, University of Minnesota, 1982.

\bibitem{Verchota_Layer_Potentials}
G.~Verchota.
\newblock {\em Layer potentials and regularity for the Dirichlet problem for Laplace's equation in Lipschitz domains\/}.
\newblock J.\@ Funct.\@ Anal.\@ \textbf{59} (1984), 572--611.
\newblock \doi{10.1016/0022-1236(84)90066-1}.

\bibitem{Wei_Zhang}
W.~Wei and Z.~Zhang.
\newblock {\em $\L^p$ resolvent estimates for constant coefficient elliptic systems on Lipschitz domains\/}.
\newblock J.\@ Funct.\@ Anal.\@ \textbf{267} (2014), no.~9, 3262--3293.
\newblock \doi{10.1016/j.jfa.2014.08.010}.

\bibitem{Weis}
L.~Weis. 
\newblock {\em Operator-valued Fourier multiplier theorems and maximal $L_p$-regularity\/}. 
\newblock Math.\@ Ann.\@~\textbf{319} (2001), 735--758.
\newblock \doi{10.1007/PL00004457}.

\end{biblist}
\end{bibdiv}

\end{document}